\newcommand{\be}{{\bold e}}
\newcommand{\balpha}{{\boldsymbol \alpha}}
\newcommand{\bnu}{{\boldsymbol \nu}}
\newcommand{\bbeta}{{\boldsymbol \beta}}
\newcommand{\btheta}{{\boldsymbol \theta}}
\newcommand{\bt}{{\bold t}}
\newcommand{\bp}{{\bold p}}
\newcommand{\Z}{{\bold Z}}
\newcommand{\C}{{\bold C}}
\newcommand{\BP}{{\mathbf P}}
\newcommand{\cO}{{\mathcal O}}
\newcommand{\cC}{{\mathcal C}}
\newcommand{\cN}{{\mathcal N}}
\newcommand{\cP}{{\mathcal P}}
\newcommand{\cR}{{\mathcal R}}
\newcommand{\cS}{{\mathcal S}}
\newcommand{\cJ}{{\mathcal J}}
\newcommand\rank{\mathop{\rm rank}\nolimits}
\newcommand\im{\mathop{\rm Im}\nolimits}
\newcommand\coker{\mathop{\rm coker}\nolimits}
\newcommand{\Aut}{\mathop{\rm Aut}\nolimits}
\newcommand{\Tr}{\mathop{\rm Tr}\nolimits}
\newcommand\Hom{\mathop{\rm Hom}\nolimits}
\newcommand\End{\mathop{\rm End}\nolimits}
\newcommand\Pic{\mathop{\rm Pic}\nolimits}
\newcommand\Spec{\mathop{\rm Spec}\nolimits}
\newcommand\Hilb{\mathop{\rm Hilb}\nolimits}
\newcommand\RH{\mathop{\bf RH}\nolimits}
\newcommand{\length}{\mathop{\rm length}\nolimits}
\newcommand{\res}{\mathop{\sf res}\nolimits}
\newcommand{\sch}{\mathrm{Sch}}
\newcommand{\sets}{\mathrm{Sets}}
\newcommand\lra{\longrightarrow}
\newcommand\ra{\rightarrow}
\newtheorem{Theorem}{Theorem}[section]
\newtheorem{Lemma}{Lemma}[section]
\newtheorem{Remark}{Remark}[section]
\newtheorem{Proposition}{Proposition}[section]
\newtheorem{Definition}{Definition}[section]
\begin{document}

\title[Moduli of unramified irregular singular connections]
{Moduli of unramified irregular singular parabolic connections
on a smooth projective curve.}

\thanks{This research was partly supported by JSPS Grant-in-Aid for Scientific Research (S)19104002, 24224001,  challenging Exploratory Research  23654010 and for Young Scientists (B) 22740014}
\keywords{Irregular singular parabolic connections, Moduli space of parabolic connections, Symplectic structure, Representation of the fundamental groups, 
Stokes data,  Riemann-Hilbert correspondence, Isomonodromic deformation of linear connections, Geometric Painlev\'e property, Painlev\'e equations}
\author{Michi-aki Inaba}
\author{Masa-Hiko Saito}
\dedicatory{In memory of Professor Masaki Maruyama}
\address{Department of Mathematics, 
Graduate school of Science, Kyoto University,  
Kyoto,  606-8502, Japan}
\email{inaba@math.kyoto-u.ac.jp}
\address{Department of Mathematics, Graduate School  of Science, 
Kobe University, Kobe, Rokko, 657-8501, Japan}
\email{mhsaito@math.kobe-u.ac.jp}
\subjclass[2010]{14D20, 34M56, 34M55}

\begin{abstract}
In this paper we construct a coarse moduli scheme of
stable unramified irregular singular parabolic connections on a 
smooth projective curve and prove that the constructed moduli space
is smooth and has a symplectic structure. 
Moreover we will construct the moduli space of generalized monodromy data 
coming from topological monodromies, formal monodromies, links and Stokes data associated to the generic irregular connections.   We will prove that 
for a generic choice of generalized local exponents, 
the generalized Riemann-Hilbert correspondence from the moduli space of 
the connections to the moduli space of the associated 
generalized monodromy data gives an analytic isomorphism. 
This shows that differential systems 
arising from (generalized) isomonodromic deformations of corresponding 
unramified irregular singular parabolic connections admit 
geometric Painlev\'e property as in the regular singular cases 
proved generally in \cite{Inaba-1}. 
\end{abstract}

\maketitle

\section*{Introduction}

Let $m,l$ be positive integers and
$\nu$ be an element of
$\mathbf{C}dw/w^{lm-l+1}+\cdots+\mathbf{C}dw/w$.
We denote the $\mathbf{C}[[w]]$-module $\mathbf{C}[[w]]^{\oplus r}$
with the connection
\begin{gather*}
 \mathbf{C}[[w]]^{\oplus r}\longrightarrow \mathbf{C}[[w]]^{\oplus r}
 \otimes\genfrac{}{}{}{}{dw}{w^{lm-l+1}} \\
 ae_j \mapsto da e_j+\nu a e_j+w^{-1}dwe_{j-1}
\end{gather*}
by $V(\nu,r)$.
Here $e_1,\ldots,e_r$ is the canonical basis of $\mathbf{C}[[w]]^{\oplus r}$
and $e_0=0$.

We have the following fundamental theorem:
\begin{Theorem}[Hukuhara-Turrittin]\label{thm-H-T}
 Let $V$ be a free $\mathbf{C}[[z]]$-module of rank $r$
 and $\nabla:V\rightarrow V\otimes dz/z^m$
 be a connection.
 Then there are positive integers $l,s,r_1,\ldots,r_s$ such that for a variable
 $w$ with $w^l=z$, there exist
 $\nu_1,\ldots,\nu_s\in
 \mathbf{C}dw/w^{ml-l+1}+\cdots+\mathbf{C}dw/w$
 such that
 \[
  (V,\nabla)\otimes_{\mathbf{C}[[z]]}\mathbf{C}((w))\cong
  \left(V(\nu_1,r_1)\oplus\cdots\oplus V(\nu_s,r_s)\right)
  \otimes_{\mathbf{C}[[w]]}\mathbf{C}((w)).
 \]
\end{Theorem}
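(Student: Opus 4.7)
The plan is a classical inductive argument on the rank $r$, combining a formal splitting lemma with Turrittin's ramification step. I would fix a basis and write $\nabla = d + A(z)\,dz/z^m$ with $A(z) = A_0 + A_1 z + A_2 z^2 + \cdots \in M_r(\mathbf{C}[[z]])$, and then run a normalization algorithm whose output is the prescribed direct sum of blocks $V(\nu_i, r_i)$.

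The first main step is a \emph{formal splitting lemma}: if $A_0$ has two distinct eigenvalues and hence a direct sum decomposition $\mathbf{C}^r = W_1 \oplus W_2$ into generalized eigenspaces, I would construct inductively a formal gauge $P = I + P_1 z + P_2 z^2 + \cdots$ in $\mathrm{GL}_r(\mathbf{C}[[z]])$ such that $P^{-1}(d + A\,dz/z^m)P$ becomes block-diagonal with respect to $W_1 \oplus W_2$. At order $k$ the obstruction is a Sylvester equation $[A_0, P_k] = B_k$ for a known $B_k$, and solvability in the off-diagonal blocks follows from disjointness of the two spectra. Once such a $P$ is produced, the connection splits as a direct sum of lower-rank pieces and the induction hypothesis applies.

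The second step handles the unieigenvalue case $A_0 = \lambda I + N$ with $N$ nilpotent. Here I would first absorb the scalar part $\lambda$ by a rank-one tensor twist (equivalently, multiply sections by a formal solution of $df/f = -\lambda\,dz/z^m$), which removes the top-order scalar and exposes the next coefficient of $A$. Iterating this scalar-removal together with constant gauge transformations, one arrives at the situation where either the leading matrix acquires two distinct eigenvalues (return to Step 1) or it remains a single Jordan block whose nilpotent structure, read against the Newton polygon of a cyclic vector, forces a ramified base change $w^l = z$ with $l$ equal to the denominator of the smallest slope. After pulling back to $\mathbf{C}[[w]]$, the leading matrix of the transformed connection genuinely splits, and we are again reduced to Step 1.

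The main obstacle is precisely this ramification step: one must exhibit the correct integer $l$ and produce a formal gauge over $\mathbf{C}((w))$ after which the leading term truly separates. The cleanest approach is via a cyclic vector, turning $(V,\nabla)$ into a single scalar differential operator whose Newton polygon has vertices $(j, v_j)$; the slopes encode the $\nu_i$ up to lower-order terms, and their denominators give $l$. One then shows by a successive approximation argument that any block whose leading matrix, after such a ramified cover and scalar twist, is a single Jordan block in $N$ only (with $\nu$-part already matching the slopes) is formally isomorphic to $V(\nu, r_i)$, since the remaining off-diagonal perturbations can be removed order by order by solving triangular systems. Combining the splitting lemma, the ramification step, and this final normalization, the induction on rank terminates and yields the desired decomposition.
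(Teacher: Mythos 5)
The paper does not prove Theorem \ref{thm-H-T} at all: it is quoted as a known fundamental input, with the proof deferred to [\cite{Sibuya}, Theorem 6.8.1], and its only internal use is to deduce Proposition \ref{prop:filtration}. So there is no in-paper argument to compare yours against; what you have written is the classical Hukuhara--Levelt--Turrittin proof (formal block splitting off the leading coefficient, scalar twist in the unieigenvalue case, ramification index read off the Newton polygon of a cyclic vector), and that route is sound and independent of anything the paper does.

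Two points in your sketch need more care before it is a complete proof. First, the splitting lemma as you state it is only automatic when $m\geq 2$: there the derivative of $P$ enters at strictly lower order, so the order-$k$ obstruction really is $[A_0,P_k]=B_k$ and disjointness of the spectra solves it off-diagonally. In the regular singular situation ($m=1$, which also reappears after your scalar twists have stripped away the irregular part), the equation becomes $[A_0,P_k]+kP_k=B_k$, and $\mathrm{ad}(A_0)+k$ can be singular when eigenvalues of $A_0$ differ by an integer; you must insert shearing (meromorphic) gauge transformations, which is legitimate precisely because the asserted isomorphism is over $\mathbf{C}((w))$ and the $\nu_i$ are only determined modulo $\mathbf{Z}\,dw/w$. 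The same resonance issue is hidden in your final ``solving triangular systems'' step, whose endpoint must be exactly the blocks $V(\nu_i,r_i)$, i.e.\ a scalar part $\nu_i$ plus a single nilpotent Jordan block with entries $dw/w$; that is the formal classification of regular singular connections and again uses shearing. Second, you should justify termination of the unieigenvalue loop: after a scalar twist and a constant gauge the leading coefficient can remain nilpotent, and one needs a strictly decreasing invariant (the smallest slope of the Newton polygon of the cyclic operator, equivalently the Katz invariant), which is also what certifies that the chosen $l$ suffices. With these standard repairs your outline is a correct proof of the statement that the paper itself simply outsources to the literature.
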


For the proof of Theorem \ref{thm-H-T},
see [\cite{Sibuya}, Theorem 6.8.1] for example.
Note that $\nu_1,\ldots,\nu_s$ in Theorem \ref{thm-H-T}
are unique modulo $\mathbf{Z}dw/w$.
So we can take $\nu_1,\ldots,\nu_s$ as
invariants of a connection.
In this paper we consider only the case $l=1$.

Let $C$ be a smooth projective irreducible curve over $\mathbf{C}$
of genus $g$,
$t_1,\ldots,t_n$ be distinct points of $C$
and $m_1,\ldots,m_n$ be positive integers.
Put $D:=\sum_{i=1}^nm_it_i$.
Take $d\in\mathbf{Z}$ and
$\bnu=(\nu^{(i)}_j)^{1\leq i\leq n}_{0\leq j\leq r-1}$
such that
$\nu^{(i)}_j\in\mathbf{C}dz_i/z_i^{m_i}+\cdots+\mathbf{C}dz_i/z_i$
and that
$d+\sum_{i=1}^n\sum_{j=0}^{r-1}\res_{t_i}(\nu^{(i)}_j)=0$,
where $z_i$ is a generator of the maximal ideal of ${\mathcal O}_{C,t_i}$.
Let $N_i$ be positive integers such that $N_i\geq m_i$
for $i=1,\ldots,n$ and set
$N_it_i:=\Spec\left({\mathcal O}_{C,t_i}/(z_i^{N_i})\right)$.
$(E,\nabla,\{l^{(i)}_j\})$ is said to be an irregular singular $\bnu$-parabolic connection
of parabolic depth $(N_i)$
if $E$ is a vector bundle on $C$ of rank $r$ and degree $d$,
$\nabla:E\rightarrow E\otimes\Omega^1_C(D)$ is a connection,
$E|_{N_it_i}=l^{(i)}_0\supset l^{(i)}_1\supset\cdots\supset l^{(i)}_{r-1}
\supset l^{(i)}_r=0$
is a filtration such that $l^{(i)}_j/l^{(i)}_{j+1}\cong{\mathcal O}_{N_it_i}$,
$\nabla|_{N_it_i}(l^{(i)}_j)\subset l^{(i)}_j\otimes\Omega^1_C(D)$
for any $i,j$ and for the induced morphism
$\overline{\nabla^{(i)}_j}:l^{(i)}_j/l^{(i)}_{j+1}\rightarrow
(l^{(i)}_j/l^{(i)}_{j+1})\otimes\Omega^1_C(D)$,
$\left(\overline{\nabla^{(i)}_j}-\nu^{(i)}_j\mathrm{id}\right)
(l^{(i)}_j/l^{(i)}_{j+1})$
is contained in the image of
$(l^{(i)}_j/l^{(i)}_{j+1})\otimes\Omega^1_C\rightarrow
(l^{(i)}_j/l^{(i)}_{j+1})\otimes\Omega^1_C(D)$
for any $i,j$.
We can define $\balpha$-stability for
$\bnu$-parabolic connection.
See Definition \ref{def-stability} for precise definition of $\balpha$-stability.
We first show in Theorem \ref{thm-moduli-exists} that
there is a coarse moduli scheme $M^{\balpha}_{D/C}(r,d,(N_i))_{\bnu}$
of $\balpha$-stable $\bnu$-parabolic connections
of parabolic depth $(N_i)$.
The main theorem of this paper is Theorem \ref{smoothness-thm}
and Theorem \ref{symplectic-form},
which state that the moduli space
$M^{\balpha}_{D/C}(r,d,(m_i))_{\bnu}$
of $\balpha$-stable $\bnu$-parabolic
connections $(E,\nabla,\{l^{(i)}_j\})$ of parabolic depth $(m_i)$
is smooth and has a symplectic structure.

However, there is a serious example (Remark \ref{counter-example})
which states that for special $\bnu$,
there is a member
$(E,\nabla,\{l^{(i)}_j\})\in M^{\balpha}_{D/C}(r,d,(m_i))_{\bnu}$
such that the invariants $\nu_1,\ldots,\nu_s$ for
$(E,\nabla)\otimes\hat{\mathcal O}_{C,t_i}$ given in
Theorem \ref{thm-H-T} are different from the data
$\nu^{(i)}_0,\ldots,\nu^{(i)}_{r-1}$ given by $\bnu$.
So $M^{\balpha}_{D/C}(r,d,(m_i))_{\bnu}$
does not seem a good moduli space at a glance.
On the other hand, assume that $N_i\geq r^2m_i$ for any $i$ and
$0\leq \mathrm{Re}\left(\res_{t_i}(\nu^{(i)}_j)\right)<1$ for any $i,j$.
Then Proposition \ref{prop-deeper} states that
for any member
$(E,\nabla,\{l^{(i)}_j\})\in M^{\balpha}_{D/C}(r,d,(N_i))_{\bnu}$,
the data $\nu_1,\ldots,\nu_s$ for
$(E,\nabla)\otimes\hat{\mathcal O}_{C,t_i}$
given in Theorem \ref{thm-H-T}
are the same as the data
$\nu^{(i)}_0,\ldots,\nu^{(i)}_{r-1}$
given by $\bnu$.
So it seems that
$M^{\balpha}_{D/C}(r,d,(N_i))_{\bnu}$
is a good moduli space.
However, Remark \ref{not-smooth} states that
the moduli space
$M^{\balpha}_{D/C}(r,d,(N_i))_{\bnu}$ is not smooth
for special $\bnu$.
So we cannot define isomonodromic deformations on
the moduli space $M^{\balpha}_{D/C}(r,d,(N_i))$.
After all the authors believe that
the moduli space $M^{\balpha}_{D/C}(r,d,(m_i))$
of $\balpha$-stable parabolic connections of parabolic depth $(m_i)$
is a correct moduli space, although $\bnu$ does not necessarily
reflect the invariants given in Hukuhara-Turrittin theorem.

After we construct the good moduli space  
$M^{\balpha}_{D/C}(r,d,(m_i))_{\bnu}$ of the $\balpha$-stable 
parabolic connections, we will investigate the 
Riemann-Hilbert correspondences for these moduli spaces and 
define the generalized  isomonodromic flows or 
isomonodromic differential systems associated to them. 
For that purpose, it is necessary to construct a good 
moduli space of the generalized monodromy data for the 
parabolic $\bnu$-connection $(E,\nabla,\{l^{(i)}_j\}) \in M^{\balpha}_{D/C}(r,d,(m_i))_{\bnu}$. 
However, for that purpose, we  
should fix the types of decompositions in the Hukuhara-Turrittin 
formal types at all irregular singular points.  However 
for some special $\bnu$, we can not recover these formal types (Remark \ref{counter-example}).  
So in this paper, we will restrict ourselves to the case 
when the local exponents $\bnu$ is generic (cf. Definition \ref{def:generic}). 
In this case, we can also construct the coarse moduli scheme  
$\cR(\bnu)$ of the data consisting of 
Stokes data, links and global topological monodromy 
representation of $\pi_1 (C \setminus \{ t_1, \cdots, t_n \})$.  
Let  us denote by $\bnu_{res}$  the residue part of $\bnu$ and by 
$\bp= \{ \widehat{\gamma_{i}} \}= \be(\bnu_{res})$
its exponential.   
Under the assumption that  $\bnu$ is generic, non-resonant and irreducible, 
 we can see that the moduli space 
$\cR(\bnu)$ is a nonsingular affine scheme.  Moreover for a fixed generic
$\bnu$, we can define the Riemann-Hilbert correspondence 
$\RH_{(D/C), \bnu}:M^{\balpha}_{D/C}(r,d,(m_i))_{\bnu} \lra \cR(\bnu)$, and 
 in Theorem \ref{thm:Riemann-Hilbert}, 
we prove that $\RH_{(D/C), \bnu}$ is an analytic isomorphism under the assumption that 
$\bnu$ is generic, non-resonant and irreducible. In \S 6, we will define 
 the isomonodromic differential systems induced by the family of 
 the Riemann-Hilbert correspondences and show that 
 the corresponding differential systems  has geometric Painlev\'e 
 property when $\bnu$ is generic,  simple, non-resonant and 
 irreducible(cf. Theorem \ref{thm:GP}).  Then as a corollary we can obtain the 
geometric Painlev\'e property of 5 types of classical 
Painlev\'e equations listed below when $\bp= \be(\bnu_{res})$ 
is non-resonant and irreducible.  (Note that if rank $E$ =$2$, $\bnu$ are always 
simple).  
\begin{equation}\label{eq:URP}
P_{VI}(D^{(1)}_4)_{\bp}, P_{V}(D^{(1)}_5)_{\bp}, P_{III}(D^{(1)}_6)_{\bp}, 
P_{IV}(E^{(1)}_6)_{\bp},  P_{II}(E^{(1)}_7)_{\bp}
\end{equation} 
For $P_{VI}(D^{(1)}_4)_{\bp}$, we showed the geometric 
Painlev\'e property for any $\bp$ (\cite{IIS-1}, \cite{IIS-2}, 
\cite{Inaba-1}). More generally, the geometric Painlev\'e 
property for isomonodromic differential systems associated 
to the regular singular parabolic connections for any $\bp$ 
was proved completely  in \cite{Inaba-1}.

The rough plan of this paper is as follows.  
In \S 1, we will prepare some results on the 
formal parabolic connections  and their reductions to the 
finite orders. In \S 2, we will construct the coarse moduli 
scheme $M^{\balpha}_{D/C}(r,d,(N_i))_{\bnu}$ for $N_i \geq m_i$ 
as a quasi-projective scheme 
and show that $M^{\balpha}_{D/C}(r,d,(m_i))_{\bnu}$ is 
smooth for any $\bnu$.  In \S 3,  we will show  the existence of 
the smooth family of the moduli spaces of parabolic 
connections over the space of generalized exponents 
when we  also vary the divisor  $D= \sum_{i=1}^n m_i t_i$ 
in a product of Hilbert schemes of points (cf.   
Theorem \ref{relative-smooth}).  Theorem \ref{relative-smooth} 
seems important from the view point of confluence process of 
singular points.  
In \S 4, we will show  the existence of 
the relative symplectic form $\omega$ 
on the family of moduli spaces of parabolic connections 
parametrized by $\bnu$.  We will use Theorem \ref{relative-smooth}
 to reduce the proof of 
 the closedness $d\omega=0$ to the case of 
 regular singular cases in \cite{Inaba-1}. 
 In \S 5, we will review on the generalized 
 monodromy data and construct the moduli space of 
 them when $\bnu$ is generic. Moreover we define the 
 Riemann-Hilbert correspondence and show that 
 it gives an analytic isomorphism for generic, non-resonant 
 and irreducible $\bnu$.  In \S 6, fixing a non-resonant and irreducible $\bnu_{res}$,  
we will define the family of  Riemann-Hilbert correspondences and 
define the isomonodromic flows on the phase space 
 $\pi_{2,\bnu_{res}}:M^{\balpha}_{D/{\mathcal C}/T^{\circ, s}_{\bnu_{res}} }
\lra T^{\circ, s}_{\bnu_{res}}$
 which is the family of moduli spaces of  $\balpha$-stable parabolic connections over a certain space $T^{\circ,s}_{\bnu_{res}}$ of parameters including generic, simple exponents $\bnu$ with the fixed 
residue part $\bnu_{res}$ (see (\ref{eq:space})).  
The isomonodromic flows define an isomonodromic foliation or 
an isomonodromic differential system on the phase space   and 
its geometric Painlev\'e property follows easily 
from the definition based on Theorem \ref{thm:Riemann-Hilbert}.  
The geometric Painlev\'e property gives a  complete and clear proof of 
the analytic Painlev\'e property for the isomonodromic differential systems 
with non-resonant and irreducible exponents $\bnu_{res}$ or $\bp= \be(\bnu_{\res})$. 
 
As explained in \cite{IISA}, it is important 
to construct the fibers of the  phase space of the isomonodromic 
differential system as smooth algebraic schemes. 
One can use affine algebraic coordinates of the fibers over an
 open set of parameter spaces 
to write down the differential systems explicitly. Then 
the differential systems satisfy the analytic 
Painlev\'e property which easily follows from the geometric 
Painlev\'e property.  

We should mention that Malgrange \cite{Malgrange}, \cite{Malgrange2} 
and Miwa \cite{Miwa} 
gave proofs of the analytic Painlev\'e property for isomonodromic 
differential systems for irregular connections on $\BP^1$.
However, in order to give a complete proof of 
the geometric Painlev\'e property, we believe that 
our algebro-geometric construction of the family of the moduli spaces 
of connections is indispensable. (See also \cite{IIS-1}, \cite{IIS-2} and \cite{Inaba-1} for the regular singular cases).
  
Bremer and Sage studied in \cite{B-S1} the moduli space
of irregular singular connections on $\mathbf{P}^1$.
They consider also the ramified case.
However, they assumed that the bundle $V$ is trivial, which means that 
their moduli space only covers a Zariski open set of our moduli space 
which is not enough to prove the geometric Painlev\'e property even for 
generic unramified cases. See Remark \ref{rem:jumping}.

The second author thanks Marius van der Put for the hospitality and 
many discussions on the generalized monodromy data of irregular singularities 
during his visits in Groningen.  Both authors thank to the referee for pointing out 
mistakes about the multiplicity of singular lines in the first version of this paper and 
other useful suggestions.  

\section{Preliminary}

As a corollary of Theorem \ref{thm-H-T},
we obtain the following Proposition:
\begin{Proposition} \label{prop:filtration}
  Let $V$ be a free $\mathbf{C}[[z]]$-module of rank $r$
 and $\nabla:V\rightarrow V\otimes dz/z^m$
 be a connection.
 Then there is a positive integer $l$ such that for a variable
 $w$ with $w^l=z$, there exist
 $\nu_0,\ldots,\nu_{r-1}\in\mathbf{C}dw/w^{lm-l+1}+\cdots+\mathbf{C}dw/w$
 and a filtration
 $V\otimes\mathbf{C}[[w]]=V_0\supset V_1\supset V_2\supset\cdots\supset V_{r-1}\supset V_r=0$
 by subbundles such that $\nabla(V_j)\subset V_j\otimes dw/w^{lm-l+1}$
 and $V_j/V_{j+1}\cong V(\nu_j,1)$
 for any $j=0,1,\ldots,r-1$.
\end{Proposition}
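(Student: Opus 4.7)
The plan is to apply Theorem~\ref{thm-H-T} to obtain a formal decomposition, refine that decomposition into a full $\nabla$-stable flag over $\mathbf{C}((w))$, and then descend the flag back to $V':=V\otimes_{\mathbf{C}[[z]]}\mathbf{C}[[w]]$ by a standard saturation argument.

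First I would pick the integer $l$ and elements $\mu_1,\dots,\mu_s$ supplied by Theorem~\ref{thm-H-T}, so that after the base change $w^l=z$ (using $dz=lw^{l-1}\,dw$, whence $\nabla'(V')\subset V'\otimes dw/w^{lm-l+1}$) there is an isomorphism of $\mathbf{C}((w))$-connections
\[
 \phi\colon V'\otimes_{\mathbf{C}[[w]]}\mathbf{C}((w))\xrightarrow{\sim}\bigoplus_{i=1}^{s}V(\mu_i,r_i)\otimes_{\mathbf{C}[[w]]}\mathbf{C}((w)).
\]
Each summand $V(\mu_i,r_i)$ admits the obvious filtration $V(\mu_i,r_i)=W^{(i)}_0\supset W^{(i)}_1\supset\cdots\supset W^{(i)}_{r_i}=0$ given by $W^{(i)}_k:=\sum_{j=1}^{r_i-k}\mathbf{C}[[w]]\,e_j$; the defining relation $\nabla e_j=\mu_i e_j+w^{-1}dw\,e_{j-1}$ makes each $W^{(i)}_k$ a $\nabla$-stable subbundle and identifies every graded piece $W^{(i)}_k/W^{(i)}_{k+1}$ with $V(\mu_i,1)$. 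Concatenating the $W^{(i)}_\bullet$ across the direct sum yields a full $\nabla$-stable flag $U_0\supset U_1\supset\cdots\supset U_r=0$ in $\bigoplus V(\mu_i,r_i)\otimes\mathbf{C}((w))$ with each successive quotient isomorphic to $V(\mu_i,1)\otimes\mathbf{C}((w))$ for an appropriate index $i$.

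Next I would descend to $\mathbf{C}[[w]]$ by setting $V_j:=\phi^{-1}(U_j)\cap V'$. A short saturation argument (any $v\in V'$ with $w^Nv\in V_j$ lies in $\phi^{-1}(U_j)$ because the latter is a $\mathbf{C}((w))$-subspace, hence in $V_j$) shows that $V_j$ is a saturated $\mathbf{C}[[w]]$-submodule of the free module $V'$. Over the discrete valuation ring $\mathbf{C}[[w]]$ every saturated submodule of a free module is a direct summand, so each $V_j$ is free over $\mathbf{C}[[w]]$, each quotient $V_j/V_{j+1}$ is free of rank one, and $V_j\otimes\mathbf{C}((w))=\phi^{-1}(U_j)$. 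The inclusion $\nabla'(V_j)\subset V_j\otimes dw/w^{lm-l+1}$ then follows from the $\nabla$-stability of both $V'$ and $U_j$ together with the identity $(V'\otimes dw/w^{lm-l+1})\cap(\phi^{-1}(U_j)\otimes dw/w^{lm-l+1})=V_j\otimes dw/w^{lm-l+1}$.

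Finally I would identify each $V_j/V_{j+1}$ with some $V(\nu_j,1)$. Choosing a generator $f_j$, the induced connection takes the form $\nabla f_j=\alpha_j f_j$ with $\alpha_j\in\mathbf{C}[[w]]\,dw/w^{lm-l+1}$; a unit change $f_j\mapsto uf_j$ modifies $\alpha_j$ by $d\log u\in\mathbf{C}[[w]]\,dw$, and formal exponentiation shows every holomorphic $1$-form arises as $d\log u$ for some $u\in 1+w\mathbf{C}[[w]]$. Hence the polar part $\nu_j\in\mathbf{C}\,dw/w^{lm-l+1}+\cdots+\mathbf{C}\,dw/w$ of $\alpha_j$ determines $V_j/V_{j+1}$ up to $\mathbf{C}[[w]]$-isomorphism, and taking this $\nu_j$ gives $V_j/V_{j+1}\cong V(\nu_j,1)$. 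The main obstacle I anticipate is conceptual rather than technical: Theorem~\ref{thm-H-T} only pins down the $\mu_i$ modulo $\mathbf{Z}\,dw/w$ (the ambiguity inherent to $\mathbf{C}((w))$-isomorphism), whereas the proposition demands a genuine $\mathbf{C}[[w]]$-isomorphism on the graded pieces; this forces one to choose $\nu_j$ as the actual polar part of the induced rank-one connection rather than reusing the $\mu_i$ verbatim.
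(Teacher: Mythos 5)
Your proposal is correct, but it takes a genuinely different route from the paper. The paper argues by induction on the rank: it uses Theorem~\ref{thm-H-T} only to extract a single eigenvector $e_{r-1}$ in the last summand $V(\mu_s,r_s)$, multiplies it by the minimal power $w^{m_{r-1}}$ needed to land in the lattice $V\otimes\mathbf{C}[[w]]$ (so the bottom rank-one piece is $V(m_{r-1}w^{-1}dw+\mu_s,1)$, already of polar form), passes to the torsion-free quotient, and invokes the inductive hypothesis, pulling the resulting flag back; the $\exp(-\int)$ normalization appears only in the $r=1$ base case, applied to the whole module. You instead build the complete $\nabla$-stable flag at once over $\mathbf{C}((w))$ from the evident filtrations of each $V(\mu_i,r_i)$, descend it to the lattice by intersecting and using saturation over the DVR $\mathbf{C}[[w]]$ (which I checked: saturatedness, freeness of the graded pieces, $V_j\otimes\mathbf{C}((w))=\phi^{-1}(U_j)$, and $\nabla$-stability via the intersection identity all go through), and then normalize every rank-one graded piece by the unit change $f_j\mapsto uf_j$ with $d\log u$ killing the holomorphic part. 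Your final remark correctly identifies the one subtlety both arguments must address — the $\mu_i$ are only canonical modulo $\mathbf{Z}\,dw/w$, so the $\nu_j$ must be read off as the actual polar parts on the lattice graded pieces (the paper's integer shift $m_{r-1}w^{-1}dw$ is exactly this phenomenon). What your approach buys is the avoidance of the induction and an explicit, uniform descent mechanism; what the paper's buys is that each graded piece comes out in polar form automatically, with the normalization confined to rank one.
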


\begin{proof}
We prove the proposition by induction on $r$.
For $r=1$, we take a basis $e$ of $V$.
Then we have
$\nabla(e)=\nu e dz$
for some $\nu\in\mathbf{C}[[z]]z^{-m}$.
We can write
$\nu=\sum_{j\geq -m}a_jz^j$.
We put
$\nu_0:=\sum_{j\geq 0}a_jz^j$ and
$\mu:=\int\nu_0=\sum_{j\geq 0}(j+1)^{-1}a_jz^{j+1}$.
Then we have $\exp(-\mu)\in\mathbf{C}[[z]]$ and
\[
 \genfrac{}{}{}{}{d}{dz}\exp(-\mu)=-\exp(-\mu)\genfrac{}{}{}{}{d\mu}{dz}
 =-\exp(-\mu)\nu_0.
\]
We put $e':=\exp(-\mu)e$.
Then $e'$ is a basis of $V$ and
\begin{align*}
 \nabla(e')=\nabla(\exp(-\mu)e)&=
 \exp(-\mu)\nabla(e)+\genfrac{}{}{}{}{d\exp(-\mu)}{dz}edz \\
 &=\exp(-\mu)\nu e dz -\exp(-\mu)\nu_0 edz \\
 &=(\nu-\nu_0)\exp(-\mu)edz=(\nu-\nu_0)dz e'.
\end{align*}
Hence we have $V\cong V((\nu-\nu_0)dz,1)$.

Now assume that $r>1$.
By Theorem \ref{thm-H-T},
there is a positive integer $l$ such that for a variable
$w$ with $w^l=z$, there exist
$\mu_1,\ldots,\mu_s\in\mathbf{C}dw/w^m+\cdots\mathbf{C}dw/w$,
positive integers $r_1,\ldots,r_s$
and an isomorphism 
\[
 \varphi:V\otimes_{\mathbf{C}[[z]]}\mathbf{C}((w))\stackrel{\sim}\longrightarrow
 \left(V(\mu_1,r_1)\otimes_{\mathbf{C}[[w]]}\mathbf{C}((w))\right)\oplus
 \cdots\oplus\left(V(\mu_s,r_s)\otimes_{\mathbf{C}[[w]]}\mathbf{C}((w))\right).
\]
We can take an element $e_{r-1}\in \varphi^{-1}(V(\mu_s,r_s))$ such that
$\nabla(e_{r-1})=\mu_se_{r-1}$.
Let $m_{r-1}$ be the smallest integer such that
$w^{m_{r-1}}e_{r-1}\in V\otimes_{\mathbf{C}[[z]]}\mathbf{C}[[w]]$.
Then we have
\[
 \nabla(w^{m_{r-1}}e_{r-1})=m_{r-1}w^{m_{r-1}-1}dwe_{r-1}+\mu_{s}w^{m_{r-1}}e_{r-1}
 =(m_{r-1}w^{-1}dw+\mu_{s})w^{m_{r-1}}e_{r-1}
\]
If we put $V_{r-1}:=\mathbf{C}[[w]]w^{m_{r-1}}e_{r-1}$ and
$\mu'_{r-1}:=m_{r-1}w^{-1}dw+\mu_{s}$, then
$V_{r-1}\cong V(\nu_{r-1},1)$ and
$W_{r-1}:=\left(V\otimes_{\mathbf{C}[[z]]}\mathbf{C}[[w]]\right)/V_{r-1}$
is a torsion free $\mathbf{C}[[w]]$-module.
So $W_{r-1}$ is a free $\mathbf{C}[[w]]$-module of rank $r-1$ and
$\nabla$ induces a connection
\[
 \bar{\nabla}:W_{r-1}\longrightarrow W_{r-1}\otimes \genfrac{}{}{}{}{dw}{w^{ml-l+1}}.
\]
Then by the induction assumption, there is a filtration
$W_{r-1}=\bar{V}_0\supset\bar{V}_1\supset\cdots\supset\bar{V}_{r-2}\supset
\bar{V}_{r-1}=0$
by subbundles such that
$\bar{\nabla}(\bar{V}_j)\subset \bar{V}_j\otimes dw/w^{lm-l+1}$
and
$\bar{V}_j/\bar{V}_{j+1}\cong V(\mu'_j,1)$ for $j=0,\ldots,r-2$
for some $\mu'_j\in\mathbf{C}dw/w^{ml-l+1}+\cdots+\mathbf{C}dw/w$
($0\leq j\leq r-2$).
Let $V_j$ be the pull back of $\bar{V}_j$ by the homomorphism
$V\otimes_{\mathbf{C}[[z]]}\mathbf{C}[[w]]\rightarrow W_{r-1}$
($0\leq j\leq r-1$).
Then $\nabla(V_j)\subset V_j\otimes dw/w^{lm-l+1}$ and
$V_j/V_{j+1}\cong V(\mu'_j,1)$ for $0\leq j\leq r-1$.
\end{proof}

\begin{Remark}\rm
 By the proof of Proposition \ref{prop:filtration}, we can easily see that
 $\{\nu_j\;\mathrm{mod} \,\mathbf{Z}dw/w\}$
 in Proposition \ref{prop:filtration} are nothing but the invariants
 in Hukuhara-Turrittin Theorem (Theorem \ref{thm-H-T}).
 We should remark that we can not give a decomposition
 $(V,\nabla)\otimes_{\mathbf{C}[[z]]}\mathbf{C}[[w]]\cong\bigoplus_{j=0}^{r-1}V(\nu_j,1)$
 even if $\nu_j$ modulo $\mathbf{Z}dw/w$ are mutually distinct.
\end{Remark}

\begin{Remark}\label{counter-example}\rm
Unfortunately, we can not recover $\nu_0,\ldots,\nu_{r-1}$
from $\nabla\otimes\mathbf{C}[[w]]/(w^{ml-l+1})$.
Indeed consider the connection
$\nabla:\mathbf{C}[[z]]^{\oplus 2}\rightarrow
\mathbf{C}[[z]]^{\oplus 2}\otimes dz/z^6$
given by
\[
 \nabla=d+
 \begin{pmatrix}
  z^{-6}dz+z^{-2}dz & z^{-4}dz \\
  0 & z^{-6}dz-z^{-2}dz
 \end{pmatrix}.
\]
Let
\[
 \nabla\otimes\mathbf{C}[[z]]/(z^6):
 (\mathbf{C}[[z]]/(z^6))^{\oplus 2}\rightarrow
 (\mathbf{C}[[z]]/(z^6))^{\oplus 2}\otimes
 \genfrac{}{}{}{}{dz}{z^6}
\]
be the induced $\mathbf{C}[[z]]/(z^6)$-homomorphism.
Then $\nabla\otimes\mathbf{C}[[z]]/(z^6)$ can be given by the matrix
\[
 A=
  \begin{pmatrix}
  z^{-6}dz+z^{-2}dz & z^{-4}dz \\
  0 & z^{-6}dz-z^{-2}dz
 \end{pmatrix}
\]
with respect to the basis
\[
 \genfrac{(}{)}{0pt}{}{1}{0}, \genfrac{(}{)}{0pt}{}{0}{1}
\]
of $(\mathbf{C}[[z]]/(z^6))^{\oplus 2}$.
So the ``eigenvalues'' of $\nabla\otimes\mathbf{C}[z]/(z^6)$
with respect to this basis  are
$z^{-6}dz+z^{-2}dz, z^{-6}dz-z^{-2}dz$.

On the other hand,
take the basis
\[
 \genfrac{(}{)}{0pt}{}{1+z^4}{-z^2}, \genfrac{(}{)}{0pt}{}{-z^2}{1+z^4}
\]
of $(\mathbf{C}[[z]]/(z^6))^{\oplus 2}$.
Then we have
\begin{align*}
 \left(\nabla\otimes\mathbf{C}[[z]]/(z^6)\right)
 \genfrac{(}{)}{0pt}{}{1+z^4}{-z^2}
 &=
 \begin{pmatrix}
  z^{-6}dz+z^{-2}dz & z^{-4}dz \\
  0 & z^{-6}dz-z^{-2}dz
 \end{pmatrix}
 \genfrac{(}{)}{0pt}{}{1+z^4}{-z^2} \\
 &=
 \genfrac{(}{)}{0pt}{}{z^{-6}dz+z^{-2}dz}{-z^{-4}dz}
 =
 z^{-6}dz\genfrac{(}{)}{0pt}{}{1+z^4}{-z^2}
\end{align*}
and
\begin{align*}
 \left(\nabla\otimes\mathbf{C}[[z]]/(z^6)\right)
 \genfrac{(}{)}{0pt}{}{-z^2}{1+z^4}
 &=
 \begin{pmatrix}
  z^{-6}dz+z^{-2}dz & z^{-4}dz \\
  0 & z^{-6}dz-z^{-2}dz
 \end{pmatrix}
 \genfrac{(}{)}{0pt}{}{-z^2}{1+z^4} \\
 &=
 \genfrac{(}{)}{0pt}{}{0}{z^{-6}dz}
 =z^{-4}dz\genfrac{(}{)}{0pt}{}{1+z^4}{-z^2}+
 z^{-6}dz\genfrac{(}{)}{0pt}{}{-z^2}{1+z^4}
\end{align*}
Thus the representation matrix of $\nabla\otimes\mathbf{C}[[z]]/(z^6)$
with respect to the basis
\[
 \genfrac{(}{)}{0pt}{}{1+z^4}{-z^2},
 \genfrac{(}{)}{0pt}{}{-z^2}{1+z^4},
\]
is given by
\[
 \begin{pmatrix}
  z^{-6}dz & z^{-4}dz \\
  0 & z^{-6}dz
 \end{pmatrix}.
\]
So the ``eigenvalues'' of $\nabla\otimes\mathbf{C}[z]/(z^6)$
with respect to this basis are $z^{-6}dz,z^{-6}dz$.
Thus we conclude that the ``eigenvalues'' of
$\nabla\otimes\mathbf{C}[z]/(z^6)$
can not be well-defined.
In other words, the eigenvalues $\nu_1,\ldots,\nu_s$ given
in Hukuhara-Turrittin theorem (Theorem \ref{thm-H-T})
can not be recovered from $\nabla\otimes\mathbf{C}[z]/(z^6)$.
\end{Remark}

On the other hand, we have the following proposition,
which will be possible to improve more generally according to the
referee's valuable comment.
\begin{Proposition}\label{prop-deeper}
Let $V,W$ be free $\mathbf{C}[[z]]/(z^{r^2m})$-modules of rank $r$
with connections
\begin{align*}
 \nabla^V&:V\longrightarrow V\otimes\genfrac{}{}{}{}{dz}{z^m} \\
 \nabla^W&:W\longrightarrow W\otimes\genfrac{}{}{}{}{dz}{z^m}.
\end{align*}
and filtrations
\begin{gather*}
 V=V_0\supset V_1\supset \cdots V_{r-1}\supset V_r=0 \\
 W=W_0\supset W_1\supset\cdots W_{r-1}\supset W_r=0
\end{gather*}
such that $V_i/V_{i+1}\cong\mathbf{C}[[z]]/(z^{r^2m})$,
$W_i/W_{i+1}\cong\mathbf{C}[[z]]/(z^{r^2m})$ and that
$\nabla^V(V_i)\subset V_i\otimes dz/z^m$,
$\nabla^W(W_i)\subset W_i\otimes dz/z^m$
for any $i$.
Let $\nabla^V_i:V_i/V_{i+1}\rightarrow (V_i/V_{i+1})\otimes dz/z^m$
and $\nabla^W_i:W_i/W_{i+1}\rightarrow (W_i/W_{i+1})\otimes dz/z^m$
be the morphisms induced by $\nabla^V$ and $\nabla^W$, respectively.
Choose a basis $e^V_i$ of $V_i/V_{i+1}$ (resp.\ $e^W_i$ of $W_i/W_{i+1}$)
such that $\nabla^V_i(e^V_i)=\nu^V_ie^V_i$ and
$\nabla^W_i(e^W_i)=\nu^W_ie^W_i$ with
\begin{align*}
 \nu^V_i&=\left(a^{(i)}_{-m}z^{-m}+a^{(i)}_{-m+1}z^{-m+1}
 +\cdots+a^{(i)}_{-1}z^{-1}\right)dz \\
 \nu^W_i&=\left(b^{(i)}_{-m}z^{-m}+b^{(i)}_{-m+1}z^{-m+1}
 +\cdots+b^{(i)}_{-1}z^{-1}\right)dz
\end{align*}
Assume that $0\leq\mathrm{Re}(a^{(i)}_{-1})<1$ and
$0\leq\mathrm{Re}(b^{(i)}_{-1})<1$
for any $i$.
If there is an isomorphism $\varphi:V\stackrel{\sim}\longrightarrow W$
of $\mathbf{C}[[z]]/(z^{r^2m})$-modules such that
$\nabla^W\circ\varphi=(\varphi\otimes\mathrm{id})\circ\nabla^V$,
then there is a permutation $\sigma\in S_r$ such that
$\nu^V_i=\nu^W_{\sigma(i)}$ for any $i=0,\ldots,r-1$.
\end{Proposition}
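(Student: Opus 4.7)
The plan is to induct on the rank $r$. The base case $r=1$ reduces to a rank-$1$ calculation: an isomorphism $\varphi$ is multiplication by a unit $c \in (\mathbf{C}[[z]]/(z^m))^\times$, and the intertwining condition $\nabla^W \circ \varphi = (\varphi \otimes \mathrm{id}) \circ \nabla^V$ yields $d\log c = \nu^V_0 - \nu^W_0$ in the truncated $1$-form module. Since $c$ is a unit, $d\log c$ has no polar part, forcing the polar parts of $\nu^V_0 - \nu^W_0$ to vanish; the residue constraint $0 \le \mathrm{Re}(a^V_{-1}),\, \mathrm{Re}(b^W_{-1}) < 1$ then pins down the residues, giving $\nu^V_0 = \nu^W_0$.

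For the inductive step, choose a lifted basis $\{e^W_j\}_{j=0}^{r-1}$ of $W$ realizing the filtration with $\nabla^W(e^W_j) \equiv \nu^W_j e^W_j \pmod{W_{j+1} \otimes dz/z^m}$, and write $\varphi(e^V_{r-1}) = \sum_{i=0}^{r-1} c_i e^W_i$ with $c_i \in \mathbf{C}[[z]]/(z^{r^2m})$. Since $\varphi(V_{r-1}) \cong V_{r-1}$ is free of rank $1$, the annihilator of $\varphi(e^V_{r-1})$, equal to $\bigcap_{c_i \neq 0} z^{r^2m - \mathrm{ord}(c_i)}\mathbf{C}[[z]]/(z^{r^2m})$, must vanish, which forces some $c_{i_0}$ to be a unit; pick $i_0$ minimal with this property. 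Projecting the intertwining condition onto each $e^W_i$-component yields
\begin{equation*}
 d c_i = (\nu^V_{r-1} - \nu^W_i)\, c_i - \sum_{k < i} A^W_{i, k}\, c_k,
\end{equation*}
where $A^W = (A^W_{ij})$ is the lower-triangular matrix of $\nabla^W$ in the chosen basis.

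The key technical claim is that every $c_k$ with $k < i_0$ either vanishes or has $z$-adic order exceeding $(r^2 - 1)m$: starting from the smallest $k$ with $c_k \neq 0$, the equation at that index involves no correction, and the rank-$1$ analysis of $dc_k = (\nu^V_{r-1} - \nu^W_k)c_k$ under the residue constraint forces the order to be zero (contradicting the minimality of $i_0$) whenever that order is $\le (r^2-1)m$; iterating upward in $k$, the contributions of the previously controlled $c_k$ to the correction term at later indices remain of order $> (r^2-1)m$ and hence supply no polar part (since $m-1 \le (r^2-1)m$ for $r\ge 2$), so the same argument goes through at every stage. With all preceding $c_k$ of high order, the correction term in the equation at $i_0$ contributes no polar part, and matching polar parts together with the residue constraint yields $\nu^V_{r-1} = \nu^W_{i_0}$. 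Combined with $c_{i_0}$ being a unit, one then obtains a $\nabla^W$-stable direct-sum decomposition permitting passage to the rank-$(r-1)$ quotient isomorphism $V/V_{r-1} \cong W/\varphi(V_{r-1})$, whose inherited filtrations carry types $\{\nu^V_j\}_{j<r-1}$ and $\{\nu^W_j\}_{j\neq i_0}$; the inductive hypothesis applies since $r^2m \ge (r-1)^2 m$. The hard part is precisely the strong-induction argument controlling the orders of the $c_k$ for $k<i_0$ via the residue constraint, where the truncation bound $r^2m$ is used essentially to guarantee that the polar-coefficient analysis of the rank-$1$ equation proceeds unambiguously at every step.
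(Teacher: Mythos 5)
Your architecture is essentially the paper's own proof written in matrix form: the paper projects $\varphi(e^V_{r-1})$ successively to $W/W_1$, then $W_1/W_2$, etc., truncating at each step, and this is exactly your system $dc_i=(\nu^V_{r-1}-\nu^W_i)c_i-\sum_{k<i}A^W_{i,k}c_k$ processed from the lowest nonzero index upward. The problem is your key technical claim. At the first nonzero index $k_0$ the correction is absent, and the rank-one analysis with the residue hypothesis does give $\mathrm{ord}(c_{k_0})>(r^2-1)m$. At the next indices, however, the correction is a nonzero $1$-form whose order is only $>(r^2-1)m-m$ (you lose $m$ to the pole of $A^W$), and the decisive identity in the rank-one argument, $s=a_{-1}-b_{-1}$, comes from matching the coefficient of $z^{s-1}$ with $s\ge 1$ --- a \emph{non-polar} degree. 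Hence ``the correction supplies no polar part'' is not enough: once the correction can contribute at degree $z^{s-1}$, no contradiction arises and the equation admits solutions $c_k$ of order as small as (order of the correction)$+1$. So the uniform threshold $(r^2-1)m$ is not maintained; what the iteration actually yields is a bound that drops by roughly $m$ per index. The strategy survives only because there are at most $r-1$ such indices, so every $c_k$ with $k<i_0$ still has order at least about $(r^2-r+1)m\ge\max\bigl((r-1)^2m,\,m\bigr)$, which is what you truly need: $\ge m$ so that the corrections at stage $i_0$ have no polar part (there the residue hypothesis is in fact not needed, since $c_{i_0}$ is a unit --- the same remark applies to your base case), and $\ge(r-1)^2m$ for the reduction. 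You must state and track this degrading bound; it is exactly where the budget $r^2m$ is spent, and it mirrors the paper's successive truncations to $\mathbf{C}[[z]]/(z^{(r^2-j)m})$.

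The second gap is the passage to rank $r-1$. When some $c_k\ne 0$ for $k<i_0$ (they are merely of high order, not zero), the filtration induced by $W_\bullet$ on $W/\varphi(V_{r-1})$ does \emph{not} have free graded pieces of length $r^2m$ with types $\{\nu^W_j\}_{j\ne i_0}$: the image of $e^W_{i_0}$ is congruent to $-c_{i_0}^{-1}\sum_{k\ne i_0}c_ke^W_k$, whose components at indices $k<i_0$ prevent $\overline{W}_{i_0}=\overline{W}_{i_0+1}$, and the graded pieces acquire torsion of length controlled by $\min_{k<i_0}\mathrm{ord}(c_k)$. The correct move --- the one the paper makes through the splittings induced by $\psi_j^{-1}$ at truncated levels --- is to pass to $\mathbf{C}[[z]]/(z^{(r-1)^2m})$ first: by the (repaired) bounds above, all $c_k$ with $k<i_0$ die there, so $\varphi(e^V_{r-1})\equiv c_{i_0}e^W_{i_0}+\sum_{k>i_0}c_ke^W_k$, the quotient filtration then has exactly the types $\{\nu^W_j\}_{j\ne i_0}$, and the inductive hypothesis (stated at truncation level $(\mathrm{rank})^2m$) applies. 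As written, your sentence about a ``$\nabla^W$-stable direct-sum decomposition \ldots whose inherited filtrations carry types \ldots'' asserts this at level $r^2m$, where it is false in general.
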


\begin{proof}
We prove the Proposition by induction on $r$.
Assume that $r=1$.
We can write $\varphi(e^V_0)=ce^W_0$ 
with $c\in(\mathbf{C}[[z]]/(z^{m}))^{\times}$.
Then we have
\begin{align*}
 (dc)e^W_0+c\nu^W_0e^W_0 & =\nabla^W(ce^W_0)
 =\nabla^W\varphi(e^V_0)=(\varphi\otimes\mathrm{id})\nabla^V(e^V_0)
 =(\varphi\otimes\mathrm{id})(\nu^V_0e^V_0)
 =\nu^V_0\varphi(e^V_0) \\
& =c\nu^V_0e^W_0.
\end{align*}
So we have
\[
 dc=c(\nu^V_0-\nu^W_0).
\]
If $\nu^V_0\neq\nu^W_0$, we can write
\[
 \nu^V_0-\nu^W_0=\alpha_{-n}z^{-n}dz+\alpha_{-n+1}z^{-n+1}dz
 +\cdots+\alpha_{-1}z^{-1}dz
\]
with $n\geq 1$ and $\alpha_{-n}\neq 0$.
If we put $c=c_0+c_1z+c_2z^2+\cdots+c_{m-1}z^{m-1}$
with each $c_j\in\mathbf{C}$,
then we have $c_0\neq 0$.
So we have
\begin{align*}
 dc&=c(\nu^V_0-\nu^W_0) \\
 &=(c_0+c_1z+\cdots+c_{m-1}z^{m-1})
 (\alpha_{-n}z^{-n}dz+\cdots\alpha_{-1}z^{-1}dz) \\
 &=c_0\alpha_{-n}z^{-n}dz+\sum_{j>-n}\beta_jz^jdz
 \notin \mathbf{C}[[z]]/(z^{m})\otimes dz,
\end{align*}
which is a contradiction.
Thus we have $\nu^V_0=\nu^W_0$.

Next assume that $r>1$.
Consider the composite
\[
 \psi:V_{r-1}\hookrightarrow V\stackrel{\varphi}\longrightarrow
 W \longrightarrow W/W_1.
\]
There exists an element $c\in\mathbf{C}[[z]]/(z^{r^2m})$
such that $\psi(e^V_{r-1})=ce^W_0$ in $W/W_1$.
Then we have
\begin{align*}
 c\nu^V_{r-1}e^W_0 & =
 (\psi\otimes\mathrm{id})(\nu^V_{r-1}e^V_{r-1})
 =(\psi\otimes\mathrm{id})\circ\nabla^V(e^V_{r-1})
  =\nabla^W\circ\psi(e^V_{r-1}) 
  =\nabla^W(ce^W_0) \\
& =(dc)e^W_0+c\nu^W_0e^W_0
\end{align*}
and so we have
\[
 dc=c(\nu^V_{r-1}-\nu^W_0).
\]

If $\psi$ is an isomorphism, then we have
$\nu^V_{r-1}=\nu^W_0$ and the composite
\[
 V\stackrel{\varphi}\longrightarrow W\longrightarrow
 W/W_1\stackrel{\psi^{-1}}\longrightarrow V_{r-1}
\]
gives a splitting of the exact sequence
\[
 0\longrightarrow V_{r-1} \longrightarrow V 
 \longrightarrow V/V_{r-1}\longrightarrow 0.
\]
So we have $V=V_{r-1}\oplus V/V_{r-1}$.
Similarly we have a splitting $W=W/W_1\oplus W_1$
and we have an isomorphism 
$V/V_{r-1}\cong W_1$ which is compatible with the connections.
So we obtain an isomorphism
$(V/V_{r-1})\otimes\mathbf{C}[[z]]/(z^{(r-1)^2m})\stackrel{\sim}
\rightarrow W_1\otimes\mathbf{C}[[z]]/(z^{(r-1)^2m})$.
By induction hypotheses, there is a permutation $\sigma\in S_r$
such that $\sigma(r-1)=0$ and
$\nu^V_i=\nu^W_{\sigma(i)}$
for any $i$.

So assume that $\psi$ is not an isomorphism.
Then we can write $c=c_kz^k+c_{k+1}z^{k+1}+\cdots+c_{r^2m}z^{r^2m}$
with $c_k\neq 0$ and $k>0$.
If $k\leq (r^2-1)m$, we have
\[
 dc=kc_kz^{k-1}dz+(k+1)c_{k+1}z^kdz+\cdots+(r^2m-1)c_{r^2m-1}z^{r^2m-2}dz
 \neq 0.
\]
So we have $\nu^V_{r-1}-\nu^W_0\neq 0$.
Put $n:=\max\{j|a^{(r-1)}_{-j}-b^{(0)}_{-j}\neq 0\}$.
Then we have
\begin{align*}
 dc=c(\nu^V_{r-1}-\nu^W_0)&=
 \left(\sum_{j=k}^{r^2m-1}c_jz^j\right)\sum_{j=-n}^{-1}(a^{(r-1)}_j-b^{(0)}_j)z^jdz \\
 &=c_k(a^{(r-1)}_{-n}-b^{(0)}_{-n})z^{k-n}dz+\sum_{j>k-n}\gamma_jz^j.
\end{align*}
Thus we have $k-1=k-n$ and $kc_k=c_k(a^{(r-1)}_{-n}-b^{(0)}_{-n})$.
So $n=1$ and $a^{(r-1)}_{-1}-b^{(0)}_{-1}=k\geq 1$, which contradicts the assumption
that $0\leq\mathrm{Re}(a^{(r-1)}_{-1})<1$, 
$0\leq \mathrm{Re}(b^{(0)}_{-1})<1$.
Hence we have $k\geq (r^2-1)m+1$.
Then $\im\psi\in z^{(r^2-1)m+1}(W/W_1)$.
So $\varphi$ induces a morphism
\[
 V_{r-1}\otimes\mathbf{C}[[z]]/(z^{(r^2-1)m})
 \longrightarrow
 W_1\otimes\mathbf{C}[[z]]/(z^{(r^2-1)m}),
\]
which also induces a morphism
\[
 \psi_1:V_{r-1}\otimes\mathbf{C}[[z]]/(z^{(r^2-1)m})
 \longrightarrow (W_1/W_2)\otimes\mathbf{C}[[z]]/(z^{(r^2-1)m}).
\]
We define $c^{(1)}\in\mathbf{C}[[z]]/(z^{(r^2-1)m})$
by $\psi_1(e^V_{r-1})=c^{(1)}e^W_1$.
If $\psi_1$ is isomorphic, then
\[
 (W_1/W_2)\otimes\mathbf{C}[[z]]/(z^{(r^2-1)m})
 \stackrel{\psi^{-1}_1}\longrightarrow
 V_{r-1}\otimes\mathbf{C}[[z]]/(z^{(r^2-1)m})
 \stackrel{\varphi}\longrightarrow
 W_1\otimes\mathbf{C}[[z]]/(z^{(r^2-1)m})
\]
gives a splitting of the exact sequence
\footnotesize
$$
 0\longrightarrow W_2\otimes\mathbf{C}[[z]]/(z^{(r^2-1)m})
 \longrightarrow W_1\otimes\mathbf{C}[[z]]/(z^{(r^2-1)m})
 \longrightarrow(W_1/W_2)\otimes\mathbf{C}[[z]]/(z^{(r^2-1)m})
 \longrightarrow 0.
$$\normalsize
So we have
\[
 W_1\otimes\mathbf{C}[[z]]/(z^{(r^2-1)m})
=\left((W_1/W_2)\otimes\mathbf{C}[[z]]/(z^{(r^2-1)m})\right)
\oplus \left(W_2\otimes\mathbf{C}[[z]]/(z^{(r^2-1)m})\right)
\]
and
$(\varphi\otimes\mathrm{id})
\left(V_{r-1}\otimes\mathbf{C}[[z]]/(z^{(r^2-1)m})\right)
=(W_1/W_2)\otimes\mathbf{C}[[z]]/(z^{(r^2-1)m})$.
Then we have $\nu^V_{r-1}=\nu^W_1$
and an isomorphism
\[
 (V/V_{r-1})\otimes\mathbf{C}[[z]]/(z^{(r-1)^2m})
 \stackrel{\sim}\longrightarrow
 \left(W\otimes\mathbf{C}[[z]]/(z^{(r-1)^2m})\right)\left/
 \left((W_1/W_2)\otimes\mathbf{C}[[z]]/(z^{(r-1)^2m})\right)\right..
\]
By induction hypothesis, there exists a permutation
$\sigma\in S_r$ such that
$\sigma(r-1)=1$ and $\nu^V_i=\nu^W_{\sigma(i)}$
for $i\neq r-1$.
If $\psi_1$ is not an isomorphism, then we can see by
a similar argument to the above that
$\varphi$ induces  a homomorphism
\[
 \psi_2:V_{r-1}\otimes\mathbf{C}[[z]]/(z^{(r^2-2)m})
 \longrightarrow
 (W_2/W_3)\otimes\mathbf{C}[[z]]/(z^{(r^2-2)m}).
\]
We repeat this argument and we finally obtain an isomorphism
\[
 \psi_j:V_{r-1}\otimes\mathbf{C}[[z]]/(z^{(r^2-j)m})
 \stackrel{\sim}\longrightarrow
 (W_j/W_{j+1})\otimes\mathbf{C}[[z]]/(z^{(r^2-j)m})
\]
for some $j$ with $0\leq j\leq r-1$.
So there is a slitting
\[
 (W_j/W_{j+1})\otimes\mathbf{C}[[z]]/(z^{((r^2-j)m})
 \stackrel{\psi_j^{-1}}\longrightarrow
 V_{r-1}\otimes\mathbf{C}[[z]]/(z^{(r^2-j)m})
 \stackrel{\varphi}\longrightarrow
 W_j\otimes\mathbf{C}[[z]]/(z^{(r^2-j)m})
\]
of the exact sequence
\footnotesize
$$
 0\longrightarrow W_{j+1}\otimes\mathbf{C}[[z]]/(z^{(r^2-j)m})
 \longrightarrow W_j\otimes\mathbf{C}[[z]]/(z^{(r^2-j)m})
 \longrightarrow (W_j/W_{j+1})\otimes\mathbf{C}[[z]]/(z^{((r^2-j)m})
 \longrightarrow 0.
$$\normalsize 
Therefore we have
$$
W_j\otimes\mathbf{C}[[z]]/(z^{(r^2-j)m})=
\left(W_j/W_{j+1}\otimes\mathbf{C}[[z]]/(z^{(r^2-j)m})\right)
\oplus \left(W_{j+1}\otimes\mathbf{C}[[z]]/(z^{(r^2-j)m})\right)
$$
and
$(\varphi\otimes\mathrm{id})(V_{r-1}\otimes\mathbf{C}[[z]]/(z^{(r^2-j)m}))
\subset (W_j/W_{j+1})\otimes\mathbf{C}[[z]]/(z^{(r^2-j)m})$.
Since $\varphi\otimes\mathrm{id}$ induces an isomorphism
$V_{r-1}\otimes\mathbf{C}[[z]]/(z^{(r^2-j)m})\stackrel{\sim}\longrightarrow
(W_j/W_{j+1})\otimes\mathbf{C}[[z]]/(z^{(r^2-j)m})$,
it also induces an isomorphism
\[
 (V/V_{r-1})\otimes\mathbf{C}[[z]]/(z^{(r-1)^2m})
 \stackrel{\sim}\longrightarrow
 \left(W\otimes\mathbf{C}[[z]]/(z^{(r-1)^2m})\right)\left/
 \left((W_j/W_{j+1})\otimes\mathbf{C}[[z]]/(z^{(r-1)^2m})\right)\right..
\]
So we have $\nu^V_{r-1}=\nu^W_j$ and by induction
hypothesis there exists a permutation $\sigma\in S_r$
such that $\sigma(r-1)=j$ and
$\nu^V_k=\nu^W_{\sigma(k)}$
for any $k\neq r-1$.
\end{proof}

\begin{Remark}\rm
Assume that $l=1$ and $0\leq\mathrm{Re}(\res(\nu_j))<1$ for any $j$
in Proposition \ref{prop:filtration}.
Then the eigenvalues $\nu_i^{V\otimes\mathbf{C}[z]/(z^{r^2m})}$
appeared in Proposition \ref{prop-deeper}
are nothing but the eigenvalues given in Hukuhara-Turrittin theorem
(Theorem \ref{thm-H-T}).
\end{Remark}

\section{Moduli space of unramified irregular singular parabolic connections}

Let $C$ be a smooth projective irreducible curve
over $\mathbf{C}$ of genus $g$ and
\[
 D=\sum_{i=1}^n m_i t_i \quad (\text{$m_i>0$, $t_i\neq t_j$ for $i\neq j$})
\]
be an effective divisor on $C$.
Take a generator $z_i$ of the maximal ideal of
${\mathcal O}_{C,t_i}$.
Let $E$ be a vector bundle of rank $r$ on $C$ and
$\nabla:E\rightarrow E\otimes\Omega^1_C(D)$
be a connection.
Take a positive integer $N_i$ with $N_i\geq m_i$ and put
$N_it_i:=\Spec\left({\mathcal O}_{C,t_i}/(z_i^{N_i})\right)$.
Then $\nabla$ induces a morphism
\[
 \nabla|_{N_it_i}:E\otimes{\mathcal O}_{C,t_i}/(z_i^{N_i})
 \longrightarrow E\otimes\Omega^1_C(D)
 \otimes{\mathcal O}_{C,t_i}/(z_i^{N_i}).
\]
Put
\begin{equation}\label{eq:exponent}
 N^{(n)}_r(d,D):=
 \left\{ \bnu=(\nu^{(i)}_j)_{1\leq i\leq n}^{0\leq j\leq r-1}
 \left|
 \begin{array}{l}
 \nu^{(i)}_j\in\sum_{k=-m_i}^{-1}\mathbf{C}z_i^kdz_i,
 \text{and} \\
 d+\sum_{1\leq i\leq n}\sum_{0\leq j\leq r-1}\res_{t_i}(\nu^{(i)}_j)=0
 \end{array}
 \right\}\right..
\end{equation}

\begin{Definition} \label{def-connection}\rm
 Take $\bnu \in N^{(n)}_r(d,D)$.
 We say $(E,\nabla,\{l^{(i)}_j\})$ an unramified irregular singular $\bnu$-parabolic connection
 of parabolic depth $(N_i)_{i=1}^n$ on $C$ if
 \begin{itemize}
 \item[(1)] $E$ is a rank $r$ vector bundle of degree $d$ on $C$,
 \item[(2)] $\nabla:E\rightarrow E\otimes\Omega_C(D)$ is a connection and
 \item[(3)] $E|_{N_it_i}=l^{(i)}_0\supset l^{(i)}_1\supset\cdots\supset
 l^{(i)}_{r-1}\supset l^{(i)}_r=0$ is a filtration by
 free ${\mathcal O}_{N_it_i}$-modules such that
 $l^{(i)}_j/l^{(i)}_{j+1}\cong{\mathcal O}_{N_it_i}$ for any $i,j$,
 $\nabla|_{N_it_i}(l^{(i)}_j)\subset l^{(i)}_j\otimes\Omega^1_C(D)$
 for any $i,j$ and for the induced morphism
 $\overline{\nabla}^{(i)}_j:l^{(i)}_j/l^{(i)}_{j+1}\rightarrow
 l^{(i)}_j/l^{(i)}_{j+1}\otimes\Omega^1_C(D)$,
 $\im(\overline{\nabla}^{(i)}_j-\nu^{(i)}_j\mathrm{id}_{l^{(i)}_j/l^{(i)}_{j+1}})$
 is contained in the image of
 $(l^{(i)}_j/l^{(i)}_{j+1})\otimes\Omega^1_C\rightarrow
 (l^{(i)}_j/l^{(i)}_{j+1})\otimes\Omega^1_C(D)$.
 \end{itemize}
\end{Definition}

We fix a sequence of rational numbers
$\balpha=(\alpha^{(i)}_j)^{1\leq i\leq n}_{1\leq j\leq r}$ such that
$0<\alpha^{(i)}_1<\alpha^{(i)}_2<\cdots<\alpha^{(i)}_r<1$
for any $i$ and $\alpha^{(i)}_j\neq \alpha^{(i')}_{j'}$ for
$(i,j)\neq (i',j')$.

\begin{Definition}\label{def-stability}\rm
 A $\bnu$-parabolic connection $(E,\nabla,\{l^{(i)}_j\})$ is said to be 
 $\balpha$-stable (resp.\ $\balpha$-semistable) if
 for any subbundle $0\neq F\subsetneq E$ with
 $\nabla(F)\subset F\otimes\Omega^1_C(D)$, the inequality
 \begin{gather*}
 \genfrac{}{}{}{}{\deg F+\sum_{i=1}^n\sum_{j=1}^r
 \alpha^{(i)}_j\length\left(\left(F|_{N_it_i}\cap l^{(i)}_{j-1}\right)/
 \left(F|_{N_it_i}\cap l^{(i)}_j\right)\right)}{\rank F} \\
 \genfrac{}{}{0pt}{}{<}{(\text{resp.\ $\leq$})}
 \genfrac{}{}{}{}{\deg E+\sum_{i=1}^n\sum_{j=1}^r
 \alpha^{(i)}_j\length(l^{(i)}_{j-1}/l^{(i)}_j)}
 {\rank E}
 \end{gather*}
 holds.
\end{Definition}

\begin{Remark}\rm
O.~Biquard and P.~Boalch consider in [\cite{BB}, section 8]
a stability condition for a meromorphic connection with the assumption
that the restriction of the connection to each singular point
is equivalent to diagonal one.
For a parabolic weight $\balpha=(\alpha^{(i)}_j)$ with
$0<\alpha^{(i)}_j<1/N_i$, the $\balpha$-stability in our definition
for a parabolic connection $(E,\nabla,\{l^{(i)}_j\})$ is equivalent to
the $(\alpha^{(i)}_jN_i)$-stability in \cite{BB}
for $(E,\nabla)$ under the Main assumption in \cite{BB}.
\end{Remark}

\begin{Remark}\rm
Take a parabolic connection $(E,\nabla,\{l^{(i)}_j\})$
with parabolic depth $(m_i)$.
Fix $l^{(i')}_{j'}$ and put
$E':=\ker(E\rightarrow E|_{m_{i'}t_{i'}}/l^{(i')}_{j'})$.
Then $\nabla$ induces a connection
$\nabla':E'\rightarrow E'\otimes\Omega^1_C(D)$.
We define a parabolic structure $\{(l')^{(i)}_j\}$ on $E'$ by
$(l')^{(i)}_j:=l^{(i)}_j$ for $i\neq i'$,
$(l')^{(i')}_j:=\ker(E'|_{m_{i'}t_{i'}}\rightarrow E|_{m_{i'}t_{i'}}/l^{(i')}_{j+j'})$
for $0\leq j\leq r-j'$ and
$(l')^{(i')}_j:=\mathrm{im}(l^{(i')}_{j-r+j'}\otimes{\mathcal O}_C(-m_{i'}t_{i'})
\hookrightarrow E|_{m_{i'}t_{i'}}\otimes{\mathcal O}_C(-m_{i'}t_{i'})
\rightarrow E'|_{m_{i'}t_{i'}})$
for $r-j'\leq j\leq r$.
Then we obtain a new parabolic connection $(E',\nabla',\{(l')^{(i)}_j\})$.
We call this the elementary transform of $(E,\nabla,\{l^{(i)}_j\})$
along $l^{(i')}_{j'}$.
We put $(\alpha')^{(i)}_j:=\alpha^{(i)}_j$ for $i\neq i'$,
$(\alpha')^{(i')}_j:=\alpha^{(i')}_{j+j'}$ for $1\leq j\leq r-j'$ and
$(\alpha')^{(i')}_j:=\alpha^{(i')}_{j-r+j'}+1$ for $r-j'+1\leq j\leq r$.
Then $(E,\nabla,\{l^{(i)}_j\})$ is $\balpha$-stable if and only if
$(E',\nabla',\{(l')^{(i)}_j\})$ satisfies the following stability condition:
for any subbundle $F'\subset E'$ with
$\nabla'(F')\subset F'\otimes\Omega^1_C(D)$,
\begin{gather*}
 \genfrac{}{}{}{}{\deg F'+\sum_{i=1}^n\sum_{j=1}^r
 (\alpha')^{(i)}_j\length((F'|_{m_it_i}\cap(l')^{(i)}_{j-1})/(F'|_{m_it_i}\cap(l')^{(i)}_j))}{\rank F'} \\
 < \genfrac{}{}{}{}{\deg E'+\sum_{i=1}^n\sum_{j=1}^r(\alpha')^{(i)}_j\length((l')^{(i)}_{j-1}/(l')^{(i)}_j)}
 {\rank E'}
\end{gather*}
holds.
So we can consider a stability of a parabolic connection with respect to a
weight $\balpha=(\alpha^{(i)}_j)$ without the condition
$0<\alpha^{(i)}_j<1$.
\end{Remark}


Let $S$ be an algebraic scheme over $\mathbf{C}$ and
${\mathcal C}$ be a projective flat scheme over $S$, such that
each geometric fiber ${\mathcal C}_s$ of ${\mathcal C}$ over $S$
is a smooth irreducible curve of genus $g$.
Let $\tilde{t}_1,\ldots,\tilde{t}_n\subset {\mathcal C}$ be closed
subschemes such that the composite
$\tilde{t}_i\hookrightarrow{\mathcal C}\rightarrow S$
is an isomorphism for any $i$ and that
$\tilde{t}_i\cap\tilde{t}_j=\emptyset$ for any $i\neq j$.
We put $D:=\sum_{i=1}^n m_i\tilde{t}_i$.
Then $D$ is an effective Cartier divisor on ${\mathcal C}$
flat over $S$.
Let  $\cN^{(n)}_r(d,D)$ be the scheme over $S$
such that for any $T\rightarrow S$,
\begin{equation}\label{eq:scheme-exp}
 \cN^{(n)}_r(d,D)(T)=
 \left\{ \bnu=(\nu^{(i)}_j)\left|
 \begin{array}{l}
 \nu^{(i)}_j\in
 H^0\left(T,\Omega^1_{\mathcal C}(m_i\tilde{t}_i)_T/(\Omega^1_{\mathcal C})_T\right) \\
 d+\sum_{i,j}\res_{\tilde{t_i}}(\nu^{(i)}_j)=0
 \end{array}
 \right\}\right.
\end{equation}

\begin{Theorem}\label{thm-moduli-exists}
 There exists a relative coarse moduli scheme 
 $M^{\balpha}_{D/{\mathcal C}/S}(r,d,(N_i))\stackrel{\pi}\longrightarrow
 \cN^{(n)}_r(d,D)$
 of $\balpha$-stable unramified irregular singular $\bnu$-parabolic connections
 ($\bnu$ moves around in $\cN^{(n)}_r(d,D)$)
 on ${\mathcal C}$ over $S$
 of parabolic depth $(N_i)_{i=1}^n$.
 Moreover $M^{\balpha}_{D/{\mathcal C}/S}(r,d,(N_i))$
 is quasi-projective over $\cN^{(n)}_r(d,D)$.
\end{Theorem}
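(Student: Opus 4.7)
The plan is to construct $M^{\balpha}_{D/{\mathcal C}/S}(r,d,(N_i))$ by the standard GIT quotient method, adapted from the author's earlier papers on regular singular parabolic connections (\cite{IIS-1}, \cite{Inaba-1}) and from Simpson-Maruyama theory, with the additional book-keeping needed for the higher-order jet data at the irregular points.

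First I would establish boundedness. For an $\balpha$-semistable $\bnu$-parabolic connection $(E,\nabla,\{l^{(i)}_j\})$, any $\nabla$-invariant subbundle $F\subset E$ satisfies a slope estimate from Definition \ref{def-stability}, and since the parabolic weights $\alpha^{(i)}_j$ are bounded in $[0,1]$, this bounds the slopes of all $\nabla$-invariant subbundles, hence bounds $\deg F$ up to a constant depending only on the discrete invariants. Applying the standard argument of Maruyama, the family of underlying vector bundles $E$ is bounded. So one can choose an integer $N\gg 0$ (depending only on $r,d,D$) and an $S$-ample line bundle $\cO_{\mathcal C}(1)$ such that for every semistable $(E,\nabla,\{l^{(i)}_j\})$ and every geometric point $s\in S$, one has $H^1(E(N))=0$ and $E(N)$ is globally generated, with $\chi:=h^0(E(N))$ independent of the member.

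Next I would set up the parameter space. Let $V$ be a vector space of dimension $\chi$ and form Grothendieck's Quot scheme $\Quot^{P}_{V\otimes\cO_{\mathcal C}(-N)/{\mathcal C}/S}$ of quotients with the appropriate Hilbert polynomial $P$. Let $R\subset\Quot$ be the open subscheme where the quotient $V\otimes\cO_{\mathcal C}(-N)\to E$ is locally free and induces an isomorphism $V\cong H^0(E(N))$. Over $R$ I then build successively: (i) the relative affine bundle parametrizing connections $\nabla:E\to E\otimes\Omega^1_{\mathcal C/S}(D)$ (a torsor under $\Hom(E,E\otimes\Omega^1_{\mathcal C/S}(D))$, realized as a closed subscheme of a vector bundle cut out by the Leibniz condition); (ii) the relative flag scheme parametrizing filtrations $E|_{N_i\tilde t_i}=l^{(i)}_0\supset\cdots\supset l^{(i)}_r=0$ by free $\cO_{N_i\tilde t_i}$-submodules with $l^{(i)}_j/l^{(i)}_{j+1}\cong\cO_{N_i\tilde t_i}$, which is a locally closed subscheme of a product of Grassmannians of jets; (iii) the universal scheme $\cN^{(n)}_r(d,D)$ from (\ref{eq:scheme-exp}). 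On the product, the conditions $\nabla|_{N_i\tilde t_i}(l^{(i)}_j)\subset l^{(i)}_j\otimes\Omega^1_{\mathcal C}(D)$ and $\overline{\nabla^{(i)}_j}\equiv \nu^{(i)}_j\bmod \Omega^1_{\mathcal C}$ of Definition \ref{def-connection} cut out a locally closed subscheme $\cR\subset R\times_S(\text{flags})\times_S\cN^{(n)}_r(d,D)$. The natural $GL(V)$-action on $R$ lifts to $\cR$, and the center acts trivially.

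Finally I would run GIT. Following Simpson's construction for moduli of sheaves with additional structure (as in \cite{Inaba-1}), one chooses a linearization on $\cR$ over $\cN^{(n)}_r(d,D)$ depending on the parabolic weights $\balpha$ and on $N$, and proves by the numerical criterion that the GIT (semi)stable locus coincides exactly with the open subscheme $\cR^{\balpha\text{-ss}}\subset\cR$ parametrizing $\balpha$-(semi)stable objects; the key computation matches the slope inequality of Definition \ref{def-stability} with the Hilbert-Mumford weight, and for this one must verify that the parabolic contribution $\sum_{i,j}\alpha^{(i)}_j\length\!\bigl(\cdot\bigr)$ (evaluated on the restriction to the full jet $N_it_i$, not only to $t_i$) appears correctly in the $GL(V)$-weight on the fiber of the linearization; this is the main technical step and the principal obstacle, since the higher-order parabolic filtration at $N_i\tilde t_i$ contributes through a Grassmannian of jets rather than a single Grassmannian. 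One then forms the good quotient $M^{\balpha}_{D/{\mathcal C}/S}(r,d,(N_i)):=\cR^{\balpha\text{-ss}}/\!\!/PGL(V)$, which is quasi-projective over $\cN^{(n)}_r(d,D)$ by Mumford's theorem, and the corepresentability argument of \cite{IIS-1,Inaba-1} shows it is the coarse moduli scheme for the functor of $\balpha$-stable $\bnu$-parabolic connections. The projection $\pi$ to $\cN^{(n)}_r(d,D)$ is induced from the third factor.
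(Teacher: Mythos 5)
Your route (boundedness, Quot scheme, schemes of connections and jet-flags over it, then GIT with a suitable linearization) is a genuinely different one from the paper's, and as written it has a real gap at exactly the step you yourself flag as ``the main technical step.'' The problem is not the jet-level book-keeping of the filtration at $N_i\tilde t_i$ but the identification of GIT stability with $\balpha$-stability: the Hilbert--Mumford criterion on your parameter space tests arbitrary one-parameter subgroups of $GL(V)$, hence arbitrary filtrations of $V$ and arbitrary subsheaves of $E$, whereas Definition \ref{def-stability} imposes the slope inequality only for $\nabla$-invariant subbundles. A $\bnu$-parabolic connection can be $\balpha$-stable while its underlying parabolic bundle is arbitrarily unstable, so no linearization built only from the Quot and flag data can make the two notions coincide; the connection datum must enter the linearization with a dominant weight, and controlling the resulting Hilbert--Mumford weights is exactly what Simpson's $\Lambda$-module formalism, or the parabolic $\Lambda^1_{D'}$-triple formulation of \cite{IIS-1} with the auxiliary parameters $\beta_1,\beta_2$ and $\gamma\gg 0$, is designed to do. Your sketch introduces no such device, so the assertion that ``the GIT (semi)stable locus coincides exactly with the $\balpha$-(semi)stable locus'' is not something the numerical criterion delivers for the naive linearization you describe; for that linearization it is in general false.

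The paper sidesteps a fresh GIT construction entirely: it chooses $\beta_1,\beta_2,\gamma\gg0$ and weights $\tilde\alpha^{(i)}_j$ with $(\beta_1+\beta_2)\alpha^{(i)}_j=\beta_1\tilde\alpha^{(i)}_j$, encodes the depth-$(N_i)$ parabolic data in a filtration $F_*(E)$, and sends $(E,\nabla,\{l^{(i)}_j\})\mapsto(E,E,\mathrm{id}_E,\nabla,F_*(E))$ into the moduli functor of $(\balpha',\bbeta,\gamma)$-stable parabolic $\Lambda^1_{D'}$-triples with $D'=\sum_i N_i\tilde t_i$, whose coarse moduli scheme already exists by [\cite{IIS-1}, Theorem 5.1]; it then checks that this morphism of functors is representable by an immersion, so that a locally closed subscheme of the triple moduli space is the desired coarse moduli scheme, and quasi-projectivity over $\cN^{(n)}_r(d,D)$ is inherited. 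To complete your proposal you must either import that triple (or $\Lambda$-module) machinery --- at which point your argument essentially becomes the paper's --- or actually carry out the matching of Hilbert--Mumford weights with the connection datum weighted into the linearization, which is a substantial computation that your sketch only names rather than performs.
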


\begin{proof}
Fix a weight $\balpha$ which determines the stability of
irregular singular parabolic connections.
We take positive integers $\beta_1,\beta_2,\gamma$
and rational numbers
$0<\tilde{\alpha}^{(i)}_1<\tilde{\alpha}^{(i)}_2<\cdots<\tilde{\alpha}^{(i)}_r<1$
satisfying $(\beta_1+\beta_2)\alpha^{(i)}_j=\beta_1\tilde{\alpha}^{(i)}_j$
for any $i,j$.
We assume that $\gamma\gg 0$.
We can take an increasing sequence
$0<\alpha'_1<\alpha'_2<\cdots<\alpha'_{nr}<1$
such that
$\{\alpha'_p|p=1,\ldots,nr\}=
\left\{\tilde{\alpha}^{(i)}_j| 1\leq i\leq n, 1\leq j\leq r\right\}$.

Take any member
$(E,\nabla,\{l^{(i)}_j\})\in{\mathcal M}^{\balpha}_{D/\cC/S}(r,d,(N_i))(T)$,
where ${\mathcal M}^{\balpha}_{D/\cC/S}(r,d,(N_i))$
is the moduli functor of $\balpha$-stable unramified irregular singular parabolic connections
of parabolic depth $(N_i)$.
We define subsheaves $F_p(E)\subset E$ inductively as follows:
First we put $F_1(E):=E$.
Inductively we define
$F_{p+1}(E):=\ker\left(F_p(E)\rightarrow (E|_{N_i(\tilde{t}_i)_T})/l^{(i)}_j\right)$,
where $(i,j)$ is determined by $\alpha'_p=\alpha^{(i)}_j$.
We also put $d_p:=\length((E/F_{p+1}(E))\otimes k(x))$
for $p=1,\ldots,rn$ and $x\in T$.
Then
$(E,\nabla,\{l^{(i)}_j\})\mapsto
(E,E,\mathrm{id}_E,\nabla,F_*(E))$
determines a morphism
\[
 \iota:{\mathcal M}^{\balpha}_{D/\cC/S}(r,d,(N_i))\longrightarrow
 \overline{{\mathcal M}^{D',\balpha',\bbeta,\gamma}
 _{\cC\times_S\cN^{(n)}_r(d,D)/\cN^{(n)}_r(d,D)}}(r,d,\{d_i\}_{1\leq i\leq rn}),
\]
where
$\overline{{\mathcal M}^{D',\balpha',\bbeta,\gamma}
_{\cC\times_S\cN^{(n)}_r(d,D)/\cN^{(n)}_r(d,D)}}(r,d,\{d_i\}_{1\leq i\leq rn})$
is the moduli functor of $(\balpha',\bbeta,\gamma)$-stable
parabolic $\Lambda^1_{D'}$-triples whose coarse moduli scheme
$\overline{M^{D',\balpha',\bbeta,\gamma}
_{\cC\times_S\cN^{(n)}_r(d,D)/\cN^{(n)}_r(d,D)}}(r,d,\{d_i\}_{1\leq i\leq rn})$
exists by
[\cite{IIS-1}, Theorem 5.1].
Here we put $D':=\sum_{i=1}^nN_i\tilde{t}_i$.
We can check that $\iota$ is representable by an immersion.
So we can prove in the same way as [\cite{IIS-1}, Theorem 2.1]
that a certain locally closed subscheme
$M^{\balpha}_{D/\cC/S}(r,d,(N_i))$ of
$\overline{M^{D',\balpha',\bbeta,\gamma}
_{\cC\times_S\cN^{(n)}_r(d,D)/\cN^{(n)}_r(d,D)}}(r,d,\{d_i\}_{1\leq i\leq rn})$
is just the coarse moduli scheme of
${\mathcal M}^{\balpha}_{D/\cC/S}(r,d,(N_i))$.
By construction, we can see that
$M^{\balpha}_{D/\cC/S}(r,d,(N_i))$
represents the \'etale sheafification of
${\mathcal M}^{\balpha}_{D/\cC/S}(r,d,(N_i))$.
\end{proof}

There is also a coarse moduli scheme
$\tilde{M}^{\balpha}_{D/{\mathcal C}/S}(r,d,(N_i))$ of
$\bnu$-parabolic connections
$(E,\nabla,\{l^{(i)}_j\})$ of parabolic depth $(N_i)$ such that
$(E,\nabla,\{l^{(i)}_j\otimes\mathbf{C}[z_i]/(z_i^{m_i})\})$
is $\balpha$-stable.
Indeed we can construct
$\tilde{M}^{\balpha}_{D/{\mathcal C}/S}(r,d,(N_i))$
as a quasi-projective scheme over
$M^{\balpha}_{D/{\mathcal C}/S}(r,d,(m_i))$.

\begin{Theorem}\label{smoothness-thm}
 $M^{\balpha}_{D/{\mathcal C}/S}(r,d,(m_i))$ is smooth over
 $\cN^{(n)}_r(d,D)$ and
 \[
  \dim\left( M^{\balpha}_{D/{\mathcal C}/S}(r,d,(m_i))_{\bnu}\right)
  =2r^2(g-1)+\sum_{i=1}^nm_ir(r-1) +2
 \]
 for any $\bnu\in\cN^{(n)}_r(d,D)$
 if $M^{\balpha}_{D/\cC/S}(r,d,(m_i))_{\bnu}$ is not empty.
\end{Theorem}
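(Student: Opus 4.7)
The plan is the standard deformation-theoretic approach via a two-term complex of sheaves on $C$. Define $\cF^{\bullet}=[\cF^0\stackrel{d_{\cF}}{\lra}\cF^1]$ in degrees $0,1$, where $\cF^0\subset \mathcal{E}nd(E)$ is the subsheaf of local endomorphisms preserving the parabolic filtration at each $m_i\tilde{t}_i$, and $\cF^1\subset\mathcal{E}nd(E)\otimes\Omega^1_{\cC}(D)$ consists of those preserving the filtration such that the induced morphism on each graded piece $l^{(i)}_j/l^{(i)}_{j+1}$ has image in that of $(l^{(i)}_j/l^{(i)}_{j+1})\otimes\Omega^1_{\cC}$; the differential is $d_{\cF}(f)=\nabla\circ f-(f\otimes\mathrm{id})\circ\nabla$. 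A local computation shows that the natural map $(l^{(i)}_j/l^{(i)}_{j+1})\otimes\Omega^1_{\cC}\to(l^{(i)}_j/l^{(i)}_{j+1})\otimes\Omega^1_{\cC}(D)$ is in fact zero (since $z_i^{m_i}$ annihilates $l^{(i)}_j/l^{(i)}_{j+1}$), so the constraint defining $\cF^1$ amounts to the vanishing of the diagonal polar parts at the $\tilde{t}_i$, which is precisely the infinitesimal condition fixing $\bnu\in\cN^{(n)}_r(d,D)$. Standard arguments then identify the relative tangent space at $(E,\nabla,\{l^{(i)}_j\})$ with $\mathbb{H}^1(\cF^{\bullet})$ and the relative obstruction space with $\mathbb{H}^2(\cF^{\bullet})$.

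By $\balpha$-stability (as in \cite{Inaba-1}), $\mathbb{H}^0(\cF^{\bullet})=\mathbb{C}\cdot\mathrm{id}_E$: horizontal parabolic endomorphisms of a stable object are scalar. Since $C$ is a curve, $\mathbb{H}^2(\cF^{\bullet})=\coker(H^1(\cF^0)\to H^1(\cF^1))$, and Serre duality identifies its dual with the kernel of the induced map $H^0(\mathcal{H}om(\cF^1,\omega_C))\to H^0(\mathcal{H}om(\cF^0,\omega_C))$. A local analysis at each $\tilde{t}_i$ shows that this Serre-dual complex is quasi-isomorphic to a complex of the same shape associated to a dual parabolic filtration on $E^{\vee}$, so by $\balpha$-stability again $\dim\mathbb{H}^2(\cF^{\bullet})=1$. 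To obtain obstruction vanishing, I would use the trace morphism $\mathrm{tr}:\cF^{\bullet}\to(\cO_C\stackrel{d}{\lra}\Omega^1_C)$, which induces $\mathrm{tr}_{*}:\mathbb{H}^2(\cF^{\bullet})\to\mathbb{H}^2(\cO_C\to\Omega^1_C)\cong\mathbb{C}$. The induced rank-one deformation problem for $(\det E,\Tr\nabla)$ with fixed trace residues is unobstructed, so any genuine obstruction lies in $\ker\mathrm{tr}_{*}$; as $\mathrm{tr}_{*}$ is a nonzero morphism between one-dimensional spaces it is an isomorphism, and $\ker\mathrm{tr}_{*}=0$. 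Hence $M^{\balpha}_{D/\cC/S}(r,d,(m_i))$ is smooth over $\cN^{(n)}_r(d,D)$.

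For the dimension, direct local length computations give $\length((\mathcal{E}nd(E)/\cF^0)_{\tilde{t}_i})=m_ir(r-1)/2$ and $\length(((\mathcal{E}nd(E)\otimes\Omega^1_{\cC}(D))/\cF^1)_{\tilde{t}_i})=m_ir(r+1)/2$. Applying Riemann-Roch,
\[
\chi(\cF^{\bullet})=\chi(\cF^0)-\chi(\cF^1)=-\Bigl(2r^2(g-1)+r(r-1)\sum_{i=1}^n m_i\Bigr),
\]
whence
\[
\dim\mathbb{H}^1(\cF^{\bullet})=\dim\mathbb{H}^0(\cF^{\bullet})+\dim\mathbb{H}^2(\cF^{\bullet})-\chi(\cF^{\bullet})=2r^2(g-1)+\sum_{i=1}^n m_ir(r-1)+2,
\]
as claimed. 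The hard part will be the careful local analysis at the irregular singular points needed to identify the Serre-dual complex (given the subtle fact that the constraint defining $\cF^1$ collapses to a vanishing condition) and to confirm that $\mathbb{H}^0$ of the dual complex is genuinely one-dimensional, together with verifying the compatibility and nontriviality of the trace map that controls obstruction vanishing.
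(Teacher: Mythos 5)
Your proposal is correct and rests on the same deformation-theoretic skeleton as the paper's proof, but the two are organized differently. The paper (Propositions \ref{prop-smoothness-det} and \ref{prop-moduli-dimension}) splits off the determinant at the level of moduli spaces: it shows the morphism $\det$ to $M_{D/\cC/S}(1,d,(m_i))\times_{\cN^{(n)}_1(d,D)}\cN^{(n)}_r(d,D)$ is smooth by placing the obstruction in $\mathbf{H}^2(\cF_0^{\bullet})$ for the \emph{trace-free} complex, which vanishes outright by Serre duality plus $\balpha$-stability, and then uses that the rank-one moduli space is an affine-space bundle over $\Pic^d\times\cN^{(n)}_1(d,D)$; you instead keep the full complex $\cF^{\bullet}$, accept that $\dim\mathbf{H}^2(\cF^{\bullet})=1$, and kill the actual obstruction class by pushing it through $\mathrm{tr}_*$ into the unobstructed determinant problem. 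In characteristic zero these are two phrasings of the same mechanism, since $\cF^{\bullet}\cong\cF_0^{\bullet}\oplus[\cO_C\to\Omega^1_C]$ via the trace splitting; what the paper's packaging buys is that it never needs to know $\mathrm{tr}_*\neq 0$, which you assert but do not prove. That assertion is true and easy: the scalar inclusion $[\cO_C\to\Omega^1_C]\hookrightarrow\cF^{\bullet}$ is a map of complexes (your observation that $\Omega^1_C\cdot\mathrm{id}$ dies on $m_it_i$, hence lands in $\cF^1$, is exactly right and is also why the diagonal condition in the paper's $\cF^1$ collapses to $b(l^{(i)}_j)\subset l^{(i)}_{j+1}\otimes\Omega^1_{\cC/S}(D)$), and $\mathrm{tr}_*\circ\iota_*$ is multiplication by $r$ on $\mathbf{H}^2(\cO_C\to\Omega^1_C)\cong\mathbf{C}$, so $\mathrm{tr}_*$ is surjective and hence an isomorphism of one-dimensional spaces; alternatively its Serre dual is the nonzero inclusion of scalars into $\mathbf{H}^0$ of the self-dual complex. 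Your dimension count (lengths $m_ir(r-1)/2$ and $m_ir(r+1)/2$, then Riemann--Roch with $\dim\mathbf{H}^0=\dim\mathbf{H}^2=1$) is equivalent to the paper's, which uses the self-duality $\cF^1\cong(\cF^0)^{\vee}\otimes\Omega^1_{\cC_x}$ to write $\dim\mathbf{H}^1(\cF^{\bullet})=2-2\chi(\cF^0)$ and computes $\chi(\cF^0)$ by filtering ${\mathcal End}(E)$; both yield $2r^2(g-1)+r(r-1)\sum_{i=1}^nm_i+2$.
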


We will prove Theorem \ref{smoothness-thm}
in several steps.

We can canonically define a morphism
\[
 \det:M^{\balpha}_{D/{\mathcal C}/S}(r,d,(m_i))
 \longrightarrow
 M_{D/{\mathcal C}/S}(1,d,(m_i))\times_{\cN^{(n)}_1(d,D)}\cN^{(n)}_r(d,D)
\]
by
\[
 \det(E,\nabla,\{l^{(i)}_j\}):=(\det(E),\det(\nabla),\pi(E,\nabla,\{l^{(i)}_j\})).
\]
Here 
$M_{D/\cC/S}(1,d,(m_i))$ is the moduli space of
pairs $(L,\nabla^L)$ of a line bundle $L$ on
$\cC_s$ and
a connection
$\nabla^L:L\rightarrow L\otimes\Omega^1_{\cC_s}(D_s)$.
Note that we put
\[
 \det(\nabla):=(\nabla\wedge\mathrm{id}\wedge\cdots\wedge\mathrm{id})
 +(\mathrm{id}\wedge\nabla\wedge\cdots\wedge\mathrm{id})
 +\cdots+(\mathrm{id}\wedge\cdots\wedge\mathrm{id}\wedge\nabla)
\]
and the morphism $\Tr:\cN^{(n)}_r(d,D)\rightarrow\cN^{(n)}_1(d,D)$
is given by $\Tr((\nu^{(i)}_j))=\left(\sum_{j=0}^{r-1}\nu^{(i)}_j\right)_{i=1}^n$.

\begin{Proposition}\label{prop-smoothness-det}
 The morphism
 \[
  \det:M^{\balpha}_{D/{\mathcal C}/S}(r,d,(m_i))
 \longrightarrow
 M_{D/{\mathcal C}/S}(1,d,(m_i))\times_{\cN^{(n)}_1(d,D)}\cN^{(n)}_r(d,D)
 \]
 defined above is smooth.
\end{Proposition}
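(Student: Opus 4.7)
The plan is to verify smoothness of $\det$ via the infinitesimal lifting criterion. Given a small extension $A \twoheadrightarrow A_0$ of Artinian local $\mathbf{C}$-algebras with square-zero kernel, an $A_0$-valued point $(E_0,\nabla_0,\{l^{(i)}_j\})$ of the source, and a lift $(L_A,\nabla^L_A,\bnu_A)$ of its determinant to an $A$-valued point of the target, I must produce an $A$-valued lift of the parabolic connection realizing the prescribed determinant. The tangent-obstruction theory of $M^{\balpha}_{D/\cC/S}(r,d,(m_i))$ is governed by the hypercohomology of a two-term complex
\[
\cF^\bullet : \cF^0 \xrightarrow{[\nabla_0,\,\cdot\,]} \cF^1,
\]
where $\cF^0 \subset \mathcal{E}nd(E_0)$ is the subsheaf of endomorphisms preserving each filtration $\{l^{(i)}_j\}$, and $\cF^1 \subset \mathcal{E}nd(E_0)\otimes \Omega^1_C(D)$ is the subsheaf satisfying the analogous irregular-singular compatibility at every $t_i$. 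First-order deformations form $\mathbb{H}^1(\cF^\bullet)$ and obstructions live in $\mathbb{H}^2(\cF^\bullet)$.

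The morphism $\det$ corresponds to the trace of complexes $\Tr:\cF^\bullet\to\cG^\bullet$, with $\cG^\bullet$ the rank-one analogue governing $(L,\nabla^L)$, and the relative obstruction for $\det$-lifts lies in $\mathbb{H}^2(\cF^\bullet_0)$ where $\cF^\bullet_0 = \ker\Tr$ is the traceless subcomplex. Thus smoothness reduces to the vanishing $\mathbb{H}^2(\cF^\bullet_0)=0$. I would establish Serre self-duality of $\cF^\bullet_0$: using the trace pairing $(A,B)\mapsto\Tr(AB)$, one constructs a quasi-isomorphism $(\cF^\bullet_0)^\vee\otimes\omega_C[1]\simeq\cF^\bullet_0$, which together with Serre duality on $C$ gives $\mathbb{H}^2(\cF^\bullet_0)^\vee\cong\mathbb{H}^0(\cF^\bullet_0)$. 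Finally, $\mathbb{H}^0(\cF^\bullet_0)$ consists of traceless $\nabla_0$-horizontal endomorphisms $s$ of $E_0$ preserving every $l^{(i)}_j$; any generalized eigenspace of such an $s$ is a $\nabla_0$-stable parabolic subbundle, so $\balpha$-stability forces $s$ to be a scalar, and the traceless condition then yields $s=0$, giving the desired vanishing.

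The main obstacle will be the local verification of self-duality at the irregular singular points, since the parabolic condition is not a pure flag condition: at each $t_i$ the induced map $\overline{\nabla^{(i)}_j}$ on $l^{(i)}_j/l^{(i)}_{j+1}$ differs from $\nu^{(i)}_j\,\mathrm{id}$ by a section in the regular part $\Omega^1_C$ rather than in $\Omega^1_C(D)$, and this nilpotent-modulo-scalar condition has to match the dual constraint on $\cF^1_0$ through the local trace pairing. One expects this to follow from a direct local computation at each $t_i$, analogous to but more delicate than the regular-singular calculation in \cite{Inaba-1}. Granting it, the global Serre duality and the stability argument above furnish the vanishing $\mathbb{H}^2(\cF^\bullet_0)=0$ and hence the smoothness of $\det$; as a by-product the relative tangent space to $\det$ has constant dimension, preparing the ground for Theorem \ref{smoothness-thm}.
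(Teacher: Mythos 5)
Your proposal follows essentially the same route as the paper's proof: reduce to the infinitesimal lifting criterion, place the relative obstruction for determinant-preserving lifts in $\mathbf{H}^2$ of the traceless subcomplex $\cF^{\bullet}_0$, dualize (via the trace pairing and Serre duality) to identify it with $\mathbf{H}^0(\cF^{\bullet}_0)^{\vee}$, and kill the latter by $\balpha$-stability plus the traceless condition. The only point you flag as delicate is in fact benign: at parabolic depth $(m_i)$ the image of $\Omega^1_{\cC/S}\to\Omega^1_{\cC/S}(D)$ restricted to $m_it_i$ vanishes, so the condition defining $\cF^1_0$ is exactly the strict shift $b|_{m_it_i}(l^{(i)}_j)\subset l^{(i)}_{j+1}\otimes\Omega^1_{\cC/S}(D)$, which pairs perfectly against flag preservation to give the term-wise isomorphisms $(\cF^0_0)^{\vee}\otimes\Omega^1\cong\cF^1_0$ and $(\cF^1_0)^{\vee}\otimes\Omega^1\cong\cF^0_0$ used in the paper.
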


\begin{proof}
We can see by an easy argument that
it is sufficient to show that
the morphism of moduli functors
\[
 \det: {\mathcal M}^{\balpha}_{D/{\mathcal C}/S}(r,d,(m_i))
 \longrightarrow
 M_{D/{\mathcal C}/S}(1,d,(m_i))\times_{\cN^{(n)}_1(d,D)}
 \cN^{(n)}_r(d,D)
\]
is formally smooth.
Let $A$ be an artinian local ring with maximal ideal $m$
and residue field $k=A/m$.
Take an ideal $I$ of $A$ such that $mI=0$.
Let 
\[
\begin{CD}
 \Spec(A/I) @>f>> {\mathcal M}^{\balpha}_{D/{\mathcal C}/S}(r,d,(m_i)) \\
 @VVV @VV\det V \\
 \Spec(A) @>g>> 
 M_{D/{\mathcal C}/S}(1,d,(m_i))\times_{\cN^{(n)}_1(d,D)}\cN^{(n)}_r(d,D)
\end{CD}
\]
be a commutative diagram.
$g$ corresponds to a line bundle $L$ on ${\mathcal C}_A$ with a connection
$\nabla^L:L\rightarrow L\otimes\Omega^1_{{\mathcal C}_A/A}(D_A)$
and $\bnu=(\nu^{(i)}_j)\in\cN^{(n)}_r(d,D)(A)$
such that
$\nabla^L|_{m_i(\tilde{t}_i)_A}(a)=\left(\sum_{j=0}^{r-1}\nu^{(i)}_j\right)a$
for any $a\in L|_{m_i(\tilde{t}_i)_A}$ and $i=1,\ldots,n$.
$f$ corresponds to an element
$(E,\nabla,\{l^{(i)}_j\})\in
{\mathcal M}^{\balpha}_{D/{\mathcal C}/S}(r,d,(m_i))(A/I)$.
Put
$(\overline{E},\overline{\nabla},\{\overline{l}^{(i)}_j\}):=
(E,\nabla,\{l^{(i)}_j\})\otimes A/m$.
We set
\begin{align*}
 &{\mathcal F}_0^0:=\left\{a \in{\mathcal End}(\overline{E})\left|
 \text{$\Tr(a)=0$ and
 $a|_{m_i(\tilde{t}_i)_k}(\overline{l}^{(i)}_j)\subset\overline{l}^{(i)}_j$
 for any $i,j$}
 \right\}\right. \\
 &{\mathcal F}_0^1:=\left\{ b\in{\mathcal End}(\overline{E})
 \otimes\Omega^1_{{\mathcal C}/S}(D)
 \left| \text{$\Tr(b)=0$ and
 $b|_{m_i(\tilde{t}_i)_k}(\overline{l}^{(i)}_j)\subset
 \overline{l}^{(i)}_{j+1}\otimes\Omega^1_{\cC/S}(D)$ for any $i,j$}
 \right\}\right.  \\
 &\nabla_{{\mathcal F}_0^{\bullet}}:{\mathcal F}_0^0\ni a\mapsto
 \overline{\nabla}a-a\overline{\nabla}\in{\mathcal F}_0^1.
\end{align*}
Let $\cC_A=\bigcup_{\alpha}U_{\alpha}$ be an affine open
covering such that
$E|_{U_{\alpha}\otimes A/I}\cong
{\mathcal O}_{U_{\alpha}\otimes A/I}^{\oplus r}$,
$\sharp\{(\tilde{t}_i)_A|(\tilde{t}_i)_A\in U_{\alpha}\}\leq 1$ for any $\alpha$
and $\sharp\{\alpha|(\tilde{t}_i)_A\in U_{\alpha}\}=1$
for any $(\tilde{t}_i)_A$.
Take a free ${\mathcal O}_{U_{\alpha}}$-module
$E_{\alpha}$ with isomorphisms
$\varphi_{\alpha}:\det(E_{\alpha})\stackrel{\sim}\rightarrow
L|_{U_{\alpha}}$ and
$\phi_{\alpha}:E_{\alpha}\otimes A/I\stackrel{\sim}\rightarrow
E|_{U_{\alpha}\otimes A/I}$
such that
\[
 \varphi_{\alpha}\otimes A/I=\det(\phi_{\alpha}):
 \det(E_{\alpha})\otimes A/I\stackrel{\sim}\longrightarrow
 \det(E)|_{U_{\alpha}\otimes A/I}
 =(L\otimes A/I)|_{U_{\alpha}\otimes A/I}.
\]
If $(\tilde{t}_i)_A\in U_{\alpha}$,
we may assume that parabolic structure
$\{l^{(i)}_j\}$ is given by
\[
 l^{(i)}_{r-j}=\langle e_1|_{m_i(\tilde{t}_i)_{A/I}},
 \ldots,e_j|_{m_i(\tilde{t}_i)_{A/I}} \rangle,
\]
where $e_1,\ldots,e_r$ is the standard basis of $E_{\alpha}$.
We define a parabolic structure $\{(l_{\alpha})^{(i)}_j\}$
on $E_{\alpha}$ by
\[
 (l_{\alpha})^{(i)}_{r-j}:=\langle e_1|_{m_i(\tilde{t}_i)_A},
 \ldots,e_j|_{m_i(\tilde{t}_i)_A} \rangle.
\]
The connection
$\phi_{\alpha}^{-1}\circ(\nabla|_{U_{\alpha}})\circ\phi_{\alpha}:
E_{\alpha}\otimes A/I\rightarrow
E_{\alpha}\otimes\Omega^1_{\cC/S}(D)\otimes A/I$
is given by a connection matrix
$B_{\alpha}\in
H^0((E_{\alpha})^{\vee}\otimes E_{\alpha}
\otimes\Omega^1_{\cC/S}(D)\otimes A/I)$.
Then we have
\[
 B_{\alpha}|_{(\tilde{t}_i)_{A/I}}=
 \begin{pmatrix}
  \nu^{(i)}_{r-1}\otimes A/I & * & \cdots & * \\
  0 & \nu^{(i)}_{r-2}\otimes A/I & \cdots & * \\
  \vdots & \vdots & \ddots & \vdots \\
  0 & 0 & \cdots & \nu^{(i)}_0\otimes A/I
 \end{pmatrix}.
\]
We can take a lift
$\tilde{B}_{\alpha}\in
H^0(E_{\alpha}^{\vee}\otimes E_{\alpha}\otimes\Omega^1_{\cC/S}(D))$
of $B_{\alpha}$ such that
\[
 \tilde{B}_{\alpha}|_{(\tilde{t}_i)_A}=
 \begin{pmatrix}
  \nu^{(i)}_{r-1} & * & \cdots & * \\
  0 & \nu^{(i)}_{r-2} & \cdots & * \\
  \vdots & \vdots & \ddots & \vdots \\
  0 & 0 & \cdots & \nu^{(i)}_0
 \end{pmatrix}.
\]
and that
$\Tr(\tilde{B}_{\alpha})(e_1\wedge\cdots\wedge e_r)
=(\varphi_{\alpha}\otimes\mathrm{id})^{-1}
(\nabla_L|_{U_{\alpha}}(\varphi_{\alpha}(e_1\wedge\cdots\wedge e_r)))$.
Consider the connection
$\nabla_{\alpha}:E_{\alpha}\rightarrow
E_{\alpha}\otimes\Omega^1_{\cC/S}(D)$
defined by
\[
 \nabla_{\alpha}
 \begin{pmatrix}
  f_1 \\
  \vdots \\
  f_r
 \end{pmatrix}
 =
 \begin{pmatrix}
  df_1 \\
  \vdots \\
  df_r
 \end{pmatrix}
 +
 \tilde{B}_{\alpha}
 \begin{pmatrix}
  f_1 \\
  \vdots \\
  f_r
 \end{pmatrix}.
\]
Then we obtain a local parabolic connection
$(E_{\alpha},\nabla_{\alpha},\{(l_{\alpha})^{(i)}_j\})$.
If $(\tilde{t}_i)_A\notin U_{\alpha}$ for any $i$,
we can easily define a local parabolic connection
$(E_{\alpha},\nabla_{\alpha},\{(l_{\alpha})^{(i)}_j\})$
(in this case the parabolic structure $\{(l_{\alpha})^{(i)}_j\}$
is nothing).

We put $U_{\alpha\beta}:=U_{\alpha}\cap U_{\beta}$ and
$U_{\alpha\beta\gamma}:=U_{\alpha}\cap U_{\beta}\cap U_{\gamma}$.
Take an isomorphism
\[
 \theta_{\beta\alpha}:E_{\alpha}|_{U_{\alpha\beta}}
 \stackrel{\sim}\longrightarrow E_{\beta}|_{U_{\alpha\beta}}
\]
such that
$\theta_{\beta\alpha}\otimes A/I=\phi_{\beta}^{-1}\circ\phi_{\alpha}$
and that
$\varphi_{\beta}\circ\det(\theta_{\beta\alpha})=\varphi_{\alpha}$.
We put
\[
 u_{\alpha\beta\gamma}:=\phi_{\alpha}\circ
 \left(\theta_{\gamma\alpha}^{-1}|_{U_{\alpha\beta\gamma}}
 \circ\theta_{\gamma\beta}|_{U_{\alpha\beta\gamma}}\circ
 \theta_{\beta\alpha}|_{U_{\alpha\beta\gamma}}
 -\mathrm{id}_{E_{\alpha}|_{U_{\alpha\beta\gamma}}}
 \right)\circ\phi_{\alpha}^{-1}
\]
and
\[
 v_{\alpha\beta}:=\phi_{\alpha}\circ\left(
 \nabla_{\alpha}|_{U_{\alpha\beta}}
 -\theta_{\beta\alpha}^{-1}\circ\nabla_{\beta}|_{U_{\alpha\beta}}
 \circ\theta_{\beta\alpha}\right)\circ\phi_{\alpha}^{-1}.
\]
Then we have
$\{u_{\alpha\beta\gamma}\}\in C^2(\{U_{\alpha}\},{\mathcal F}_0^0\otimes I)$
and
$\{v_{\alpha\beta}\}\in C^1(\{U_{\alpha}\},{\mathcal F}_0^1\otimes I)$.
We can easily see that
\[
 d\{u_{\alpha\beta\gamma}\}=0 \quad\text{and}\quad
 \nabla_{{\mathcal F}_0^{\bullet}}\{u_{\alpha\beta\gamma}\}=
 -d\{v_{\alpha\beta}\}.
\]
So we can define an element
\[
 \omega(E,\nabla,\{l^{(i)}_j\}):=
 [(\{u_{\alpha\beta\gamma}\},\{v_{\alpha\beta}\})]
 \in\mathbf{H}^2({\mathcal F}_0^{\bullet})\otimes_kI.
\]
Then we can check that $\omega(E,\nabla,\{l^{(i)}_j\})=0$
if and only if
$(E,\nabla,\{l^{(i)}_j\})$ can be lifted to an element
$(\tilde{E},\tilde{\nabla},\{\tilde{l}^{(i)}_j\})$ of
${\mathcal M}^{\balpha}_{D/{\mathcal C}/S}(r,d,(m_i))(A)$
such that
\[
 \det(\tilde{E},\tilde{\nabla},\{\tilde{l}^{(i)}_j\})=g.
\]
{F}rom the spectral sequence
$H^q({\mathcal F}_0^p)\Rightarrow
\mathbf{H}^{p+q}({\mathcal F}_0^{\bullet})$,
there is an isomorphism
\[
 \mathbf{H}^2({\mathcal F}_0^{\bullet})\cong
 \coker\left(H^1({\mathcal F}_0^0)
 \xrightarrow{H^1(\nabla_{{\mathcal F}_0^{\bullet}})}
 H^1({\mathcal F}_0^1)\right).
\]
Since
$({\mathcal F}_0^0)^{\vee}\otimes\Omega^1_{{\mathcal C}_k/k}
\cong{\mathcal F}_0^1$ and
$({\mathcal F}_0^1)^{\vee}\otimes\Omega^1_{{\mathcal C}_k/k}
\cong{\mathcal F}_0^0$,
we have
\begin{align*}
 \mathbf{H}^2({\mathcal F}_0^{\bullet})
 &\cong
 \coker\left(H^1({\mathcal F}_0^0)
 \xrightarrow{H^1(\nabla_{{\mathcal F}_0^{\bullet}})}
 H^1({\mathcal F}_0^1)\right) \\
 &\cong \ker\left(H^1({\mathcal F}_0^1)^{\vee}
 \xrightarrow{H^1(\nabla_{{\mathcal F}_0^{\bullet}})^{\vee}}
 H^1({\mathcal F}_0^0)^{\vee}\right)^{\vee} \\
 &\cong \ker\left(
 H^0(({\mathcal F}_0^1)^{\vee}\otimes\Omega^1_{{\mathcal C}_k/k})
 \xrightarrow{-H^0(\nabla_{({\mathcal F}_0^{\bullet})^{\vee}})}
 H^0(({\mathcal F}_0^0)^{\vee}\otimes\Omega^1_{{\mathcal C}_k/k})
 \right)^{\vee} \\
 &\cong\ker\left(
 H^0({\mathcal F}_0^0)\xrightarrow{-H^0(\nabla_{{\mathcal F}_0^{\bullet}})}
 H^0({\mathcal F}_0^1)
 \right)^{\vee}.
\end{align*}
Take any element
$a\in \ker\left(H^0({\mathcal F}_0^0)
\xrightarrow{-H^0(\nabla_{{\mathcal F}_0^{\bullet}})}
H^0({\mathcal F}_0^1)
\right)$.
Then we have
$a\in\End(\overline{E},\overline{\nabla},\{\overline{l}^{(i)}_j\})$.
Since $(\overline{E},\overline{\nabla},\{\overline{l}^{(i)}_j\})$ is
$\balpha$-stable, we have $a=c\cdot\mathrm{id}_{\overline{E}}$
for some $c\in\mathbf{C}$.
So we have $a=0$, because $\Tr(a)=0$.
Thus  we have
$\ker\left(H^0({\mathcal F}_0^0)
\xrightarrow{-H^0(\nabla_{{\mathcal F}_0^{\bullet}})}
H^0({\mathcal F}_0^1)\right)=0$ and so we have
$\mathbf{H}^2({\mathcal F}_0^{\bullet})=0$.
In particular, we have $\omega(E,\nabla,\{l^{(i)}_j\})=0$.
Thus $(E,\nabla,\{l^{(i)}_j\})$ can be lifted to a member
$(\tilde{E},\tilde{\nabla},\{\tilde{l}^{(i)}_j\})\in
{\mathcal M}^{\balpha}_{D/\cC/S}(r,d,(m_i))(A)$
such that
$(\tilde{E},\tilde{\nabla},\{\tilde{l}^{(i)}_j\})\otimes A/I
\cong(E,\nabla,\{l^{(i)}_j\})$
and
$\det(\tilde{E},\tilde{\nabla},\{\tilde{l}^{(i)}_j\})=g$.
Hence $\det$ is a smooth morphism.
\end{proof}

We can see that the moduli space
$M_{D/\cC/S}(1,d,(m_i))$
is an affine space bundle
over 
\par \noindent
$\Pic^d_{\cC/S}\times\cN^{(n)}_1(d,D)$
with fibers $H^0(\Omega^1_{\cC_s}) (s\in S)$.
So $M_{D/\cC/S}(1,d,(m_i))$ is smooth over
$\cN^{(n)}_1(d,D)$.
Combined with Proposition \ref{prop-smoothness-det},
we can see that
$M^{\balpha}_{D/\cC/S}(r,d,(m_i))$ is smooth over
$\cN^{(n)}_r(d,D)$.

\begin{Proposition}\label{prop-moduli-dimension}
 For any $\bnu\in\cN^{(n)}_r(d,D)$,
 the fiber
 $\pi^{-1}(\bnu)=M^{\balpha}_{D/\cC/S}(r,d,(m_i))_{\bnu}$
 is equidimensional of dimension
 $2r^2(g-1)+2+r(r-1)\sum_{i=1}^n m_i$
if it is not empty.
\end{Proposition}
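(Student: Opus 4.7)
The plan is to combine the smoothness of $\pi$ established in Theorem~\ref{smoothness-thm} with a deformation-theoretic computation of the Zariski tangent space at an arbitrary point of the fiber. Since $\pi$ is smooth, each scheme-theoretic fiber $\pi^{-1}(\bnu)$ is a smooth scheme, which is therefore equidimensional with local dimension at each point equal to the Zariski tangent space dimension there. It thus suffices to show that $\dim_{\mathbf{C}} T_x(\pi^{-1}(\bnu))$ equals $2r^2(g-1) + 2 + r(r-1)\sum_{i=1}^n m_i$ at every point $x = (E,\nabla,\{l^{(i)}_j\})$.

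At such a point, standard deformation theory for parabolic connections identifies $T_x(\pi^{-1}(\bnu))$ with $\bH^1(\cF^{\bullet})$ of the two-term complex $\cF^{\bullet} = [\cF^0 \xrightarrow{\nabla_{\cF^{\bullet}}} \cF^1]$ already considered in the proof of Proposition~\ref{prop-smoothness-det} but without the trace-zero restriction: $\cF^0 \subset {\mathcal E}nd(E)$ is the subsheaf of endomorphisms preserving the parabolic flags $\{l^{(i)}_j\}$, $\cF^1 \subset {\mathcal E}nd(E)\otimes\Omega^1_C(D)$ is the subsheaf of endomorphisms that strictly shift each flag by one step (the shift condition is exactly what forces the induced deformation of $\bnu$ to vanish), and the differential is $a \mapsto \nabla a - a\nabla$.

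I will then compute $h^1(\cF^{\bullet}) = h^0(\cF^{\bullet}) + h^2(\cF^{\bullet}) - \chi(\cF^{\bullet})$ via three inputs. First, $\balpha$-stability of $(E,\nabla,\{l^{(i)}_j\})$ forces $\bH^0(\cF^{\bullet}) = \End(E,\nabla,\{l^{(i)}_j\}) = \C\cdot\mathrm{id}_E$, so $h^0 = 1$. Second, the trace pairing together with residue matching at each $t_i$ yields natural isomorphisms $(\cF^0)^{\vee}\otimes\Omega^1_C \cong \cF^1$ and $(\cF^1)^{\vee}\otimes\Omega^1_C \cong \cF^0$; Serre duality on the curve $C$ then gives $\bH^2(\cF^{\bullet}) \cong \bH^0(\cF^{\bullet})^{\vee}$, whence $h^2 = 1$. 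Third, Riemann--Roch applied to the locally free sheaves $\cF^0$ and $\cF^1$, whose colengths inside ${\mathcal E}nd(E)$ and ${\mathcal E}nd(E)\otimes\Omega^1_C(D)$ at each $t_i$ are $\frac{r(r-1)}{2}m_i$ and $\frac{r(r+1)}{2}m_i$ respectively (a direct flag-adapted basis computation), yields $\chi(\cF^{\bullet}) = 2r^2(1-g) - r(r-1)\sum_{i=1}^n m_i$.

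Substituting gives $h^1(\cF^{\bullet}) = 2 - \chi(\cF^{\bullet}) = 2r^2(g-1) + 2 + r(r-1)\sum_{i=1}^n m_i$, as claimed. Since this expression depends only on the discrete invariants $(r,g,(m_i))$ and not on the chosen point $x$, every irreducible component of $\pi^{-1}(\bnu)$ has the same dimension, proving equidimensionality. The main technical obstacle is establishing the self-duality $(\cF^0)^{\vee}\otimes\Omega^1_C \cong \cF^1$ at each irregular singularity: one must verify that the natural trace pairing interacts correctly with the strict shift condition and the $D$-twist, which amounts to a residue calculation at $t_i$. Once this self-duality is in hand, both the identification $h^2 = 1$ and the local colength counts needed for $\chi(\cF^{\bullet})$ follow by routine bookkeeping.
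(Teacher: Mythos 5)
Your proof is correct and follows essentially the same route as the paper: both identify the tangent space of the fiber with $\mathbf{H}^1$ of the complex $\cF^0\to\cF^1$ of flag-preserving endomorphisms mapping to strictly flag-shifting ones, use $\balpha$-stability to get $h^0=1$, use the self-duality $(\cF^0)^{\vee}\otimes\Omega^1_C\cong\cF^1$ together with Serre duality to get $h^2=1$, and finish with an Euler-characteristic count. The only (immaterial) difference is bookkeeping: the paper folds everything into $\chi(\cF^0)$, computed via a filtration by the sheaves ${\mathcal Hom}(l^{(i)}_j,l^{(i)}_{j-1}/l^{(i)}_j)$, whereas you compute $\chi(\cF^0)-\chi(\cF^1)$ directly from the colengths $\tfrac{r(r-1)}{2}m_i$ and $\tfrac{r(r+1)}{2}m_i$ and Riemann--Roch, which gives the same number.
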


\begin{proof}
Since $M^{\balpha}_{D/\cC/S}(r,d,(m_i))_{\bnu}$
is smooth over $\mathbf{C}$ for any
$\bnu\in\cN^{(n)}_r(d,D)(\mathbf{C})$,
it is sufficient to show that
the dimension of the tangent space
$\Theta_{M^{\balpha}_{D/\cC/S}(r,d,(m_i))_{\bnu}}(x)$
of $M^{\balpha}_{D/\cC/S}(r,d,(m_i))_{\bnu}$ at any point
$x=(E,\nabla,\{l^{(i)}_j\})\in
M^{\balpha}_{D/\cC/S}(r,d,(m_i))_{\bnu}$
is equal to
\[
 2r^2(g-1)+2+r(r-1)\sum_{i=1}^n m_i.
\]

We define a complex ${\mathcal F}^{\bullet}$ on $\cC_x$ by
\begin{align*}
 &{\mathcal F}^0:=\left\{ a\in{\mathcal End}(E)\left|
 \text{$a|_{m_i(\tilde{t}_i)_x}(l^{(i)}_j)\subset l^{(i)}_j$ for any $i,j$}
 \right\}\right., \\
 &{\mathcal F}^1:=\left\{
 b\in{\mathcal End}(E)\otimes\Omega^1_{\cC/S}(D)\left|
 \text{$b|_{m_i(\tilde{t}_i)_x}(l^{(i)}_j)\subset
 l^{(i)}_{j+1}\otimes\Omega^1_{\cC/S}(D)$ for any $i,j$}
 \right\}\right., \\
 &\nabla_{{\mathcal F}^{\bullet}}:
 {\mathcal F}^0\ni a\mapsto \nabla\circ a-a\circ\nabla
 \in{\mathcal F}^1
\end{align*}
Take a tangent vector
$v\in\Theta_{M^{\balpha}_{D/\cC/S}(r,d,(m_i))_{\bnu}}(x)$.
Then $v$ corresponds to a member 
$$(E^v,\nabla^v,\{(l^v)^{(i)}_j\})\in
M^{\balpha}_{D/\cC/S}(r,d,(m_i))_{\bnu}(\mathbf{C}[\epsilon])
$$
such that
$(E^v,\nabla^v,\{(l^v)^{(i)}_j\})\otimes\mathbf{C}[\epsilon]/(\epsilon)
\cong(E,\nabla,\{l^{(i)}_j\})$,
where $\epsilon^2=0$.
Take an affine open covering
$\cC_x=\bigcup_{\alpha}U_{\alpha}$
such that
$E|_{U_{\alpha}}\cong{\mathcal O}_{U_{\alpha}}^{\oplus r}$,
$\sharp\{i|(\tilde{t}_i)_x\in U_{\alpha}\}\leq 1$ for any $\alpha$
and $\sharp\{\alpha|(\tilde{t}_i)_x\in U_{\alpha}\}=1$
for any $i$.
We can take an isomorphism
\[
 \varphi_{\alpha}:E^v|_{U_{\alpha}\times\Spec\mathbf{C}[\epsilon]}
 \stackrel{\sim}\longrightarrow
 (E\otimes_{\mathbf{C}}\mathbf{C}[\epsilon])
 |_{U_{\alpha}\times\Spec\mathbf{C}[\epsilon]}
\]
such that
$\varphi_{\alpha}\otimes\mathbf{C}[\epsilon]/(\epsilon):
E^v\otimes\mathbf{C}[\epsilon]/(\epsilon)|_{U_{\alpha}}
\stackrel{\sim}\rightarrow
(E\otimes\mathbf{C}[\epsilon]/(\epsilon))|_{U_{\alpha}}
=E|_{U_{\alpha}}$
is the given isomorphism. 
We put 
\begin{eqnarray*}
u_{\alpha\beta}& := & \varphi_{\alpha}\circ\varphi_{\beta}^{-1}
 -\mathrm{id}_{(E\otimes\mathbf{C}[\epsilon])
 |_{U_{\alpha\beta}\times\Spec\mathbf{C}[\epsilon]}}, \\   
 v_{\alpha} & := & (\varphi_{\alpha}\otimes\mathrm{id})\circ
 \nabla^v|_{U_{\alpha}\times\Spec\mathbf{C}[\epsilon]}
 \circ\varphi_{\alpha}^{-1}
 -\nabla\otimes\mathbf{C}[\epsilon]
 |_{U_{\alpha}\times\Spec\mathbf{C}[\epsilon]}.
 \end{eqnarray*}
Then we have
$\{u_{\alpha\beta}\}\in C^1(\{U_{\alpha}\},(\epsilon)\otimes{\mathcal F}^0)$,
$\{v_{\alpha}\}\in C^0(\{U_{\alpha}\},(\epsilon)\otimes{\mathcal F}^1)$
and
\[
 d\{u_{\alpha\beta}\}=
 \{u_{\beta\gamma}-u_{\alpha\gamma}+u_{\alpha\beta}\}=0,\;
 \nabla_{{\mathcal F}^{\bullet}}\{u_{\alpha\beta}\}=
 \{v_{\beta}-v_{\alpha}\}=d\{v_{\alpha}\}.
\]
So $[(\{u_{\alpha\beta}\},\{v_{\alpha}\}]$ determines
an element $\sigma_x(v)\in \mathbf{H}^1({\mathcal F}^{\bullet})$.
We can easily check that
the correspondence
$v\mapsto \sigma_x(v)$ gives an isomorphism
\[
 \Theta_{M^{\balpha}_{D/\cC/S}(r,d,(m_i))_{\bnu}}(x)
 \stackrel{\sim}\longrightarrow
 \mathbf{H}^1({\mathcal F}^{\bullet}).
\]
{F}rom the spectral sequence
$H^q({\mathcal F}^p)\Rightarrow \mathbf{H}^{p+q}({\mathcal F}^{\bullet})$,
we obtain an exact sequence
\[
 0 \longrightarrow \mathbf{C} \longrightarrow H^0({\mathcal F}^0)
 \longrightarrow H^0({\mathcal F}^1) \longrightarrow
 \mathbf{H}^1({\mathcal F}^{\bullet}) \longrightarrow
 H^1({\mathcal F}^0) \longrightarrow H^1({\mathcal F}^1)
 \longrightarrow \mathbf{C} \longrightarrow 0.
\]
So we have
\begin{align*}
 \dim\mathbf{H}^1({\mathcal F}^{\bullet})&=
 \dim H^0({\mathcal F}^1)
 +\dim H^1({\mathcal F}^0)-\dim H^0({\mathcal F}^0)
 -\dim H^1({\mathcal F}^1)+2\dim_{\mathbf{C}}\mathbf{C} \\
 &=\dim H^0(({\mathcal F}^0)^{\vee}\otimes\Omega^1_{\cC_x})
 +\dim H^1({\mathcal F}^0)-\dim H^0({\mathcal F}^0)
 -\dim H^1(({\mathcal F}^0)^{\vee}\otimes\Omega^1_{\cC_x})
 +2 \\
 &=\dim H^1({\mathcal F}^0)^{\vee}+\dim H^1({\mathcal F}^0)
 -\dim H^0({\mathcal F}^0)-\dim H^0({\mathcal F}^0)^{\vee}
 +2 \\
 &=2-2\chi({\mathcal F}^0).
\end{align*}
Here we used the isomorphisms
${\mathcal F}^1\cong({\mathcal F}^0)^{\vee}\otimes\Omega^1_{\cC_x}$,
${\mathcal F}^0\cong({\mathcal F}^1)^{\vee}\otimes\Omega^1_{\cC_x}$
and Serre duality.
We define a subsheaf ${\mathcal E}_1\subset{\mathcal End}(E)$
by the exact sequence
\[
 0\longrightarrow {\mathcal E}_1 \longrightarrow {\mathcal End}(E)
 \longrightarrow \bigoplus_{i=1}^n
 {\mathcal Hom}_{{\mathcal O}_{m_i(\tilde{t}_i)_x}}(l^{(i)}_1,l^{(i)}_0/l^{(i)}_1)
 \longrightarrow 0.
\]
Inductively we define a subsheaf ${\mathcal E}_k\subset{\mathcal End}(E)$
by the exact sequence
\[
 0\longrightarrow {\mathcal E}_k \longrightarrow {\mathcal E}_{k-1}
 \longrightarrow \bigoplus_{i=1}^n
 {\mathcal Hom}_{{\mathcal O}_{m_i(\tilde{t}_i)_x}}(l^{(i)}_k,l^{(i)}_{k-1}/l^{(i)}_k)
 \longrightarrow 0.
\]
Then we have ${\mathcal E}_{r-1}={\mathcal F}^0$
and we have
\begin{align*}
 \chi({\mathcal F}^0)=\chi({\mathcal E}_{r-1})&=
 \chi({\mathcal End}(E))-\sum_{i=1}^n\sum_{j=1}^{r-1}
 \length\left({\mathcal Hom}_{{\mathcal O}_{m_i(\tilde{t}_i)_x}}
 (l^{(i)}_j,l^{(i)}_{j-1}/l^{(i)}_j)\right) \\
 &=r^2(1-g)-\sum_{i=1}^n\sum_{j=1}^{r-1}m_i(r-j) \\
 &=r^2(1-g)-r(r-1)\sum_{i=1}^n m_i/2.
\end{align*}
Thus we have
\[
 \dim\mathbf{H}^1({\mathcal F}^{\bullet})
 =2-2\chi({\mathcal F}^0)
 =2+2r^2(g-1)+r(r-1)\sum_{i=1}^nm_i
\]
and the statement of Proposition follows.
\end{proof}

{\it Proof of Theorem \ref{smoothness-thm}.}
By Proposition \ref{prop-smoothness-det},
we can see that
$M^{\balpha}_{D/\cC/S}(r,d,(m_i))$ is smooth
over $\cN^{(n)}_r(d,D)$
and by Proposition \ref{prop-moduli-dimension},
every fiber
$M^{\balpha}_{D/\cC/S}(r,d,(m_i))_{\bnu}$
over $\bnu\in\cN^{(n)}_r(d,D)$
is smooth of equidimension
$2r^2(g-1)+2+r(r-1)\sum_{i=1}^n m_i$.
So we obtain Theorem \ref{smoothness-thm}.
\hfill $\square$

\begin{Proposition}\label{prop-generic-fiber}
 Take $\bnu=(\nu^{(i)}_j)\in\cN^{(n)}_r(d,D)$
 and write $\nu^{(i)}_j=\sum_{k=-m_i}^{-1}a^{(i,j)}_kz_i^kdz_i$,
 where $(C,t_1,\ldots,t_n):=(\cC,\tilde{t}_1,\ldots,\tilde{t}_n)_{\bnu}$
 and $z_i$ is a generator of the maximal ideal of ${\mathcal O}_{C,t_i}$.
 Assume that $m_i>1$ and $a^{(i,j)}_{-m_i}\neq a^{(i,j')}_{-m_i}$ for any
 $i$ and any $j\neq j'$.
 Then the canonical morphism
 \begin{gather*}
  p:\tilde{M}^{\balpha}_{D/{\mathcal C}/S}(r,d,(N_i))_{\bnu}
  \longrightarrow
  M^{\balpha}_{D/{\mathcal C}/S}(r,d,(m_i))_{\bnu} \\
  (E,\nabla,\{l^{(i)}_j\})\mapsto
  \left(E,\nabla,\left\{l^{(i)}_j\otimes{\mathcal O}_{C,t_i}/(z_i^{m_i})\right\}\right)
 \end{gather*}
 is an isomorphism.
\end{Proposition}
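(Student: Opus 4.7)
The plan is to construct an explicit inverse morphism $q$ to $p$ by showing that, under the distinctness hypothesis on leading coefficients, the parabolic filtration of depth $(m_i)$ at each singular point extends uniquely and canonically to one of depth $(N_i)$. The argument is entirely local at each $t_i$, and the key ingredient is the following formal statement. Let $(\hat V^{(i)},\hat \nabla^{(i)}):=(E,\nabla)\otimes\hat{\mathcal O}_{C,t_i}$. Then there is a unique decomposition
\[
\hat V^{(i)}\;\cong\;\bigoplus_{j=0}^{r-1}\hat L^{(i)}_j,\qquad (\hat L^{(i)}_j,\hat\nabla^{(i)}|_{\hat L^{(i)}_j})\cong V(\nu^{(i)}_j,1),
\]
into rank-one $\hat\nabla^{(i)}$-stable formal summands. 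The associated canonical filtration $\hat V^{(i)}_j:=\bigoplus_{k\geq j}\hat L^{(i)}_k$ is then the unique formal filtration realising the prescribed generalised exponents $\nu^{(i)}_j$; its reductions modulo $z_i^{m_i}$ and $z_i^{N_i}$ recover the given $\{l^{(i)}_j\}$ and the sought depth-$N_i$ lift $\{\tilde l^{(i)}_j\}$, respectively. This produces the candidate inverse $q$.

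Uniqueness of the decomposition reduces to the vanishing $\mathrm{Hom}_{\hat\nabla}\bigl(V(\nu^{(i)}_j,1),V(\nu^{(i)}_{j'},1)\bigr)=0$ for $j\neq j'$: any $\hat\nabla$-equivariant map sending a generator $e\mapsto f\cdot e'$ must satisfy $df=f\,(\nu^{(i)}_j-\nu^{(i)}_{j'})$, and if $f\neq 0$ with $f=z_i^k g$, $g(0)\neq 0$, then $df/f$ has a pole of order at most $1$ while $\nu^{(i)}_j-\nu^{(i)}_{j'}$ has a pole of exact order $m_i\geq 2$ by the hypothesis, a contradiction. Existence combines Proposition \ref{prop:filtration} (which, since $\bnu$ is already in unramified form, produces a formal flag with $l=1$) with the vanishing $\mathrm{Ext}^1_{\hat\nabla}\bigl(V(\nu^{(i)}_j,1),V(\nu^{(i)}_{j'},1)\bigr)=0$: splitting an extension amounts to solving $h'-(\nu^{(i)}_j-\nu^{(i)}_{j'})h=g$ in $\hat{\mathcal O}_{C,t_i}$, which succeeds because $(\nu^{(i)}_j-\nu^{(i)}_{j'})^{-1}$ vanishes to order $m_i\geq 2$ at $z_i=0$, making the natural iteration $z_i$-adically convergent.

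Granted the local claim, the entries of the gauge transformation producing the decomposition are obtained by solving successive triangular linear systems in the coefficients of the connection matrix; each solution depends polynomially on those coefficients over the open locus where the distinct-leading-coefficient condition holds. Hence $q$ is an algebraic morphism, and by construction $p\circ q=\mathrm{id}$ and $q\circ p=\mathrm{id}$ on points, so $p$ is an isomorphism. The principal obstacle is carrying out the existence half of the local claim in families: one must verify that the iterative off-diagonal-killing procedure yields a genuine formal power series gauge transformation depending algebraically on the input, which is precisely where both the distinctness hypothesis on the $a^{(i,j)}_{-m_i}$ and the assumption $m_i>1$ are indispensable (without either, the operators being inverted at each stage fail to be contracting on $\hat{\mathcal O}_{C,t_i}$).
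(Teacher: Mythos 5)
Your local computations are sound and in fact isolate the same mechanism the paper uses: with the leading coefficients $a^{(i,j)}_{-m_i}$ pairwise distinct and $m_i\geq 2$, the equation $df=f(\nu^{(i)}_j-\nu^{(i)}_{j'})$ has no nonzero solution (so $\Hom$ between the rank-one pieces vanishes), and $h'-(\nu^{(i)}_j-\nu^{(i)}_{j'})h=g$ is solvable by a $z_i$-adic contraction (so the extensions split). However, as written the argument has two genuine gaps. First, everything you prove about uniqueness and recovery lives over $\mathbf{C}[[z_i]]$, whereas the proposition is about truncated filtrations: $q\circ p=\mathrm{id}$ amounts to showing that an \emph{arbitrary} parabolic structure of depth $N_i$ coincides with your canonical one, i.e. injectivity of $p$. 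A depth-$N_i$ filtration of $E|_{N_it_i}$ is not a priori the truncation of a $\widehat\nabla$-stable formal filtration, so this does not follow from uniqueness of the formal decomposition; one needs the analogous pole-order computation carried out modulo $z_i^{N_i}$ (this is exactly the computation $dc_1=c_1(\nu^{(i)}_{r-1}-\nu^{(i)}_1)$ in the paper's monomorphism step). The companion assertion that the reduction mod $z_i^{m_i}$ of the canonical filtration recovers the given $\{l^{(i)}_j\}$ (needed for $p\circ q=\mathrm{id}$, and also to see that the formal exponents are the prescribed $\nu^{(i)}_j$ on the nose, residues included) is likewise asserted but not proved; note also that Proposition \ref{prop:filtration} by itself does not give $l=1$ --- you must first use the genericity of $\bnu$ to see the leading matrix is regular semisimple, precisely because of the phenomenon in Remark \ref{counter-example}.

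Second, the moduli spaces are only coarse, and $\tilde{M}^{\balpha}_{D/{\mathcal C}/S}(r,d,(N_i))_{\bnu}$ is not known to be smooth or even reduced, so verifying $p\circ q=\mathrm{id}$ and $q\circ p=\mathrm{id}$ ``on points'' does not yet show they are the identity morphisms. To make your strategy work you must construct $q$ functorially on $T$-valued points of the moduli functors (equivalently on their \'etale sheafifications, which these schemes represent), so the one-line remark that the gauge transformation ``depends polynomially on the coefficients'' is carrying the real burden: it has to be upgraded to a statement about families, e.g. by defining $\tilde{l}^{(i)}_j=\bigoplus_{k\geq j}\ker(\nabla|_{N_it_i}-\nu^{(i)}_k)$ relatively over $T$ and checking it is a subbundle filtration. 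This is essentially what the paper does, but packaged differently: instead of building an inverse, it shows $p$ is surjective, a monomorphism, and formally smooth (the smoothness step is where the family version of the canonical filtration and Nakayama's lemma appear), and concludes that $p$ is bijective and \'etale, hence an isomorphism. If you supply the truncated-level uniqueness and the family-level construction, your direct-inverse route does go through, but those are precisely the points where the substance of the paper's proof lies.
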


\begin{proof}
Take any member
$(E,\nabla,\{l^{(i)}_j\})\in
M^{\balpha}_{D/{\mathcal C}/S}(r,d,(m_i))_{\bnu}(\mathbf{C})$.
We can see the following claim by Hukuhara-Turrittin theorem
(see [\cite{Sibuya}, Theorem 6.1.1):

\noindent
{\bf Claim.}
We have
$(E,\nabla)\otimes\mathbf{C}[[z_i]]\cong
V(\nu^{(i)}_{r-1},1)\oplus\cdots\oplus V(\nu^{(i)}_0,1)$. \\
By the above claim, we have
$l^{(i)}_j=V(\nu^{(i)}_{r-1},1)|_{m_it_i}\oplus\cdots\oplus V(\nu^{(i)}_j,1)|_{m_it_i}$.
If we set
$\tilde{l}^{(i)}_j:=V(\nu^{(i)}_{r-1},1)|_{N_it_i}\oplus\cdots\oplus V(\nu^{(i)}_j,1)|_{N_it_i}$,
then $(E,\nabla,\{\tilde{l}^{(i)}_j\})\in M^{\balpha}_{D/{\mathcal C}/S}(r,d,(N_i))_{\bnu}$
and
$p(E,\nabla,\{\tilde{l}^{(i)}_j\})=(E,\nabla,\{l^{(i)}_j\})$.
Thus $p$ is surjective. 

Take any member
$(E,\nabla,\{\overline{l}^{(i)}_j\})\in
M^{\balpha}_{D/{\mathcal C}/S}(r,d,(m_i))_{\bnu}(U)$
and two members
$(E,\nabla,\{l^{(i)}_j\})$, $(E,\nabla,\{(l')^{(i)}_j\})\in
p^{-1}(E,\nabla,\{\overline{l}^{(i)}_j\})$,
where $U$ is a scheme over $\mathbf{C}$.
Take any point $x\in U$ and a local section
$e'_{r-1}\in ((l')^{(i)}_{r-1})_x$
such that
$({\mathcal O}_{N_it_i}\otimes{\mathcal O}_{U,x})e'_{r-1}
=((l')^{(i)}_{r-1})_x$,
and
$\nabla(e'_{r-1})=\nu^{(i)}_{r-1}e'_{r-1}$.
Let $c_1$ be the image of $e'_{r-1}$ by the homomorphism
\[
 \pi_1:E|_{N_it_i\times U}\longrightarrow E|_{N_it_i\times U}/l^{(i)}_1\cong
 {\mathcal O}_{N_it_i\times U}.
\]
Then we have
\[
 c_1\nu^{(i)}_{r-1}=\pi_1(\nu^{(i)}_{r-1}e'_{r-1})
 =\pi_1\nabla (e'_{r-1})=\nabla\pi_1(e'_{r-1})
 =\nabla(c_1)=dc_1+c_1\nu^{(i)}_1.
\]
So we have
\[
 c_1(\nu^{(i)}_{r-1}-\nu^{(i)}_1)=dc_1.
\]
Since
\[
 \nu^{(i)}_{r-1}-\nu^{(i)}_1
 =(a^{(i,r-1)}_{-m_i}-a^{(i,1)}_{-m_i})z_i^{-m_i}dz_i+
 (a^{(i,r-1)}_{-m_i+1}-a^{(i,1)}_{-m_i+1})z_i^{-m_i+1}dz_i+\cdots
\]
and $a^{(i,r-1)}_{-m_i}-a^{(i,1)}_{-m_i}\in\mathbf{C}\setminus\{0\}$,
we have $c_1=0$.
Similarly, the projection of $e'_{r-1}$ to $E|_{N_it_i\times U}/l^{(i)}_j$
is zero for $j=1,\ldots,r-1$.
So we have $e'_{r-1}\in (l^{(i)}_{r-1})_x$
and so $(l')^{(i)}_{r-1}\subset l^{(i)}_{r-1}$.
Similarly we have $l^{(i)}_{r-1}\subset (l')^{(i)}_{r-1}$
and $l^{(i)}_{r-1}=(l')^{(i)}_{r-1}$.
By induction on $r$, we have
$l^{(i)}_j=(l')^{(i)}_j$ for $j=1,\ldots,r-1$.
So we have $(E,\nabla,\{l^{(i)}_j\})=(E,\nabla,\{(l')^{(i)}_j\})$.
Thus $p$ is a monomorphism.

Finally we will show that $p$ is smooth.
Let $A$ be an artinian local ring with maximal ideal $m$
and $I$ be an ideal of $A$ such that $mI=0$.
Assume that a commutative diagram
\[
 \begin{CD}
  \Spec(A/I) @>f>> \tilde{\mathcal M}^{\balpha}_{D/\cC/S}(r,d,(N_i))_{\bnu} \\
  @VVV @VV p V \\
  \Spec(A) @>g>> {\mathcal M}^{\balpha}_{D/\cC/S}(r,d,(m_i)) 
 \end{CD}
\]
is given.
Then $g$ corresponds to a member
$(E,\nabla,\{l^{(i)}_j\})\in {\mathcal M}^{\balpha}_{D/\cC/S}(r,d,(m_i))_{\bnu}(A)$
and $f$ corresponds to a member
$(E\otimes A/I,\nabla\otimes A/I,\{\bar{l}^{(i)}_j\})\in
\tilde{\mathcal M}^{\balpha}_{D/\cC/S}(r,d,(N_i))_{\bnu}(A/I)$.
Note that
$l^{(i)}_j=\bigoplus_{k=j}^{r-1}\ker(\nabla|_{m_it_i}-\nu^{(i)}_k)$
and that
$\overline{l}^{(i)}_j=\bigoplus_{k=j}^{r-1}\ker((\nabla\otimes A/I)|_{N_it_i}-\nu^{(i)}_k)$.
We can easily check that a canonical homomorphism
$\ker(\nabla|_{N_it_i}-\nu^{(i)}_j)\rightarrow
\ker(\nabla|_{m_it_i}-\nu^{(i)}_j)$ is surjective.
So the canonical homomorphism
$\varphi:\bigoplus_{j=0}^{r-1}\ker(\nabla|_{N_it_i}-\nu^{(i)}_j)
\rightarrow E|_{N_it_i}$
is surjective by Nakayama's lemma.
We can easily check that $\varphi$ is also injective.
If we put $\tilde{l}^{(i)}_j:=\bigoplus_{k=j}^{r-1}\ker(\nabla|_{N_it_i}-\nu^{(i)}_k)$,
then
$(E,\nabla,\{\tilde{l}^{(i)}_j\})\in \tilde{\mathcal M}^{\balpha}_{D/\cC/S}(r,d,(N_i))(A)$,
$p(E,\nabla,\{\tilde{l}^{(i)}_j\})=(E,\nabla,\{l^{(i)}_j\})$ and
$(E,\nabla,\{\tilde{l}^{(i)}_j\})\otimes A/I=
(E\otimes A/I,\nabla\otimes A/I,\{\bar{l}^{(i)}_j\})$.
Thus $p$ is a smooth morphism.

By the above proof, $p$ becomes bijective and \'etale.
Hence $p$ is an isomorphism.
\end{proof}

\begin{Remark}\label{not-smooth}\rm
In general the moduli space $M^{\balpha}_{D/\cC/S}(r,d,(N_i))$
is not smooth over $\cN^{(n)}_r(d,D)$
if $N_i>m_i$.
For example assume that $m_i>1$ for any $i$ and $g\geq 1$.
Then a general fiber $M^{\balpha}_{D/\cC/S}(r,d,(N_i))_{\bnu}$
over $\bnu\in\cN^{(n)}_r(d,D)$
is smooth of dimension
$2r^2(g-1)+2+r(r-1)\sum_{i=1}^nm_i$
by Proposition \ref{prop-generic-fiber} and
Theorem \ref{smoothness-thm}.
Take $x\in S$ and
put $C:=\cC_x$, $t_i:=(\tilde{t}_i)_x$
and $E:={\mathcal O}_C(-t_1)\oplus{\mathcal O}_C$.
Take a non-zero section $\omega\in H^0(C,\Omega^1_C((m_1+1)t_1))$
and consider the connection
\begin{gather*}
 \nabla:E\longrightarrow E\otimes\Omega^1_C\left(\sum_{i=1}^nm_it_i\right) \\
 \nabla\begin{pmatrix} f_1 \\ f_2 \end{pmatrix}=
 \begin{pmatrix} df_1 \\ df_2 \end{pmatrix}
 +
 \begin{pmatrix}
  0 & \omega \\
  0 & 0
 \end{pmatrix}
 \begin{pmatrix} f_1 \\ f_2 \end{pmatrix}
 \quad 
 (f_1\in{\mathcal O}_C(-t_1),\; f_2\in{\mathcal O}_C)
\end{gather*}
Then there is a canonical extension
$0\rightarrow{\mathcal O}_C(-t_1)\rightarrow E
\rightarrow{\mathcal O}_C\rightarrow 0$
which is compatible with the connections.
We take the parabolic structure
$l^{(i)}_1:={\mathcal O}_C(-t_1)|_{N_it_i}$.
If we take $\alpha^{(i)}_j\ll 1$ for any $i,j$,
$(E,\nabla,\{l^{(i)}_j\})$ becomes $\balpha$-stable.
We define a complex ${\mathcal F}^{\bullet}$ as follows:
\begin{align*}
 {\mathcal F}^0&:=\left\{ a\in{\mathcal End}(E) \left|
 \text{$a|_{N_it_i}(l^{(i)}_j)\subset l^{(i)}_j$ for any $i,j$}
 \right\}\right., \\
 {\mathcal F}^1&:=\left\{
 b\in{\mathcal End}(E)\otimes\Omega^1_C\left(\sum_{i=1}^nm_it_i\right)
 \left|
 \begin{array}{l}
 \text{$b|_{N_it_i}(l^{(i)}_j)\subset l^{(i)}_j\otimes\Omega^1_C(\sum_{i=1}^nm_it_i)$
 for any $i,j$} \\
 \text{$\overline{b^{(i)}_j}(l^{(i)}_j/l^{(i)}_{j+1})$ is contained in the image of} \\
 \text{$(l^{(i)}_j/l^{(i)}_{j+1})\otimes\Omega^1_C\rightarrow
 (l^{(i)}_j/l^{(i)}_{j+1})\otimes\Omega^1_C(m_it_i)$ for any $i,j$}
 \end{array}
 \right\}\right., \\
 \nabla_{{\mathcal F}^{\bullet}}&:{\mathcal F}^0
 \ni a \mapsto \nabla a-a\nabla \in{\mathcal F}^1,
\end{align*}
where
$\overline{b^{(i)}_j}:l^{(i)}_j/l^{(i)}_{j+1}\rightarrow
(l^{(i)}_j/l^{(i)}_{j+1})\otimes\Omega^1_C(m_it_i)$
is the homomorphism induced by $b|_{N_it_i}$.
We can see that the relative tangent space
$\Theta_{M^{\balpha}_{D/\cC/S}(2,-1,(m_i))/\cN^{(n)}_r(d,D)}\otimes k(x)$
at the point $x=(E,\nabla,\{l^{(i)}_j\})$ is isomorphic to
$\mathbf{H}^1({\mathcal F}^{\bullet})$.
{F}rom the spectral sequence
$H^q({\mathcal F}^p)\Rightarrow \mathbf{H}^{p+q}({\mathcal F}^{\bullet})$,
we obtain an exact sequence
\[
 0\longrightarrow \mathbf{C}\longrightarrow H^0({\mathcal F}^0)\longrightarrow
 H^0({\mathcal F}^1)\longrightarrow\mathbf{H}^1({\mathcal F}^{\bullet})
 \longrightarrow H^1({\mathcal F}^0)\longrightarrow H^1({\mathcal F}^1)
 \longrightarrow \mathbf{H}^2({\mathcal F}^{\bullet})\longrightarrow 0.
\]
So we have
\begin{align*}
 \dim\mathbf{H}^1({\mathcal F}^{\bullet})&=
 \dim H^0({\mathcal F}^1)+\dim H^1({\mathcal F}^0)-\dim H^0({\mathcal F}^0)
 -\dim H^1({\mathcal F}^1)+1+\dim\mathbf{H}^2({\mathcal F}^{\bullet}) \\
 &=\chi({\mathcal F}^1)-\chi({\mathcal F}^0)+1
 +\dim\mathbf{H}^2({\mathcal F}^{\bullet}) \\
 &=\left(2^2(1-g)+2^2(2g-2+\sum_{i=1}^nm_i)-\sum_{i=1}^n(N_i+2m_i)\right) \\
 &\quad -\left(2^2(1-g)-\sum_{i=1}^nN_i\right)+1+\dim\mathbf{H}^2({\mathcal F}^{\bullet}) \\
 &=8(g-1)+2+2\sum_{i=1}^nm_i+(\dim\mathbf{H}^2({\mathcal F}^{\bullet})-1).
\end{align*}
If we put \small
\begin{align*}
 &({\mathcal F}')^0:= \\
 &\left\{
 a\in{\mathcal End}(E)\otimes{\mathcal O}_C(\sum_{i=1}^n(N_i-m_i)t_i)\left|
 \begin{array}{l}
 \text{$a|_{N_it_i}(l^{(i)}_j)\subset l^{(i)}_j\otimes{\mathcal O}_C((N_i-m_i)t_i)$
 for any $i,j$} \\
 \text{and $\overline{a^{(i)}_j}(l^{(i)}_j/l^{(i)}_{j+1})$ is contained in the image of} \\
 \text{$(l^{(i)}_j/l^{(i)}_{j+1})\rightarrow(l^{(i)}_j/l^{(i)}_{j+1})
 \otimes{\mathcal O}_C((N_i-m_i)t_i)$ for any $i,j$}
 \end{array}
 \right\}\right., \\
 &({\mathcal F}')^1:=\left.\left\{
 b\in{\mathcal End}(E)\otimes\Omega^1_C(\sum_{i=1}^nN_it_i)
 \right|
  \text{$b|_{N_it_i}(l^{(i)}_j)\subset l^{(i)}_{j+1}\otimes\Omega^1_C(N_it_i)$
  for any $i,j$}
 \right\}, \\
 &\nabla_{({\mathcal F'})^{\bullet}}:({\mathcal F}')^0
 \ni a\mapsto \nabla a-a\nabla \in ({\mathcal F}')^1,
\end{align*}\normalsize
then we have
$({\mathcal F}^1)^{\vee}\otimes\Omega^1_C\cong
({\mathcal F}')^0$ and
$({\mathcal F}^0)^{\vee}\otimes\Omega^1_C\cong
({\mathcal F}')^1$.
We have
\begin{align*}
 \mathbf{H}^2({\mathcal F}^{\bullet})&\cong
 \coker\left(H^1({\mathcal F}^0)
 \xrightarrow{H^1(\nabla_{{\mathcal F}^{\bullet}})}
 H^1({\mathcal F}^1)\right) \\
 &\cong\ker\left(
 H^1({\mathcal F}^1)^{\vee}\xrightarrow{H^1(\nabla_{{\mathcal F}^{\bullet}})^{\vee}}
 H^1({\mathcal F}^0)^{\vee}\right)^{\vee} \\
 &\cong \ker \left(
 H^0(({\mathcal F}^1)^{\vee}\otimes\Omega^1_C)
 \xrightarrow{H^1(\nabla_{{\mathcal F}^{\bullet}})^{\vee}}
 H^0(({\mathcal F}^0)^{\vee}\otimes\Omega^1_C) \right)^{\vee} \\
 &\cong\ker\left(
 H^0(({\mathcal F}')^0)\xrightarrow{-H^0(\nabla_{({\mathcal F'})^{\bullet}})}
 H^0(({\mathcal F}')^1)\right)^{\vee}
\end{align*}
Note that
$\mathbf{C}\cdot\mathrm{id}_E\subset\ker
\left(H^0(({\mathcal F}')^0)\xrightarrow{-H^0(\nabla_{({\mathcal F'})^{\bullet}})}
H^0(({\mathcal F}')^1)\right)$.
Let $f:E\rightarrow E(t_1)$ be the composite
\[
 f:E\longrightarrow{\mathcal O}_C\hookrightarrow E(t_1).
\]
Since $f$ is compatible with the connections, we have
$f\in\ker
\left(H^0(({\mathcal F}')^0)\xrightarrow{-H^0(\nabla_{({\mathcal F'})^{\bullet}})}
H^0(({\mathcal F}')^1)\right)$.
Note that $0\neq f\notin\mathbf{C}\cdot\mathrm{id}_E$.
So we have $\dim\mathbf{H}^2({\mathcal F}^{\bullet})\geq 2$,
which means that
$\dim\mathbf{H}^1({\mathcal F}^{\bullet})\geq
8(g-1)+3+2\sum_{i=1}^nm_i$.
So $M^{\balpha}_{D/\cC/S}(2,-1,(N_i))$ is not smooth over
$\cN^{(n)}_r(d,D)$ at $x$.
\end{Remark}

\section{Smoothness of the family of the moduli spaces over configuration space}

Take any point $x\in S$.
If we put $t_i:=(\tilde{t}_i)_x$, we have
$D_x=\sum_{i=1}^n m_i t_i$.
Consider the Hilbert scheme
$H_i:=\Hilb^{m_i}_{\cC_x}$.
Put $H:=\Hilb^{m_1}_{\cC_x}\times\cdots\times\Hilb^{m_n}_{\cC_x}$
and let $D_i\subset\cC_x\times H$ be the universal divisors
for $i=1,\ldots,n$.
Note that $H$ is smooth over $\mathbf{C}$.
Let $H'\subset H$ be the open subscheme
such that
$H'=\{h\in H|\text{$(D_i)_h\cap(D_j)_h=\emptyset$ for $i\neq j$}\}$.
Consider the affine space bundle
\[
 \cN:=\prod_{i=1}^n
 \mathbf{V}_*\left((\pi_i)_*\left(\Omega^1_{\cC_x\times H'}
 ((D_i)_{H'})|_{(D_i)_{H_i}}\right)\right)
\]
over $H'$,
where $\pi_i:(D_i)_{H'}\rightarrow H'$ is the projection.
Take the universal family
$(\tilde{\nu}^{(i)}_j)$,
where
$\tilde{\nu}^{(i)}_j\in
H^0((D_i)_{\cN},\Omega^1_{\cC_x}((D_i)_{\cN})|_{(D_i)_{\cN}})$.

Assume that $\bnu=(\nu^{(i)}_j)\in\cN$ is given.
Let $h\in H'$ be the corresponding point and write
$(D_i)_h=\sum_km'_kt'_k$ with $t'_k\neq t'_j$ for $k\neq j$
and $\nu^{(i)}_j=\sum_k\nu'_k$ with
$\nu'_k\in H^0(m'_kt'_k,\Omega^1_{\cC_x}((D_i)_h)|_{m'_kt'_k})$.
Then we define $f^{(i)}_j(\bnu)=\sum_k\res_{t'_k}(\nu'_k)$.

Though it is obvious that the function $f^{(i)}_j$ defined above
is an algebraic function on $\cN$,
we give a proof by way of precaution.
We can take a disk $\Delta_k\subset\cC_x$ containing $t'_k$
such that $\overline{\Delta_k}\cap\overline{\Delta_{k'}}=\emptyset$
for $k\neq k'$.
Taking a sufficiently small analytic open neighborhood $U$ of $h$ in $H'$,
we can write $(D_i)_U=\sum_k D'_k$ with $D'_k$ an effective Cartier
divisor on $\cC_x\times U$ flat over $U$,
$(D'_k)_h=m'_kt'_k$ and $(D'_k)_g\subset\Delta_k$
for any $g\in U$.
Then we can write $(\tilde{\nu}^{(i)}_j)_{\cN_U}=\sum_k\tilde{\nu}'_k$
with $\tilde{\nu}'_k\in H^0\left((D'_k)_{\cN_U},
\Omega^1_{\cC_x}(D'_k)\otimes{\mathcal O}_{(D'_k)_{\cN_U}}\right)$.
By shrinking $U$ if necessary, we can take an open subset
$W'_k\subset\cC_x\times\cN_U$ such that
$\overline{\Delta_k}\times\cN_U\subset W'_k$
and a section
$\tilde{\omega}'_k\in
H^0(W'_k,\Omega^1_{\cC_x\times\cN_U/\cN_U}(D'_k)|_{W'_k})$
such that $\tilde{\omega}'_k|_{D'_k\times_U\cN_U}=\tilde{\nu}'_k$.
Then we have
\[
 (f^{(i)}_j)_U=\genfrac{}{}{}{}{1}{2\pi\sqrt{-1}}
 \sum_k\int_{\partial\Delta_k}\tilde{\omega}'_k.
\]
So $(f^{(i)}_j)_U$ becomes a holomorphic function on
$\cN_U$.
We can glue $(f^{(i)}_j)_U$ and obtain a holomorphic
function
$f^{(i)}_j$ on $\cN$.
Note that $f^{(i)}_j|_{\cN_{H^{\circ}}}$
is an algebraic function on $\cN_{H^{\circ}}$ by its definition,
where $H^{\circ}$ is the Zariski open subset of $H'$ defined by
\[
 H^{\circ}:=\{h\in H'|\text{$(D_i)_h$ has no multiple component}\}.
\]
So $f^{(i)}_j$ is a rational function on $\cN$,
which is holomorphic on $\cN$ and hence
$f^{(i)}_j$ becomes an algebraic function on $\cN$.

We define
\[
 \cN^{(n)}_r(d,(D_i)):=\left\{ \bnu\in \cN\left|
 d+\sum_{i=1}^n\sum_{j=0}^{r-1}f^{(i)}_j(\bnu)=0 \right\}\right..
\]
Then we can easily see that $\cN^{(n)}_r(d,(D_i))$
is smooth over $H'$.
We put $\tilde{D}:=\sum_{i=1}^n(D_i)_{\cN^{(n)}_r(d,(D_i))}$
and define a moduli functor
${\mathcal M}_{\cC_x}^{\balpha}(r,d,(D_i)):(\sch/\cN^{(n)}_r(d,(D_i)))
\rightarrow(\sets)$
by
\begin{align*}
 &{\mathcal M}^{\balpha}_{\cC_x}(r,d,(D_i))(T):= \\
 &\left\{ (E,\nabla,\{l^{(i)}_j\}) \left|
 \begin{array}{l}
  \text{$E$ is a vector bundle on $\cC_x\times T$ of rank $r$,} \\
  \text{$\nabla:E\rightarrow E\otimes\Omega^1_{\cC_x\times T/T}(\tilde{D}_T)$
  is a relative connection,} \\
  \text{$E|_{(D_i)_T}=l^{(i)}_0\supset l^{(i)}_1\supset\cdots\supset l^{(i)}_{r-1}
  \supset l^{(i)}_r=0$ is a filtration} \\
  \text{such that for any $i,j$, $l^{(i)}_j/l^{(i)}_{j+1}$ is a line bundle on $(D_i)_T$}, \\
  \text{$(\nabla|_{(D_i)_T}-(\tilde{\nu}^{(i)}_j)_T\mathrm{id}_{E|_{(D_i)_T}})(l^{(i)}_j)
  \subset l^{(i)}_{j+1}\otimes\Omega^1_{\cC_x\times T/T}(\tilde{D}_T)$} \\
  \text{$(E,\nabla,\{l^{(i)}_j\})\otimes k(y)$ satisfies the $\balpha$-stability
  $(\dag)$ below} \\
  \text{for any geometric point $y$ of $T$}
 \end{array}
 \right\}\right/\sim,
\end{align*}
where
$T$ is a locally noetherian scheme over $\cN^{(n)}_r(d,(D_i))$
and $(E,\nabla,\{l^{(i)}_j\})\sim(E',\nabla',\{(l')^{(i)}_j\})$
if there is a line bundle ${\mathcal L}$ on $T$ such that
$(E,\nabla,\{l^{(i)}_j\})\cong(E',\nabla',\{(l')^{(i)}_j\})\otimes{\mathcal L}$.
$(E,\nabla,\{l^{(i)}_j\})\otimes k(y)$ is $\balpha$-stable if
\begin{align*}
 (\dag) \hspace{20pt} &
 \text{for any subbundle $0\neq F\subsetneq E\otimes k(y)$
 with
 $(\nabla\otimes k(y))(F)\subset F\otimes\Omega^1_{\cC_y}(\tilde{D}_y)$,} \\
 &\genfrac{}{}{}{}{\deg F+\sum_{i=1}^n\sum_{j=1}^r\alpha^{(i)}_j
 \length\left( \left(F|_{(D_i)_y}\cap (l^{(i)}_{j-1}\otimes k(y))\right)
 /\left(F|_{(D_i)_y}\cap (l^{(i)}_j\otimes k(y)) \right) \right)}{\rank F} \\
 &<
 \genfrac{}{}{}{}{\deg (E\otimes k(y))+\sum_{i=1}^n\sum_{j=1}^r\alpha^{(i)}_j
 \length((l^{(i)}_{j-1}\otimes k(y))/(l^{(i)}_j\otimes k(y)))}{\rank E}.
\end{align*}

\begin{Theorem}\label{relative-smooth}
There exists a relative coarse moduli scheme
\[
 \pi:M^{\balpha}_{\cC_x}(r,d,(D_i))\longrightarrow
 \cN^{(n)}_r(d,(D_i))
\]
of ${\mathcal M}^{\balpha}_{\cC_x}(r,d,(D_i))$.
Moreover $\pi$ is a smooth morphism.
\end{Theorem}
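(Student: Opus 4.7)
The plan is to mimic the strategy that proved Theorem \ref{thm-moduli-exists} together with Theorem \ref{smoothness-thm}, lifted to the situation where the divisor $\tilde{D}$ itself varies over $H'$. For the existence of a relative coarse moduli scheme, I would embed the moduli functor ${\mathcal M}^{\balpha}_{\cC_x}(r,d,(D_i))$ into a functor of $(\balpha',\bbeta,\gamma)$-stable parabolic $\Lambda^1_{D'}$-triples with $D'=\sum_i N_i(D_i)$ (here taking $N_i=m_i$), as was done in Theorem \ref{thm-moduli-exists}. The construction of the moduli scheme of parabolic triples in [\cite{IIS-1}, Theorem 5.1] is relative, so it produces a quasi-projective coarse moduli scheme over $\cN^{(n)}_r(d,(D_i))$; our functor is then cut out as a locally closed subscheme by exactly the same argument as in Theorem \ref{thm-moduli-exists}, yielding $M^{\balpha}_{\cC_x}(r,d,(D_i))$.

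For the smoothness of $\pi$, I would factor through a determinant morphism exactly as in Proposition \ref{prop-smoothness-det}. Let $M_{\cC_x}(1,d,(D_i))\to\cN^{(n)}_1(d,(D_i))$ be the rank one analogue, parametrizing pairs $(L,\nabla^L)$ with varying pole divisor. Because $\cN^{(n)}_r(d,(D_i))$ is smooth over $H'$ (shown just before the theorem) and $H'$ is smooth over $\mathbf{C}$, and because $M_{\cC_x}(1,d,(D_i))$ is an affine bundle over $\Pic^d_{\cC_x}\times\cN^{(n)}_1(d,(D_i))$, the fiber product
\[
M_{\cC_x}(1,d,(D_i))\times_{\cN^{(n)}_1(d,(D_i))}\cN^{(n)}_r(d,(D_i))
\]
is smooth over $\cN^{(n)}_r(d,(D_i))$. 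So it suffices to show the determinant morphism is smooth, and by a standard limit argument I would reduce this to formal smoothness: given an artinian local ring $A$ with maximal ideal $\mathfrak{m}$, an ideal $I\subset A$ with $\mathfrak{m}I=0$, a compatible diagram prescribing the determinant data over $A$ and a lift $(E,\nabla,\{l^{(i)}_j\})$ over $A/I$ with the correct $\bnu$ and $\tilde{D}$, lift $(E,\nabla,\{l^{(i)}_j\})$ to $A$ with prescribed determinant.

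The obstruction lives in $\mathbf{H}^2$ of the trace-free parabolic complex $\mathcal{F}_0^\bullet$ defined exactly as in Proposition \ref{prop-smoothness-det} but now with respect to the varying divisor $\tilde{D}_A$; the data of how $\tilde{D}$ deforms is encoded in the base map $\Spec(A)\to\cN^{(n)}_r(d,(D_i))$, so $\mathcal{F}_0^\bullet$ is a well-defined complex over the central fiber $\cC_x$. Via the spectral sequence and Serre duality (as in the proof of Proposition \ref{prop-smoothness-det}), one identifies
\[
\mathbf{H}^2(\mathcal{F}_0^\bullet)\cong\ker\bigl(H^0(\mathcal{F}_0^0)\xrightarrow{\nabla_{\mathcal{F}_0^\bullet}}H^0(\mathcal{F}_0^1)\bigr)^{\vee},
\]
and an element of this kernel is a trace-free parabolic endomorphism of $(E,\nabla,\{l^{(i)}_j\})\otimes k$; $\balpha$-stability forces it to be a scalar, and the trace-free condition then forces it to vanish. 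Hence the obstruction is zero and $\det$ is formally smooth, which completes the proof. Fiberwise smoothness together with dimension of fibers being $2r^2(g-1)+2+r(r-1)\sum_i m_i$ (by Theorem \ref{smoothness-thm}) is then automatically recovered.

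The principal technical obstacle I anticipate is the careful setup of the relative deformation theory along a moving singular divisor: one has to verify that the complex $\mathcal{F}_0^\bullet$ on $\cC_x$ and its associated obstruction class are truly well-defined when $\tilde{D}$ itself deforms, and that the parabolic filtrations $\{l^{(i)}_j\}$ on the deforming Cartier divisor $\tilde{D}_A$ have a well-defined lifting problem. Because $H'$ is smooth and the universal divisors $D_i$ are flat and locally principal on $\cC_x\times H'$, one can work \'etale-locally on $H'$ with an explicit defining equation for each $D_i$, after which the arguments of Propositions \ref{prop-smoothness-det} and \ref{prop-moduli-dimension} go through verbatim; once that bookkeeping is carried out, the obstruction-vanishing argument is exactly the one already used in the fixed-divisor case.
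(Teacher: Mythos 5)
Your proposal is correct and follows essentially the same route as the paper: existence by the same embedding into the moduli of parabolic $\Lambda^1$-triples used in Theorem \ref{thm-moduli-exists}, and smoothness by factoring through the determinant morphism to $M_{\cC_x}(1,d,(D_i))\times_{\cN^{(n)}_1(d,(D_i))}\cN^{(n)}_r(d,(D_i))$ and killing the obstruction in $\mathbf{H}^2$ of the trace-free parabolic complex via $\balpha$-stability, exactly as in Proposition \ref{prop-smoothness-det}. The technical point you flag about the moving divisor is handled just as you suggest, since the obstruction complex lives on the fixed closed fiber with the divisors $(D_i)_k$ determined by the base map.
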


\begin{proof}
We can see by the same argument as Theorem \ref{thm-moduli-exists}
that there exists a relative coarse moduli scheme
\[
 \pi:M^{\balpha}_{\cC_x}(r,d,(D_i))\longrightarrow
 \cN^{(n)}_r(d,(D_i))
\]
of ${\mathcal M}^{\balpha}_{\cC_x}(r,d,(D_i))$.
More precisely, $M^{\balpha}_{\cC_x}(r,d,(D_i))$ represents
the \'etale sheafification of
\par\noindent
${\mathcal M}^{\balpha}_{\cC_x}(r,d,(D_i))$.
We can define a morphism
\begin{align*}
 \det:& M^{\balpha}_{\cC_x}(r,d,(D_i))\longrightarrow
 M_{\cC_x}(1,d,(D_i))\times_{\cN^{(n)}_1(d,(D_i))}\cN^{(n)}_r(d,(D_i)) \\
 &(E,\nabla,\{l^{(i)}_j\})\mapsto
 \left((\det(E),\det(\nabla)),\pi(E,\nabla,\{l^{(i)}_j\})\right).
\end{align*}
Here $M_{\cC_x}(1,d,(D_i))$ is the moduli space of
line bundles with a connection.
We can construct $M_{\cC_x}(1,d,(D_i))$ as an affine space
bundle over $\Pic^d_{\cC_x} \times \cN^{(n)}_1(d,(D_i)) $
whose fiber is isomorphic to
$H^0(\Omega^1_{\cC_x})$.
So $M_{\cC_x}(1,d,(D_i))$ is smooth over $\cN^{(n)}_1(d,(D_i))$.
Let $A$ be an artinian local ring with maximal ideal $m$
and residue field $k=A/m$.
Assume that an ideal $I$ of $A$ such that $mI=0$ and
a commutative diagram
\[
 \begin{CD}
  \Spec(A/I) @>f>> {\mathcal M}^{\balpha}_{\cC_x}(r,d,(D_i)) \\
  @VVV @VV\det V \\
  \Spec(A) @>g>>
  M_{\cC_x}(1,d,(D_i))\times_{\cN^{(n)}_1(d,(D_i))}\cN^{(n)}_r(d,(D_i))
 \end{CD}
\]
are given.
$f$ corresponds to an $A/I$-valued point
$(E,\nabla,\{l^{(i)}_j\})\in{\mathcal M}^{\balpha}_{\cC_x}(r,d,(D_i))(A/I)$.
Put $(\overline{E},\overline{\nabla},\{\overline{l}^{(i)}_j\}):=
(E,\nabla,\{l^{(i)}_j\})\otimes A/m$.
Set \small
\begin{align*}
 {\mathcal F}^0&:=\left\{ a\in{\mathcal End}(\overline{E})
 \left| \text{$\Tr(a)=0$ and $a|_{(D_i)_k}(\overline{l}^{(i)}_j)\subset \overline{l}^{(i)}_j$
 for any $i,j$} \right\}\right., \\
 {\mathcal F}^1&:=\left\{
 b\in{\mathcal End}(\overline{E})\otimes\Omega^1_{(\cC_x)_k}(\tilde{D}_k)
 \left| \text{$\Tr(b)=0$ and $b|_{(D_i)_k}(\overline{l}^{(i)}_j)\subset
 \overline{l}^{(i)}_{j+1}\otimes\Omega^1_{(\cC_x)_k}(\tilde{D}_k)$ for any $i,j$}
 \right\}\right., \\
 \nabla_{{\mathcal F}^{\bullet}}&:{\mathcal F}^0
 \ni a\mapsto\overline{\nabla}a-a\overline{\nabla}\in{\mathcal F}^1.
\end{align*}\normalsize
Then we can see by the same argument as that of
Proposition \ref{prop-smoothness-det} that
there is an obstruction class
$\omega(E,\nabla,\{l^{(i)}_j\})\in
\mathbf{H}^2({\mathcal F}^{\bullet})\otimes I$
such that $\omega(E,\nabla,\{l^{(i)}_j\})=0$
if and only if
$(E,\nabla,\{l^{(i)}_j\})$ can be
lifted to an $A$-valued point
$(\tilde{E},\tilde{\nabla},\{\tilde{l}^{(i)}_j\})\in
{\mathcal M}^{\balpha}_{\cC_x}(r,d,(D_i))(A)$
such that
$(\tilde{E},\tilde{\nabla},\{\tilde{l}^{(i)}_j\})\otimes A/I
\cong(E,\nabla,\{l^{(i)}_j\})$ and
$\det(\tilde{E},\tilde{\nabla},\{\tilde{l}^{(i)}_j\})=g$.
Since $(\overline{E},\overline{\nabla},\{\overline{l}^{(i)}_j\})$
is $\balpha$-stable, we can see by the proof of
Proposition \ref{prop-smoothness-det} that
$\mathbf{H}^2({\mathcal F}^{\bullet})=0$.
So $\det$ is a smooth morphism.
Since $M_{\cC_x}(1,d,(D_i))$ is smooth over $\cN^{(n)}_1(d,(D_i))$,
we can see that
$M^{\balpha}_{\cC_x}(r,d,(D_i))$ is smooth over $\cN^{(n)}_r(d,(D_i))$.
\end{proof}

\section{Relative Symplectic form on the moduli space}

\begin{Theorem}\label{symplectic-form}
There exists a relative symplectic form
\[
 \omega\in H^0(M^{\balpha}_{D/\cC/S}(r,d,(m_i)),
 \Omega^2_{M^{\balpha}_{D/\cC/S}(r,d,(m_i))/\cN^{(n)}_r(d,D)}).
\]
\end{Theorem}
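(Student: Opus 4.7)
The plan is to construct $\omega$ fiberwise from the natural self-duality of the deformation complex $\cF^\bullet$ that computes the tangent space, to check non-degeneracy via Serre duality, and to deduce closedness by reducing to the regular singular case through Theorem \ref{relative-smooth}.

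For the construction, at a closed point $x=(E,\nabla,\{l^{(i)}_j\})$, Proposition \ref{prop-moduli-dimension} identifies the relative tangent space with $\bH^1(\cF^\bullet)$. The key observation is that although $b\in\cF^1$ has a pole of order $m_i$ at $t_i$, the condition $b|_{m_it_i}(l^{(i)}_j)\subset l^{(i)}_{j+1}\otimes\Omega^1_{\cC_x}(D)$ makes $b$ strictly lower the parabolic filtration while $a\in\cF^0$ preserves it; in a local frame adapted to the filtration, $a\circ b$ is strictly block-upper-triangular modulo regular terms, so $\Tr(a\circ b)$ has no pole at $t_i$. Hence composition-and-trace defines a pairing $\cF^0\otimes\cF^1\ra\Omega^1_{\cC_x}$ (landing in $\Omega^1_{\cC_x}$ rather than $\Omega^1_{\cC_x}(D)$). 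Representing two tangent vectors by \v{C}ech hypercocycles $(\{u^{(k)}_{\alpha\beta}\},\{v^{(k)}_\alpha\})$ ($k=1,2$), I define
\begin{equation*}
\omega_x(v_1,v_2) := \Bigl[\bigl\{\Tr(u^{(1)}_{\alpha\beta}\,v^{(2)}_\beta) - \Tr(v^{(1)}_\alpha\,u^{(2)}_{\alpha\beta})\bigr\}\Bigr] \in H^1(\cC_x,\Omega^1_{\cC_x}) \cong \mathbf{C}.
\end{equation*}
Routine checks of independence of cocycle choices, skew-symmetry and functoriality globalize this to a relative $2$-form $\omega$ on $M^{\balpha}_{D/\cC/S}(r,d,(m_i))$ over $\cN^{(n)}_r(d,D)$. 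Non-degeneracy follows from the isomorphisms $\cF^1 \cong (\cF^0)^\vee \otimes \Omega^1_{\cC_x}$ and $\cF^0 \cong (\cF^1)^\vee \otimes \Omega^1_{\cC_x}$ already recalled in the proof of Proposition \ref{prop-moduli-dimension}, which exhibit $\cF^\bullet$ as its own Serre dual up to a shift; the induced pairing on $\bH^1(\cF^\bullet)$ is then perfect by Serre duality.

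The main obstacle is closedness $d\omega=0$, which I would handle as the introduction indicates. Fix $x\in S$; by Theorem \ref{relative-smooth}, the moduli space of interest is the fiber, over the point corresponding to the divisor $D=\sum_i m_i(\tilde t_i)_x$, of the smooth family $M^{\balpha}_{\cC_x}(r,d,(D_i))\ra\cN^{(n)}_r(d,(D_i))$ in which each $D_i$ is allowed to vary in $\Hilb^{m_i}_{\cC_x}$. The cohomological construction above is entirely functorial in $(E,\nabla,\{l^{(i)}_j\})$, so $\omega$ extends verbatim to a relative $2$-form $\tilde\omega$ on the whole smooth family, and its relative exterior derivative $d\tilde\omega$ is a global section of the locally free sheaf $\Omega^3_{M^{\balpha}_{\cC_x}(r,d,(D_i))/\cN^{(n)}_r(d,(D_i))}$. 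Over the Zariski-open locus of $\cN^{(n)}_r(d,(D_i))$ lying above $H^\circ$, where every $(D_i)_h$ consists of $m_i$ distinct reduced points, each connection has only simple poles, hence is a regular singular parabolic connection, and there the closedness of the analogous $2$-form was already established in \cite{Inaba-1}. Thus $d\tilde\omega$ vanishes on a Zariski-dense open subset of the smooth total space and therefore vanishes identically; specializing to $\bnu\in\cN^{(n)}_r(d,D)\hookrightarrow\cN^{(n)}_r(d,(D_i))$ and varying $x\in S$ gives $d\omega=0$, completing the proof that $\omega$ is a relative symplectic form.
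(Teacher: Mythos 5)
Your overall strategy coincides with the paper's: a trace pairing built from the self-duality $\cF^1\cong(\cF^0)^{\vee}\otimes\Omega^1_{\cC_x}$, nondegeneracy via Serre duality and the five lemma, and closedness by extending the form to the family of Theorem \ref{relative-smooth} and reducing to the regular singular case of \cite{Inaba-1}. However, the central formula you write down is not well-defined, and this is a genuine gap. The $1$-cochain $\tau_{\alpha\beta}:=\Tr(u^{(1)}_{\alpha\beta}\circ v^{(2)}_{\beta})-\Tr(v^{(1)}_{\alpha}\circ u^{(2)}_{\alpha\beta})$ is \emph{not} a \v{C}ech cocycle with values in $\Omega^1_{\cC_x}$: using $u_{\alpha\gamma}=u_{\alpha\beta}+u_{\beta\gamma}$ and $v_{\beta}-v_{\alpha}=\nabla_{{\mathcal F}^{\bullet}}(u_{\alpha\beta})$ one finds
\[
 (\delta\tau)_{\alpha\beta\gamma}
 =-\,d\,\Tr\bigl(u^{(1)}_{\alpha\beta}\circ u^{(2)}_{\beta\gamma}\bigr),
\]
which does not vanish in general, so $[\tau]$ simply does not exist in $H^1(\cC_x,\Omega^1_{\cC_x})$ and the ``routine checks of independence of cocycle choices'' cannot go through. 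The repair is to keep the missing \v{C}ech-degree-two component: the pair $\bigl(\{\Tr(u^{(1)}_{\alpha\beta}\circ u^{(2)}_{\beta\gamma})\},\,-\{\tau_{\alpha\beta}\}\bigr)$ is a hypercocycle for the relative de Rham complex ${\mathcal O}\rightarrow\Omega^1$, and the pairing takes values in $\mathbf{H}^2(\Omega^{\bullet}_{\cC\times_SU/U})\cong H^0(U,{\mathcal O}_U)$, which on each fiber is the one-dimensional space you had in mind. This is exactly the formula used in the paper's Proposition \ref{nondegenerate-form}; your observation that $\Tr(a\circ b)$ has no pole for $a\in\cF^0$, $b\in\cF^1$ is correct, but it only controls the coefficient sheaf, not the cocycle condition.

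Two further remarks. Skew-symmetry is not a routine sign check from the formula: the paper obtains it by identifying $\omega(v,v)$ with the obstruction to lifting the first-order deformation $(E^v,\nabla^v,\{(l^v)^{(i)}_j\})$ to $\mathbf{C}[t]/(t^3)$, which vanishes because the fibers are smooth (Theorem \ref{smoothness-thm}); with the corrected hypercohomological formula you would either reproduce that argument or verify antisymmetry of the cup product directly. Your closedness argument is essentially the paper's (Proposition \ref{prop-d-closed}): extend $\omega$ to $\tilde\omega$ on $M^{\balpha}_{\cC_x}(r,d,(D_i))$ over $\cN^{(n)}_r(d,(D_i))$, note that over the locus above $H^{\circ}$ the fibers are moduli of regular singular parabolic connections (the paper restricts to generic $\bnu$ there, where every connection is irreducible and stable, before quoting [\cite{Inaba-1}, Propositions 7.2 and 7.3]), and conclude $d\tilde\omega=0$ by density, using that the total space is smooth over the base; this part of your proposal is sound once the construction of $\omega$ itself is fixed.
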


We prove Theorem \ref{symplectic-form} in several steps.

\begin{Proposition}\label{nondegenerate-form}
There exists a skew symmetric nondegenerate pairing
\[
 \omega:\Theta_{M^{\balpha}_{D/\cC/S}(r,d,(m_i))/\cN^{(n)}_r(d,D)}
 \times\Theta_{M^{\balpha}_{D/\cC/S}(r,d,(m_i))/\cN^{(n)}_r(d,D)}
 \longrightarrow
 {\mathcal O}_{M^{\balpha}_{D/\cC/S}(r,d,(m_i))}.
\]
\end{Proposition}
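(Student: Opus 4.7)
The plan is to construct $\omega$ pointwise via Serre duality on the hypercohomology complex computing the relative tangent space, and then verify that the construction is both functorial (so that it gives a global section) and non-degenerate.

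\medskip

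\noindent\textbf{Step 1: Tangent space and the complex.}
By the computation in the proof of Proposition \ref{prop-moduli-dimension}, for any geometric point $x=(E,\nabla,\{l^{(i)}_j\}) \in M^{\balpha}_{D/\cC/S}(r,d,(m_i))_{\bnu}$ the fiber of the relative tangent sheaf is canonically isomorphic to $\mathbf{H}^1({\mathcal F}^{\bullet})$, where ${\mathcal F}^{\bullet}=[{\mathcal F}^0 \xrightarrow{\nabla_{{\mathcal F}^{\bullet}}} {\mathcal F}^1]$ is the two-term complex introduced there. The isomorphism $v\mapsto \sigma_x(v)=[(\{u_{\alpha\beta}\},\{v_\alpha\})]$ is given by Čech representatives coming from a deformation over $\mathbf{C}[\epsilon]$.

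\medskip

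\noindent\textbf{Step 2: Self-duality of ${\mathcal F}^{\bullet}$.}
Consider the trace pairing
\[
 \psi:{\mathcal F}^0\otimes{\mathcal F}^1\longrightarrow \Omega^1_{\cC_x},\qquad
 (a,b)\mapsto \Tr(a\circ b).
\]
A priori $\Tr(a\circ b)$ is a section of $\Omega^1_{\cC_x}(D)$. However, in a local basis adapted to the filtration near $t_i$, the condition $a|_{m_it_i}(l^{(i)}_j)\subset l^{(i)}_j$ and $b|_{m_it_i}(l^{(i)}_j)\subset l^{(i)}_{j+1}\otimes\Omega^1_{\cC_x}(D)$ force the matrix of $a\circ b$ modulo $z_i^{m_i}$ to be strictly triangular, so its trace vanishes modulo $z_i^{m_i}$. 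Since $\Omega^1_{\cC_x}(D)$ has a pole of order exactly $m_i$ at $t_i$, this gives $\psi(a,b)\in\Omega^1_{\cC_x}$. The pairing $\psi$ induces isomorphisms
\[
 {\mathcal F}^1\cong({\mathcal F}^0)^{\vee}\otimes\Omega^1_{\cC_x},\qquad
 {\mathcal F}^0\cong({\mathcal F}^1)^{\vee}\otimes\Omega^1_{\cC_x},
\]
(as already used in the proof of Proposition \ref{prop-moduli-dimension}), and a direct check shows that the differential $\nabla_{{\mathcal F}^{\bullet}}$ is, up to sign, its own transpose under these identifications. In other words, ${\mathcal F}^{\bullet}$ is a Serre-self-dual complex on $\cC_x$.

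\medskip

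\noindent\textbf{Step 3: The pairing on $\mathbf{H}^1({\mathcal F}^{\bullet})$.}
Given two Čech representatives $[(\{u_{\alpha\beta}\},\{v_\alpha\})]$ and $[(\{u'_{\alpha\beta}\},\{v'_\alpha\})]$ of classes in $\mathbf{H}^1({\mathcal F}^{\bullet})$, I would set
\[
 \omega_x(v,v'):=\Big[\big\{\Tr(u_{\alpha\beta}\,v'_\beta) - \Tr(v_\alpha\,u'_{\alpha\beta})\big\}\Big]\in H^1(\cC_x,\Omega^1_{\cC_x})\stackrel{\Tr}\cong \mathbf{C}.
\]
A direct Čech computation using the cocycle relations $u_{\beta\gamma}-u_{\alpha\gamma}+u_{\alpha\beta}=0$ and $v_\beta-v_\alpha=[\nabla,u_{\alpha\beta}]$, together with the Leibniz rule $\Tr([\nabla,XY])=d\Tr(XY)$, shows that $\{\Tr(u_{\alpha\beta}v'_\beta-v_\alpha u'_{\alpha\beta})\}$ is indeed a Čech 1-cocycle in $\Omega^1_{\cC_x}$, and its class in $H^1(\Omega^1_{\cC_x})$ is independent of the choices of $(\{u_{\alpha\beta}\},\{v_\alpha\})$ and $(\{u'_{\alpha\beta}\},\{v'_\alpha\})$ modulo Čech coboundaries in ${\mathcal F}^{\bullet}$. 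Antisymmetry, $\omega_x(v,v')=-\omega_x(v',v)$, follows from the identity $\Tr(X\,Y)=\Tr(Y\,X)$ combined with a similar coboundary manipulation.

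\medskip

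\noindent\textbf{Step 4: Non-degeneracy.}
This is the step I expect to be the main technical point. By Serre duality on the curve $\cC_x$ applied to the self-dual two-term complex ${\mathcal F}^{\bullet}$ of Step 2, one has a canonical perfect pairing
\[
 \mathbf{H}^1({\mathcal F}^{\bullet})\otimes \mathbf{H}^1({\mathcal F}^{\bullet})\longrightarrow \mathbf{H}^2(\Omega^1_{\cC_x}[-1])\cong H^1(\Omega^1_{\cC_x})\cong\mathbf{C},
\]
which, unwound in Čech cohomology, is exactly the pairing $\omega_x$ constructed in Step 3. Hence $\omega_x$ is non-degenerate. (One should also verify that $\mathbf{H}^0({\mathcal F}^{\bullet})=\mathbf{C}$ and $\mathbf{H}^2({\mathcal F}^{\bullet})=\mathbf{C}$ do not obstruct this; these facts already appeared in the proof of Proposition \ref{prop-moduli-dimension}, and in the $\det$-fixed variant in the proof of Proposition \ref{prop-smoothness-det}.)

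\medskip

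\noindent\textbf{Step 5: Globalization.}
All ingredients—the complex ${\mathcal F}^{\bullet}$, the trace pairing $\psi$, Čech hypercohomology, and the Serre duality map—are functorial and behave well in flat families. Starting from the universal parabolic connection over an étale cover of $M^{\balpha}_{D/\cC/S}(r,d,(m_i))$ (which exists because this scheme represents the étale sheafification of the moduli functor, by Theorem \ref{thm-moduli-exists}), the same formulas define a section
\[
 \omega\in H^0\bigl(M^{\balpha}_{D/\cC/S}(r,d,(m_i)),\,\wedge^2\Omega^1_{M^{\balpha}_{D/\cC/S}(r,d,(m_i))/\cN^{(n)}_r(d,D)}\bigr),
\]
whose value at each geometric point $x$ is the pairing $\omega_x$ of Step 3. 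Non-degeneracy from Step 4, checked pointwise, then gives the skew-symmetric non-degenerate pairing required.
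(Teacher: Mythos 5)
Your overall strategy (trace pairing on the complex ${\mathcal F}^{\bullet}$, a \v{C}ech cup-product formula, self-duality of ${\mathcal F}^{\bullet}$ for non-degeneracy, and globalization via the universal family on an \'etale cover) is the same as the paper's, but Step 3 contains a genuine error: the cochain $\{\Tr(u_{\alpha\beta}v'_{\beta})-\Tr(v_{\alpha}u'_{\alpha\beta})\}$ is \emph{not} a \v{C}ech $1$-cocycle with values in $\Omega^1_{\cC_x}$. Using $u_{\alpha\gamma}=u_{\alpha\beta}+u_{\beta\gamma}$, $v_{\beta}-v_{\alpha}=[\nabla,u_{\alpha\beta}]$, $v'_{\gamma}-v'_{\beta}=[\nabla,u'_{\beta\gamma}]$ and $\Tr([\nabla,X]Y)+\Tr(X[\nabla,Y])=d\Tr(XY)$, its \v{C}ech coboundary on $U_{\alpha\beta\gamma}$ comes out to $-d\,\Tr(u_{\alpha\beta}\circ u'_{\beta\gamma})$, which has no reason to vanish. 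Consequently your $\omega_x$ does not define a class in $H^1(\cC_x,\Omega^1_{\cC_x})$, and well-definedness under change of representatives fails for the same reason. The repair is exactly what the paper does: keep the degree-$(2,0)$ component $\{\Tr(u_{\alpha\beta}\circ u'_{\beta\gamma})\}$ as part of the cocycle and define the pairing with values in the hypercohomology $\mathbf{H}^2$ of the two-term relative de Rham complex $[\cO\to\Omega^1_{\cC/U}]$ (which is canonically $\cO_U$), i.e.\ $\omega_U=[\,\{\Tr(u_{\alpha\beta}u'_{\beta\gamma})\},\,-\{\Tr(u_{\alpha\beta}v'_{\beta})-\Tr(v_{\alpha}u'_{\alpha\beta})\}\,]$; there the offending term is absorbed by the de Rham differential. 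Your skew-symmetry claim would then also have to be redone at the level of this corrected cocycle.

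Two smaller remarks. For skew-symmetry the paper does not perform the coboundary manipulation you allude to: it identifies $\omega(v,v)$ with the obstruction to lifting the first-order deformation over $\mathbf{C}[t]/(t^2)$ to $\mathbf{C}[t]/(t^3)$ and uses smoothness of $M^{\balpha}_{D/\cC/S}(r,d,(m_i))_{\bnu}$ to conclude $\omega(v,v)=0$; a direct graded-commutativity argument is possible but is not free, so you should either carry it out or adopt the paper's argument. For non-degeneracy your appeal to Serre duality for the self-dual complex is the right idea, but it must be checked that the map $\xi$ induced by the (corrected) pairing is compatible with the duality isomorphisms; the paper does this by the five lemma applied to the hypercohomology long exact sequence with the vertical isomorphisms coming from ${\mathcal F}^0\cong({\mathcal F}^1)^{\vee}\otimes\Omega^1_{\cC_x}$, ${\mathcal F}^1\cong({\mathcal F}^0)^{\vee}\otimes\Omega^1_{\cC_x}$ and Serre duality, and you should include that verification rather than assert it.
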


\begin{proof}
There are an affine scheme $U$
and an \'{e}tale surjective morphism
$p:U\rightarrow M^{\balpha}_{D/\cC/S}(r,d,(m_i))$
which factors through
${\mathcal M}^{\balpha}_{D/\cC/S}(r,d,(m_i))$,
namely there is a universal family
$(\tilde{E},\tilde{\nabla},\{\tilde{l}^{(i)}_j\})$
on $\cC\times_S U$.
We define a complex ${\mathcal F}^{\bullet}$
on $\cC\times_SU$ by
\begin{align*}
 {\mathcal F}^0&:=\left\{ a\in{\mathcal End}(\tilde{E}) \left|
 a|_{m_i(\tilde{t}_i)_U}(\tilde{l}^{(i)}_j)\subset\tilde{l}^{(i)}_j\;
 \text{for any $i,j$}
 \right\}\right., \\
 {\mathcal F}^1&:=\left\{
 b\in{\mathcal End}(\tilde{E})\otimes\Omega^1_{\cC/S}(D)
 \left|
 b|_{m_i(\tilde{t}_i)_U}(\tilde{l}^{(i)}_j)\subset
 \tilde{l}^{(i)}_{j+1}\otimes\Omega^1_{\cC/S}(D)
 \;\text{for any $i,j$}
 \right\}\right., \\
 \nabla_{{\mathcal F}^{\bullet}}&:{\mathcal F}^0
 \ni a \mapsto \tilde{\nabla}\circ a-a\circ\tilde{\nabla}
 \in {\mathcal F}^1.
\end{align*}
Let $\pi_U:\cC\times_SU\rightarrow U$ be the projection.
Then we have
\[
 \Theta_{U/\cN^{(n)}_r(d,D)}\cong
 p^*(\Theta_{M^{\balpha}_{D/\cC/S}(r,d,(m_i))/\cN^{(n)}_r(d,D)})
 \cong \mathbf{R}^1(\pi_U)_*({\mathcal F}^{\bullet}).
\]
Take an affine open covering
$\cC\times_SU=\bigcup_{\alpha}U_{\alpha}$
and a member
$v\in H^0(U,\mathbf{R}^1(\pi_U)_*({\mathcal F}^{\bullet}))
=\mathbf{H}^1(\cC\times_SU,{\mathcal F}^{\bullet}_U)$.
$v$ is given by
$[(\{u_{\alpha\beta}\},\{v_{\alpha}\})]$,
where
$\{u_{\alpha\beta}\}\in C^1(\{U_{\alpha}\},{\mathcal F}^0_U)$,
$\{v_{\alpha}\}\in C^0(\{U_{\alpha}\},{\mathcal F}^1_U)$
and
\[
 d\{u_{\alpha\beta}\}
 =\{u_{\beta\gamma}-u_{\alpha\gamma}+u_{\alpha\beta}\}=0,\;
 \nabla_{{\mathcal F}^{\bullet}}(\{u_{\alpha\beta}\})
 =\{v_{\beta}-v_{\alpha}\}=d\{v_{\alpha}\}.
\]
We define a pairing
\[
 \omega_U:\mathbf{H}^1(\cC\times_SU,{\mathcal F}^{\bullet})
 \times \mathbf{H}^1(\cC\times_SU,{\mathcal F}^{\bullet})
 \longrightarrow
 \mathbf{H}^2(\cC\times_SU,\Omega^{\bullet}_{\cC\times_SU/U})
 \cong H^0(U,{\mathcal O}_U)
\]
by
\[
 \omega_U([(\{u_{\alpha\beta}\},\{v_{\alpha}\})],
 [(\{u'_{\alpha\beta}\},\{v'_{\alpha}\})]):=
 [\{\Tr(u_{\alpha\beta}\circ u'_{\beta\gamma})\},
 -\{\Tr(u_{\alpha\beta}\circ v'_{\beta})-\Tr(v_{\alpha}\circ u'_{\alpha\beta})
 \}].
\]
By construction, $\omega_U$ is functorial in $U$.
So $\omega_U$ descends to a pairing
\[
 \omega:\Theta_{M^{\balpha}_{D/\cC/S}(r,d,(m_i))/\cN^{(n)}_r(d,D)}
 \times\Theta_{M^{\balpha}_{D/\cC/S}(r,d,(m_i))/\cN^{(n)}_r(d,D)}
 \longrightarrow {\mathcal O}_{M^{\balpha}_{D/\cC/S}(r,d,(m_i))}.
\]

Take any $\mathbf{C}$-valued point
$x=(E,\nabla,\{l^{(i)}_j\})\in
M^{\balpha}_{D/\cC/S}(r,d,(m_i))(\mathbf{C})$
and put $\bnu:=\pi(x)$.
Then a tangent vector
$v\in\Theta_{M^{\balpha}_{D/\cC/S}(r,d,(m_i))/\cN^{(n)}_r(d,D)}(x)
=\Theta_{M^{\balpha}_{D/\cC/S}(r,d,(m_i))_{\bnu}}(x)$
corresponds to a $\mathbf{C}[t]/(t^2)$-valued point
$(E^v,\nabla^v,\{(l^v)^{(i)}_j\})\in
{\mathcal M}^{\balpha}_{D/\cC/S}(r,d,(m_i))_{\bnu}(\mathbf{C}[t]/(t^2))$
such that
$(E^v,\nabla^v,\{(l^v)^{(i)}_j\})\otimes\mathbf{C}[t]/(t)
\cong(E,\nabla,\{l^{(i)}_j\})$.
We can check that
$\omega(v,v)$ is nothing but the obstruction class
for the lifting of $(E^v,\nabla^v,\{(l^v)^{(i)}_j\})$
to a member of
$M^{\balpha}_{D/\cC/S}(r,d,(m_i))_{\bnu}(\mathbf{C}[t]/(t^3))$.
Since $M^{\balpha}_{D/\cC/S}(r,d,(m_i))_{\bnu}$ is smooth,
we have $\omega(v,v)=0$.
So $\omega$ is a skew symmetric bilinear pairing.
Let
$\xi:\Theta_{M^{\balpha}_{D/\cC/S}(r,d,(m_i))/\cN^{(n)}_r(d,D)}
\rightarrow
\Theta_{M^{\balpha}_{D/\cC/S}(r,d,(m_i))/\cN^{(n)}_r(d,D)}^{\vee}$
be the homomorphism induced by $\omega$.
For any $\mathbf{C}$-valued point
$x\in M^{\balpha}_{D/\cC/S}(r,d,(m_i))(\mathbf{C})$,
\small
\[
 \xi(x):{\bf H}^1({\mathcal F}^{\bullet}(x))=
 \Theta_{M^{\balpha}_{D/\cC/S}(r,d,(m_i))/\cN^{(n)}_r(d,D)}(x)
 \longrightarrow
 \Theta_{M^{\balpha}_{D/\cC/S}(r,d,(m_i))/\cN^{(n)}_r(d,D)}^{\vee}(x)
 ={\mathbf H}^1({\mathcal F}^{\bullet}(x))^{\vee}
\]\normalsize
induces a commutative diagram
\small\[
 \begin{CD}
  H^0({\mathcal F}^0(x)) @>>> H^0({\mathcal F}^1(x)) @>>>
  \mathbf{H}^1({\mathcal F}^{\bullet}(x)) @>>>
  H^1({\mathcal F}^0(x)) @>>> H^1({\mathcal F}^1(x)) \\
  @V b_1 VV @V b_2 VV @V\xi VV @V b_3 VV @V b_4 VV \\
  H^1({\mathcal F}^1(x))^{\vee} @ >>> H^1({\mathcal F}^0(x))^{\vee}
  @>>> \mathbf{H}^1({\mathcal F}^1(x))^{\vee} @>>>
  H^0({\mathcal F}^1(x))^{\vee} @>>> H^1({\mathcal F}^0(x))^{\vee},
 \end{CD}
\]\normalsize
where $b_1,b_2,b_3,b_4$ are isomorphisms induced by
${\mathcal F}^0(x)\cong{\mathcal F}^1(x)^{\vee}\otimes\Omega^1_{\cC_x}$,
${\mathcal F}^1(x)\cong{\mathcal F}^0(x)^{\vee}\otimes\Omega^1_{\cC_x}$
and Serre duality.
Thus $\xi$ becomes an isomorphism by the five lemma.
\end{proof}

\begin{Proposition}\label{prop-d-closed}
 For the $2$-form $\omega$ constructed in
 Proposition \ref{nondegenerate-form},
 we have $d\omega=0$.
\end{Proposition}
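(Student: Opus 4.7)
The plan is to reduce the closedness to the regular singular case by extending $\omega$ to the total space of Theorem \ref{relative-smooth}, which allows the polar divisors $D_i$ themselves to vary in the Hilbert scheme $H'$. The crucial geometric observation is that inside $\cN^{(n)}_r(d,(D_i))$ there is a Zariski dense open subset $V\subset H'$ where each $D_i$ is a sum of $m_i$ distinct points, and over this locus every parabolic connection is regular singular at a total of $\sum_i m_i$ simple poles. Closedness of the corresponding symplectic form on the regular singular moduli space is already established in \cite{Inaba-1}. Our special fiber, where $D_i=m_i\tilde t_i$, is a specialization of this generic situation, and closedness is a closed condition on the smooth ambient family.

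First, I would construct, by literally the same \v{C}ech-cocycle recipe used in Proposition \ref{nondegenerate-form}, a relative $2$-form
\[
 \tilde\omega\in H^0\bigl(M^{\balpha}_{\cC_x}(r,d,(D_i)),\,\Omega^2_{M^{\balpha}_{\cC_x}(r,d,(D_i))/\cN^{(n)}_r(d,(D_i))}\bigr).
\]
Concretely, on an \'etale cover $p:U\to M^{\balpha}_{\cC_x}(r,d,(D_i))$ carrying a universal family $(\tilde E,\tilde\nabla,\{\tilde l^{(i)}_j\})$ over $\cC_x\times U$, one forms the complex ${\mathcal F}^\bullet$ with $\mathcal F^0\subset\mathcal End(\tilde E)$ preserving each filtration along $(D_i)_U$ and $\mathcal F^1\subset\mathcal End(\tilde E)\otimes\Omega^1_{\cC_x\times U/U}(\tilde D_U)$ shifting the filtration, and uses the trace pairing on \v{C}ech representatives exactly as before. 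Smoothness of $M^{\balpha}_{\cC_x}(r,d,(D_i))$ over $\cN^{(n)}_r(d,(D_i))$ (Theorem \ref{relative-smooth}) plus the five-lemma argument of Proposition \ref{nondegenerate-form} show $\tilde\omega$ is non-degenerate. By construction $\tilde\omega$ is natural under base change, so its pullback along the closed embedding $\{D_i=m_i\tilde t_i\}\hookrightarrow H'$ recovers the original $\omega$.

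Now consider $d\tilde\omega\in H^0\bigl(M^{\balpha}_{\cC_x}(r,d,(D_i)),\Omega^3_{M/\cN^{(n)}_r(d,(D_i))}\bigr)$. Over the open subscheme $V\subset H'$ where every $(D_i)_h$ is reduced, the pullback of $\tilde\omega$ to $M^{\balpha}_{\cC_x}(r,d,(D_i))|_V$ is the symplectic form on a moduli space of regular singular parabolic connections (the $n$ points are replaced by $\sum m_i$ simple poles), and hence by the closedness proved in \cite{Inaba-1} we have $d\tilde\omega|_V=0$. Since $d\tilde\omega$ is an algebraic section of a locally free sheaf on the smooth irreducible scheme $M^{\balpha}_{\cC_x}(r,d,(D_i))$ (smooth over the smooth base $\cN^{(n)}_r(d,(D_i))$), vanishing on the dense open preimage of $V$ forces $d\tilde\omega=0$ globally. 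Restricting to the fiber over the point $D_i=m_i\tilde t_i$ and using that exterior differentiation commutes with restriction to a smooth subvariety yields $d\omega=0$ on $M^{\balpha}_{D/\cC/S}(r,d,(m_i))$, as desired.

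The main obstacle I anticipate is verifying that the construction of $\tilde\omega$ truly globalizes over the variation of divisors: one must check that the parabolic conditions defining $\mathcal F^0,\mathcal F^1$ are compatible with the flat family $(D_i)_{\cN^{(n)}_r(d,(D_i))}$, that the trace pairing still lands in $\mathcal O$ (not in a twist by the boundary of $D$) after allowing points to collide, and that the identification $\Theta_{M/\cN^{(n)}_r(d,(D_i))}\cong \mathbf R^1(\pi_U)_*\mathcal F^\bullet$ is natural in the base. A related subtlety is ensuring irreducibility (or at least connectedness of fibers) of $M^{\balpha}_{\cC_x}(r,d,(D_i))$ along the specialization, so that the Zariski closure argument reaches the special fiber $D_i=m_i\tilde t_i$; if it does not, one instead argues along analytic paths in $H'$ specializing a generic configuration to the clustered one and invokes holomorphicity of $\tilde\omega$.
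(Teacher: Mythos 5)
Your proposal is correct and follows essentially the same route as the paper: the authors also extend $\omega$ to a relative $2$-form $\tilde\omega$ on $M^{\balpha}_{\cC_x}(r,d,(D_i))$ over $\cN^{(n)}_r(d,(D_i))$ (Theorem \ref{relative-smooth}) via the identical \v{C}ech/trace construction, identify its restriction over generic $\bnu$ (reduced divisors) with the regular singular symplectic form of \cite{Inaba-1} to get $d\tilde\omega=0$ there, and then use smoothness over the base plus density to conclude $d\tilde\omega=0$ globally and hence $d\omega=0$ on the special fiber. The compatibility and density subtleties you flag are handled in the paper exactly as you anticipate (the generic fibers are dense by smoothness/flatness, and for generic $\bnu$ all connections are irreducible and stable, so the identification with \cite{Inaba-1} is immediate).
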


\begin{proof}
Take any point $x\in S$.
We will show that
$d\omega|_{M^{\balpha}_{D/\cC/S}(r,d,(m_i))_x}=0$.
We use the notation in Theorem \ref{relative-smooth}.
Note that the relative moduli space
$M^{\balpha}_{\cC_x}(r,d,(D_i))$ is smooth over $\cN^{(n)}_r(d,(D_i))$.
There is an affine scheme $U$ and an \'etale surjective morphism
$p:U\rightarrow M^{\balpha}_{\cC_x}(r,d,(D_i))$
which factors through ${\mathcal M}^{\balpha}_{\cC_x}(r,d,(D_i))$,
namely there exists a universal family
$(\tilde{E},\tilde{\nabla},\{\tilde{l}^{(i)}_j\})$
on $\cC_x\times U$.
Set
\begin{align*}
 \tilde{\mathcal F}^0&:=\left\{ a\in{\mathcal End}(\tilde{E})
 \left| \text{$a|_{(D_i)_U}(\tilde{l}^{(i)}_j)\subset\tilde{l}^{(i)}_j$ for any $i,j$}
 \right\}\right., \\
 \tilde{\mathcal F}^1&:=\left\{
 b\in{\mathcal End}(\tilde{E})\otimes
 \Omega^1_{\cC_x\times U/U}(\tilde{D}_U)
 \left| \text{$b|_{(D_i)_U}(\tilde{l}^{(i)}_j)\subset\tilde{l}^{(i)}_{j+1}$ for any $i,j$}
 \right\}\right., \\
 \nabla_{\tilde{\mathcal F}^{\bullet}}&:\tilde{\mathcal F}^0
 \ni a \mapsto \tilde{\nabla}a-a\tilde{\nabla}
 \in\tilde{\mathcal F}^1.
\end{align*}
Then we have a canonical isomorphism
$H^0(U,p^*(\Theta_{M^{\balpha}_{\cC_x}(r,d,(D_i))/\cN^{(n)}_r(d,(D_i))}))
\cong\mathbf{H}^1(\tilde{\mathcal F}^{\bullet})$.
We can define a skew symmetric pairing
\begin{gather*}
 \tilde{\omega}_U:\mathbf{H}^1(\tilde{\mathcal F}^{\bullet})
 \times\mathbf{H}^1(\tilde{\mathcal F}^{\bullet})
 \longrightarrow
 \mathbf{H}^2(U,\Omega^1_{\cC_x\times U/U})\cong{\mathcal O}_U \\
 ([(\{u_{\alpha\beta}\},\{v_{\alpha}\})],[(\{u'_{\alpha\beta}\},\{v'_{\alpha}\})])
 \mapsto
 [(\{ \Tr (u_{\alpha\beta}\circ u'_{\beta\gamma}) \},
 -\{\Tr (u_{\alpha\beta}\circ v'_{\beta})-\Tr (v_{\alpha}\circ u'_{\alpha\beta})\}].
\end{gather*}
Since $\tilde{\omega}_U$ is functorial in $U$,
it descends to a $2$-form
$$
\tilde{\omega}\in H^0(M^{\balpha}_{\cC_x}(r,d,(D_i)),
\Theta_{M^{\balpha}_{\cC_x}(r,d,(D_i))/\cN^{(n)}_r(d,(D_i))}).
$$
By construction
the restriction $\tilde{\omega}|_{M^{\balpha}_{\cC_x}(r,d,(D_i))_{\overline{h}}}$
is nothing but the restriction
$\omega|_{M^{\balpha}_{D/\cC/S}(r,d,(m_i))_x}$ of
the $2$-form $\omega$ defined in Proposition \ref{nondegenerate-form}.
On the other hand, for generic $\bnu\in\cN^{(n)}_r(d,(D_i))$,
the fiber $M^{\balpha}_{\cC_x}(r,d,(D_i))_{\bnu}$ is
nothing but the moduli space of regular singular parabolic
connections considered in \cite{Inaba-1}.
Note that for generic $\bnu$,
every $\bnu$-parabolic connection is irreducible
and automatically stable.
Moreover the restriction
$\tilde{\omega}|_{M^{\balpha}_{\cC_x}(r,d,(D_i))_{\bnu}}$
is nothing but the restriction of the relative $2$-form
considered in [\cite{Inaba-1}, Proposition 7.2].
By [\cite{Inaba-1}, Proposition 7.3], we have
$d\tilde{\omega}|_{M^{\balpha}_{\cC_x}(r,d,(D_i))_{\bnu}}=0$.
Since $M^{\balpha}_{\cC_x}(r,d,(D_i))$ is smooth over
$\cN^{(n)}_r(d,(D_i))$, we have
$d\tilde{\omega}=0$.
So we have
$d\omega|_{M^{\balpha}_{D/\cC/S}(r,d,(m_i))_x}=
d\tilde{\omega}|_{M^{\balpha}_{\cC_x}(r,d,(D_i))_{\overline{h}}}=0$.
Hence we have $d\omega=0$.
\end{proof}

\section{Moduli spaces of generalized monodromy data and Riemann-Hilbert correspondence}

\subsection{Fixing the formal type}

Fix a nonsingular projective curve $C$ and a divisor 
$D = \sum_{i=1}^n m_i t_i$  on $C$ such that $m_i >0$,  $t_i \not= t_j$ 
for $  i \not= j$. 
At each point $t_i$, we take a generator $z_i $ of the maximal 
ideal ${\frak m}_{t_i}$   of $\cO_{C, t_i}$ then we have the formal completion 
$\widehat{\cO_{C, t_i}} = \lim_{k} \cO_{C, t_i}/{\frak m}_{t_i}^{k} 
\simeq \C[[z_i]]$. 

For given integers $r>0, d$, let us fix generalized exponents
$ \bnu= (\nu^{(i)}_j)_{1\leq i \leq n}^{0\leq j \leq r-1}  \in N_{r}^{(n)}(d, D)$ 
(cf. (\ref{eq:exponent})).   In Theorem \ref{thm-moduli-exists} and 
\ref{smoothness-thm}, we have constructed a smooth quasi-projective moduli scheme 
$
M^{\balpha}_{D/C}(r, d, (m_i))_{\bnu}
$
of $\balpha$-stable 
$\bnu$-parabolic connections  on $C$  of 
parabolic depth $(m_i)_{i=1}^r$,  
with $\rank$  $r$, $\deg$ $d $.  

For each fixed $\bnu$-parabolic connection $(E, \nabla, \{l^{(i)}_j \} ) \in 
M^{\balpha}_{D/C}(r, d, (m_i))_{\bnu}$, we can  define a formal connection by 
$$
\widehat{E}_{t_i} = E \otimes_{\cO_{C, t_i}}\C[[z_i]],  \quad 
\widehat{\nabla}_{t_i} : \widehat{E}_{t_i} \lra \widehat{E}_{t_i} \otimes 
\C[[z_i]] \frac{dz_i}{(z_i)^{m_i}}.
$$
In this section, we  assume 
that $(\widehat{E}_{t_i}, \widehat{\nabla}_{t_i})$ 
is unramified for each $i, 1 \leq i \leq n$, that is, in Hukuhara-Turrittin 
decomposition in Theorem \ref{thm-H-T}, 
$l=1$.   

By Proposition \ref{prop:filtration}, 
there exists a filtration by $\C[[z_i]]$-submodules 
$$
\widehat{E}_{t_i} = \widehat{l}^{(i)}_0 \supset \widehat{l}^{(i)}_1
\supset \widehat{l}^{(i)}_2 \supset \cdots \supset \widehat{l}^{(i)}_{r-1} 
\supset \widehat{l}^{(i)}_{r} = 0
$$
such that $\widehat{\nabla}_{t_i} (\widehat{l}^{(i)}_j) \subset \widehat{l}^{(i)}_j
\otimes dz_{i}/z_{i}^{m_i}$ and $\widehat{l}^{(i)}_j/\widehat{l}^{(i)}_{j+1} 
\simeq V(\tilde{\nu}^{(i)}_j, 1)$ where $\tilde{\bnu} = 
(\tilde{\nu}^{(i)}_j )_{1\leq i \leq n}^{0 \leq j \leq r-1} \in  N^{(n)}_r(d, D)$.  

The isomorphism class of $(\widehat{E}_{t_i}, \widehat{\nabla}_{t_i}, 
\{ \widehat{l}^{(i)}_j \}) $ at each $t_i$ as $\C[[z_i]]$-connection
is called the {\em formal type of 
the connection $(E, \nabla)$} at $t_i$.
For each $i$,   
the  data $\tilde{\bnu}^{(i)}:=(\tilde{\nu}^{(i)}_j )^{0\leq j \leq r-1} $ is 
called {\em formal generalized exponents} of  
$(\widehat{E}_{t_i}, \widehat{\nabla}_{t_i}, \{ \widehat{l}^{(i)}_j \}) $.  
Note that the original parabolic structure $\{ l^{(i)}_j \}$ is a filtration 
of $E \otimes_{\cO_{C, t_i}} \C[z_i]/(z_i^{m_i})$.  Moreover as we see in 
Remark \ref{counter-example}, $\bnu$ may not be equal to the formal 
generalized exponents $\tilde{\bnu}$.  

The main purpose of this section is  to define the Riemann-Hilbert correspondence 
from the moduli space $ M^{\balpha}_{D/C}(r, d, (m_i))_{\bnu}$ of $\bnu$-parabolic 
connections to the moduli space of {\em generalized monodromy data} consisting  of 
{\em monodromy representation} of fundamental group 
$\pi_{1}(C \setminus \{t_1, \cdots, t_n \}, \ast)$, {\em links (or connection matrices)}, 
{\em formal monodromies} and {\em Stokes data}.  
Moreover we may expect that the Riemann-Hilbert 
correspondence is a proper bimeromorphic surjective analytic morphism for any $\bnu$ 
as we proved in the case of at most regular singularities, that is, the case when  
$m_i=1$ for all $i, 1 \leq i \leq n$  (cf. \cite{Inaba-1}, \cite{IIS-1}, 
\cite{IIS-2}).  
 
As explained in \cite{JMU1}, \cite{vdP-S} and \cite{PS}, 
in order to construct the moduli space of generalized monodromy data and 
define the Riemann-Hilbert correspondence, 
we need to fix a formal type of the parabolic connection 
$(E, \nabla, \{l^{(i)}_j \} )$ at each 
irregular or regular singular point $t_i$.  
However the counter-example in Remark \ref{counter-example} shows 
that for a special $\bnu$, one can not determine the formal 
type of a connection $(E, \nabla, \{l^{(i)}_j \} ) \in 
M^{\balpha}_{D/C}(r, d, (m_i))_{\bnu}$, that is, the reductions up to the order $m_i$ 
is not enough to determine the formal type for a special $\bnu$.
Since we have Proposition \ref{prop-deeper}, 
we may take  deeper reductions of order $N_i=r^2 m_i > m_i$
 to recover the formal type. 
However, in Remark \ref{not-smooth}, we see that the corresponding
 moduli space $M^{\balpha}_{D/C}(r, d, (N_i))_{\bnu}$ is not 
 smooth.

At this moment, we do not know how to handle these difficulties.  
By this reason, we impose the following genericity conditions on $
\bnu= (\nu^{(i)}_j)_{1\leq i \leq n}^{0\leq j \leq r-1}  \in N_{r}^{(n)}(d, D)$.   

Let us write $\nu^{(i)}_j(z_i)$ explicitly as 
\begin{equation}\label{eq:local-exponent}
\nu^{(i)}_j (z_i) =(a^{(i)}_{j, -m_i} z_i^{-m_i} + \cdots + a^{(i)}_{j, -1} z_i^{-1 } )dz_i 
=\sum_{k=-m_i}^{-1} (a^{(i)}_{j, k} z_i^{k}) dz_i  
\quad \mbox{for $0 
\leq j \leq r-1$}. 
\end{equation}

\begin{Definition}\label{def:generic}{\rm 
Let $\bnu =  
\{ \nu^{(i)}_j(z_i) \}_{1 \leq i \leq n}^{0 \leq j \leq r-1} \in N_{r}^{(n)}(d, D)$ 
be written as in (\ref{eq:local-exponent}). 

\begin{enumerate}
\item 
$\bnu$   is {\em generic} if for every $(i, j_1), (i, j_2)$, 
$j_1 \not= j_2$, the top terms are different, that is, 
$a^{(i)}_{j_1, -m_i} \not= a^{(i)}_{j_2, -m_i}$. 

\item  $\bnu$ is {\em resonant} if for some $i, 1\leq i \leq n$ 
with $m_i=1$ 
there exists $j_1, j_2$,   $j_1 \not= j_2$ such that 
$$
a^{(i)}_{j_1, -1} - a^{(i)}_{j_2, -1} \in \Z .
$$
Moreover $\bnu$ is called {\em non-resonant} if it is not resonant.  

\item $\bnu$ is {\em reducible}, if for some 
$h, 1 \leq h  < r$, there exist some 
choices of $j^{(i)}_1, \cdots, j^{(i)}_h$, $0 \leq j^{(i)}_1 < j^{(i)}_2 < \cdots < j^{(i)}_h \leq r-1$
 for each $i, 1 \leq i \leq n$ such that 
\begin{equation}\label{eq:reducible}
\sum_{i=1}^n \sum_{k=1}^{h}a^{(i)}_{j^{(i)}_{k}, -1} \in \Z. 
\end{equation}
If $\bnu$ is not reducible, we call $\bnu$ {\em irreducible}. 
\end{enumerate}
 }
\end{Definition}
Note that the genericity and resonance
of $\bnu$ does not depend on the choice of 
the local coordinates $z_i$. 
An easy argument shows that if $\bnu$ is irreducible, 
every $\bnu$-parabolic connection 
$(E, \nabla, \{ l^{(i)}_j \} )$ is irreducible, 
hence $\balpha$-stable for any choice of 
the weights $\balpha$.

{F}rom now on, we assume that $\bnu$ is generic. 
{F}rom Hukuhara-Turrittin theorem ([\cite{Sibuya}, Theorem 6.1.1)]), 
it is easy to see the following Lemma.    

\begin{Lemma}\label{lemma:generic} Let $(E, \nabla, \{ l^{(i)}_j \} ) $ 
be an $\balpha$-stable $\bnu$-parabolic connection in $M^{\balpha}_{D/C}(r, d, (m_i))_{\bnu}$.  
Assume that $\bnu= \{ \nu^{(i)}_j(z_i) \} $ is generic.  
Then we have a direct sum decomposition of the formal connection
\begin{equation}\label{eq:direct}
(\widehat{E}_{t_i}, \widehat{\nabla}_{t_i} ) 
\simeq V(\nu^{(i)}_{0}, 1) \oplus V(\nu^{(i)}_1, 1) \oplus \cdots \oplus V(\nu^{(i)}_{r-1},1 ). 
\end{equation} Here $V(\nu^{(i)}_{j}, 1) \simeq \C[[z_i]] e^{(i)}_j $ is a rank $1$ $\C[[z_i]]$-module 
with  a connection given by $e^{(i)}_j \mapsto 
\nu^{(i)}_j(z_i) e^{(i)}_j $.  
In particular, the formal type of $(\widehat{E}_{t_i}, \widehat{\nabla}_{t_i} ) $
is uniquely determined by generalized exponents $ \{ \nu^{(i)}_j\}_{0 \leq j \leq r-1}$. 
Moreover the decomposition (\ref{eq:direct}) is compatible with the 
parabolic structure $\{ l^{(i)}_j \}_{0 \leq i \leq r-1}   $.  
\end{Lemma}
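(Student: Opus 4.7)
The plan is to combine the Hukuhara--Turrittin theorem with the genericity hypothesis. Since we are in the unramified case $l=1$, Theorem~\ref{thm-H-T} gives $(\widehat{E}_{t_i},\widehat{\nabla}_{t_i})\cong\bigoplus_{k=1}^{s}V(\mu_k,r_k)$ with the $\mu_k$ pairwise inequivalent modulo $\Z\,dz_i/z_i$. To rule out $r_k>1$, I inspect the principal part of $\widehat{\nabla}_{t_i}$ (the coefficient of $z_i^{-m_i}\,dz_i$) on the fiber $E|_{t_i}$: from the parabolic side, in a basis adapted to $\{l^{(i)}_j\}$ it is upper triangular with diagonal $(a^{(i)}_{0,-m_i},\ldots,a^{(i)}_{r-1,-m_i})$, which are pairwise distinct by genericity; from the Hukuhara--Turrittin side, its eigenvalues with multiplicity are the leading coefficients of the $\mu_k$, each with multiplicity $r_k$. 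Matching the two multisets and using distinctness forces $r_k=1$ for every $k$ and $s=r$; after reindexing, $\mu_k$ has leading coefficient $a^{(i)}_{k,-m_i}$.

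Next I verify compatibility with the parabolic filtration and match the lower-order terms of $\mu_k$ with $\nu^{(i)}_k$. The crucial observation is that the natural map $\Omega^1_C|_{m_it_i}\to\Omega^1_C(D)|_{m_it_i}$ is the zero map (the inclusion sends $dz_i$ to $z_i^{m_i}\cdot(dz_i/z_i^{m_i})$, which vanishes modulo $z_i^{m_i}$); in particular, $dc_k$ vanishes in $\Omega^1_C(D)|_{m_it_i}$ for any $c_k\in\C[z_i]/(z_i^{m_i})$. Hence, a $\nabla|_{m_it_i}$-stable rank one submodule $M\subset E|_{m_it_i}$ with generator $f=\sum_k c_k\bar{e}_k$ in the basis descended from the formal decomposition satisfies $(\tilde{\lambda}-\tilde{\mu}_k)c_k=0$ in $\C[z_i]/(z_i^{m_i})$ for every $k$, where $\lambda=\tilde{\lambda}\,dz_i/z_i^{m_i}$ and $\mu_k=\tilde{\mu}_k\,dz_i/z_i^{m_i}$. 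By genericity, $\tilde{\lambda}-\tilde{\mu}_k$ is a unit (nonzero constant term) for all but one $k$, forcing $M$ to coincide with the single summand $V(\mu_{k_0},1)\otimes\C[z_i]/(z_i^{m_i})$ determined by the matching leading term of $\lambda$. Applying this inductively (to $l^{(i)}_{r-1}$, then to the successive quotients) identifies $l^{(i)}_j=\bigoplus_{k\geq j}V(\mu_k,1)\otimes\C[z_i]/(z_i^{m_i})$.

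Finally, the induced connection on the graded piece $l^{(i)}_j/l^{(i)}_{j+1}$ is $\mu_j$ from the formal side and $\nu^{(i)}_j$ from the $\bnu$-parabolic condition (the vanishing of the differential map forbids any holomorphic correction, so the defining inclusion in Definition~\ref{def-connection} forces equality and not just congruence), and since both describe the same element of $\sum_{k=-m_i}^{-1}\C z_i^k dz_i$, we obtain $\mu_j=\nu^{(i)}_j$, yielding (\ref{eq:direct}) together with its compatibility with $\{l^{(i)}_j\}$. The main obstacle is the uniqueness argument for stable rank one submodules in the second paragraph; it rests squarely on genericity (which makes $\tilde{\lambda}-\tilde{\mu}_k$ a unit for $k\neq k_0$) together with the vanishing of the differential map modulo $z_i^{m_i}$, and is precisely the point where distinctness of leading terms (as opposed to full polar parts) is indispensable.
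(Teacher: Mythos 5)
There is a genuine gap, and it sits at the very first step. Theorem \ref{thm-H-T} only gives an isomorphism after tensoring with $\C((z_i))$, i.e.\ over the Laurent field, whereas both the statement to be proved and the object you manipulate in your second paragraph (``the basis descended from the formal decomposition'' of $E|_{m_it_i}$) require a decomposition of the lattice $(\widehat{E}_{t_i},\widehat{\nabla}_{t_i})$ over $\C[[z_i]]$; passing from the $\C((z_i))$-decomposition to the lattice is exactly assertion (\ref{eq:direct}), so as written the argument is circular at that point. Likewise your multiset-matching claim --- that the eigenvalues of the coefficient of $z_i^{-m_i}dz_i$ acting on $E|_{t_i}$ are the leading coefficients of the $\mu_k$ with multiplicities $r_k$ --- is not a formal consequence of Theorem \ref{thm-H-T}: the comparison passes through a meromorphic gauge transformation, which does not preserve the polar part of the connection matrix of a fixed lattice. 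As a purely local statement it is in fact false when $m_i=1$: the connection $\nabla=d-\begin{pmatrix}z^{-1}&1\\0&0\end{pmatrix}dz$ preserves the flag $\C\bar{e}_1\subset E|_{t}$ and its residue has the distinct eigenvalues $-1,0$ (so the attached exponents are generic in the sense of Definition \ref{def:generic}), yet its solutions contain $\log z$, so the formal type is an indecomposable $V(\nu,2)$ with equal residues; the two eigenvalue multisets differ. Hence any correct proof must use $m_i\ge 2$ (or non-resonance when $m_i=1$), which your argument never invokes --- a sign that the real input is missing. For $m_i\ge 2$ the matching and the lattice splitting are true, but their proof is precisely the block-diagonalization theorem for a lattice connection with regular semisimple leading term, i.e.\ Sibuya's Theorem 6.1.1, which is exactly what the paper cites here (its ``proof'' of the Lemma is that citation) and again for the Claim inside Proposition \ref{prop-generic-fiber}. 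The step you treat as bookkeeping is the entire content.

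The rest of your argument is sound and in fact sharpens the paper's own computations: the observation that $\Omega^1_C|_{m_it_i}\to\Omega^1_C(D)|_{m_it_i}$ is the zero map, so that $\nabla|_{m_it_i}$ is $\cO_{m_it_i}$-linear and $dc$ drops out, is correct, and your unit/annihilator argument ($c_k(\tilde{\lambda}-\tilde{\mu}_k)=0$ with $\tilde{\lambda}-\tilde{\mu}_k$ a unit for all but one $k$) correctly identifies the filtration with the tail filtration of a given lattice decomposition and forces the graded exponents to agree on the nose; this parallels the injectivity computation in Proposition \ref{prop-generic-fiber}. To repair the proof, argue first that the parabolic condition at depth $m_i$ makes the connection matrix triangular modulo $z_i^{m_i}$ in a basis adapted to $\{l^{(i)}_j\}$, with diagonal leading entries $a^{(i)}_{j,-m_i}$ pairwise distinct by genericity, so the coefficient of $z_i^{-m_i}dz_i$ is regular semisimple; then invoke the splitting theorem (Sibuya, Th.\ 6.1.1, using $m_i\ge 2$) to obtain $\widehat{E}_{t_i}=\bigoplus_j\C[[z_i]]e_j$ with $\nabla e_j=\lambda_je_j$ and $\lambda_j$ polar with leading term $a^{(i)}_{j,-m_i}$; after that, your second and third paragraphs apply verbatim to give $\lambda_j=\nu^{(i)}_j$ and the compatibility with $\{l^{(i)}_j\}$.
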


This lemma implies that there exists a free basis $e^{(i)}_0, \cdots, e^{(i)}_{r-1}$ of $\widehat{E}_{t_i}$
as a $\C[[z_i]]$-module, such that 
$$
\widehat{\nabla}_{t_i} e^{(i)}_j = \nu^{(i)}_j(z_i) e^{(i)}_j. 
$$
Moreover for $\widehat{E}_{t_i} \otimes \C[[z_i]]/(z_i^{m_i})$, the 
induced basis $\{ \overline{e}^{(i)}_j \}$ gives a parabolic structure
$$
l^{(i)}_k = < \overline{e}^{(i)}_k,\overline{e}^{(i)}_{k+1}, \cdots, \overline{e}^{(i)}_{r-1}>.
$$
Let us take  a generic 
$\bnu = \{ \bnu^{(i)} \}_{1 \leq i \leq n}  \in N_{r}^{(n)}(d, D)$.   
For each $t_i$, define {\em the space of formal solutions} at $t_i$ by 
\begin{equation}\label{eq:formal-dec}
V_{t_i} = \{ \sigma \in \widehat{E}_{t_i}\otimes_{\C[[z_i]]} Univ_{t_i} \ 
 | \widehat{\nabla}_{t_i} \sigma  = 0 \}
\end{equation}
where  $Univ_{t_i}$ denote the differential ring extension of 
$\C[[z_i]]$ which is similarly defined as in  [1.2, \cite{vdP-S}].    
Under the isomorphism of (\ref{eq:direct}), 
the space $V_{t_i}$  is a $\C$-vector space of dimension $r$ and has a natural 
decomposition 
\begin{equation}\label{eq:formal-decomp}
V_{t_i} = V^{(i)}_0 \oplus \cdots \oplus V^{(i)}_{r-1} 
\end{equation}
where $V^{(i)}_j = \C (f^{(i)}_{j}(z_i) e^{(i)}_j )$ is a one dimensional 
vector subspace and 
$f^{(i)}_j(z_i) = \exp (- \int \nu^{(i)}_j(z_i) ) \in Univ_{t_i}$.  Note that we have
{$d f^{(i)}_{j}(z_i) =  - f^{(i)}_j(z_i)\nu^{(i)}_{j}(z_i)  $.

\subsection{Generalized monodromy data}
\label{ss:mon}
 As in the former subsection, 
we  fix a nonsingular projective curve $C$ and a divisor 
$D = \sum_{i=1}^n m_i t_i$  on $C$ such that $m_i >0$,  $t_i \not= t_j$ 
for $  i \not= j$.  Moreover,  at each point $t_i$, we fix a generator $z_i $ of the maximal 
ideal ${\frak m}_{t_i}$   of $\cO_{C, t_i}$ so that we have the formal completion 
$\widehat{\cO_{C, t_i}} = \lim_{k} \cO_{C, t_i}/{\frak m}_{t_i}^{k} 
\simeq \C[[z_i]]$.   Let us fix  a generic  element $ \bnu \in N_{r}^{(n)}(d, D)$ written as 
in (\ref{eq:local-exponent}) .  
Then Lemma \ref{lemma:generic} implies that 
the formal types of every $\bnu$-parabolic connection 
$(E, \nabla, \{ l^{(i)}_j \}) \in M^{\balpha}_{D/C}(r, d, (m_i))_{\bnu}$
at $t_i$ can be fixed as in (\ref{eq:direct}). 
Fixing these data, we will associate a generalized monodromy data 
to each $(E, \nabla, \{ l^{(i)}_j \}) \in M^{\balpha}_{D/C}(r, d, (m_i))_{\bnu}$  as follows. 
We will basically follow the formulation in \cite{JMU1} and 
\cite{vdP-S} of genus $0$ case, which is easily generalized to higher genus case.
(For the known facts on generalized monodromy data,  see 
\cite{BV}, \cite{JMU1}, \cite{sabbah-book}, \cite{Sibuya} and \cite{PS}.)

\begin{enumerate}

\item{\bf Local coordinates:}  For each $i, 1 \leq i \leq n$, 
we consider the fixed  generator $z_i$ of the maximal ideal of $\cO_{C, t_i}$ as a local analytic 
coordinate around $t_i$ of  $C$. 

\item{\bf Local neighborhoods:}  
An analytic local neighborhood $\Delta_{i} \subset C $ of $t_i$ which is 
identified with $\{ z_i \ | \ |z_i| < \epsilon_i \}$ for a small 
positive number $\epsilon_i$.  

\item{\bf Singular directions and sectors:} 

Let us identify $d, 0 \leq d < 2 \pi$  as a ray 
starting from the origin $z_i=0$ with an argument $d$.  
Fixing a generic  $\bnu = \{ \nu^{(i)}_{j}(z_i) \} \in N_{r}^{(n)}(d, D)$,
we can define the singular directions 
$\{d^{(i)}_k \}^{1 \leq i \leq n}_{1 \leq k \leq s_i}$ such that 
$0 \leq  d^{(i)}_1 < d^{(i)}_2 < \cdots < d^{(i)}_{s_i} < 2 \pi$.
A direction $d$ at $t_i$ is called {\em singular} if for some 
$j_1 \not=j_2$ the function  
$\exp \left( \int \left(\nu^{(i)}_{j_1}- \nu^{(i)}_{j_2} \right) \right) $ 
has ``maximal descent" along the ray $z_i= r_i e^{\sqrt{-1} d}$ for $r_i \ra 0$.  
More explicitly, if 
$$
\nu^{(i)}_{j_1}- \nu^{(i)}_{j_2} = ((a^{(i)}_{j_1, -m_i} - a^{(i)}_{j_2, -m_i}) z_i^{-m_i} +  \cdots) dz_i, 
$$
$(a^{(i)}_{j_1, -m_i} - a^{(i)}_{j_2, -m_i})\not=0$, 
then $d$   is a singular direction if 
$$
- ((a^{(i)}_{j_1, -m_i} - a^{(i)}_{j_2, -m_i})e^{-\sqrt{-1}(m_i-1)d} r_i^{-(m_i-1)}/(m_i-1) )  
$$
is a negative real number.  (For more detail, see 1.3 of \cite{vdP-S})  .
For each $i, 1\leq i \leq n$, let  $0 \leq  d^{(i)}_1 < d^{(i)}_2 < \cdots < d^{(i)}_{s_i} < 2 \pi$ 
be all the singular directions at $t_i$.   In order to fix the order of  Stokes data at $t_i$, 
we take a  point $t^{\ast}_{i} \in \partial \Delta_i$ 
such that    $ d^{(i)}_{s_i} - 2 \pi <  \arg t^{\ast}_{i} < d^{(i)}_1$. (Later we will not impose this last condition for $t^{\ast}_i$ when we will vary the associated data continuously.)     
 We denote by $\gamma_i = \partial \Delta_i $ a closed counterclockwise loop starting 
 from $t^{\ast}_i$.   Moreover we set $ d^{(i)}_{0} = d^{(i)}_{s_i} - 2 \pi < 0 $.  
 For each $1 \leq k \leq s_i$, we define a  {\em sector} $S_k^{(i)}$ by 
\begin{equation}\label{eq:sector} 
S_k^{(i)} = \{ z_i \in \Delta_i \quad  | \quad  0 < |z_i| < \epsilon_i, \ 
d_{k-1}^{(i)} < \arg z_i < d^{(i)}_k \}. 
\end{equation} 
  (See Figure \ref{figure:1}).
For a singular direction $d$ at  $t_i$, 
let ${\mathcal J}(d, i )$ be the set of all pairs $(j_1, j_2) $ such that 
a singular direction of $\nu^{(i)}_{j_1} - \nu^{(i)}_{j_2}$ is $d$.  
The number $\sharp {\mathcal J}(d, i) $ is called the multiplicity of $d$ at $t_i$.
It is easy to see  
\begin{equation}\label{eq:diection}
 \sum_{1 \leq  k \leq s_i}  \sharp {\mathcal J}(d^{(i)}_k,  i) = (m_i-1) r(r-1). 
\end{equation}
Note that  if  the multiplicity $\sharp {\mathcal J}(d^{(i)}_k, i)$ is one for all $1 \leq k \leq s_i$, the number  of singular direction  is equal to $(m_i-1) r(r-1)$.

\item{\bf Paths and Loops:}

We fix a point $b$ on $C \setminus \{t_1, \cdots, t_n \}$ and a continuous path 
$l_i$  from $b$ to $t^{\ast}_i$. Let us set $\gamma^{l}_{i} := l_i \gamma_i l_i^{-1} $ for $1 \leq i \leq n$ and 
usual symplectic generators $\alpha_k, \beta_k$, $1 \leq k \leq g$  
of $\pi_1(C, b)$ so that the fundamental group 
$\pi_1(C \setminus \{t_1, \cdots, t_n \}, b )$ is generated by 
$\{ \gamma^{l}_i, \alpha_k, \beta_k \}$. 
Moreover we assume that 
our choice of paths $l_i$ and loops $\gamma_i$ $\alpha_k, \beta_l$ satisfies 
the conditions $
\prod_{k=1}^g [\alpha_k, \beta_k ] \prod_{i=1}^{n} \gamma^{l}_i  = 1,   
$
where $[\alpha_k, \beta_k ]= \alpha_k \beta_k \alpha_k^{-1} \beta_k^{-1}$.
Then we have the following presentation of the fundamental group.  
(See Figure \ref{figure:2}). 
\begin{equation}\label{eq:fund}
\pi_1(C \setminus \{t_1, \cdots, t_n \}, b ) = 
\langle \gamma^{l}_i, \alpha_k, \beta_k \ | \
 \prod_{k=1}^g [\alpha_k, \beta_k ] \prod_{i=1}^{n} \gamma^{l}_i  = 1
\rangle
\end{equation}

\item{\bf Spaces of formal solutions and analytic solutions:} 
Since we assume that $\bnu$ is generic, we can fix a decomposition  of 
the formal connection 
$
(\widehat{E}_{t_i}, \widehat{\nabla}_{t_i}) \simeq V(\nu^{(i)}_{0}, 1) 
\oplus V(\nu^{(i)}_1, 1) \oplus \cdots \oplus V(\nu^{(i)}_{r-1},1 )
$ as in (\ref{eq:direct}) 
and {\em the space of formal solutions} $ V_{t_i} $   as in (\ref{eq:formal-decomp}).  
Moreover we fix the space of analytic solutions  $V_b $ of $(E, \nabla)$
near $b$ which is a $\C$-vector space of dimension $r$.  
\end{enumerate}

\begin{figure}[h]
\begin{center}
\unitlength 0.1in
\begin{picture}(37.50,32.90)(1.50,-34.90)
%
\special{pn 13}%
\special{ar 2156 2089 1195 1196  1.0771791 6.2831853}%
\special{ar 2156 2089 1195 1196  0.0000000 1.0701705}%
%
\special{pn 13}%
\special{sh 1}%
\special{ar 2126 2109 10 10 0  6.28318530717959E+0000}%
\special{sh 1}%
\special{ar 2126 2109 10 10 0  6.28318530717959E+0000}%
%
\special{pn 13}%
\special{pa 2136 2119}%
\special{pa 3733 1852}%
\special{fp}%
%
\special{pn 13}%
\special{pa 2111 2109}%
\special{pa 3254 960}%
\special{fp}%
%
\special{pn 13}%
\special{pa 2121 2104}%
\special{pa 2355 500}%
\special{fp}%
%
\special{pn 13}%
\special{pa 2116 2114}%
\special{pa 3399 3103}%
\special{fp}%
\put(33.1000,-9.3000){\makebox(0,0)[lb]{$d^{(i)}_2$}}%
\put(34.4000,-22.1000){\makebox(0,0)[lb]{$ (d=0)$}}%
%
\special{pn 13}%
\special{ar 2156 2089 1195 1196  1.0771791 6.2831853}%
\special{ar 2156 2089 1195 1196  0.0000000 1.0701705}%
%
\special{pn 13}%
\special{sh 1}%
\special{ar 2126 2109 10 10 0  6.28318530717959E+0000}%
\special{sh 1}%
\special{ar 2126 2109 10 10 0  6.28318530717959E+0000}%
%
\special{pn 13}%
\special{pa 2136 2119}%
\special{pa 3733 1852}%
\special{fp}%
%
\special{pn 13}%
\special{pa 2111 2109}%
\special{pa 3254 960}%
\special{fp}%
%
\special{pn 13}%
\special{pa 2121 2104}%
\special{pa 2355 500}%
\special{fp}%
%
\special{pn 13}%
\special{pa 2116 2114}%
\special{pa 3399 3103}%
\special{fp}%
\put(39.0000,-19.4000){\makebox(0,0)[lb]{Singular direction $d^{(i)}_1$}}%
\put(35.4100,-32.5000){\makebox(0,0)[lb]{$d^{(i)}_{s_i}$}}%
\put(18.7600,-21.5900){\makebox(0,0)[lb]{$t_i$}}%
%
\special{pn 20}%
\special{sh 1}%
\special{ar 3341 2119 10 10 0  6.28318530717959E+0000}%
\special{sh 1}%
\special{ar 3345 2114 10 10 0  6.28318530717959E+0000}%
\put(1.5000,-33.8000){\makebox(0,0)[lb]{$\Delta_i=\{ z_i, |z_i| < \epsilon_i\}$}}%
%
\special{pn 13}%
\special{pa 2126 2134}%
\special{pa 3355 2134}%
\special{da 0.070}%
\put(22.8000,-15.2000){\makebox(0,0)[lb]{$S^{(i)}_3$}}%
\put(25.6000,-18.7000){\makebox(0,0)[lb]{Sector $S^{(i)}_2$}}%
%
\put(29.1500,-18.5800){\makebox(0,0)[lb]{}}%
\put(25.9000,-25.0000){\makebox(0,0)[lb]{Sector $S^{(i)}_1$}}%
\put(14.2000,-7.5000){\makebox(0,0)[lb]{$\gamma_i= \partial \Delta_i$}}%
%
\special{pn 13}%
\special{pa 1590 1020}%
\special{pa 1530 1070}%
\special{fp}%
\special{sh 1}%
\special{pa 1530 1070}%
\special{pa 1594 1043}%
\special{pa 1571 1036}%
\special{pa 1568 1012}%
\special{pa 1530 1070}%
\special{fp}%
\put(23.0000,-3.7000){\makebox(0,0)[lb]{$d^{(i)}_3$}}%
%
\put(29.5000,-21.4000){\makebox(0,0)[lb]{}}%
\put(34.5000,-23.9000){\makebox(0,0)[lb]{$t^{\ast}_i$}}%
%
\special{pn 13}%
\special{pa 2140 2160}%
\special{pa 2290 3400}%
\special{fp}%
%
\special{pn 13}%
\special{pa 2120 2120}%
\special{pa 3330 2300}%
\special{dt 0.045}%
\special{pa 3330 2300}%
\special{pa 3329 2300}%
\special{dt 0.045}%
\put(21.7000,-36.6000){\makebox(0,0)[lb]{$d^{(i)}_{s_i-1}$}}%
\put(23.9000,-28.2000){\makebox(0,0)[lb]{$S^{(i)}_{s_i}$}}%
%
\special{pn 4}%
\special{sh 1}%
\special{ar 3330 2310 10 10 0  6.28318530717959E+0000}%
\special{sh 1}%
\special{ar 3330 2310 10 10 0  6.28318530717959E+0000}%
\end{picture}%
\end{center}
\vspace{0.5cm}
\caption{Local neighborhood of $t_i$.}
\label{figure:1}
\end{figure}
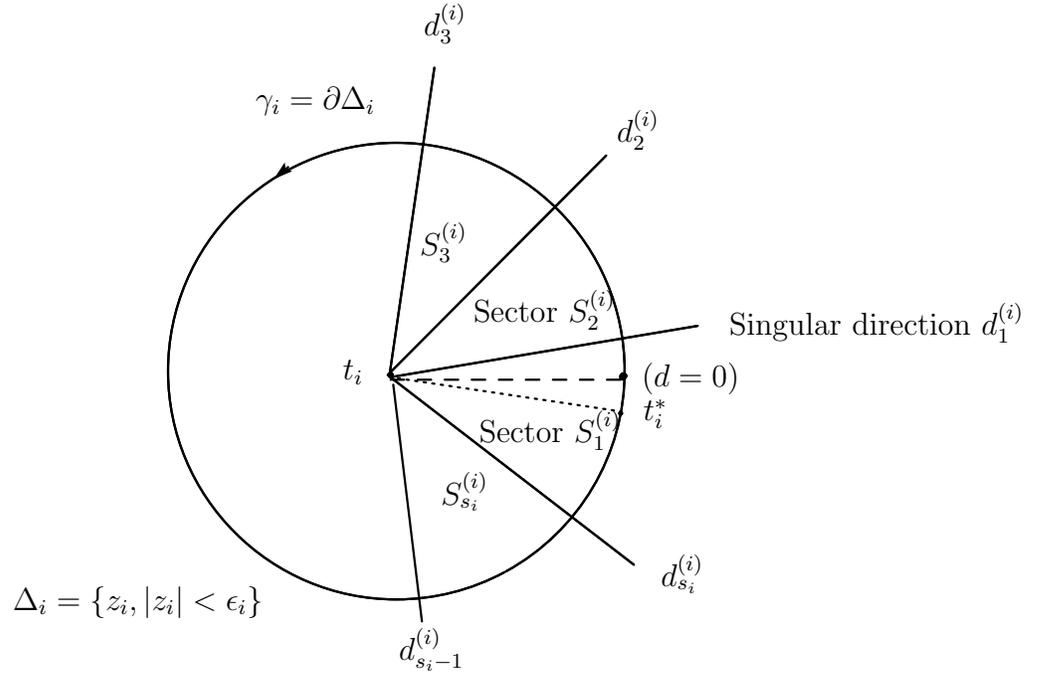

\begin{figure}[h]
\begin{center}
\unitlength 0.1in
\begin{picture}(54.20,29.24)(1.72,-31.90)
%
\special{pn 13}%
\special{pa 467 1205}%
\special{pa 486 1179}%
\special{pa 505 1153}%
\special{pa 524 1127}%
\special{pa 543 1101}%
\special{pa 563 1075}%
\special{pa 582 1049}%
\special{pa 602 1024}%
\special{pa 621 998}%
\special{pa 641 973}%
\special{pa 662 949}%
\special{pa 682 924}%
\special{pa 703 900}%
\special{pa 724 876}%
\special{pa 746 852}%
\special{pa 767 829}%
\special{pa 790 806}%
\special{pa 812 784}%
\special{pa 836 762}%
\special{pa 859 741}%
\special{pa 883 720}%
\special{pa 908 700}%
\special{pa 933 680}%
\special{pa 958 661}%
\special{pa 984 642}%
\special{pa 1011 624}%
\special{pa 1037 606}%
\special{pa 1064 589}%
\special{pa 1092 572}%
\special{pa 1120 555}%
\special{pa 1148 540}%
\special{pa 1176 524}%
\special{pa 1205 509}%
\special{pa 1234 495}%
\special{pa 1264 481}%
\special{pa 1293 467}%
\special{pa 1323 454}%
\special{pa 1353 442}%
\special{pa 1384 430}%
\special{pa 1414 418}%
\special{pa 1445 407}%
\special{pa 1476 396}%
\special{pa 1508 386}%
\special{pa 1539 376}%
\special{pa 1570 367}%
\special{pa 1602 358}%
\special{pa 1634 349}%
\special{pa 1666 341}%
\special{pa 1698 333}%
\special{pa 1730 326}%
\special{pa 1762 319}%
\special{pa 1795 313}%
\special{pa 1827 307}%
\special{pa 1859 302}%
\special{pa 1892 297}%
\special{pa 1924 292}%
\special{pa 1956 288}%
\special{pa 1989 284}%
\special{pa 2021 281}%
\special{pa 2053 278}%
\special{pa 2086 275}%
\special{pa 2118 273}%
\special{pa 2150 271}%
\special{pa 2182 269}%
\special{pa 2214 268}%
\special{pa 2247 267}%
\special{pa 2279 267}%
\special{pa 2311 266}%
\special{pa 2343 266}%
\special{pa 2375 266}%
\special{pa 2407 266}%
\special{pa 2439 267}%
\special{pa 2471 268}%
\special{pa 2502 268}%
\special{pa 2534 269}%
\special{pa 2566 270}%
\special{pa 2598 272}%
\special{pa 2630 273}%
\special{pa 2662 274}%
\special{pa 2694 276}%
\special{pa 2726 277}%
\special{pa 2758 278}%
\special{pa 2789 280}%
\special{pa 2821 281}%
\special{pa 2853 282}%
\special{pa 2885 284}%
\special{pa 2917 285}%
\special{pa 2949 286}%
\special{pa 2981 287}%
\special{pa 3013 288}%
\special{pa 3045 288}%
\special{pa 3077 289}%
\special{pa 3109 290}%
\special{pa 3141 290}%
\special{pa 3173 291}%
\special{pa 3205 291}%
\special{pa 3237 291}%
\special{pa 3269 292}%
\special{pa 3301 292}%
\special{pa 3333 292}%
\special{pa 3365 292}%
\special{pa 3397 292}%
\special{pa 3429 292}%
\special{pa 3461 292}%
\special{pa 3493 292}%
\special{pa 3525 292}%
\special{pa 3557 291}%
\special{pa 3589 291}%
\special{pa 3621 291}%
\special{pa 3653 291}%
\special{pa 3685 291}%
\special{pa 3717 290}%
\special{pa 3749 290}%
\special{pa 3781 290}%
\special{pa 3813 290}%
\special{pa 3844 290}%
\special{pa 3876 291}%
\special{pa 3908 291}%
\special{pa 3940 292}%
\special{pa 3972 293}%
\special{pa 4004 294}%
\special{pa 4036 296}%
\special{pa 4067 298}%
\special{pa 4099 300}%
\special{pa 4131 302}%
\special{pa 4163 305}%
\special{pa 4195 309}%
\special{pa 4227 313}%
\special{pa 4259 317}%
\special{pa 4290 322}%
\special{pa 4322 327}%
\special{pa 4354 333}%
\special{pa 4386 340}%
\special{pa 4418 347}%
\special{pa 4450 355}%
\special{pa 4482 364}%
\special{pa 4514 373}%
\special{pa 4546 383}%
\special{pa 4578 393}%
\special{pa 4610 405}%
\special{pa 4642 417}%
\special{pa 4673 429}%
\special{pa 4705 442}%
\special{pa 4736 456}%
\special{pa 4767 471}%
\special{pa 4798 486}%
\special{pa 4829 502}%
\special{pa 4860 518}%
\special{pa 4890 535}%
\special{pa 4920 553}%
\special{pa 4950 571}%
\special{pa 4979 590}%
\special{pa 5008 609}%
\special{pa 5037 629}%
\special{pa 5065 649}%
\special{pa 5092 670}%
\special{pa 5119 692}%
\special{pa 5146 714}%
\special{pa 5172 736}%
\special{pa 5198 759}%
\special{pa 5223 783}%
\special{pa 5247 807}%
\special{pa 5271 831}%
\special{pa 5294 856}%
\special{pa 5317 882}%
\special{pa 5339 908}%
\special{pa 5360 934}%
\special{pa 5380 961}%
\special{pa 5400 988}%
\special{pa 5418 1015}%
\special{pa 5436 1043}%
\special{pa 5453 1072}%
\special{pa 5470 1101}%
\special{pa 5485 1130}%
\special{pa 5499 1160}%
\special{pa 5513 1189}%
\special{pa 5525 1220}%
\special{pa 5537 1250}%
\special{pa 5547 1281}%
\special{pa 5557 1313}%
\special{pa 5565 1344}%
\special{pa 5572 1376}%
\special{pa 5578 1409}%
\special{pa 5583 1441}%
\special{pa 5587 1474}%
\special{pa 5590 1507}%
\special{pa 5591 1541}%
\special{pa 5592 1574}%
\special{pa 5591 1608}%
\special{pa 5589 1642}%
\special{pa 5586 1677}%
\special{pa 5582 1711}%
\special{pa 5577 1745}%
\special{pa 5571 1780}%
\special{pa 5564 1815}%
\special{pa 5556 1849}%
\special{pa 5547 1884}%
\special{pa 5537 1919}%
\special{pa 5526 1954}%
\special{pa 5514 1988}%
\special{pa 5502 2023}%
\special{fp}%
\special{pa 5502 2023}%
\special{pa 5488 2058}%
\special{pa 5474 2092}%
\special{pa 5459 2127}%
\special{pa 5443 2161}%
\special{pa 5426 2195}%
\special{pa 5409 2229}%
\special{pa 5391 2262}%
\special{pa 5372 2296}%
\special{pa 5353 2329}%
\special{pa 5332 2362}%
\special{pa 5312 2394}%
\special{pa 5290 2427}%
\special{pa 5268 2458}%
\special{pa 5246 2490}%
\special{pa 5223 2521}%
\special{pa 5199 2552}%
\special{pa 5175 2582}%
\special{pa 5150 2612}%
\special{pa 5125 2641}%
\special{pa 5100 2670}%
\special{pa 5074 2698}%
\special{pa 5048 2726}%
\special{pa 5021 2753}%
\special{pa 4994 2780}%
\special{pa 4966 2806}%
\special{pa 4939 2831}%
\special{pa 4911 2856}%
\special{pa 4882 2880}%
\special{pa 4854 2903}%
\special{pa 4825 2925}%
\special{pa 4796 2947}%
\special{pa 4767 2968}%
\special{pa 4738 2988}%
\special{pa 4708 3008}%
\special{pa 4678 3026}%
\special{pa 4649 3044}%
\special{pa 4619 3061}%
\special{pa 4589 3076}%
\special{pa 4559 3091}%
\special{pa 4529 3105}%
\special{pa 4500 3118}%
\special{pa 4470 3130}%
\special{pa 4440 3141}%
\special{pa 4410 3151}%
\special{pa 4380 3160}%
\special{pa 4351 3168}%
\special{pa 4322 3174}%
\special{pa 4292 3180}%
\special{pa 4263 3184}%
\special{pa 4234 3187}%
\special{pa 4206 3189}%
\special{pa 4177 3190}%
\special{pa 4149 3189}%
\special{pa 4121 3188}%
\special{pa 4094 3185}%
\special{pa 4066 3180}%
\special{pa 4039 3175}%
\special{pa 4013 3168}%
\special{pa 3986 3160}%
\special{pa 3960 3150}%
\special{pa 3934 3140}%
\special{pa 3908 3129}%
\special{pa 3883 3116}%
\special{pa 3858 3103}%
\special{pa 3832 3089}%
\special{pa 3807 3074}%
\special{pa 3783 3058}%
\special{pa 3758 3041}%
\special{pa 3733 3023}%
\special{pa 3708 3005}%
\special{pa 3684 2986}%
\special{pa 3659 2966}%
\special{pa 3635 2946}%
\special{pa 3610 2925}%
\special{pa 3586 2904}%
\special{pa 3561 2882}%
\special{pa 3536 2860}%
\special{pa 3512 2838}%
\special{pa 3487 2815}%
\special{pa 3462 2792}%
\special{pa 3437 2768}%
\special{pa 3412 2745}%
\special{pa 3387 2721}%
\special{pa 3361 2697}%
\special{pa 3336 2673}%
\special{pa 3310 2649}%
\special{pa 3284 2625}%
\special{pa 3257 2601}%
\special{pa 3231 2577}%
\special{pa 3204 2553}%
\special{pa 3177 2530}%
\special{pa 3149 2506}%
\special{pa 3121 2483}%
\special{pa 3093 2460}%
\special{pa 3065 2437}%
\special{pa 3036 2415}%
\special{pa 3007 2394}%
\special{pa 2977 2372}%
\special{pa 2947 2352}%
\special{pa 2916 2332}%
\special{pa 2886 2313}%
\special{pa 2855 2295}%
\special{pa 2825 2279}%
\special{pa 2794 2264}%
\special{pa 2764 2251}%
\special{pa 2734 2240}%
\special{pa 2704 2231}%
\special{pa 2675 2225}%
\special{pa 2646 2220}%
\special{pa 2618 2219}%
\special{pa 2590 2220}%
\special{pa 2564 2224}%
\special{pa 2538 2232}%
\special{pa 2513 2242}%
\special{pa 2488 2254}%
\special{pa 2464 2269}%
\special{pa 2440 2287}%
\special{pa 2417 2306}%
\special{pa 2395 2327}%
\special{pa 2372 2350}%
\special{pa 2350 2374}%
\special{pa 2328 2399}%
\special{pa 2306 2426}%
\special{pa 2283 2453}%
\special{pa 2261 2481}%
\special{pa 2239 2510}%
\special{pa 2217 2539}%
\special{pa 2194 2568}%
\special{pa 2171 2597}%
\special{pa 2148 2625}%
\special{pa 2124 2654}%
\special{pa 2100 2682}%
\special{pa 2075 2709}%
\special{pa 2050 2735}%
\special{pa 2024 2761}%
\special{pa 1998 2786}%
\special{pa 1971 2811}%
\special{pa 1944 2834}%
\special{pa 1916 2857}%
\special{pa 1889 2879}%
\special{pa 1861 2900}%
\special{pa 1832 2921}%
\special{pa 1804 2940}%
\special{pa 1775 2959}%
\special{pa 1745 2976}%
\special{pa 1716 2993}%
\special{pa 1686 3009}%
\special{pa 1657 3023}%
\special{pa 1627 3037}%
\special{pa 1597 3050}%
\special{pa 1566 3061}%
\special{pa 1536 3072}%
\special{pa 1506 3081}%
\special{pa 1475 3089}%
\special{pa 1445 3096}%
\special{pa 1415 3102}%
\special{pa 1384 3106}%
\special{pa 1354 3109}%
\special{pa 1324 3111}%
\special{pa 1293 3112}%
\special{pa 1263 3111}%
\special{pa 1233 3109}%
\special{pa 1203 3106}%
\special{pa 1174 3101}%
\special{pa 1144 3095}%
\special{pa 1115 3087}%
\special{pa 1086 3079}%
\special{pa 1057 3069}%
\special{pa 1028 3057}%
\special{pa 1000 3045}%
\special{pa 972 3031}%
\special{pa 944 3017}%
\special{pa 916 3001}%
\special{pa 889 2984}%
\special{pa 861 2967}%
\special{pa 834 2948}%
\special{pa 808 2929}%
\special{pa 781 2908}%
\special{pa 755 2887}%
\special{pa 730 2865}%
\special{pa 704 2843}%
\special{pa 679 2819}%
\special{pa 654 2795}%
\special{pa 630 2771}%
\special{pa 606 2746}%
\special{pa 582 2720}%
\special{pa 558 2694}%
\special{pa 535 2668}%
\special{pa 513 2641}%
\special{pa 491 2614}%
\special{pa 469 2586}%
\special{pa 447 2558}%
\special{pa 427 2530}%
\special{pa 406 2501}%
\special{pa 387 2473}%
\special{pa 368 2444}%
\special{pa 349 2414}%
\special{pa 332 2385}%
\special{pa 315 2355}%
\special{pa 299 2325}%
\special{fp}%
\special{pa 299 2325}%
\special{pa 283 2295}%
\special{pa 269 2264}%
\special{pa 255 2234}%
\special{pa 242 2203}%
\special{pa 230 2173}%
\special{pa 220 2142}%
\special{pa 210 2111}%
\special{pa 201 2080}%
\special{pa 193 2049}%
\special{pa 187 2018}%
\special{pa 181 1987}%
\special{pa 177 1956}%
\special{pa 174 1925}%
\special{pa 172 1894}%
\special{pa 172 1863}%
\special{pa 173 1833}%
\special{pa 175 1802}%
\special{pa 179 1771}%
\special{pa 184 1741}%
\special{pa 190 1711}%
\special{pa 198 1681}%
\special{pa 207 1651}%
\special{pa 218 1621}%
\special{pa 229 1591}%
\special{pa 242 1562}%
\special{pa 256 1533}%
\special{pa 270 1504}%
\special{pa 286 1475}%
\special{pa 302 1447}%
\special{pa 319 1418}%
\special{pa 336 1390}%
\special{pa 354 1363}%
\special{pa 372 1335}%
\special{pa 391 1308}%
\special{pa 410 1282}%
\special{pa 429 1255}%
\special{pa 449 1229}%
\special{pa 468 1203}%
\special{pa 488 1178}%
\special{pa 493 1171}%
\special{sp}%
%
\special{pn 13}%
\special{pa 913 2255}%
\special{pa 934 2231}%
\special{pa 956 2208}%
\special{pa 978 2185}%
\special{pa 1001 2162}%
\special{pa 1026 2142}%
\special{pa 1052 2122}%
\special{pa 1079 2104}%
\special{pa 1108 2088}%
\special{pa 1138 2074}%
\special{pa 1170 2062}%
\special{pa 1202 2052}%
\special{pa 1234 2044}%
\special{pa 1268 2039}%
\special{pa 1301 2035}%
\special{pa 1334 2034}%
\special{pa 1367 2036}%
\special{pa 1399 2040}%
\special{pa 1430 2046}%
\special{pa 1460 2055}%
\special{pa 1489 2067}%
\special{pa 1517 2081}%
\special{pa 1544 2097}%
\special{pa 1570 2115}%
\special{pa 1596 2135}%
\special{pa 1621 2156}%
\special{pa 1645 2178}%
\special{pa 1669 2201}%
\special{pa 1693 2224}%
\special{pa 1716 2248}%
\special{pa 1723 2255}%
\special{sp}%
%
\special{pn 13}%
\special{pa 3747 2315}%
\special{pa 3761 2343}%
\special{pa 3775 2372}%
\special{pa 3789 2400}%
\special{pa 3804 2428}%
\special{pa 3820 2456}%
\special{pa 3838 2484}%
\special{pa 3857 2511}%
\special{pa 3878 2538}%
\special{pa 3900 2564}%
\special{pa 3924 2589}%
\special{pa 3948 2612}%
\special{pa 3974 2634}%
\special{pa 4001 2654}%
\special{pa 4028 2672}%
\special{pa 4056 2688}%
\special{pa 4085 2702}%
\special{pa 4114 2712}%
\special{pa 4143 2719}%
\special{pa 4173 2723}%
\special{pa 4203 2724}%
\special{pa 4232 2722}%
\special{pa 4262 2716}%
\special{pa 4292 2708}%
\special{pa 4321 2698}%
\special{pa 4351 2685}%
\special{pa 4380 2670}%
\special{pa 4409 2653}%
\special{pa 4437 2634}%
\special{pa 4465 2614}%
\special{pa 4493 2592}%
\special{pa 4520 2569}%
\special{pa 4547 2546}%
\special{pa 4573 2521}%
\special{pa 4598 2496}%
\special{pa 4622 2470}%
\special{pa 4646 2444}%
\special{pa 4669 2418}%
\special{pa 4691 2392}%
\special{pa 4712 2367}%
\special{pa 4732 2342}%
\special{pa 4737 2335}%
\special{sp}%
%
\special{pn 13}%
\special{pa 847 2121}%
\special{pa 861 2149}%
\special{pa 875 2178}%
\special{pa 889 2206}%
\special{pa 905 2234}%
\special{pa 921 2262}%
\special{pa 939 2289}%
\special{pa 958 2316}%
\special{pa 979 2343}%
\special{pa 1001 2369}%
\special{pa 1025 2394}%
\special{pa 1050 2417}%
\special{pa 1075 2439}%
\special{pa 1102 2459}%
\special{pa 1130 2477}%
\special{pa 1158 2493}%
\special{pa 1186 2506}%
\special{pa 1216 2516}%
\special{pa 1245 2524}%
\special{pa 1275 2527}%
\special{pa 1304 2528}%
\special{pa 1334 2526}%
\special{pa 1364 2520}%
\special{pa 1394 2512}%
\special{pa 1423 2502}%
\special{pa 1453 2489}%
\special{pa 1482 2474}%
\special{pa 1511 2457}%
\special{pa 1539 2438}%
\special{pa 1567 2417}%
\special{pa 1595 2396}%
\special{pa 1622 2373}%
\special{pa 1649 2349}%
\special{pa 1675 2324}%
\special{pa 1700 2299}%
\special{pa 1724 2273}%
\special{pa 1748 2248}%
\special{pa 1771 2222}%
\special{pa 1793 2196}%
\special{pa 1813 2170}%
\special{pa 1834 2145}%
\special{pa 1837 2141}%
\special{sp}%
%
\special{pn 13}%
\special{pa 3833 2441}%
\special{pa 3854 2417}%
\special{pa 3876 2393}%
\special{pa 3898 2370}%
\special{pa 3921 2348}%
\special{pa 3945 2327}%
\special{pa 3970 2307}%
\special{pa 3998 2289}%
\special{pa 4027 2273}%
\special{pa 4057 2259}%
\special{pa 4088 2246}%
\special{pa 4120 2236}%
\special{pa 4153 2228}%
\special{pa 4186 2223}%
\special{pa 4219 2219}%
\special{pa 4252 2218}%
\special{pa 4285 2219}%
\special{pa 4317 2223}%
\special{pa 4349 2230}%
\special{pa 4379 2239}%
\special{pa 4408 2251}%
\special{pa 4436 2265}%
\special{pa 4463 2281}%
\special{pa 4489 2299}%
\special{pa 4514 2319}%
\special{pa 4539 2340}%
\special{pa 4563 2362}%
\special{pa 4587 2385}%
\special{pa 4611 2408}%
\special{pa 4634 2432}%
\special{pa 4643 2441}%
\special{sp}%
%
\special{pn 20}%
\special{sh 1}%
\special{ar 2713 1741 10 10 0  6.28318530717959E+0000}%
\special{sh 1}%
\special{ar 2713 1741 10 10 0  6.28318530717959E+0000}%
%
\special{pn 13}%
\special{pa 2713 1745}%
\special{pa 2691 1766}%
\special{pa 2669 1788}%
\special{pa 2647 1809}%
\special{pa 2625 1830}%
\special{pa 2603 1852}%
\special{pa 2581 1873}%
\special{pa 2559 1894}%
\special{pa 2536 1916}%
\special{pa 2514 1937}%
\special{pa 2491 1958}%
\special{pa 2469 1979}%
\special{pa 2446 2000}%
\special{pa 2423 2020}%
\special{pa 2400 2041}%
\special{pa 2377 2062}%
\special{pa 2353 2082}%
\special{pa 2330 2103}%
\special{pa 2306 2123}%
\special{pa 2282 2143}%
\special{pa 2258 2163}%
\special{pa 2233 2183}%
\special{pa 2209 2203}%
\special{pa 2184 2222}%
\special{pa 2159 2242}%
\special{pa 2133 2261}%
\special{pa 2107 2280}%
\special{pa 2081 2299}%
\special{pa 2055 2317}%
\special{pa 2028 2336}%
\special{pa 2001 2354}%
\special{pa 1973 2372}%
\special{pa 1946 2390}%
\special{pa 1917 2407}%
\special{pa 1889 2425}%
\special{pa 1860 2442}%
\special{pa 1830 2458}%
\special{pa 1801 2475}%
\special{pa 1770 2491}%
\special{pa 1740 2507}%
\special{pa 1709 2523}%
\special{pa 1677 2538}%
\special{pa 1645 2553}%
\special{pa 1612 2568}%
\special{pa 1579 2583}%
\special{pa 1546 2597}%
\special{pa 1512 2610}%
\special{pa 1477 2624}%
\special{pa 1443 2636}%
\special{pa 1408 2647}%
\special{pa 1374 2658}%
\special{pa 1339 2668}%
\special{pa 1304 2676}%
\special{pa 1270 2684}%
\special{pa 1236 2690}%
\special{pa 1202 2694}%
\special{pa 1168 2697}%
\special{pa 1135 2699}%
\special{pa 1103 2699}%
\special{pa 1071 2696}%
\special{pa 1040 2692}%
\special{pa 1010 2686}%
\special{pa 981 2678}%
\special{pa 953 2667}%
\special{pa 925 2654}%
\special{pa 899 2639}%
\special{pa 874 2621}%
\special{pa 851 2600}%
\special{pa 829 2577}%
\special{pa 808 2552}%
\special{pa 788 2524}%
\special{pa 771 2495}%
\special{pa 754 2463}%
\special{pa 740 2431}%
\special{pa 727 2397}%
\special{pa 715 2363}%
\special{pa 706 2327}%
\special{pa 698 2291}%
\special{pa 693 2255}%
\special{pa 689 2219}%
\special{pa 687 2183}%
\special{pa 687 2147}%
\special{pa 689 2112}%
\special{pa 694 2078}%
\special{pa 701 2045}%
\special{pa 709 2014}%
\special{pa 721 1984}%
\special{pa 734 1955}%
\special{pa 750 1929}%
\special{pa 769 1905}%
\special{pa 789 1883}%
\special{pa 812 1864}%
\special{pa 837 1846}%
\special{pa 864 1830}%
\special{pa 892 1816}%
\special{pa 922 1803}%
\special{pa 953 1792}%
\special{pa 985 1782}%
\special{pa 1018 1773}%
\special{pa 1051 1766}%
\special{pa 1086 1759}%
\special{pa 1120 1754}%
\special{pa 1155 1749}%
\special{pa 1190 1745}%
\special{pa 1225 1741}%
\special{pa 1260 1738}%
\special{pa 1294 1735}%
\special{pa 1328 1733}%
\special{pa 1362 1731}%
\special{pa 1395 1729}%
\special{pa 1428 1727}%
\special{pa 1461 1726}%
\special{pa 1494 1725}%
\special{pa 1526 1724}%
\special{pa 1559 1724}%
\special{pa 1591 1724}%
\special{pa 1623 1724}%
\special{pa 1655 1724}%
\special{pa 1686 1725}%
\special{pa 1718 1725}%
\special{pa 1749 1726}%
\special{pa 1781 1727}%
\special{pa 1812 1728}%
\special{pa 1843 1729}%
\special{pa 1874 1730}%
\special{pa 1905 1731}%
\special{pa 1936 1732}%
\special{pa 1968 1733}%
\special{pa 1999 1734}%
\special{pa 2030 1735}%
\special{pa 2061 1737}%
\special{pa 2092 1738}%
\special{pa 2123 1739}%
\special{pa 2154 1740}%
\special{pa 2186 1741}%
\special{pa 2217 1741}%
\special{pa 2249 1742}%
\special{pa 2280 1742}%
\special{pa 2312 1743}%
\special{pa 2344 1743}%
\special{pa 2376 1743}%
\special{pa 2408 1743}%
\special{pa 2440 1743}%
\special{pa 2472 1743}%
\special{pa 2504 1743}%
\special{pa 2536 1743}%
\special{pa 2568 1742}%
\special{pa 2600 1742}%
\special{pa 2633 1742}%
\special{pa 2665 1741}%
\special{pa 2697 1741}%
\special{pa 2707 1741}%
\special{sp}%
%
\special{pn 13}%
\special{pa 2723 1745}%
\special{pa 2704 1775}%
\special{pa 2685 1805}%
\special{pa 2667 1835}%
\special{pa 2648 1866}%
\special{pa 2629 1896}%
\special{pa 2610 1926}%
\special{pa 2591 1956}%
\special{pa 2572 1985}%
\special{pa 2553 2015}%
\special{pa 2534 2045}%
\special{pa 2515 2074}%
\special{pa 2495 2103}%
\special{pa 2476 2133}%
\special{pa 2457 2162}%
\special{pa 2437 2190}%
\special{pa 2417 2219}%
\special{pa 2398 2247}%
\special{pa 2378 2276}%
\special{pa 2358 2304}%
\special{pa 2338 2331}%
\special{pa 2318 2359}%
\special{pa 2297 2386}%
\special{pa 2277 2413}%
\special{pa 2256 2440}%
\special{pa 2235 2466}%
\special{pa 2214 2492}%
\special{pa 2193 2518}%
\special{pa 2172 2543}%
\special{pa 2150 2568}%
\special{pa 2128 2592}%
\special{pa 2106 2617}%
\special{pa 2084 2640}%
\special{pa 2062 2664}%
\special{pa 2039 2687}%
\special{pa 2016 2709}%
\special{pa 1993 2731}%
\special{pa 1970 2753}%
\special{pa 1946 2774}%
\special{pa 1922 2794}%
\special{pa 1898 2815}%
\special{pa 1874 2834}%
\special{pa 1849 2853}%
\special{pa 1824 2872}%
\special{pa 1799 2890}%
\special{pa 1774 2907}%
\special{pa 1748 2924}%
\special{pa 1722 2940}%
\special{pa 1695 2956}%
\special{pa 1668 2971}%
\special{pa 1641 2985}%
\special{pa 1614 2999}%
\special{pa 1586 3012}%
\special{pa 1558 3024}%
\special{pa 1529 3036}%
\special{pa 1500 3047}%
\special{pa 1471 3057}%
\special{pa 1441 3067}%
\special{pa 1411 3076}%
\special{pa 1381 3084}%
\special{pa 1350 3091}%
\special{pa 1319 3098}%
\special{pa 1303 3101}%
\special{sp}%
%
\special{pn 13}%
\special{pa 1293 3105}%
\special{pa 1266 3084}%
\special{pa 1240 3063}%
\special{pa 1216 3041}%
\special{pa 1192 3019}%
\special{pa 1171 2995}%
\special{pa 1153 2969}%
\special{pa 1138 2942}%
\special{pa 1127 2913}%
\special{pa 1119 2882}%
\special{pa 1115 2849}%
\special{pa 1115 2817}%
\special{pa 1118 2784}%
\special{pa 1124 2753}%
\special{pa 1132 2722}%
\special{pa 1143 2692}%
\special{pa 1155 2663}%
\special{pa 1169 2634}%
\special{pa 1185 2605}%
\special{pa 1201 2577}%
\special{pa 1218 2549}%
\special{pa 1233 2525}%
\special{sp -0.045}%
%
\special{pn 13}%
\special{pa 1233 2535}%
\special{pa 1266 2531}%
\special{pa 1298 2528}%
\special{pa 1331 2524}%
\special{pa 1363 2520}%
\special{pa 1395 2515}%
\special{pa 1428 2511}%
\special{pa 1460 2505}%
\special{pa 1491 2500}%
\special{pa 1523 2494}%
\special{pa 1554 2487}%
\special{pa 1585 2480}%
\special{pa 1616 2472}%
\special{pa 1646 2463}%
\special{pa 1676 2454}%
\special{pa 1706 2443}%
\special{pa 1735 2432}%
\special{pa 1763 2420}%
\special{pa 1791 2406}%
\special{pa 1819 2392}%
\special{pa 1846 2377}%
\special{pa 1873 2360}%
\special{pa 1900 2344}%
\special{pa 1926 2326}%
\special{pa 1952 2308}%
\special{pa 1978 2289}%
\special{pa 2003 2270}%
\special{pa 2029 2250}%
\special{pa 2054 2230}%
\special{pa 2079 2209}%
\special{pa 2103 2188}%
\special{pa 2128 2167}%
\special{pa 2153 2146}%
\special{pa 2177 2124}%
\special{pa 2202 2102}%
\special{pa 2226 2081}%
\special{pa 2251 2059}%
\special{pa 2275 2037}%
\special{pa 2300 2016}%
\special{pa 2325 1994}%
\special{pa 2350 1973}%
\special{pa 2375 1952}%
\special{pa 2400 1932}%
\special{pa 2425 1911}%
\special{pa 2451 1892}%
\special{pa 2477 1872}%
\special{pa 2503 1854}%
\special{pa 2530 1836}%
\special{pa 2557 1818}%
\special{pa 2584 1802}%
\special{pa 2612 1786}%
\special{pa 2640 1771}%
\special{pa 2669 1757}%
\special{pa 2698 1743}%
\special{pa 2727 1731}%
\special{sp}%
%
\special{pn 13}%
\special{pa 2767 1741}%
\special{pa 2790 1764}%
\special{pa 2812 1786}%
\special{pa 2835 1809}%
\special{pa 2858 1831}%
\special{pa 2881 1854}%
\special{pa 2903 1877}%
\special{pa 2926 1899}%
\special{pa 2949 1922}%
\special{pa 2972 1944}%
\special{pa 2995 1966}%
\special{pa 3018 1989}%
\special{pa 3041 2011}%
\special{pa 3064 2033}%
\special{pa 3087 2056}%
\special{pa 3110 2078}%
\special{pa 3133 2100}%
\special{pa 3157 2122}%
\special{pa 3180 2144}%
\special{pa 3203 2166}%
\special{pa 3227 2187}%
\special{pa 3250 2209}%
\special{pa 3274 2231}%
\special{pa 3298 2252}%
\special{pa 3321 2273}%
\special{pa 3345 2295}%
\special{pa 3369 2316}%
\special{pa 3394 2337}%
\special{pa 3418 2358}%
\special{pa 3442 2378}%
\special{pa 3467 2399}%
\special{pa 3491 2420}%
\special{pa 3516 2440}%
\special{pa 3541 2460}%
\special{pa 3566 2480}%
\special{pa 3591 2500}%
\special{pa 3616 2520}%
\special{pa 3641 2540}%
\special{pa 3667 2559}%
\special{pa 3692 2578}%
\special{pa 3718 2597}%
\special{pa 3744 2616}%
\special{pa 3770 2635}%
\special{pa 3797 2654}%
\special{pa 3823 2672}%
\special{pa 3850 2690}%
\special{pa 3877 2708}%
\special{pa 3904 2725}%
\special{pa 3932 2742}%
\special{pa 3959 2758}%
\special{pa 3987 2773}%
\special{pa 4016 2788}%
\special{pa 4045 2802}%
\special{pa 4074 2815}%
\special{pa 4104 2827}%
\special{pa 4134 2838}%
\special{pa 4164 2848}%
\special{pa 4196 2857}%
\special{pa 4227 2865}%
\special{pa 4259 2871}%
\special{pa 4292 2876}%
\special{pa 4324 2880}%
\special{pa 4357 2883}%
\special{pa 4390 2884}%
\special{pa 4423 2883}%
\special{pa 4456 2882}%
\special{pa 4490 2879}%
\special{pa 4522 2875}%
\special{pa 4555 2869}%
\special{pa 4588 2862}%
\special{pa 4620 2853}%
\special{pa 4652 2843}%
\special{pa 4683 2831}%
\special{pa 4714 2818}%
\special{pa 4744 2803}%
\special{pa 4774 2787}%
\special{pa 4803 2769}%
\special{pa 4831 2750}%
\special{pa 4858 2729}%
\special{pa 4884 2706}%
\special{pa 4910 2682}%
\special{pa 4934 2656}%
\special{pa 4956 2629}%
\special{pa 4978 2601}%
\special{pa 4998 2572}%
\special{pa 5016 2542}%
\special{pa 5032 2511}%
\special{pa 5046 2480}%
\special{pa 5058 2449}%
\special{pa 5067 2418}%
\special{pa 5074 2386}%
\special{pa 5079 2355}%
\special{pa 5081 2323}%
\special{pa 5080 2293}%
\special{pa 5075 2262}%
\special{pa 5068 2233}%
\special{pa 5058 2204}%
\special{pa 5044 2176}%
\special{pa 5028 2149}%
\special{pa 5010 2123}%
\special{pa 4989 2098}%
\special{pa 4966 2074}%
\special{pa 4942 2051}%
\special{pa 4915 2029}%
\special{pa 4888 2009}%
\special{pa 4859 1990}%
\special{pa 4828 1972}%
\special{pa 4797 1956}%
\special{pa 4766 1941}%
\special{pa 4734 1928}%
\special{pa 4702 1916}%
\special{pa 4669 1906}%
\special{pa 4637 1897}%
\special{pa 4605 1890}%
\special{pa 4573 1884}%
\special{pa 4541 1880}%
\special{pa 4509 1877}%
\special{pa 4477 1875}%
\special{pa 4445 1873}%
\special{pa 4413 1873}%
\special{pa 4381 1873}%
\special{pa 4349 1874}%
\special{pa 4317 1875}%
\special{pa 4285 1876}%
\special{pa 4254 1878}%
\special{pa 4222 1880}%
\special{pa 4190 1882}%
\special{pa 4158 1884}%
\special{pa 4126 1885}%
\special{pa 4094 1887}%
\special{pa 4062 1888}%
\special{pa 4030 1888}%
\special{pa 3998 1889}%
\special{pa 3966 1889}%
\special{pa 3934 1888}%
\special{pa 3902 1888}%
\special{pa 3870 1887}%
\special{pa 3838 1886}%
\special{pa 3806 1885}%
\special{pa 3774 1883}%
\special{pa 3742 1882}%
\special{pa 3710 1880}%
\special{pa 3678 1877}%
\special{pa 3646 1875}%
\special{pa 3614 1872}%
\special{pa 3582 1869}%
\special{pa 3550 1866}%
\special{pa 3519 1863}%
\special{pa 3487 1860}%
\special{pa 3455 1856}%
\special{pa 3423 1852}%
\special{pa 3391 1848}%
\special{pa 3359 1844}%
\special{pa 3327 1839}%
\special{pa 3295 1835}%
\special{pa 3264 1830}%
\special{pa 3232 1826}%
\special{pa 3200 1821}%
\special{pa 3169 1816}%
\special{pa 3137 1810}%
\special{pa 3105 1805}%
\special{pa 3074 1800}%
\special{pa 3042 1794}%
\special{pa 3011 1789}%
\special{pa 2979 1783}%
\special{pa 2948 1777}%
\special{pa 2916 1772}%
\special{pa 2885 1766}%
\special{pa 2853 1760}%
\special{pa 2822 1754}%
\special{pa 2791 1748}%
\special{pa 2773 1745}%
\special{sp}%
%
\special{pn 13}%
\special{pa 1693 1721}%
\special{pa 1553 1721}%
\special{fp}%
\special{sh 1}%
\special{pa 1553 1721}%
\special{pa 1620 1741}%
\special{pa 1606 1721}%
\special{pa 1620 1701}%
\special{pa 1553 1721}%
\special{fp}%
%
\special{pn 13}%
\special{pa 2345 1968}%
\special{pa 2256 2040}%
\special{fp}%
\special{sh 1}%
\special{pa 2256 2040}%
\special{pa 2320 2014}%
\special{pa 2297 2006}%
\special{pa 2295 1983}%
\special{pa 2256 2040}%
\special{fp}%
%
\special{pn 13}%
\special{pa 3247 2215}%
\special{pa 3353 2305}%
\special{fp}%
\special{sh 1}%
\special{pa 3353 2305}%
\special{pa 3315 2247}%
\special{pa 3312 2270}%
\special{pa 3289 2277}%
\special{pa 3353 2305}%
\special{fp}%
%
\special{pn 13}%
\special{pa 2753 1731}%
\special{pa 2785 1734}%
\special{pa 2817 1737}%
\special{pa 2849 1740}%
\special{pa 2881 1742}%
\special{pa 2914 1745}%
\special{pa 2946 1748}%
\special{pa 2978 1750}%
\special{pa 3010 1752}%
\special{pa 3042 1755}%
\special{pa 3074 1757}%
\special{pa 3106 1758}%
\special{pa 3138 1760}%
\special{pa 3170 1761}%
\special{pa 3202 1762}%
\special{pa 3234 1763}%
\special{pa 3266 1764}%
\special{pa 3297 1764}%
\special{pa 3329 1764}%
\special{pa 3361 1763}%
\special{pa 3393 1762}%
\special{pa 3424 1761}%
\special{pa 3456 1759}%
\special{pa 3488 1757}%
\special{pa 3519 1755}%
\special{pa 3551 1752}%
\special{pa 3582 1749}%
\special{pa 3614 1746}%
\special{pa 3645 1742}%
\special{pa 3676 1738}%
\special{pa 3708 1735}%
\special{pa 3739 1731}%
\special{pa 3771 1726}%
\special{pa 3802 1722}%
\special{pa 3834 1718}%
\special{pa 3865 1714}%
\special{pa 3897 1710}%
\special{pa 3928 1706}%
\special{pa 3960 1702}%
\special{pa 3991 1698}%
\special{pa 4023 1694}%
\special{pa 4055 1691}%
\special{pa 4086 1688}%
\special{pa 4118 1685}%
\special{pa 4150 1682}%
\special{pa 4182 1679}%
\special{pa 4214 1677}%
\special{pa 4246 1676}%
\special{pa 4278 1675}%
\special{pa 4310 1674}%
\special{pa 4342 1674}%
\special{pa 4375 1674}%
\special{pa 4407 1675}%
\special{pa 4440 1676}%
\special{pa 4472 1678}%
\special{pa 4505 1681}%
\special{pa 4538 1684}%
\special{pa 4571 1688}%
\special{pa 4604 1693}%
\special{pa 4638 1698}%
\special{pa 4671 1705}%
\special{pa 4705 1712}%
\special{pa 4738 1720}%
\special{pa 4772 1729}%
\special{pa 4805 1739}%
\special{pa 4838 1749}%
\special{pa 4871 1761}%
\special{pa 4903 1774}%
\special{pa 4935 1787}%
\special{pa 4966 1802}%
\special{pa 4996 1818}%
\special{pa 5025 1834}%
\special{pa 5054 1852}%
\special{pa 5081 1871}%
\special{pa 5107 1891}%
\special{pa 5132 1912}%
\special{pa 5155 1934}%
\special{pa 5177 1958}%
\special{pa 5197 1982}%
\special{pa 5216 2008}%
\special{pa 5232 2035}%
\special{pa 5247 2063}%
\special{pa 5260 2093}%
\special{pa 5271 2124}%
\special{pa 5279 2156}%
\special{pa 5286 2189}%
\special{pa 5290 2222}%
\special{pa 5292 2256}%
\special{pa 5292 2291}%
\special{pa 5290 2325}%
\special{pa 5286 2359}%
\special{pa 5279 2392}%
\special{pa 5271 2425}%
\special{pa 5260 2457}%
\special{pa 5247 2488}%
\special{pa 5233 2518}%
\special{pa 5216 2546}%
\special{pa 5197 2572}%
\special{pa 5176 2597}%
\special{pa 5157 2615}%
\special{sp}%
%
\special{pn 13}%
\special{pa 5143 2625}%
\special{pa 5111 2630}%
\special{pa 5080 2634}%
\special{pa 5048 2638}%
\special{pa 5016 2640}%
\special{pa 4984 2641}%
\special{pa 4952 2641}%
\special{pa 4920 2641}%
\special{pa 4888 2639}%
\special{pa 4856 2638}%
\special{pa 4824 2636}%
\special{pa 4792 2635}%
\special{pa 4760 2634}%
\special{pa 4728 2634}%
\special{pa 4696 2636}%
\special{pa 4664 2639}%
\special{pa 4633 2643}%
\special{pa 4601 2649}%
\special{pa 4570 2655}%
\special{pa 4539 2662}%
\special{pa 4507 2669}%
\special{pa 4476 2676}%
\special{pa 4445 2683}%
\special{pa 4413 2691}%
\special{pa 4382 2697}%
\special{pa 4350 2704}%
\special{pa 4319 2709}%
\special{pa 4287 2714}%
\special{pa 4256 2718}%
\special{pa 4237 2721}%
\special{sp -0.045}%
%
\special{pn 13}%
\special{pa 4283 2721}%
\special{pa 4249 2721}%
\special{pa 4215 2721}%
\special{pa 4182 2720}%
\special{pa 4149 2719}%
\special{pa 4117 2716}%
\special{pa 4085 2712}%
\special{pa 4054 2706}%
\special{pa 4024 2699}%
\special{pa 3996 2689}%
\special{pa 3968 2676}%
\special{pa 3942 2661}%
\special{pa 3917 2644}%
\special{pa 3894 2625}%
\special{pa 3871 2604}%
\special{pa 3848 2581}%
\special{pa 3827 2557}%
\special{pa 3806 2532}%
\special{pa 3785 2506}%
\special{pa 3765 2479}%
\special{pa 3745 2452}%
\special{pa 3725 2425}%
\special{pa 3705 2397}%
\special{pa 3685 2370}%
\special{pa 3665 2343}%
\special{pa 3644 2317}%
\special{pa 3623 2292}%
\special{pa 3601 2267}%
\special{pa 3579 2243}%
\special{pa 3557 2219}%
\special{pa 3534 2196}%
\special{pa 3511 2174}%
\special{pa 3488 2152}%
\special{pa 3464 2131}%
\special{pa 3439 2111}%
\special{pa 3415 2091}%
\special{pa 3390 2072}%
\special{pa 3364 2053}%
\special{pa 3338 2035}%
\special{pa 3312 2017}%
\special{pa 3286 2000}%
\special{pa 3259 1983}%
\special{pa 3231 1967}%
\special{pa 3204 1952}%
\special{pa 3176 1936}%
\special{pa 3148 1921}%
\special{pa 3119 1907}%
\special{pa 3091 1893}%
\special{pa 3062 1879}%
\special{pa 3033 1866}%
\special{pa 3003 1852}%
\special{pa 2974 1839}%
\special{pa 2944 1826}%
\special{pa 2915 1814}%
\special{pa 2885 1801}%
\special{pa 2855 1789}%
\special{pa 2825 1776}%
\special{pa 2795 1764}%
\special{pa 2773 1755}%
\special{sp}%
%
\special{pn 13}%
\special{pa 4205 1673}%
\special{pa 4055 1693}%
\special{fp}%
\special{sh 1}%
\special{pa 4055 1693}%
\special{pa 4124 1704}%
\special{pa 4108 1686}%
\special{pa 4118 1664}%
\special{pa 4055 1693}%
\special{fp}%
\put(12.8300,-16.7100){\makebox(0,0)[lb]{$\alpha_1$}}%
\put(20.0300,-21.2500){\makebox(0,0)[lb]{$\beta_1$}}%
%
\special{pn 13}%
\special{pa 2585 1978}%
\special{pa 2657 1860}%
\special{fp}%
\special{sh 1}%
\special{pa 2657 1860}%
\special{pa 2605 1906}%
\special{pa 2629 1906}%
\special{pa 2639 1927}%
\special{pa 2657 1860}%
\special{fp}%
%
\special{pn 13}%
\special{pa 5021 2510}%
\special{pa 5075 2370}%
\special{fp}%
\special{sh 1}%
\special{pa 5075 2370}%
\special{pa 5032 2425}%
\special{pa 5056 2420}%
\special{pa 5070 2439}%
\special{pa 5075 2370}%
\special{fp}%
\put(47.4300,-22.7500){\makebox(0,0)[lb]{$\alpha_2$}}%
\put(35.2300,-16.9100){\makebox(0,0)[lb]{$\beta_2$}}%
\put(27.9700,-19.4100){\makebox(0,0)[lb]{$b$}}%
%
\special{pn 13}%
\special{ar 1213 1151 223 223  0.3283294 6.2831853}%
\special{ar 1213 1151 223 223  0.0000000 0.2905992}%
%
\special{pn 20}%
\special{sh 1}%
\special{ar 1197 1145 10 10 0  6.28318530717959E+0000}%
\special{sh 1}%
\special{ar 1197 1145 10 10 0  6.28318530717959E+0000}%
%
\special{pn 13}%
\special{pa 1453 1121}%
\special{pa 2743 1741}%
\special{fp}%
\put(11.1700,-12.9500){\makebox(0,0)[lb]{$t_n$}}%
\put(14.4300,-10.6500){\makebox(0,0)[lb]{$t_n^{\ast}$}}%
\put(17.8700,-14.9500){\makebox(0,0)[lb]{$l_n$}}%
\put(10.3700,-8.4500){\makebox(0,0)[lb]{$\gamma_n$}}%
%
\special{pn 13}%
\special{ar 2635 655 224 224  0.3307889 6.2831853}%
\special{ar 2635 655 224 224  0.0000000 0.2948829}%
%
\special{pn 20}%
\special{sh 1}%
\special{ar 2619 650 10 10 0  6.28318530717959E+0000}%
\special{sh 1}%
\special{ar 2619 650 10 10 0  6.28318530717959E+0000}%
\put(25.3900,-8.0000){\makebox(0,0)[lb]{$t_j$}}%
\put(28.6500,-5.7000){\makebox(0,0)[lb]{$t_j^{\ast}$}}%
\put(21.2100,-4.4000){\makebox(0,0)[lb]{$\gamma_j$}}%
%
\special{pn 13}%
\special{pa 2857 655}%
\special{pa 2854 687}%
\special{pa 2851 719}%
\special{pa 2849 751}%
\special{pa 2846 783}%
\special{pa 2843 814}%
\special{pa 2840 846}%
\special{pa 2837 878}%
\special{pa 2834 910}%
\special{pa 2832 942}%
\special{pa 2829 974}%
\special{pa 2826 1006}%
\special{pa 2823 1038}%
\special{pa 2820 1070}%
\special{pa 2816 1101}%
\special{pa 2813 1133}%
\special{pa 2810 1165}%
\special{pa 2807 1197}%
\special{pa 2804 1229}%
\special{pa 2800 1261}%
\special{pa 2797 1292}%
\special{pa 2793 1324}%
\special{pa 2790 1356}%
\special{pa 2786 1388}%
\special{pa 2782 1420}%
\special{pa 2778 1451}%
\special{pa 2775 1483}%
\special{pa 2771 1515}%
\special{pa 2766 1547}%
\special{pa 2762 1578}%
\special{pa 2758 1610}%
\special{pa 2754 1642}%
\special{pa 2749 1673}%
\special{pa 2744 1705}%
\special{pa 2740 1737}%
\special{pa 2737 1755}%
\special{sp}%
%
\special{pn 13}%
\special{ar 4143 765 224 224  0.3307889 6.2831853}%
\special{ar 4143 765 224 224  0.0000000 0.2948829}%
%
\special{pn 20}%
\special{sh 1}%
\special{ar 4127 760 10 10 0  6.28318530717959E+0000}%
\special{sh 1}%
\special{ar 4127 760 10 10 0  6.28318530717959E+0000}%
\put(40.4700,-9.1000){\makebox(0,0)[lb]{$t_1$}}%
%
\special{pn 13}%
\special{pa 1177 1135}%
\special{pa 1423 1131}%
\special{dt 0.045}%
\special{pa 1423 1131}%
\special{pa 1422 1131}%
\special{dt 0.045}%
%
\special{pn 13}%
\special{pa 2627 651}%
\special{pa 2873 645}%
\special{dt 0.045}%
\special{pa 2873 645}%
\special{pa 2872 645}%
\special{dt 0.045}%
%
\special{pn 13}%
\special{pa 4123 765}%
\special{pa 4367 761}%
\special{dt 0.045}%
\special{pa 4367 761}%
\special{pa 4366 761}%
\special{dt 0.045}%
%
\special{pn 13}%
\special{pa 2767 1715}%
\special{pa 2799 1709}%
\special{pa 2830 1704}%
\special{pa 2862 1698}%
\special{pa 2893 1692}%
\special{pa 2925 1687}%
\special{pa 2956 1681}%
\special{pa 2988 1675}%
\special{pa 3019 1670}%
\special{pa 3051 1664}%
\special{pa 3082 1658}%
\special{pa 3114 1653}%
\special{pa 3145 1647}%
\special{pa 3177 1641}%
\special{pa 3208 1635}%
\special{pa 3240 1629}%
\special{pa 3271 1623}%
\special{pa 3303 1618}%
\special{pa 3334 1612}%
\special{pa 3365 1606}%
\special{pa 3397 1600}%
\special{pa 3428 1594}%
\special{pa 3460 1587}%
\special{pa 3491 1581}%
\special{pa 3522 1575}%
\special{pa 3553 1569}%
\special{pa 3585 1563}%
\special{pa 3616 1556}%
\special{pa 3647 1550}%
\special{pa 3678 1543}%
\special{pa 3710 1536}%
\special{pa 3741 1529}%
\special{pa 3772 1522}%
\special{pa 3803 1515}%
\special{pa 3834 1507}%
\special{pa 3865 1499}%
\special{pa 3896 1491}%
\special{pa 3927 1482}%
\special{pa 3957 1473}%
\special{pa 3988 1464}%
\special{pa 4019 1454}%
\special{pa 4049 1444}%
\special{pa 4080 1433}%
\special{pa 4111 1422}%
\special{pa 4141 1410}%
\special{pa 4171 1398}%
\special{pa 4201 1386}%
\special{pa 4231 1372}%
\special{pa 4261 1358}%
\special{pa 4290 1343}%
\special{pa 4318 1328}%
\special{pa 4347 1311}%
\special{pa 4374 1294}%
\special{pa 4401 1276}%
\special{pa 4427 1257}%
\special{pa 4452 1237}%
\special{pa 4477 1215}%
\special{pa 4501 1193}%
\special{pa 4523 1169}%
\special{pa 4545 1145}%
\special{pa 4565 1119}%
\special{pa 4584 1091}%
\special{pa 4602 1063}%
\special{pa 4619 1032}%
\special{pa 4634 1001}%
\special{pa 4648 968}%
\special{pa 4660 933}%
\special{pa 4671 897}%
\special{pa 4680 860}%
\special{pa 4685 824}%
\special{pa 4687 791}%
\special{pa 4684 761}%
\special{pa 4676 737}%
\special{pa 4663 719}%
\special{pa 4643 708}%
\special{pa 4618 702}%
\special{pa 4589 702}%
\special{pa 4556 705}%
\special{pa 4520 712}%
\special{pa 4482 722}%
\special{pa 4442 734}%
\special{pa 4401 747}%
\special{pa 4377 755}%
\special{sp}%
\put(28.6300,-13.1100){\makebox(0,0)[lb]{$l_j$}}%
\put(39.6700,-4.4100){\makebox(0,0)[lb]{$\gamma_1$}}%
\put(43.7300,-6.8500){\makebox(0,0)[lb]{$t_1^{\ast}$}}%
\put(44.6700,-13.9100){\makebox(0,0)[lb]{$l_1$}}%
%
\special{pn 8}%
\special{pa 1937 1355}%
\special{pa 1797 1291}%
\special{fp}%
\special{sh 1}%
\special{pa 1797 1291}%
\special{pa 1849 1337}%
\special{pa 1846 1313}%
\special{pa 1866 1301}%
\special{pa 1797 1291}%
\special{fp}%
%
\special{pn 13}%
\special{pa 2801 1270}%
\special{pa 2831 1090}%
\special{fp}%
\special{sh 1}%
\special{pa 2831 1090}%
\special{pa 2800 1152}%
\special{pa 2822 1143}%
\special{pa 2840 1159}%
\special{pa 2831 1090}%
\special{fp}%
%
\special{pn 13}%
\special{pa 2781 1770}%
\special{pa 2071 1090}%
\special{fp}%
%
\special{pn 13}%
\special{pa 2731 1720}%
\special{pa 3271 1330}%
\special{fp}%
%
\special{pn 13}%
\special{pa 2741 1750}%
\special{pa 2561 1180}%
\special{fp}%
\end{picture}%
\end{center}
\vspace{0.5cm}
\caption{Paths and Loops}
\label{figure:2}
\end{figure}
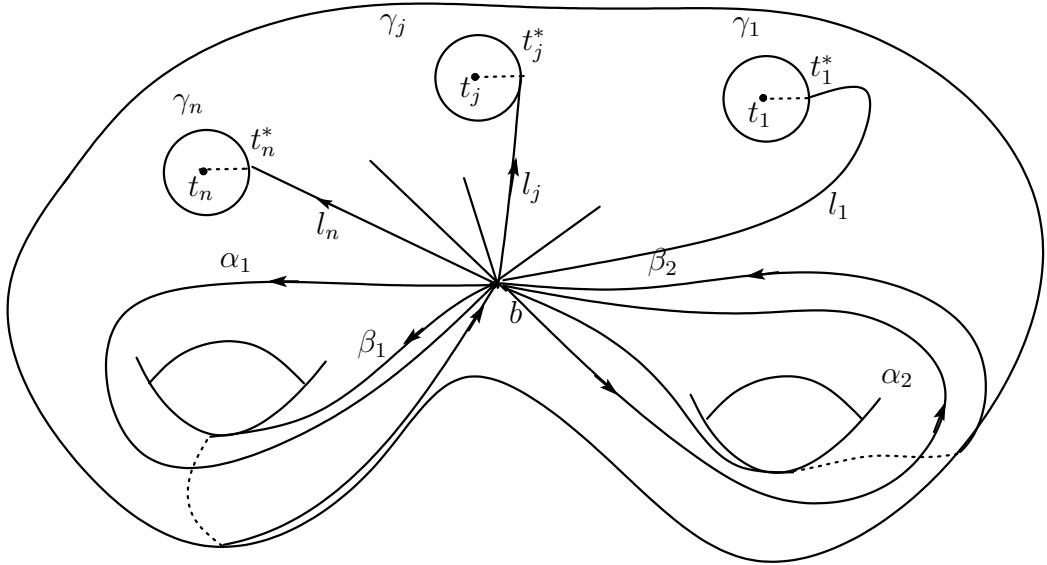

Fixing these data, we can associate the following 
{\em generalized monodromy data} to each $\bnu$-parabolic connection 
$(E, \nabla, \{l^{(i)}_j \} ) \in M^{\balpha}_{D/C}(r, d, (m_i))_{\bnu}$.

\vspace{0.5cm}
\noindent
{\bf Generalized monodromy data.}

\begin{itemize}

\item{\bf Formal monodromy $\{ \widehat{\gamma}_{i} \}$:}
For each $i, 1\leq i \leq n$, we can define 
the formal monodromy $ \widehat{\gamma}_{i} \in \Aut(V_{t_i}) $ 
coming from a monodromy on formal solutions.  The eigenvalues 
of $\widehat{\gamma}_{i}$ are determined by $\{a^{(i)}_{j, -1} \}_{0 \leq j \leq r-1}$ 
with some exponential maps. Since we fix the decomposition (\ref{eq:formal-decomp}), 
$\widehat{\gamma}_{i}$ are  fixed diagonal matrices. 

\item{\bf Stokes data $\{ St_{d^{(i)}_k} \}$:}
Let us consider a sector 
$S \subset \Delta_i \setminus \{ 0 \}$ and let $V(S)$ denote the  
space of analytic (or convergent) solutions of $\nabla = 0$ on the sector $S$.  
Let $\{  S^{(i)}_{k} \}_{1 \leq i \leq s_i}  $  be the set of sectors defined in (\ref{eq:sector}). 
For directions $d_{k, -} \in S^{(i)}_{k}$, $d_{k, +} \in S^{(i)}_{k+1}$,  
we have 
multi-summation maps 
\begin{eqnarray*}
{\rm mults}_{d_{k, -}} &: & V_{t_i} \lra V(S^{(i)}_{k}) \\
{\rm mults}_{d_{k, +}} &: & V_{t_i} \lra V(S^{(i)}_{k+1}) 
\end{eqnarray*}
which are  $\C$-linear isomorphisms between $V_{t_i}$ and $V(S^{(i)}_{k}) $
and $V(S^{(i)}_{k+1})$ respectively.  
The Stokes map $St_{d^{(i)}_k}$  comes from an isomorphism 
\begin{equation}\label{eq:sotkes}
St_{d^{(i)}_k} : V_{t_i} \lra V_{t_i} 
\end{equation}
which makes the following diagram commutative;
$$
\begin{array}{cccc}
{\rm mults}_{d_k, -} :& V_{t_i} &  \lra & V(S^{(i)}_{k}) \\
   & \quad \quad \downarrow St_{d^{(i)}_k} &   &  ||  \\
{\rm mults}_{d_k, +}: &V_{t_i} &  \lra &  V(S^{(i)}_{k+1} )
\end{array}
$$
The identification  $V(S^{(i)}_{k}) = 
V(S^{(i)}_{k+1} )$ comes from analytic continuations. 

According to the decomposition (\ref{eq:formal-decomp}) of 
$V_{t_i} = \oplus_{j=0}^{r-1} V^{(i)}_j$,  
each Stokes map $St_{d^{(i)}_k}$ has the form 
$$
St_{d^{(i)}_k}=Id +  \sum_{(j_1, j_2) 
\in \cJ(d_k^{(i)}, i)}  R_{j_1, j_2},
$$ 
with $R_{j_1, j_2} = i_{j_1} \circ M_{j_1, j_2} \circ p_{j_2} $
where $ 0 \leq j_1, j_2 \leq r-1, j_1 \not=j_2$ and 
 $p_{j_1}  :V_{t_i}  \ra V^{(i)}_{j_1}$ is the projection
and  $i_{j_2}:V^{(i)}_{j_2} \ra V_{t_i}$ is 
the canonical injection. Moreover 
$M_{j_1,j_2} : V^{(i)}_{j_1} \ra V^{(i)}_{j_2}$ is a 
linear map between one dimensional spaces.  
So $M_{j_1, j_2}$  
is given by a scalar $c_{j_2, j_1} \in \C$.  
In the matrix form,  one can write as 
$St_{d^{(i)}_k} =I_r +  \sum_{(j_1, j_2) \in \cJ(d^{(i)}_k, i)}  c_{j_2, j_1} I_{j_2, j_1}$ where $I_{j_2, j_1} $ is the $r \times r$ matrix whose $(i, k)$-entry is zero except for 
$(i, k)= (j_2, j_1)$ and the $(j_2, j_1)$-entry is $1$.  
(For this fact, see [Theorem 8.13, \cite{PS}] or [Lemma 6.5, \cite{sabbah-book}].)

\item{\bf The link $L_i \in \Hom_{\C}(V_b, V_{t_i})$:}
 Analytic continuation along $l_i$ gives a $\C$-linear 
isomorphism $V_b \lra V_{t_i^{\ast}}$. Composition of this 
isomorphism and  the inverse of 
multi-summation map $V_{t_i^{\ast}} \lra V_{t_i} $ gives the linear map  
which is called {\em a link (or a connection matrix)} 
\begin{equation}\label{eq:link}
L_i:V_{b} \stackrel{\simeq}{\lra}V_{t_i^{\ast}} \stackrel{\simeq}{\lra} V_{t_i}
\end{equation}

\item{\bf The topological monodromy $Top_i \in \Aut (V_{t_i})$:}

Identifying $V_{t^{\ast}_i}$ with $V_{t_i}$ by the multi-summation map, an   
analytic continuation along the loop $\gamma_i$ starting from $t^{\ast}_i$
gives a topological monodromy $Top_i \in \Aut (V_{t_i}) \simeq GL_r(\C)$.  
We have the following relation.  
\begin{equation}\label{eq:top}
Top_i = \widehat{\gamma}_i \circ St_{d^{(i)}_{s_i}} \circ  \cdots St_{d^{(i)}_2} \circ St_{d^{(i)}_1}. 
\end{equation}
 
\item{\bf The global monodromy representation:}

We can consider the monodromy representation $\rho: 
\pi_1(C \setminus \{t_1, \cdots, t_n \}, b ) \lra \Aut(V_b) \simeq GL_r(\C) $.
 Moreover $\rho (\gamma^{l}_i) = L^{-1}_i Top_i L_i$ and 
 we set $A_k = \rho(\alpha_k), B_k = \rho(\beta_k)$. 
 These data determine 
the monodromy representation 
$\rho: \pi_1(C \setminus \{t_1, \cdots, t_n \}, b ) \lra 
\Aut(V_b) \simeq GL_{r}(\C) $ 
associated to analytic continuations of the space of solutions of 
$\nabla \sigma = 0$. We have the relation 
\begin{equation}\label{eq:top-relation}
\prod_{i=n}^{1} L_i^{-1} Top_i L_i\prod_{k=g}^1 (B_k^{-1}A_k^{-1}B_kA_k)= I_r.
\end{equation}
(Note that in this notation, $\rho$ becomes an anti-homomorphism such that 
$\rho(\delta_1 \delta_2) = \rho(\delta_2) \rho(\delta_1)$.)
By the relation (\ref{eq:top}), we see that the formal monodromy $\widehat{\gamma_i}$,  
Stokes data $\{ St_{d^{(i)}_k} \}_{1 \leq k  \leq r(r-1)(m_i-1)}$ and the link $L_i$ determine 
$\rho (\gamma^{l}_i) $.  
\end{itemize}

For a generic  $\bnu \in N_r^{(n)}(d, D)$, we define the set $\tilde{{\mathcal R}}(\bnu)$ 
of all 
tuples 
$$
\{ \{\widehat{\gamma_i} \},  \{ St_{d_k^{(i)}} \}, \{L_i \},  \{ A_k,  B_k \}  \}
$$ 
satisfying: 
\begin{enumerate}
\item  For each  $1 \leq i \leq n $,  $\widehat{\gamma_i} \in GL(V_{t_i})$ preserving the 
decomposition (\ref{eq:formal-decomp}) 
whose eigenvalues are determined by $\{a^{(i)}_{j, -1} \}_{0 \leq j \leq r-1}$. 
(Hence, $\widehat{\gamma}_i$ is a diagonal matrix with prescribed eigenvalues.)

\item For each $1 \leq i \leq n$ and $1 \leq k \leq s_i$, $St_{d_k^{(i)}} \in GL(V_{t_i})$ 
of the form $St_{d^{(i)}_k} =Id +  \sum_{(j_1, j_2) \in \cJ(d^{(i)}_k, i)}  R_{j_1, j_2}$
where $R_{j_1, j_2}$ corresponds to a one dimensional homomorphism 
$c_{j_2, j_1}:V^{(i)}_{j_1} \lra V^{(i)}_{j_2}$.  
\item Linear bijections $L_i:V_b \lra V_{t_i}$ for  $1 \leq i \leq n$. 
\item Define $Top_{i} \in GL(V_{t_i}) $ by the formula (\ref{eq:top}). 
The set $\{\{ Top_{i} \}_{1\leq i \leq n}, \{A_k, B_k \in GL(V_{b})\}_{1\leq k 
\leq g} \}$ 
satisfying the relation (\ref{eq:top-relation}). 
\end{enumerate}

\begin{Definition}{\rm \label{def:equiv}
Two tuples 
$\{ \{\widehat{\gamma_i} \}, \{ St_{d_k^{(i)}} \}, \{L_i \},  \{ A_k,  B_k \}  \} $ 
and $\{ \{\widehat{\gamma_i}' \},  \{ St_{d_k^{(i)}}' \}, \{L'_i \},  \{ A'_k,  B'_k \}  \}$ are called equivalent, if there exist $\sigma^{(i)} \in GL(V_{t_i})$ preserving the decomposition 
$V_{t_i} = \oplus_{j=0}^{r-1} V^{(i)}_j$ in (\ref{eq:formal-decomp}) and $\sigma \in GL(V_b)$ satisfying 
\begin{eqnarray*}
\sigma^{(i)} L_i & = &  L'_i \sigma \quad \mbox{for each $ i, 1\leq i \leq n $ } \\
\sigma^{(i)} \widehat{\gamma_i} & = &  \widehat{\gamma_i}' \sigma^{(i)}, \quad    
\mbox{for each $ i, 1\leq i \leq n $ } \\
\sigma^{(i)} St_{d_k^{(i)}} & = &  St_{d_k^{(i)}}'  \sigma^{(i)}, \quad    
\mbox{for each $ i, 1\leq i \leq n, 1 \leq k \leq s_i $ } \\
A_k & = &  \sigma^{-1} A'_k \sigma, \quad    
B_k = \sigma^{-1}B'_k \sigma,  \quad  
\mbox{for each $ k, 1 \leq k \leq g $ }.  
\end{eqnarray*}
}
\end{Definition}

Note that under the assumption that $\bnu$ is generic we see that 
$\sigma^{(i)} \in GL(V_{t_i})$ above is a diagonal matrix 
in $\prod_{j=0}^{r-1} GL(V^{(i)}_j) \simeq (\C^{\times})^{r}$. 

Since the set $\tilde{\mathcal R}(\bnu)$ is an affine scheme 
with a natural action of the reductive group 
\begin{equation}\label{eq:group}
G := GL(V_b) \times \prod_{i=1}^{n} 
\prod_{j=0}^{r-1} GL(V^{(i)}_j) 
\end{equation}
in Definition \ref{def:equiv}, we 
can construct the categorical quotient
\begin{equation}\label{eq:quotient}
{\mathcal R}(\bnu) = \tilde{\mathcal R}(\bnu)//G
\end{equation}
which is considered as the set of equivalence classes of 
the generalized monodromy data associated to $\bnu$.  
By definition of the categorical quotient, 
$ {\mathcal R}(\bnu)$ is an affine scheme.

\begin{Proposition}\label{prop:smooth}
Assume that $\bnu \in N^{(n)}_r(d,D)$ is generic, non-resonant and irreducible 
(cf. Definition \ref{def:generic}). Then 
moduli space ${\mathcal R}(\bnu)$ is a nonsingular affine scheme and 
$$
\dim {\mathcal R}(\bnu)= 2r^2(g-1) + \sum_{i=1}^n m_i r(r-1) +2
$$
if ${\mathcal R}(\bnu)$ is non-empty.  
\end{Proposition}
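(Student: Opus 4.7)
The plan is to realise $\mathcal{R}(\bnu)$ as a GIT quotient of a smooth affine variety by a reductive group acting with trivial stabilisers modulo the scalar centre, and then count dimensions.

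First I would describe the ambient affine space $X$ parametrising tuples
$$
\bigl(\{\widehat\gamma_i\},\{St_{d^{(i)}_k}\},\{L_i\},\{A_k,B_k\}\bigr)
$$
without the relation (\ref{eq:top-relation}). The formal monodromies $\widehat\gamma_i$ contribute nothing (they are prescribed diagonal matrices, since genericity fixes the decomposition (\ref{eq:formal-decomp})). Each Stokes factor $St_{d^{(i)}_k}$ is parametrised by $\sharp\mathcal{J}(d^{(i)}_k,i)$ complex numbers, totalling $(m_i-1)r(r-1)$ at $t_i$ by (\ref{eq:diection}). The links and global loops contribute $nr^2$ and $2gr^2$ parameters respectively, and $X$ is smooth of dimension
$$
\dim X=\sum_{i=1}^n(m_i-1)r(r-1)+nr^2+2gr^2.
$$

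Next, consider the map $\Phi:X\to SL_r(\mathbf{C})$ sending a tuple to the left--hand side of (\ref{eq:top-relation}). Note that $\Phi$ indeed lands in $SL_r$: each Stokes factor and each commutator has determinant one, and $\prod_i\det(\widehat\gamma_i)=\exp\bigl(-2\pi\sqrt{-1}\sum_{i,j}a^{(i)}_{j,-1}\bigr)=\exp(2\pi\sqrt{-1}d)=1$ because $\bnu\in N^{(n)}_r(d,D)$. Then $\tilde{\mathcal{R}}(\bnu)=\Phi^{-1}(I_r)$. The key technical step is to show that $\Phi$ is a submersion at every point of $\tilde{\mathcal{R}}(\bnu)$. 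For this I would vary the $B_k$'s and $A_k$'s and use the fact that for any nonzero traceless $\xi\in\mathfrak{sl}_r$, the differential picks up a term of the form $\mathrm{Ad}(\rho(\alpha_k))\xi-\xi$, and irreducibility of $\bnu$ (hence of the representation $\rho$) forces these to generate $\mathfrak{sl}_r$; this is the standard argument going back to Goldman in the regular--singular case, and nothing new is needed for the irregular situation because the Stokes parameters enter $\Phi$ only via $\mathrm{Top}_i$. Submersivity gives that $\tilde{\mathcal{R}}(\bnu)$ is smooth of dimension $\dim X-(r^2-1)$.

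Now I would analyse the action of $G=GL(V_b)\times\prod_{i,j}GL(V^{(i)}_j)$ (of dimension $r^2+nr$) on $\tilde{\mathcal{R}}(\bnu)$. Under irreducibility, a stabilising $(\sigma,\{\sigma^{(i)}\})$ must commute with the irreducible representation $\rho$ through the links, so Schur's lemma forces $\sigma=c\cdot\mathrm{id}$ and compatibility with $L_i$ then forces each $\sigma^{(i)}=c\cdot\mathrm{id}$. Hence the stabiliser of every point is the diagonal $\mathbf{C}^{\times}$, and the non-resonance hypothesis (needed only for the regular--singular components $m_i=1$) guarantees that the prescribed eigenvalues of $\widehat\gamma_i$ are distinct, so no collapsing of orbits can occur on the Stokes--less factors. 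The action of $G/\mathbf{C}^{\times}$ is therefore free; because all tuples are irreducible they are GIT--stable, so the categorical quotient (\ref{eq:quotient}) coincides with the geometric quotient and is a smooth affine variety.

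Finally, a straightforward dimension count gives
\begin{align*}
\dim\mathcal{R}(\bnu)&=\dim X-(r^2-1)-(\dim G-1)\\
&=\sum_i(m_i-1)r(r-1)+nr^2+2gr^2-r^2-r^2-nr+2\\
&=r(r-1)\sum_{i=1}^n m_i+2r^2(g-1)+2,
\end{align*}
which is the asserted formula. The main obstacle is the submersivity of $\Phi$ at every point; the standard Goldman--type argument must be adapted to accommodate the Stokes variables, but since these only appear inside the factors $\mathrm{Top}_i=\widehat\gamma_i\circ St_{d^{(i)}_{s_i}}\circ\cdots\circ St_{d^{(i)}_1}$ and the variation in $A_k,B_k$ already suffices to surject onto $\mathfrak{sl}_r$ under irreducibility, this causes no essential difficulty.
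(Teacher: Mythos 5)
Your overall strategy is the same as the paper's: you build the smooth affine space of tuples without the relation (the paper's $\cS(\bnu)$, of the same dimension $\sum_i(m_i-1)r(r-1)+(n+2g)r^2$), view $\tilde{\cR}(\bnu)$ as the fibre over $I_r$ of the product map $\mu$ into $SL_r(\C)$, prove smoothness of the fibre by surjectivity of $d\mu$, quotient by $G=GL(V_b)\times\prod_{i,j}GL(V^{(i)}_j)$ modulo the scalar subgroup acting trivially and freely (Schur through the links, exactly as you argue), and your dimension count reproduces the paper's. The paper handles the surjectivity of $d\mu$ by invoking the computation of [Theorem 2.2.5, \cite{HR}], adapted to the present variables.

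The genuine gap is in your justification of that surjectivity. You claim that varying the $A_k,B_k$ alone produces terms $\mathrm{Ad}(\rho(\alpha_k))\xi-\xi$ which, by irreducibility, already span $\mathfrak{sl}_r(\C)$, so that ``the Stokes variables cause no essential difficulty.'' This cannot work: for $g=0$ there are no $A_k,B_k$ at all (and this is precisely the case of the Painlev\'e applications), while the left-hand side of (\ref{eq:top-relation}) still has to be shown to be a submersion; and even for $g\geq 1$ the handle monodromies may be central (or generate a reducible subgroup) while the full representation is irreducible thanks to the puncture factors, in which case the differential of each commutator $B_k^{-1}A_k^{-1}B_kA_k$ at such a point vanishes and the $A_k,B_k$ variations contribute nothing. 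Irreducibility of $\rho$ does not control the subrepresentation generated by the $\alpha_k,\beta_k$ alone. The correct argument must use the variations of the links $L_i$ (equivalently, conjugation of the factors $L_i^{-1}Top_iL_i$), together with the Stokes and commutator directions: one identifies the cokernel of $d\mu_s$, via the trace pairing, with the Lie-algebra centralizer of the image of the whole representation (i.e.\ of all $L_i^{-1}Top_iL_i$, $A_k$, $B_k$) intersected with $\mathfrak{sl}_r(\C)$, and this vanishes by Schur's lemma under the irreducibility hypothesis; this is the computation of \cite{HR} that the paper cites, and it is where non-resonance and irreducibility of $\bnu$ actually enter (they guarantee that every point of $\tilde{\cR}(\bnu)$ gives an irreducible representation). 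With that step repaired, the rest of your argument (freeness of $G/Z$, smoothness of the quotient, and the dimension count) goes through as in the paper.
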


\begin{proof} 
For a generic  $\bnu \in N^{(n)}_r(d,D)$, consider the affine variety 
of tuples
$$
{\mathcal S}(\bnu) =\{ \{  \{\widehat{\gamma_i} \}, \{ St_{d_k^{(i)}} \}, \{L_i \},  \{ A_k,  B_k \}  \} | \mbox{ without the relation
(\ref{eq:top-relation})} \}.
$$
Set $l_i = (m_i-1)r(r-1)$ and recall the equality
$\sum_{1 \leq k \leq s_i} \sharp \cJ(d^{(i)}_k, i) = l_i$ where $\sharp \cJ(d^{(i)}_k, i)$ 
is the multiplicity of the singular direction $d^{(i)}_k$.  
The set of Stokes matrices $St_{d_k^{(i)}}$ is isomorphic to the affine variety  
$\C^{\sharp \cJ(d^{(i)}_k, i)}$. 
Then we see that ${\mathcal S}(\bnu) \simeq \prod_{i=1}^n \C^{l_i} \times 
GL_{r}(\C)^{n} \times GL_r(\C)^{2g}$ (with 
$l_i = (m_i-1)r(r-1)$), hence ${\mathcal S}(\bnu)$ is 
a smooth affine variety of dimension $ \sum_{i=1}^n (m_i-1) r (r-1) + 
(n + 2g) r^2$.  
Define the morphism 
\begin{equation}
\mu:\cS(\bnu) \lra SL_r(\C) 
\end{equation} 
by
\begin{equation}
\mu(\{  \{\widehat{\gamma_i} \}, \{ St_{d_k^{(i)}} \}, \{L_i \},  \{ A_k,  B_k \} \})  = 
\prod_{i=n}^{1} L_i^{-1} Top_i L_i \prod_{k=g}^1 (B_k^{-1}A_k^{-1}B_kA_k)
\end{equation}
with $Top_i = \widehat{\gamma}_i \circ St_{d^{(i)}_{s_i}} \circ  \cdots St_{d^{(i)}_2} \circ St_{d^{(i)}_1}$. 
Then we see that $\tilde{\cR}(\bnu) = \mu^{-1}(I_r)$.  
As in [Theorem 2.2.5, \cite{HR}], in order to prove 
the smoothness of  $\tilde{\cR}(\bnu)$, we only have to prove 
that the derivative  $d\mu_s: T_{\cS, s} \lra T_{SL_{r}(\C), I_r} \simeq sl_{r}
(\C)$ is surjective at any point $s \in \cS$.  
If $\bnu$ is non-resonant and irreducible, this can be shown by  
direct calculations of $d\mu_s$  as in the proof of [Theorem 2.2.5, \cite{HR}]. 
Therefore $\tilde{\cR}(\bnu)$ is a smooth affine scheme with 
$$
\dim \tilde{\cR}(\bnu) = \dim \cS(\bnu) - (r^2-1) = \sum_{i=1}^n (m_i-1) r (r-1) + (n + 2g) r^2 -(r^2-1). 
$$
Recall that $G = GL(V_b) \times \prod_{i=1}^{n} 
\prod_{j=0}^{r-1} GL(V^{(i)}_j)  \simeq GL_{r}(\C) \times \prod_{i=1}^n (\C^{\times})^r$ acts on $\tilde{\cR}(\bnu)$ as in Definition \ref{def:equiv}.
Note that the subgroup $Z = \{ (c I_r, (c, \ldots, c)) \in G, c 
\in \C^{\times} \} $ acts on  $\tilde{\cR}(\bnu)$ trivially.   
Then under the assumption on $\bnu$, 
it is also easy to see that the action of 
$G/Z$ on $\tilde{\cR}(\bnu)$ is free. 
 Hence $\cR(\bnu) = \tilde{\cR}(\bnu)//G$ is a smooth affine scheme with 
\begin{eqnarray*}
\dim {\mathcal R}(\bnu) &=& 
\dim  \tilde{\mathcal R}(\bnu) - (\dim G -1) \\
  & = &  \sum_{i=1}^n (m_i-1) r (r-1) + (n + 2g) r^2 -(r^2-1)
- (r^2 + nr-1) \\
& =&  2 r^2(g-1) + \sum_{i=1}^n m_i r (r-1)+ 2.  
\end{eqnarray*}

\end{proof}

\subsection{The generalized Riemann-Hilbert correspondence}

Let us fix a data $(C, D= \sum_{i=1}^n m_i t_i)$ and  $z_i$ a generator of ${\frak m}_{t_i}$, 
and take a generic element $\bnu \in N_{r}^{(n)}(d, D)$.  For these data, we 
can also fix an analytic neighborhood $\Delta_{i} = \{ z_i \in \C |  |z_i| < \epsilon_i  \} $ of each $t_i$, singular directions $\{d^{(i)}_k \}$, sectors $\{S^{(i)}_k \}$  and $ t_i^{\ast} 
\in \partial \Delta_{i} $  as in the previous subsection. 

Moreover we fix a base point $b \in C \setminus \{ t_1, \cdots, t_n \}$ and 
a continuous path $l_i$ from $b$ to $t_{i}^{\ast}$ and loops $\{ \gamma^{l}_{i}, \alpha_k,  \beta_k \} $  with the condition (\ref{eq:fund}).   

Fixing these data, we can define the generalized Riemann-Hilbert correspondence as in the 
previous subsection. 
\begin{equation}\label{eq:RH-nu}
{\bf RH}_{(D/C),\bnu}: M^{\balpha}_{D/C}(r, d, (m_i))_{\bnu} \lra 
{\mathcal R}(\bnu).  
\end{equation}

\begin{Theorem}\label{thm:Riemann-Hilbert}
 Under the notation above,  assume further that $\bnu$ is  non-resonant and irreducible.  
Then the generalized Riemann-Hilbert correspondence ${\bf RH}_{(D/C), \bnu}$ $(\ref{eq:RH-nu})$ is an analytic isomorphism.  
\end{Theorem}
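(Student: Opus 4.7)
The plan is to show that ${\bf RH}_{(D/C),\bnu}$ is a holomorphic bijection whose differential is everywhere an isomorphism; since both source and target are smooth complex manifolds of the same dimension $2r^2(g-1)+\sum_i m_i r(r-1)+2$ by Theorem \ref{smoothness-thm} and Proposition \ref{prop:smooth}, this yields the asserted analytic isomorphism.

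First I would check that ${\bf RH}_{(D/C),\bnu}$ is holomorphic. In any analytic family of $\bnu$-parabolic connections, the formal monodromies $\widehat{\gamma}_i$, the links $L_i$, and the topological monodromy $(A_k,B_k)$ vary holomorphically in the parameters, and the Stokes matrices vary holomorphically via multisummation (cf.~\cite{vdP-S}, \cite{Malgrange2}); the genericity of $\bnu$ together with Lemma \ref{lemma:generic} canonically pins down the Hukuhara--Turrittin decomposition (\ref{eq:direct}) so that all these data are unambiguously defined. Injectivity on $\mathbf{C}$-points is a classical reconstruction: starting from equivalent generalized monodromy data $(\{\widehat{\gamma}_i\},\{St_{d^{(i)}_k}\},\{L_i\},\{A_k,B_k\})$, the multisummed horizontal sections on the sectors $S^{(i)}_k$ are glued across singular rays by the Stokes matrices and globalized on $C\setminus\{t_1,\dots,t_n\}$ by the links and the monodromy representation, recovering the meromorphic flat bundle uniquely up to isomorphism; non-resonance together with Proposition \ref{prop-deeper} forces the restriction of $\nabla$ on each formal disk to be determined by the formal data, and the eigen-decomposition (\ref{eq:direct}) then fixes the parabolic filtration $\{l^{(i)}_j\}$.

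Next I would show that the differential $d\,{\bf RH}_{(D/C),\bnu}$ is injective (hence, by the dimension equality, an isomorphism) at every point. The tangent space to $M^{\balpha}_{D/C}(r,d,(m_i))_{\bnu}$ at $(E,\nabla,\{l^{(i)}_j\})$ is $\mathbf{H}^1(\mathcal{F}^{\bullet})$ for the complex $\mathcal{F}^{\bullet}$ introduced in Proposition \ref{prop-moduli-dimension}, and the tangent space to $\cR(\bnu)$ at the image can be realized as a first hypercohomology governing infinitesimal deformations of the Stokes-link-monodromy tuple modulo the $G$-action of (\ref{eq:group}). Riemann--Hilbert supplies a natural comparison morphism between them, and the infinitesimal version of the reconstruction above shows its kernel vanishes. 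Surjectivity is then established by a Malgrange--Sibuya-type construction: given a point of $\cR(\bnu)$, at each $t_i$ the prescribed formal decomposition (\ref{eq:direct}) and Stokes matrices are realized by a local meromorphic flat bundle (Malgrange--Sibuya existence), the monodromy representation realizes a local system on $C\setminus\{t_1,\dots,t_n\}$, and the links $L_i$ glue these local and global pieces into a meromorphic connection on $C$ with poles exactly on $D$ whose formal generalized exponents coincide with $\bnu$; irreducibility of $\bnu$ makes the resulting $\bnu$-parabolic connection automatically $\balpha$-stable.

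The main obstacle, I expect, is the tangent-space comparison: producing a clean \v{C}ech-theoretic model for the tangent space of $\cR(\bnu)$ that correctly incorporates the infinitesimal deformations of the Stokes matrices along each singular ray together with the quotient by $G$, and then matching this, term by term under Riemann--Hilbert, with $\mathbf{H}^1(\mathcal{F}^{\bullet})$. Once that identification is in place, the remaining ingredients (holomorphicity of multisummation, reconstruction uniqueness, and Malgrange--Sibuya realization) reduce to already-established analytic facts, and the equality of dimensions in Theorem \ref{smoothness-thm} and Proposition \ref{prop:smooth} closes the argument.
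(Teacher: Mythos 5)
Your proposal follows the paper's argument in its two substantive parts: holomorphicity of ${\bf RH}_{(D/C),\bnu}$ (the paper obtains this from Sibuya's theorem \cite{Sibuya-1} on holomorphic dependence of fundamental solutions on parameters; your appeal to holomorphy of multisummation is an acceptable variant), and bijectivity, proved exactly as you sketch -- injectivity by uniqueness of the reconstruction from the sectorial solutions, Stokes matrices, links and monodromy, and surjectivity by Malgrange--Sibuya realization of the local data with the formal type fixed by Lemma \ref{lemma:generic}, gluing with the local system determined by $\{Top_i,L_i,A_k,B_k\}$ on $C\setminus\{t_1,\dots,t_n\}$, passage to an algebraic connection by GAGA (the degree being forced by the Fuchs relation), and irreducibility of $\bnu$ giving $\balpha$-stability. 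Where you diverge is the extra step you add and single out as the main obstacle: proving that $d\,{\bf RH}_{(D/C),\bnu}$ is an isomorphism by building a \v{C}ech/hypercohomological model for the tangent space of $\cR(\bnu)$ and comparing it with $\mathbf{H}^1({\mathcal F}^{\bullet})$. This step is not needed: a holomorphic bijection between smooth complex manifolds is automatically biholomorphic (an injective holomorphic map between equidimensional complex manifolds is an open embedding), and this is precisely how the paper concludes -- once smoothness of source and target is in hand (Theorem \ref{smoothness-thm} and Proposition \ref{prop:smooth}), bijectivity alone closes the proof, so the infinitesimal comparison, while interesting in its own right, can be dropped. One small correction: the determination of the formal data and of the filtration $\{l^{(i)}_j\}$ rests on genericity via Lemma \ref{lemma:generic} (with non-resonance used to pin down the parabolic structure in the surjectivity step), not on Proposition \ref{prop-deeper}, which concerns the depth-$r^2m$ truncations and plays no role in this proof.
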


\begin{proof} 
Under the assumption that $\bnu$ is  generic, we can fix formal types of all
singularities of $\bnu$-parabolic connections $(E, \nabla, \{ l^{(i)}_j \}) 
\in  M^{\balpha}_{D/C}(r, d, (m_i))_{\bnu}$, and then
we can define the Riemann-Hilbert correspondence  ${\bf RH}_{(D/C), \bnu}$ as we explained above.    
The fact that   ${\bf RH}_{(D/C), \bnu}$ is a holomorphic map 
can be proved as follows.   All  generalized monodromy data can be defined 
by a system of local fundamental solutions of $\nabla=0$ defined in each 
open sets including the sectors near the singular points $t_i$ (cf. [Ch. VI, \cite{Sibuya}]).   
If one has a holomorphic family of $\bnu$-parabolic connections, Sibuya \cite{Sibuya-1} showed that at least locally in the parameter space
there exists  a  family of a system of fundamental solutions 
depending on the parameter holomorphically.  Hence, this shows that ${\bf RH}_{(D/C), \bnu}$ is holomorphic.  

Recall that  $M^{\balpha}_{D/C}(r, d, (m_i))_{\bnu}$ 
 is  a smooth quasi-projective scheme (Theorem \ref{smoothness-thm}) and  
 ${\mathcal R}(\bnu)$ is  a smooth affine algebraic scheme  (Proposition \ref{prop:smooth}). Since ${\bf RH}_{(D/C), \bnu}$ is an analytic morphism between 
 smooth analytic manifolds, we only have to prove that 
${\bf RH}_{(D/C), \bnu}$ is bijective. 

First, we prove the surjectivity of ${\bf RH}_{(D/C), \bnu}$. 
Let us take a tuple 
$$\{ \{\widehat{\gamma_i} \}, \{ St_{d_k^{(i)}} \}, \{L_i \},  \{ A_k,  B_k \}  \} 
\in \tilde{\mathcal R}(\bnu).
$$  
By Malgrange-Sibuya theorem formulated as in [Theorem 4.5.1, \cite{BV}] or 
original form [Theorem 6.11.1 \cite{Sibuya}]), we see that 
the local analytic isomorphism class of the singular connection 
on each small neighborhood $\Delta_{i}$ 
with the fixed formal type $\bnu^{(i)}= \{\nu^{(i)}_j(z_i) \}$   
has one to one correspondence with the set of the formal monodromy and 
Stokes data with the formal type determined by $\bnu^{(i)}$.  
So for each $i, 1 \leq i \leq n$, 
we can take local analytic connections $(E^{(i)}, \nabla^{(i)})$ on $\Delta_i$ 
whose formal types are given by $\bnu^{(i)}$ and whose local generalized monodromy data 
is isomorphic to $ \{ \{ \widehat{\gamma_i} \}, \{ St_{d_k^{(i)}} \}\}$. 

 Since we assume that $\bnu$ is generic and non-resonant, 
the parabolic structures $\{ l^{(i)}_j \} $  of $(E^{(i)}, \nabla^{(i)}) $ at $t_i$ 
can be uniquely determined, so we obtain local analytic $\bnu^{(i)}$-parabolic 
connections $(E^{(i)}, \nabla^{(i)}, \{ l^{(i)}_j \})$.  

The data $ \{Top_i, \{L_i\}, A_k, B_k \} $ determine the monodromy data of a 
flat bundle $ {\bf E}_1 $ on $C_0:=C \setminus \{t_1, \cdots, t_n \}$.  
Hence $E_1 = {\bf E}_1 \otimes \cO_{C_0}$ 
is a locally free sheaf with a flat connection $ \nabla:E_1 \lra E_1 \otimes 
\Omega^1_{C_0}$.  Since by (\ref{eq:top-relation}), the 
local monodromy data of $ (E_1, \nabla) $ and $(E^{(i)}, \nabla^{(i)}) $ 
is isomorphic over $ \Delta_i \setminus \{ 0 \}$, 
we can glue $ (E_1, \nabla_1) $ and $ (E^{(i)}, \nabla^{(i)}) $ to 
obtain a holomorphic vector bundle $E$ on $C$ and a flat connection 
$ \nabla:E \lra E \otimes \Omega^1_C(D) $. Then by GAGA,
we obtain a $\bnu$-parabolic connection  $ (E, \nabla, \{ l^{(i)}_j \} ) $ of degree $d$. 
(Note that by Fuchs relation, the residue part of  
$\bnu$  determines the degree of $E$). 
Since $\bnu$ is irreducible, this 
connection must be irreducible, hence it is $\balpha$-stable for any weight $\balpha$, so 
it is a member of  $M^{\balpha}_{D/C}(r, d, (m_i))_{\bnu}$. 
This shows that ${\bf RH}_{(D/C), \bnu}$ is surjective.
Now from this construction, the injectivity of ${\bf RH}_{(D/C), \bnu}$ is obvious.     Hence  ${\bf RH}_{(D/C), \bnu}$ is bijective.  
\end{proof}
 
\begin{Remark} 
\label{rem:family}
{\rm  In the next section, we will vary the data $(C, \bt) $, the local generators $\{ z_i \in {\frak m}_{t_i} \} $ in a suitable moduli space and $\bnu = \{ \bnu^{(i)}(z_i) \} \in N^{(n)}_r(d, D)$ and we will  construct   the continuous analytic family  of Riemann-Hilbert correspondences  ${\bf RH}_{(D/C), \bnu}$.  
In order to do this, we first fix a data  $(C, \bt), \{ z_1, \cdots, z_n \},  \bnu $ as a  base point in a connected  component of the moduli space of such data.     
(We will assume that $\bnu$ is generic and simple (see Definition \ref{def:multi1}) for a technical reason).
We can also fix a small neighborhood $\Delta_i$ near $t_i$ and the (simple) singular directions $\{ d_j^{(i)} \}_{1 \leq  j \leq s_i}   $ and the ordered sectors 
$\{ S_k^{(i)} \}_{1 \leq k \leq s_i}  $.   Moreover,  we  can  fix $t_i^{\ast}$ as before (see Figure \ref{figure:1}).  
Fixing a base point $b \in C \setminus \{ t_1, \cdots, t_n \}$, we can also take and fix paths and  loops  as in the previous subsection.  Once we fix these data,  we can define the moduli space $\cR(\bnu)$ of generalized monodromy data (\ref{eq:quotient}) and  the Riemann-Hilbert correspondence ${\bf RH}_{(D/C), \bnu}$ as in (\ref{eq:RH-nu}). 
Note that  in order to define a data of  $\cR(\bnu)$,  we need fix the paths $\{ l_i \}, 
\{ \gamma^{l}, \alpha_k, \beta_k \}$ and  the order of the sectors $\{ S^{(i)}_k \}_{1 \leq k \leq s_i}  $  near each $t_i$ which is determined by the singular directions 
determined by $\bnu^{(i)}$ as in \ref{ss:mon}.    The closure of the first sector $S^{(i)}_1$ contains the end point $t_i^{\ast}$  of the path $l_i$.  If we vary  $\bnu$ continuously from the original data in the connected component  under the condition that $\bnu$ is generic and simple and   
fixing the data $(C, \bt), \{ z_i \} $,  the singular directions and sectors are changing continuously.   In this procedure, we need to keep the order of sectors, hence we need to change  the point $t^{\ast}_i$  and the path $l_i$ continuously. 
It is easy to see that when we vary the data $(C, \bt)$, $\{ z_i \} $,  $\bnu$ continuously  in the connected component of the  moduli spaces (see \ref{subsec:isom} )  starting from  the base data,  we can vary continuously singular directions, sectors  and paths and loops  starting from the original data .   By this procedure, we can define the continuous analytic family of Riemann-Hilbert correspondences in each connected component of the moduli space. }
\end{Remark}

\begin{Remark}{\rm  By using the result in \cite{Sibuya-1}, Jimbo, Miwa and Ueno \cite{JMU1} discussed about the analycity of 
the Riemann-Hilbert correspondence when one varies $\{(\BP^1, D), \bnu \} 
\in M_{0, n} \times N_{r}^{(n)}(d, D)$ and discuss about the isomonodromic deformations of linear connections. 
When $\bnu$ varies in the open set of $N_{r}^{(n)}(d, D)$
corresponding to generic exponents, one can define an analytic family of  Riemann-Hilbert correspondences.}   
\end{Remark}

\begin{Remark}\label{rem:jumping}
 {\rm The surjectivity part of Theorem \ref{thm:Riemann-Hilbert} 
 is related to the generalized Riemann-Hilbert problem with 
 irregular singularities over $C = \BP^1$ 
 which has been investigated,  for example,  in 
 \cite{JMU1}, \cite{BMM}, \cite{vdP-S}.  
Usually, they would like to obtain  singular connections  
$(E, \nabla)$ with trivial bundle $E=  \cO_{\BP^1}^{\oplus r}$. 
However from our view point of global moduli spaces of the connections, 
even in the case of $C = \BP^1$ and $d = \deg E =0$, 
it is not natural to assume that vector bundle $E$ is always trivial, that is, $E 
=  \cO_{\BP^1}^{\oplus r}$, for the set of such connections may correspond to  
a Zariski dense open subset of the moduli space  
$M^{\balpha}_{D/C}(r, d, (m_i))_{\bnu}$ but they may not 
cover all of the moduli space.  
The type of bundle $E$ may jump, for example, as 
$E \simeq \cO_{\BP^1}(1) \oplus \cO_{\BP^1}(-1) 
\oplus \cO_{\BP^1}^{\oplus (r-2)}$.  
The jumping phenomena of the bundle types in the moduli space of 
semistable bundles, which both of authors learned from professor Maruyama, 
is one of keys of many moduli problems and 
make the moduli theory interesting. 
In the case of the connections, divisors for jumping 
phenomena are corresponding to the $\tau$-divisors. }
\end{Remark}

\begin{Remark}{\rm 
In \cite{B}, Boalch constructed the space of isomorphism classes 
of meromorphic connections on a degree zero bundles on $\BP^1$ 
with compatible framing of fixed generic irregular type by an analytic 
method and  showed that taking monodromy data induces the 
bijection between the space of meromorphic connections on degree zero 
bundles and the corresponding spaces of monodromy data (cf. [Corollary 
4.9, \cite{B}]).  In  \cite{BB},  Biquard and Boalch generalized 
the analytic construction of the moduli spaces of the connections and 
showed that under a slight weaker generic condition 
the space of  meromorphic connections 
with fixed equivalence classes of polar part over a curve  is 
a hyper-K\"{a}hler manifold.  Despite these interesting analytic constructions, 
we believe that our algebro-geometric constructions of the moduli 
space of stable parabolic connections with fixed irregular singular types 
have some advantages such as natural algebraic structures on 
the moduli spaces which are crucial to write down the isomonodromic 
differential equations in some rational algebraic equations on the 
algebraic coordinates on the phase spaces.   
}
\end{Remark}

\begin{Remark}{\rm 
In \cite{Inaba-1},  Inaba showed more stronger statement for Riemann-Hilbert correspondence when all of the singularities are at most regular (that is, $m_i=1$ for all $i$). See also \cite{IIS-1}, \cite{IIS-2} and \cite{IISA} for former results on the  Riemann-Hilbert correspondences.  }
\end{Remark}

\section{Geometric Painlev\'e property for generalized isomonodromy differential systems}

\subsection{Generalized isomonodromic differential systems and their geometric Painlev\'e property} \label{subsec:isom}
\par\noindent
Let us fix integers $g, n, d, r, (m_i)_{1 \leq i \leq n}$ as in 
the previous section and let $M_{g,n}$ be an algebraic scheme which is a smooth covering
of the moduli stack of $n$-(distinct) pointed curves such that  
$M_{g,n}$ is smooth and has the universal family $({\mathcal C}, \tilde{t}_1, \cdots, \tilde{t}_n)  \lra M_{g, n}$.
We put $D = \sum_{i=1}^{n} m_i \tilde{t}_i$.   
For each $(C, t_1, \cdots, t_n) \in M_{g,n}$ and $i, 1 \leq i \leq n$, let 
$
\Psi_i:\cO_{C, t_i}/{\mathfrak m}_
{t_i}^{m_i} 
\stackrel{\simeq}{\lra} \C[z_i]/(z_i^{m_i})
$
be  ring isomorphisms.  The 
moduli space $M_{g, n, (m_i)}$ of tuples 
$
(C, t_1, \cdots, t_n, \{\Psi_i \}_{1 \leq i \leq n})
$ 
is a smooth quasi-projective scheme over $M_{g,n}$.  
Let 
$
M_{g,n, (m_i)} \lra M_{g, n}
$
be the natural morphism and consider the scheme
$
\cN^{(n)}_r(d, D)
$
over $M_{g,n}$  of generalized exponents defined in (\ref{eq:scheme-exp}) in  \S 2.  
Then by using the local coordinates $z_i$ at $\tilde{t_i}$, 
we have a natural isomorphism 
$$
M_{g, n, (m_i)} \times_{M_{g, n}}  \cN^{(n)}_r(d, D) 
\simeq M_{g, n, (m_i)} \times  N^{(n)}_r(d, D)
$$
where $N^{(n)}_r(d, D)$ is defined in (\ref{eq:exponent}). 
This space is the parameter space of our moduli spaces, and 
for simplicity, from now on,  we set 
\begin{equation}\label{eq:time}
T = M_{g, n, (m_i)} \times_{M_{g, n}}  \cN^{(n)}_r(d, D)  \simeq M_{g, n, (m_i)} \times  N^{(n)}_r(d, D). 
\end{equation}

Let us take  $\bnu=(\nu^{(i)}_j)_{1\leq i\leq n}^{0\leq j\leq r-1} \in 
 N^{(n)}_r(d,D)$ and write $ \nu^{(i)}_j(z_i) $  as in  (\ref{eq:local-exponent})

\begin{equation}\label{eq:local-exponent2}
\nu^{(i)}_j (z_i) =(a^{(i)}_{j, -m_i} z_i^{-m_i} + \cdots + a^{(i)}_{j, -1} z_i^{-1 } )dz_i 
=\sum_{k=-m_i}^{-1} (a^{(i)}_{j, k} z_i^{k}) dz_i  
\quad \mbox{for $ 1 \leq  i  \leq n $}. 
\end{equation}
Let us consider the following decomposition according to the order of 
expansions in (\ref{eq:local-exponent2})
$$ 
N^{(n)}_r(d, D) =N_{top} \times  N_{mid} \times N_{res}
$$
where we set
$N_{top}= \{ (a^{(i)}_{j, -m_i} ), m_i \geq 2 \  \}, 
N_{mid} = \{  (a^{(i)}_{j, k}), -m_i < k < -1,  \ m_i \geq 3 \}, 
N_{res} = \{ (a^{(i)}_{j, -1} ) \}$. 
Using this decomposition, 
for $\bnu \in N^{(n)}_r(d, D)$, we can write as $\bnu = (\bnu_{top}, \bnu_{mid}, \bnu_{res} )$.  
Let us define 
$$
N^{\circ}_{top}=\{ (a^{(i)}_{j, -m_i} ) \  | \ a^{(i)}_{j_1, -m_i} \not= a^{(i)}_{j_2, -m_i}, \mbox{if \ }  j_1 \not=j_2  \}.  
$$ 
Since the genericity condition on $\bnu \in N^{(n)}_r(d, D)$ depends on the part $\bnu_{top}$ (cf. Definition \ref{def:generic}), 
\begin{equation}\label{eq:generic}
N^{\circ} = N^{\circ}_{top} \times N_{mid} \times N_{res} \subset N^{(n)}_r(d, D)
\end{equation}
 is the space of generic generalized exponents. Note that $N^{\circ}$ 
 is an affine open subvariety of  $N^{(n)}_r(d, D)$.    Moreover 
the conditions of resonance and reducibility on $\bnu$ depend just on $\bnu_{res} $(cf. Definition \ref{def:generic}).
Let us denote by $\cP$ the set of formal monodromies $\{ \widehat{\gamma}_i\}$ 
associated to $\bnu_{res}$, which admits a surjective map  by a exponential map
$$
\be:N_{res} \lra \cP,  \quad \{ a^{(i)}_{j, -1} \} \mapsto \{ \widehat{\gamma}_i\}.  
$$ 
(Note that we have the Fuchs relation of $\bnu_{res}$.)

Recall that for $\bnu \in N^{\circ}$, in the previous section, we define the moduli space of generalized monodromy data $\cR(\bnu)$ as in  (\ref{eq:quotient}).

Now we will  see the dependence of isomorphism classes of $\cR(\bnu)$ on $\bnu \in N^{\circ}$.  In order to avoid technical difficulties coming from the multiplicity of the Stokes lines, 
we give the following definition.  
\begin{Definition}\label{def:multi1}{\rm  A generic local exponent  $\bnu =( \nu^{(i)}_j (z_i) )  \in N^{\circ}$ is called simple, if all of the multiplicities of the singular directions of $\bnu$ are one.  We denote by $N^{\circ, s} $  the set of all simple generic  local exponents $\bnu$. 
}
\end{Definition}

Since the singular directions for generic local exponents can be 
determined by $\bnu_{top}$ as in subsection \ref{ss:mon}, we have the following
\begin{Lemma}We can write 
\begin{equation}
N^{\circ, s} 
= N^{\circ, s}_{top}  \times N_{mid} \times N_{res}
\end{equation}
where $N^{\circ, s}_{top}$ consists of  $\bnu_{top}=(a^{(i)}_{j, -m_i} ) \in N^{\circ}_{top}$
with the conditions that for any $i$ and $(j_1, j_2) \not=(k_1, k_2) $
$$ 
 \arg (a^{(i)}_{j_1, -m_i} 
- a^{(i)}_{j_2, -m_i})  \not\equiv  \arg (a^{(i)}_{k_1, -m_i} 
- a^{(i)}_{k_2, -m_i} )  \mbox{ mod $2 \pi \Z$}.  
$$
Note that $N^{\circ, s}_{top}$ is not a Zariski open subset of  $N^{\circ}_{top}$ and 
$N^{\circ, s}_{top}$ may  not be  connected. 
\end{Lemma}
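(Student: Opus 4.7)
The plan is to reduce the simplicity condition on $\bnu\in N^{\circ}$ to an explicit condition on the top coefficients alone, via the formula for singular directions recalled in \S\ref{ss:mon}. This will show both that simplicity is independent of $\bnu_{mid}$ and $\bnu_{res}$ (giving the product decomposition) and that the condition on $\bnu_{top}$ has exactly the stated form.

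First I would unwind the definition of singular direction. For generic $\bnu$, the difference $\nu^{(i)}_{j_1}-\nu^{(i)}_{j_2}$ has a nonvanishing leading term $(a^{(i)}_{j_1,-m_i}-a^{(i)}_{j_2,-m_i})z_i^{-m_i}dz_i$, and by the explicit criterion recalled in \S\ref{ss:mon} a direction $d$ is singular for the pair $(j_1,j_2)$ at $t_i$ precisely when
\[
-\frac{(a^{(i)}_{j_1,-m_i}-a^{(i)}_{j_2,-m_i})\,e^{-\sqrt{-1}(m_i-1)d}}{m_i-1}\in \R_{<0},
\]
i.e.\ $\arg(a^{(i)}_{j_1,-m_i}-a^{(i)}_{j_2,-m_i})\equiv (m_i-1)d \pmod{2\pi}$. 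Thus the singular directions associated to the pair $(j_1,j_2)$ are the $m_i-1$ solutions $d=\bigl(\arg(a^{(i)}_{j_1,-m_i}-a^{(i)}_{j_2,-m_i})+2\pi k\bigr)/(m_i-1)$, $k=0,\ldots,m_i-2$. In particular, the positions of all singular directions, and therefore their multiplicities, depend only on the top part $\bnu_{top}$.

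Next I would translate the simplicity condition (every $\sharp{\mathcal J}(d^{(i)}_k,i)=1$) into the condition on pairs of pairs. Two ordered pairs $(j_1,j_2)\neq (k_1,k_2)$ with $j_1\neq j_2$, $k_1\neq k_2$, produce a common singular direction at $t_i$ if and only if
\[
\frac{\arg(a^{(i)}_{j_1,-m_i}-a^{(i)}_{j_2,-m_i})}{m_i-1}\equiv \frac{\arg(a^{(i)}_{k_1,-m_i}-a^{(i)}_{k_2,-m_i})}{m_i-1}\pmod{\tfrac{2\pi}{m_i-1}},
\]
which is equivalent to
\[
\arg(a^{(i)}_{j_1,-m_i}-a^{(i)}_{j_2,-m_i})\equiv \arg(a^{(i)}_{k_1,-m_i}-a^{(i)}_{k_2,-m_i})\pmod{2\pi}.
\]
Thus simplicity at $t_i$ is equivalent to the negation of this coincidence for every pair of distinct pairs at $t_i$, which is exactly the condition defining $N^{\circ,s}_{top}$ in the lemma.

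Finally I would combine these observations: since the simplicity condition cuts out a subset determined purely by the top coefficients $(a^{(i)}_{j,-m_i})$, the set $N^{\circ,s}$ is the preimage under the projection $N^{(n)}_r(d,D)\to N_{top}$ of the subset $N^{\circ,s}_{top}\subset N^{\circ}_{top}$, and hence factors as claimed. I do not expect any serious obstacle here; the only subtle point to mention is that $N^{\circ,s}_{top}$ is defined by inequalities involving arguments (not by polynomial equations), so it is a real-analytic locally closed subset of $N^{\circ}_{top}$ rather than a Zariski open one, and its connected components correspond to combinatorial orderings of the singular directions at each $t_i$, which explains the parenthetical remark after the lemma.
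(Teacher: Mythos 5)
Your proposal is correct and follows exactly the paper's (essentially unproved) reasoning: the paper justifies the lemma solely by the remark that the singular directions are determined by $\bnu_{top}$ via the explicit criterion in \S 5.2, and you simply spell out that computation, showing that the directions for the pair $(j_1,j_2)$ are the $m_i-1$ solutions of $(m_i-1)d\equiv\arg(a^{(i)}_{j_1,-m_i}-a^{(i)}_{j_2,-m_i})\pmod{2\pi}$, so two distinct pairs collide if and only if their arguments agree mod $2\pi$. This yields precisely the stated description of $N^{\circ,s}_{top}$ and the product decomposition, with no gaps.
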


We constructed the moduli 
space $\cR(\bnu)$ of the generalized monodromy data associated to 
the formal type $\bnu$ as in (\ref{eq:quotient}) .   
Since for  $\bnu \in N^{\circ, s}$ every Stokes matrix 
 associated to each singular direction is  one dimensional,   
we can easily see that  the algebraic isomorphism class
of  the affine scheme $\cR(\bnu) $  only depends on $\bnu_{res}$ or 
on $\bp=\be(\bnu_{res})$ for  $\bnu \in N^{\circ, s}$.  
So we may write as $\cR(\bnu) = \cR(\bnu_{res}) = \cR(\bp)$ for $\bnu \in N^{\circ, s}$.
Fix a base element  in each connected component of  $M_{g, n, (m_i)} \times N^{\circ, s}_{top} \times N_{mid} \times \cP$  and fixing the data of singular directions, sectors,  paths and loops 
for it as in the previous section. Varying the data  continuously in each connected component of $M_{g, n, (m_i)} \times N^{\circ, s}_{top} \times N_{mid} \times \cP$,  we can  
construct the family of moduli spaces of generalized monodromy data 
\begin{equation}\label{eq:fam-monod}
\pi_1:{\mathcal R}  \lra  M_{g, n, (m_i)} \times N^{\circ, s}_{top} \times N_{mid} \times \cP 
\end{equation}
such that $\pi_1^{-1}((C, \bt,\{\Psi_i\}),  (\bnu_{top},\bnu_{mid}, \be(\bnu_{res})) \simeq {\mathcal R}(\bnu) = \cR(\bnu_{res})$.   (Note that in order to construct the family (\ref{eq:fam-monod}), we need to consider  the actions of the fundamental groups of the base spaces to 
singular directions and the homotopy classes of  paths and loops in \ref{ss:mon}.)   
Let us fix $\bnu_{res} \in N_{res}$ and set $\bp = \be(\bnu_{res})= 
\{\widehat{\gamma}_i \} \in \cP$.  For simplicity, we set
\begin{equation}\label{eq:space}
  T^{\circ, s}_{\bnu_{res}} = 
M_{g, n, (m_i)} \times N^{\circ, s}_{top} \times 
N_{mid} \times \{ \bnu_{res} \} \subset T^{\circ}=M_{g, n, (m_i)} \times N^{\circ}_{top} \times N_{mid} \times N_{res} 
\end{equation}
Since $   T^{\circ, s}_{\bnu_{res}} \simeq M_{g, n, (m_i)} \times N^{\circ, s}_{top} \times 
N_{mid} \times \{ \bp \}$, restricting the family $\pi_1$ (\ref{eq:fam-monod}) 
to this space,  we obtain the family of moduli spaces 
\begin{equation}\label{eq:fam-mon}
\pi_{1, \bp}: {\mathcal R}_{\bp} \lra   T^{\circ, s}_{\bnu_{res}}
\end{equation}
which is analytically locally constant with the typical fiber $\cR(\bnu_{res}) =
\cR(\bp)$.   
Considering the universal covering map  \small
\begin{equation}\label{eq:univ}
\widetilde{T}^{\circ, s}_{\bnu_{res}}= \tilde{M}_{g, n, (m_i)} \times \tilde{N}^{\circ, s}_{top}\times \tilde{N}_{mid} \times \{ \bnu_{res} \} 
 \lra   T^{\circ, s}_{\bnu_{res}} = M_{g, n, (m_i)} \times N^{\circ, s}_{top} \times N_{mid}\times \{\bnu_{res}\},  
\end{equation}\normalsize
we can pull back the family $\pi_{1, \bp}$ (\ref{eq:fam-mon}) to the family over the universal covering which is isomorphic to the product fibration: 
$$
\tilde{\pi}_{1, \bp}:
\widetilde{\cR}_{\bp} \simeq 
\cR(\bp) \times 
\widetilde{T}^{\circ,s}_{\bnu_{res}} \lra \widetilde{T}^{\circ,s}_{\bnu_{res}}
$$
with the fixed fiber $\cR(\bnu_{res})= \cR(\bp)$. 
On the other hand, by applying Theorems \ref{thm-moduli-exists}
and \ref{smoothness-thm} to the family of $n$-pointed curves
over $M_{g,n,(m_i)}$, there exists the quasi-projective smooth family
of relative moduli spaces
\[
 \pi_2:M^{\balpha}_{D/\cC/M_{g,n,(m_i)}}(r,d,(m_i))
 \longrightarrow T =
 M_{g,n,(m_i)}\times_{M_{g,n}}\cN^{(n)}_r(d,D)
 \cong M_{g,n,(m_i)}\times N^{(n)}_r(d,D).
\]
We denote by $M^{\balpha}_{D/{\mathcal C}/  T^{\circ, s}_{\bnu_{res}}}$
the pull back of
$  T^{\circ, s}_{\bnu_{res}}=M_{g,n,(m_i)}\times N^{\circ, s}_{top}\times N_{mid}\times\{\bnu_{res}\}
\subset T =M_{g,n,(m_i)}\times N^{(n)}_r(d,D)$
by the morphism $\pi_2$.
Then there exists the quasi-projective smooth  family of relative moduli spaces 
\begin{equation}\label{eq:univ-family}
\pi_{2,\bnu_{res}}:M^{\balpha}_{D/{\mathcal C}/  T^{\circ, s}_{\bnu_{res}}}
\lra   T^{\circ, s}_{\bnu_{res}}. 
\end{equation}

Pulling back this family by the universal covering map (\ref{eq:univ}),  
we obtain the  family
$\tilde{\pi}_{2,\bnu_{res}}:
M^{\balpha}_{\tilde{D}/\tilde{\mathcal C}/\widetilde{T}^{\circ, s}_{\bnu_{res}}}
\lra \widetilde{T}^{\circ, s}_{\bnu_{res}} $ of moduli spaces. 

Now take a base point $(C, t_1, \cdots, t_n , \{ z_i \}_{1 \leq i \leq n } , \bnu)$ in each connected component of  $ T^{\circ, s}_{\bnu_{res}}$ and fix a  small neighborhood $\Delta_i$ near each 
$t_i$ and the (simple) singular directions $\{ d_j^{(i)} \}_{1 \leq  j \leq s_i} $ 
and the ordered sectors $\{ S_k^{(i)} \}_{1 \leq k \leq s_i}  $ for each $i$ as in 
\ref{ss:mon}.    Moreover,  fixing  $t_i^{\ast} \in S^{(i)}_1  \cap \partial \Delta_j$ and a base point $b \in C \setminus \{ t_1, \cdots, t_n \}$, we can fix paths  $\{ l_i \}, 
\{ \gamma^{l}, \alpha_k, \beta_k \}$   as in \ref{ss:mon}. 

As explained in Remark \ref{rem:family},  when we vary the data  in $  T^{\circ, s}_{\bnu_{res}}$ or in $ \widetilde{T}^{\circ, s}_{\bnu_{res}}$ starting from each base point, we can vary the choice of sectors, paths and loops continuously.  Hence we can 
define  an analytic morphism 
$$
{\bf RH}_{\bnu_{res}}: M^{\balpha}_{\tilde{D}/\tilde{\mathcal C}/
 \widetilde{T}^{\circ, s}_{\bnu_{res}}} \lra {\mathcal R}(\bnu_{res}) \times 
 \widetilde{T}^{\circ, s}_{\bnu_{res}}
$$  
which makes the following diagram commutative and induces the continuous analytic family of 
Riemann-Hilbert correspondences of fibers of $\tilde{\pi}_{2, \bnu_{res}}$ and 
$ \tilde{\pi}_{1, \bp}$
\begin{equation}\label{eq:RH}
 \begin{array}{ccc}
 M^{\balpha}_{\tilde{D}/\tilde{\mathcal C}/
 \widetilde{T}^{\circ,s}_{\bnu_{res}}}
 & \stackrel{\RH_{\bnu_{res}}}{\lra} & {\mathcal R}(\bnu_{res}) \times 
 \widetilde{T}^{\circ,s}_{\bnu_{res}} \\
&& \\
\quad \quad \downarrow \tilde{\pi}_{2, \bnu_{res}} & & \quad \downarrow \tilde{\pi}_{1, \bp} \\
&& \\
\widetilde{T}^{\circ, s}_{\bnu_{res}} &  =  & \widetilde{T}^{\circ,s}_{\bnu_{res}}. 
 \end{array}
\end{equation}
The analycity of  ${\bf RH}_{\bnu_{res}}$ also follows from the 
result  in \cite{Sibuya-1}.  
Since $ \tilde{\pi}_{2, \bnu_{res}}$ is smooth, we can consider
the natural surjection of tangent sheaves 
\begin{equation}\label{eq:tangent}
\varphi: \Theta_{M^{\balpha}_{\tilde{D}/\tilde{\mathcal C}/
\widetilde{T}^{\circ, s}_{\bnu_{res}}}} \lra \tilde{\pi}_{2, \bnu_{res}}^{\ast} 
(\Theta_{\widetilde{T}^{\circ,s}_{\bnu_{res}}} ) \lra 0. 
\end{equation}
Now one can introduce the (generalized) isomonodromic flows and 
isomonodromic differential systems as follows. 

\begin{Definition} 
{\rm Assume that 
$\bnu_{res}$ is non-resonant and irreducible so that 
${\bf RH}_{\bnu_{res}}$ induces an 
analytic isomorphism between the closed fibers of   $\tilde{\pi}_{1, \bp}$
and $\tilde{\pi}_{2,\bnu_{res}} $ over every closed point of  $\widetilde{T}^{\circ}_{\bnu_{res}}$ by Theorem \ref{thm:Riemann-Hilbert}.
The pull back of  the set of all  
constant sections of $\tilde{\pi}_{1, \bp}$  over  $\widetilde{T}^{\circ,s}_{\bnu_{res}}
(\subset \widetilde{T}^{\circ}_{\bnu_{res}})$ 
 via the Riemann-Hilbert 
correspondence ${\bf RH}_{\bnu_{res}}$ 
gives the set of  horizontal analytic sections 
of  $ \tilde{\pi}_{2, \bnu_{res}} $ in (\ref{eq:RH}) 
which we call the {\em $($generalized$)$ isomonodromic flows}. 
Then the isomonodromic flows define a splitting 
$\tilde{\Psi}: \tilde{\pi}_{2, \bnu_{res}}^{\ast} 
(\Theta_{\widetilde{T}^{\circ,s}_{\bnu_{res}}} ) \hookrightarrow
\Theta_{M^{\balpha}_{\tilde{D}/\tilde{\mathcal C}/
\widetilde{T}^{\circ,s}_{\bnu_{res}}}}$ 
of the surjection (\ref{eq:tangent}) and define the subsheaf 
\begin{equation}\label{eq:split}
\tilde{\btheta}_{\bnu_{res}} = \tilde{\btheta}_{\bp} :=\tilde{ \Psi}( \tilde{\pi}_{2, \bnu_{res}}^{\ast} 
(\Theta_{\widetilde{T}^{\circ, s}_{\bnu_{res}}} )) \subset 
\Theta_{M^{\balpha}_{\tilde{D}/\tilde{\mathcal C}/
\widetilde{T}^{\circ,s}_{\bnu_{res}}}},
\end{equation}
which we call the isomonodromic foliation or the isomonodromic differential system.  It is obvious that the isomonodromic flows become
solution manifolds, or integral manifolds of the differential system
$\tilde{\btheta}_{\bnu_{res}}$.  
The differential system $\tilde{\btheta}_{\bnu_{res}}$ in 
(\ref{eq:split}) is called  {\em the isomonodromic differential system}  associated to the moduli space of $\bnu$-parabolic connections. The parameter space
$\widetilde{T}^{\circ, s}_{\bnu_{res}} =  \tilde{M}_{g, n, (m_i)} \times \tilde{N}^{\circ, s}_{top}\times \tilde{N}_{mid} \times \{ \bnu_{res} \} $ can be  considered as the space of time variables, though some of parameters may be redundant. }
\end{Definition}

Now from the diagram (\ref{eq:RH}), we can descend $\RH_{\bnu_{res}}$ to obtain the following commutative diagram:
\begin{equation}\label{eq:RH-descent}
\begin{array}{ccc}
 M^{\balpha}_{{\mathcal D}/{\mathcal C}/{T}^{\circ, s}_{\bnu_{res}}} 
 & \stackrel{\RH_{\bnu_{res}}}{\lra} & {\mathcal R}_{\bp}  \\
&& \\
\quad \quad \downarrow {\pi}_{2, \bnu_{res}} & &  \quad  \downarrow {\pi}_{1, \bp} \\
&& \\
{T}^{\circ, s}_{\bnu_{res}} &  =  & {T}^{\circ, s}_{\bnu_{res}}
 \end{array}
\end{equation}
By the same reason, we can pull back the locally constant sections of $\pi_{1, \bp}$ by $\RH_{\bnu_{res}}$, and define an isomonodromic flows on 
$
\pi_{2,\bnu_{res}}: M^{\balpha}_{{\mathcal D}/{\mathcal C}/{T}^{\circ, s}_{\bnu_{res}}} 
\lra {T}^{\circ, s}_{\bnu_{res}}$.

Then we can also define the splitting 
\begin{equation}\label{eq:split2}
\Psi: {\pi}_{2, \bnu_{res}}^{\ast} 
(\Theta_{{T}^{\circ, s}_{\bnu_{res}}} ) \hookrightarrow
\Theta_{ M^{\balpha}_{{\mathcal D}/{\mathcal C}/{T}^{\circ, s}_{\bnu_{res}}}},
\end{equation} 
and we can define an analytic foliation 
\begin{equation}\label{eq:isom-flow-alg}
\btheta_{\bnu_{res}} = \btheta_{\bp} := 
\Psi( \pi_{2,\bnu_{res}}^{\ast} (\Theta_{{T}^{\circ, s}_{\bnu_{res}}} ))
\subset \Theta_{M^{\balpha}_{{\mathcal D}/{\mathcal C}/ {T}^{\circ, s}_{\bnu_{res}} } }. 
\end{equation} 

It is natural to consider both isomonodromic differential systems $\btheta_{\bnu_{res}}$ and $\tilde{\btheta}_{\bnu_{res}}$.   Since their integral  manifolds are the isomonodromic flows 
on the corresponding phase spaces, now it is almost trivial to 
see the following theorem as is explained in \cite{Inaba-1}, \cite{IIS-1}.  

\begin{Theorem}\label{thm:GP} Assume that $\bnu_{res}$ is   
non-resonant and irreducible. Then the isomonodromic differential system $\tilde{\btheta}_{\bnu_{res}}$ in (\ref{eq:split}) on the phase space  $M^{\balpha}_{\tilde{D}/\tilde{\mathcal C}/
 \widetilde{T}^{\circ,s}_{\bnu_{res}}}$ satisfies the geometric Painlev\'e property.  Moreover 
the differential system $\btheta_{\bnu_{res}}$ in  (\ref{eq:isom-flow-alg}) on the phase space 
$M^{\balpha}_{{\mathcal D}/{\mathcal C}/{T}^{\circ, s}_{\bnu_{res}}}$  also satisfies the  geometric Painlev\'e property.  
\end{Theorem}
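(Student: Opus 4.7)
The plan is to reduce the geometric Painlev\'e property for both differential systems to the trivial statement that constant sections of a product fibration extend globally, transported through the analytic isomorphism $\RH_{\bnu_{res}}$. Recall that geometric Painlev\'e property on $\tilde{\pi}_{2,\bnu_{res}}$ means exactly that for any integral manifold (local analytic section) $s: U \to M^{\balpha}_{\tilde{D}/\tilde{\mathcal C}/\widetilde{T}^{\circ,s}_{\bnu_{res}}}$ of $\tilde{\btheta}_{\bnu_{res}}$ defined on an open $U \subset \widetilde{T}^{\circ,s}_{\bnu_{res}}$, and for any path $\gamma$ in $\widetilde{T}^{\circ,s}_{\bnu_{res}}$ starting in $U$, the section $s$ admits analytic continuation along $\gamma$ as a section of $\tilde{\pi}_{2,\bnu_{res}}$.

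First I would treat the system $\tilde{\btheta}_{\bnu_{res}}$ on the universal cover. By Theorem \ref{thm:Riemann-Hilbert}, the hypothesis that $\bnu_{res}$ is non-resonant and irreducible (hence every $\bnu \in \widetilde{T}^{\circ,s}_{\bnu_{res}}$ with the given residue part is generic, non-resonant and irreducible) guarantees that the restriction of $\RH_{\bnu_{res}}$ to every closed fibre of $\tilde{\pi}_{2,\bnu_{res}}$ is an analytic isomorphism. Combined with the smoothness of source and target fibres of the same dimension, and the fact that $\RH_{\bnu_{res}}$ commutes with the projections to $\widetilde{T}^{\circ,s}_{\bnu_{res}}$, this gives that $\RH_{\bnu_{res}}$ is a fibrewise analytic isomorphism over $\widetilde{T}^{\circ,s}_{\bnu_{res}}$. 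By definition of $\tilde{\btheta}_{\bnu_{res}}$, its integral manifolds are precisely the preimages under $\RH_{\bnu_{res}}$ of the constant sections of the product fibration $\tilde{\pi}_{1,\bp}: \cR(\bp) \times \widetilde{T}^{\circ,s}_{\bnu_{res}} \to \widetilde{T}^{\circ,s}_{\bnu_{res}}$. Since constant sections of a trivial product fibration obviously extend uniquely along any path in the base, and $\RH_{\bnu_{res}}^{-1}$ is globally defined and analytic, every integral manifold of $\tilde{\btheta}_{\bnu_{res}}$ extends analytically along any path in $\widetilde{T}^{\circ,s}_{\bnu_{res}}$. This is the geometric Painlev\'e property for $\tilde{\btheta}_{\bnu_{res}}$.

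For $\btheta_{\bnu_{res}}$ on the non-simply-connected base $T^{\circ,s}_{\bnu_{res}}$, I would use the covering map (\ref{eq:univ}) and the descended diagram (\ref{eq:RH-descent}). A local integral manifold of $\btheta_{\bnu_{res}}$ over $U \subset T^{\circ,s}_{\bnu_{res}}$ lifts, on any simply connected neighbourhood, to an integral manifold of $\tilde{\btheta}_{\bnu_{res}}$. Given a path $\gamma$ in $T^{\circ,s}_{\bnu_{res}}$, lift it to a path $\tilde{\gamma}$ in $\widetilde{T}^{\circ,s}_{\bnu_{res}}$; by the previous paragraph, the lifted integral manifold continues along $\tilde{\gamma}$, and pushing down by the covering map produces an analytic continuation of the original section along $\gamma$ as a section of $\pi_{2,\bnu_{res}}$. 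Here it is essential that the covering $\widetilde{T}^{\circ,s}_{\bnu_{res}} \to T^{\circ,s}_{\bnu_{res}}$ is unramified so that path lifting and the descent of analytic sections present no obstruction, and that the fibration $\pi_{2,\bnu_{res}}$ is obtained as a genuine quotient (which follows from the construction of the moduli space as a smooth quasi-projective scheme in Theorem \ref{smoothness-thm}).

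The main obstacle, and essentially the only non-formal point, is verifying that $\RH_{\bnu_{res}}$ is actually a fibrewise analytic isomorphism of the total spaces over $\widetilde{T}^{\circ,s}_{\bnu_{res}}$, not merely a fibrewise bijection. Fibrewise bijectivity and fibrewise analyticity come from Theorem \ref{thm:Riemann-Hilbert}, but promoting this to an isomorphism of relative analytic spaces requires that $\RH_{\bnu_{res}}$ itself be holomorphic as a map of the total spaces, which follows from Sibuya's theorem \cite{Sibuya-1} on holomorphic dependence of fundamental systems of solutions on parameters, combined with the holomorphic dependence of the Stokes data, links, and monodromy data on the connection in a family --- a point already invoked in the construction of $\RH_{\bnu_{res}}$ and recorded in Remark \ref{rem:family}. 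Once this is in hand, the geometric Painlev\'e property follows immediately as sketched.
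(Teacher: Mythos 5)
Your proposal is correct and follows essentially the same route as the paper: the paper also obtains the geometric Painlev\'e property directly from the definition of the isomonodromic flows as pullbacks of the (globally extendable) constant sections of the locally constant fibration $\tilde{\pi}_{1,\bp}$ under the fibrewise analytic isomorphism $\RH_{\bnu_{res}}$ guaranteed by Theorem \ref{thm:Riemann-Hilbert}, with holomorphy in the parameters supplied by Sibuya's result, and then descends to $T^{\circ,s}_{\bnu_{res}}$ via the diagram (\ref{eq:RH-descent}). Your write-up simply makes explicit the argument the paper describes as immediate from the construction.
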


Let us consider the affine variety $T^{\circ}_{\bnu_{res}}$ which contains $T^{\circ, s}_{\bnu_{res}}$ as an analytic dense open set.  Then we have  the following diagram: 
\begin{equation}\label{eq:RH-descent2}
\begin{array}{ccc}
 M^{\balpha}_{{\mathcal D}/{\mathcal C}/{T}^{\circ, s}_{\bnu_{res}}} 
 & \hookrightarrow  &  M^{\balpha}_{{\mathcal D}/{\mathcal C}/{T}^{\circ}_{\bnu_{res}}}  \\
&& \\
\quad \quad \downarrow {\pi}_{2, \bnu_{res}} & &  \quad  \downarrow {\pi'}_{2, \bnu_{res}} \\
&& \\
{T}^{\circ, s}_{\bnu_{res}} & \subset   & {T}^{\circ}_{\bnu_{res}}.
 \end{array}
\end{equation}
Since ${\pi'}_{2, \bnu_{res}}$ is  smooth and algebraic, 
we have a natural surjective homomorphism 
\begin{equation}\label{eq:tangent2}
\varphi: \Theta_{M^{\balpha}_{D/{\mathcal C}/
T^{\circ}_{\bnu_{res}}}} \lra (\pi'_{2, \bnu_{res}})^{\ast} 
(\Theta_{T^{\circ}_{\bnu_{res}}} ) \lra 0.
\end{equation}
Over  the phase space 
$M^{\balpha}_{{\mathcal D}/{\mathcal C}/{T}^{\circ, s}_{\bnu_{res}}} $, 
this is nothing but  the surjection in (\ref{eq:tangent}).  The following 
theorem says that the splitting $\Psi$ in (\ref{eq:split2}) can be extended to
the algebraic splitting 
$\Psi: (\pi'_{2,\bnu_{res}})^*(\Theta_{T^{\circ}_{\bnu_{res}}})
 \longrightarrow \Theta_{M^{\balpha}_{D/\cC/T^{\circ}_{\bnu_{res}}}}$.

\begin{Theorem}\label{thm:algebraic-splitting}
 We can extend the splitting  $\Psi$ in (\ref{eq:split2}) to 
the algebraic splitting 
\begin{equation} \label{eq split-ex}
\Psi:(\pi'_{2,\bnu_{res}})^*(\Theta_{T^{\circ}_{\bnu_{res}}})
 \hookrightarrow
 \Theta_{M^{\balpha}_{D/\cC/T^{\circ}_{\bnu_{res}}}}. 
 \end{equation}
\end{Theorem}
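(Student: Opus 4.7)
The plan is to construct an algebraic splitting $\Psi_{\mathrm{alg}}$ of the surjection (\ref{eq:tangent2}) by a direct deformation-theoretic recipe, and then to identify $\Psi_{\mathrm{alg}}$ with the analytic splitting $\Psi$ of (\ref{eq:split2}) on the analytically dense subset $T^{\circ,s}_{\bnu_{res}}\subset T^{\circ}_{\bnu_{res}}$, so that $\Psi_{\mathrm{alg}}$ is automatically the desired algebraic extension.

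First I would describe both sheaves in (\ref{eq:tangent2}) via hypercohomology. Following the proof of Proposition \ref{prop-moduli-dimension}, at a closed point $x=(E,\nabla,\{l^{(i)}_j\})$ the relative tangent space of the moduli space over $T^{\circ}_{\bnu_{res}}$ is $\mathbf{H}^1(\cF^{\bullet})$ for the two-term complex $\cF^{\bullet}$ governing $\bnu$-preserving infinitesimal deformations of the parabolic connection on $\cC_t$, while the full tangent space fits into a hypercohomological exact sequence that also records deformations of the base datum $(C,\bt,\{\Psi_i\},\bnu_{top},\bnu_{mid})$. To construct $\Psi_{\mathrm{alg}}(v)$ for a tangent vector $v\in\Theta_{T^{\circ}_{\bnu_{res}}}$, I would exploit Lemma \ref{lemma:generic}, which applies throughout $T^{\circ}_{\bnu_{res}}$ since $\bnu$ is generic there: the canonical decomposition $(\widehat{E}_{t_i},\widehat{\nabla}_{t_i})\cong\bigoplus_j V(\nu^{(i)}_j,1)$ admits a unique first-order deformation when the $\{\nu^{(i)}_j\}$ and the local parameter $z_i$ are perturbed by $v$, producing a distinguished formal lift on each disk around $t_i$. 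Away from the singular divisor, any curve deformation induces a unique infinitesimal deformation of $(E,\nabla)$ by parallel transport of flat sections (the Gauss--Manin lift). Assembling the discrepancies between these two kinds of local lifts into a \v{C}ech 1-cocycle yields a class in $\mathbf{H}^1(\cF^{\bullet})$, which I declare to be $\Psi_{\mathrm{alg}}(v)$. The construction is algebraic in $v$ and functorial in $x$, so $\Psi_{\mathrm{alg}}$ is an algebraic splitting of (\ref{eq:tangent2}).

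To conclude, I would verify $\Psi_{\mathrm{alg}}|_{T^{\circ,s}_{\bnu_{res}}}=\Psi$. Both are splittings of the same algebraic surjection, so it suffices to check their difference, which is a section of $(\pi'_{2,\bnu_{res}})^{\ast}(\Theta_{T^{\circ}_{\bnu_{res}}})^{\vee}\otimes\ker\varphi$, vanishes. This follows from the fact that the local lifts entering the definition of $\Psi_{\mathrm{alg}}$ preserve the generalized monodromy data: the canonical formal deformation in Hukuhara-Turrittin normal form preserves the formal monodromies $\widehat{\gamma}_i$ and the Stokes matrices $St_{d^{(i)}_k}$ to first order, while the off-singular Gauss--Manin lifts preserve flat sections and hence preserve topological monodromies and the links $L_i$. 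Therefore $\Psi_{\mathrm{alg}}(v)$ is isomonodromic, and Theorem \ref{thm:Riemann-Hilbert} forces $\Psi_{\mathrm{alg}}=\Psi$ on $T^{\circ,s}_{\bnu_{res}}$. Since $T^{\circ,s}_{\bnu_{res}}$ is analytically dense in $T^{\circ}_{\bnu_{res}}$ (the simplicity condition cuts out at most a real codimension-one locus in $N^{\circ}_{top}$) and $\Psi_{\mathrm{alg}}$ is algebraic on the whole of $T^{\circ}_{\bnu_{res}}$, the conclusion follows.

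I expect the main obstacle to lie in arranging the Hukuhara-Turrittin normal form to vary algebraically in families over $T^{\circ}_{\bnu_{res}}$: Proposition \ref{prop:filtration} is pointwise, and the \v{C}ech construction above needs the rank-one eigenline summands $V(\nu^{(i)}_j,1)$ and their first-order deformations to be chosen algebraically, at least \'etale-locally, in $(\bnu_{top},\bnu_{mid},z_i)$. The distinct-top-term condition defining $N^{\circ}_{top}$ should make this possible by a relative adaptation of the inductive construction in the proof of Proposition \ref{prop:filtration}, since the leading eigenvalues remain separated on the base; however, carrying out this relative construction compatibly with the parabolic filtration and gluing it to the Gauss--Manin lift on overlaps is the delicate bookkeeping step.
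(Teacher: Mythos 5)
Your overall strategy (build an algebraic isomonodromic lift locally, glue, and identify it with the analytic splitting on the dense simple locus) is the same one the paper follows, but two essential steps are missing or would fail as you describe them. First, the local lifts near the irregular points cannot be taken from the Hukuhara--Turrittin eigen-decomposition of Lemma \ref{lemma:generic}: that decomposition lives over $\C[[z_i]]$ and is in general divergent, so it does not define a deformation of $(E,\nabla)$ on any actual (Zariski or analytic) neighborhood of $t_i$, and a \v{C}ech assembly that mixes formal-disk data with the Gauss--Manin lift on genuine open sets is not set up. You flag this as ``the delicate bookkeeping step,'' but it is really the content of the theorem. The paper avoids the formal splitting entirely: using genericity of $\bnu$ it diagonalizes the connection matrix only to the finite order $2m_i-1$ (an algebraic, \'etale-local normalization), defines $B_{\alpha}$ via $\int\mu^{(i)}_j$, and adds the explicit correction term $C_{\alpha}=\tilde g_i^{m_i}\partial B_{\alpha}/\partial\tilde g_i+[A_{\alpha},B_{\alpha}]$ so that the lifted connection $(A_{\alpha}+\epsilon C_{\alpha})\tilde g_i^{-m_i}d\tilde g_i+B_{\alpha}d\epsilon$ is flat with the prescribed pole order in the $d\epsilon$-direction. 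A point your proposal never uses, but which is indispensable here, is that $\bnu_{res}$ is held fixed, so the deformation $\mu^{(i)}_j$ of the exponents has no residue part and $\int\mu^{(i)}_j$ is single-valued; without this the local horizontal lift does not exist in this algebraic form.

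Second, ``assembling the discrepancies into a \v{C}ech $1$-cocycle and declaring the resulting class in $\mathbf{H}^1(\cF^{\bullet})$ to be $\Psi_{\mathrm{alg}}(v)$'' does not typecheck and hides the real issue: a splitting must assign to $v$ a tangent vector of the total space lying over $v$, i.e.\ a global first-order deformation of the parabolic connection that is horizontal (flat) in the $\epsilon$-direction, not an element of the relative tangent space $\mathbf{H}^1(\cF^{\bullet})=\ker\varphi$. To obtain such a global deformation one must show that the local horizontal lifts can be glued, and the obstruction lies in $\mathbf{H}^2$ of a three-term complex (involving the $d\epsilon$-component with controlled pole order); the paper proves this complex is exact, giving $\mathbf{H}^2=0$ (existence) and $\mathbf{H}^1=0$ (uniqueness). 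Your proposal supplies no substitute for this vanishing. The final steps you give — identifying the algebraic lift with $\Psi$ on $T^{\circ,s}_{\bnu_{res}}$ via preservation of monodromy data and Theorem \ref{thm:Riemann-Hilbert}, then using density — are fine in spirit (the paper gets the identification directly ``by construction,'' since flatness in the $\epsilon$-direction is the infinitesimal isomonodromy condition), but they rest on a construction that, as written, has not been carried out.
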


\begin{proof}
Take an affine open subset $U\subset T^{\circ}_{\bnu_{res}}$
and an algebraic vector field $v\in H^0(U,\Theta_{T^{\circ}_{\bnu_{res}}})$.
$v$ corresponds to a morphism
$\iota^v:\Spec{\mathcal O}_U[\epsilon]\rightarrow T^{\circ}_{\bnu_{res}}$,
where $\epsilon^2=0$.
We denote the pullback to $\cC\times\Spec{\mathcal O}_U[\epsilon]$
of the local defining equation of $\tilde{t}_i$ by $\tilde{g}_i$.
We may assume that $\tilde{g}_i|_{m_i\tilde{t}_i}$ is the element
given by $v$.
Consider the composite
\[
 d_{\epsilon}:{\mathcal O}_{\cC\times\Spec{\mathcal O}_U[\epsilon]}
 \stackrel{d}\rightarrow\Omega^1_{\cC\times\Spec{\mathcal O}_U[\epsilon]/U}
 ={\mathcal O}_{\cC\times\Spec{\mathcal O}_U[\epsilon]}d\tilde{g_i}\oplus
 {\mathcal O}_{\cC\times\Spec{\mathcal O}_U[\epsilon]}d\epsilon
 \rightarrow{\mathcal O}_{\cC\times\Spec{\mathcal O}_U[\epsilon]}d\epsilon.
\]
Note that $\epsilon d\epsilon=0$ and so
${\mathcal O}_{\cC\times\Spec{\mathcal O}_U[\epsilon]}d\epsilon
\cong{\mathcal O}_{\cC_U}d\epsilon$.
Let $(\nu^{(i)}_j)+\epsilon(\mu^{(i)}_j)$ be the pullback of the
universal family on $T^{\circ}_{\bnu_{res}}$ by $\iota^v$,
where $d_{\epsilon}(\nu^{(i)}_j)=0$.
There is an \'etale surjective morphism
$V=\coprod_kV_k\rightarrow (\pi'_{2,\bnu_{res}})^{-1}(U)$
such that $V$ is an affine scheme and
there is a universal family $(\tilde{E},\tilde{\nabla},\{\tilde{l}^{(i)}_j\})$
on $\cC_V$.

Take an affine open covering
$\cC_{V_k}=\bigcup_{\alpha}W_{\alpha}$.
After shrinking $V_k$ we may assume that
$\sharp\{\alpha|(\tilde{t}_i)_{V_k}\subset W_{\alpha}\}=1$ for any $i$
and $\sharp\{i|(\tilde{t}_i)_{V_k}\cap W_{\alpha}\neq\emptyset\}\leq 1$
for any $\alpha$.
Take a free module ${\mathcal O}_{W_{\alpha}}[\epsilon]$-module
$E_{\alpha}$ with an isomorphism
$E_{\alpha}\otimes{\mathcal O}_{W_{\alpha}}[\epsilon]/(\epsilon)
\stackrel{\phi_{\alpha}}\rightarrow \tilde{E}|_{W_{\alpha}}$.
Assume that $(\tilde{t}_i)_{V_k}\subset W_{\alpha}$.
We can take a basis $e_0,\ldots,e_{r-1}$ of $E_{\alpha}$ and
$A_{\alpha}\in\End(E_{\alpha})$ such that
$\tilde{\nabla}|_{W_{\alpha}}(e_j)=
\tilde{g}_i^{-m_i}d\tilde{g}_i(A_{\alpha}\otimes{\mathcal O}_U[\epsilon]/(\epsilon))(e_j)$
and
$A_{\alpha}|_{(2m_i-1)\tilde{t}_i}(e_j|_{(2m_i-1)\tilde{t}_i})
=(\tilde{g}_i^{m_i}\nu^{(i)}_j)e_j|_{(2m_i-1)\tilde{t}_i}$
for each $0\leq j\leq r-1$.
We may assume that $d_{\epsilon}(A_{\alpha})=0$.
We can take a matrix
$B_{\alpha}\in\End(E_{\alpha})\tilde{g}_i^{1-m_i}$
such that
$B_{\alpha}|_{(2m_i-1)\tilde{t}_i}(e_j|_{(2m_i-1)\tilde{t}_i})
=(\int\mu^{(i)}_j)e_j|_{(2m_i-1)\tilde{t}_i}$
for each $0\leq j\leq r-1$.
Here note that $\mu^{(i)}_j$ has no residue part and so
$\int\mu^{(i)}_j$ is single valued.
We have
\begin{align*}
 &(A_{\alpha}|_{(2m_i-1)\tilde{t}_i}B_{\alpha}|_{(2m_i-1)\tilde{t}_i}
 -B_{\alpha}|_{(2m_i-1)\tilde{t}_i}A_{\alpha}|_{(2m_i-1)\tilde{t}_i})(e_j|_{(2m_i-1)\tilde{t}_i}) \\
 &=A_{\alpha}|_{(2m_i-1)\tilde{t}_i}((\int\mu^{(i)}_j)e_j|_{(2m_i-1)\tilde{t}_i})
 -B_{\alpha}|_{(2m_i-1)\tilde{t}_i}((\tilde{g}_i^{m_i}\nu^{(i)}_j)e_j|_{(2m_i-1)\tilde{t}_i}) \\
 &=(\int\mu^{(i)}_j)A_{\alpha}|_{(2m_i-1)\tilde{t}_i}(e_j|_{(2m_i-1)\tilde{t}_i})
 -(\tilde{g}_i^{m_i}\nu^{(i)}_j)B_{\alpha}|_{(2m_i-1)\tilde{t}_i}(e_j|_{(2m_i-1)\tilde{t}_i}) \\
 &=(\int\mu^{(i)}_j)(\tilde{g}_i^{m_i}\nu^{(i)}_j)(e_j|_{(2m_i-1)\tilde{t}_i})
 -(\tilde{g}_i^{m_i}\nu^{(i)}_j)(\int\mu^{(i)}_j)(e_j|_{(2m_i-1)\tilde{t}_i})=0.
\end{align*}
This means that
$A_{\alpha}B_{\alpha}-B_{\alpha}A_{\alpha}\in\tilde{g}_i^{m_i}\End(E_{\alpha})$.
We define
\[
 C_{\alpha}:=\tilde{g}_i^{m_i}\genfrac{}{}{}{}{\partial B_{\alpha}}{\partial\tilde{g}_i}
 +A_{\alpha}B_{\alpha}-B_{\alpha}A_{\alpha}\in\End(E_{\alpha}).
\]
Then we have
$(A_{\alpha}+\epsilon C_{\alpha})\tilde{g}_i^{-m_i}d\tilde{g}_i|_{m_i\tilde{t}_i}
(e_j|_{m_i\tilde{t}_i})
=(\nu^{(i)}_j+\epsilon\mu^{(i)}_j)(e_j|_{m_i\tilde{t}_i})$.
We put
\[
 \tilde{A}_{\alpha}:=(A_{\alpha}+\epsilon C_{\alpha})\tilde{g}_i^{-m_i}d\tilde{g}_i
 +B_{\alpha}d\epsilon
\]
and define a connection
$\nabla_{\alpha}:E_{\alpha}\rightarrow E_{\alpha}\otimes\tilde{\Omega}^1$
by
\[
 \nabla_{\alpha}(\sum_{j=0}^{r-1}a_je_j):=\sum_{j=0}^{r-1}da_j\otimes e_j
 +\sum_{j=0}^{r-1}a_j\tilde{A}_{\alpha}(e_j)
\]
for $a_j\in{\mathcal O}_{W_{\alpha}}$,
where $\tilde{\Omega}^1$ is the subsheaf of
$\Omega^1_{\cC_{V_k}\times_U\Spec{\mathcal O}_U[\epsilon]/V_k}(D)$
locally generated by $\tilde{g}_i^{-m_i}d\tilde{g}_i$
and $\tilde{g}_i^{1-m_i}d\epsilon$.
Then $\nabla_{\alpha}$ is a flat connection, that is
$\nabla_{\alpha}\circ\nabla_{\alpha}=0$.
We define a local parabolic structure
$\{(l_{\alpha})^{(i)}_j\}$ by
$(l_{\alpha})^{(i)}_j=\langle e_{r-1}|_{m_i\tilde{t}_i},\ldots,e_j|_{m_i\tilde{t}_i}\rangle$.
So we obtain a triple $(E_{\alpha},\nabla_{\alpha},\{(l_{\alpha})^{(i)}_j\})$
which satisfies
$\nabla_{\alpha}|_{m_i\tilde{t}_i}((l_{\alpha})^{(i)}_j)
\subset(l_{\alpha})^{(i)}_j\otimes\tilde{\Omega}^1$ for any $i,j$ and
$(\tilde{\nabla}_{\alpha}|_{m_i\tilde{t}_i}-(\nu^{(i)}_j+\epsilon\mu^{(i)}_j))((l_{\alpha})^{(i)}_j)
\subset(l_{\alpha})^{(i)}_{j+1}\otimes\Omega^1_{\cC_{V_k}[\epsilon]/V_k[\epsilon]}(D_{V_k}[\epsilon])$
for any $i,j$, where $\cC_{V_k}[\epsilon]=\cC_{V_k}\times_U\Spec{\mathcal O}_U[\epsilon]$,
$D_{V_k}[\epsilon]=D_{V_k}\times_U\Spec{\mathcal O}_U[\epsilon]$ and
$\tilde{\nabla}_{\alpha}$ is the relative connection
induced by $\nabla_{\alpha}$.

We call $({\mathcal E},\nabla_{\mathcal E},\{(l_{\mathcal E})^{(i)}_j\})$
a horizontal lift of $(\tilde{E},\tilde{\nabla},\{\tilde{l}^{(i)}_j\})$ with respect to $v$ if
\begin{itemize}
\item[(1)] ${\mathcal E}$ is a vector bundle on
$\cC_{V_k}\times_U\Spec{\mathcal O}_U[\epsilon]$,
\item[(2)] ${\mathcal E}|_{m_i\tilde{t}_i}=(l_{\mathcal E})^{(i)}_0\supset\cdots\supset
(l_{\mathcal E})^{(i)}_r=0$ is a filtration by subbundle for $i=1,\ldots,n$ and
\item[(3)] $\nabla_{\mathcal E}:{\mathcal E}\rightarrow{\mathcal E}\otimes\tilde{\Omega}^1$
is a connection satisfying
\begin{enumerate}
\item[(a)] $\nabla_{\mathcal E}|_{m_i\tilde{t}_i}((l_{\mathcal E})^{(i)}_j)\subset
(l_{\mathcal E})^{(i)}_j\otimes\tilde{\Omega}^1$ for any $i,j$,
\item[(b)] the curvature $\nabla_{\mathcal E}\circ\nabla_{\mathcal E}:
{\mathcal E}\rightarrow{\mathcal E}\otimes\tilde{\Omega}^2$ is zero,
\item[(c)] $(\tilde{\nabla}_{\mathcal E}|_{m_i\tilde{t}_i}
-(\nu^{(i)}_j+\epsilon\mu^{(i)}_j)\mathrm{id})((l_{\mathcal E})^{(i)}_j)
\subset (l_{\mathcal E})^{(i)}_{j+1}\otimes
\Omega^1_{\cC_{V_k}[\epsilon]/V_k[\epsilon]}
(D_{\cC_{V_k}[\epsilon]})$ for any $i,j$,
where $\tilde{\nabla}_{\mathcal E}$ is the relative connection induced by $\nabla_{\mathcal E}$ and
\item[(d)] $({\mathcal E},\tilde{\nabla}_{\mathcal E},\{(l_{\mathcal E})^{(i)}_j\})
\otimes{\mathcal O}_U[\epsilon]/(\epsilon)\cong(\tilde{E},\tilde{\nabla},\{\tilde{l}^{(i)}_j\})$.
\end{enumerate}
\end{itemize}
Note that $(E_{\alpha},\nabla_{\alpha},\{(l_{\alpha})^{(i)}_j\})$ is a local horizontal lift
and the obstruction class for the existence of a global horizontal lift lies in
$\mathbf{H}^2({\mathcal F}^{\bullet})$, where
\begin{align*}
 {\mathcal F}^0&:=\left\{u\in{\mathcal End}(\tilde{E})\left|
 \text{$u|_{m_i\tilde{t}_i}(\tilde{l}^{(i)}_j)\subset\tilde{l}^{(i)}_j$ for any $i,j$}\right.\right\} \\
 {\mathcal F}^1&:=\left\{u\in{\mathcal End}(\tilde{E})\otimes\overline{\Omega}^1\left|
 \begin{array}{l}
 \text{$u|_{m_i\tilde{t}_i}(\tilde{l}^{(i)}_j)\subset\tilde{l}^{(i)}_j\otimes\overline{\Omega}^1$
 for any $i,j$ and the image of} \\
 \text{$\tilde{l}^{(i)}_j\hookrightarrow \tilde{E}|_{m_i\tilde{t}_i}
 \stackrel{u|_{m_i\tilde{t}_i}}\longrightarrow\tilde{E}|_{m_i\tilde{t}_i}\otimes\overline{\Omega}^1
 \rightarrow\tilde{E}|_{m_i\tilde{t}_i}\otimes\Omega^1_{\cC_{V_k}/V_k}(D_{V_k})$} \\
 \text{lies in $l^{(i)}_{j+1}\otimes\Omega^1_{\cC_{V_k}/V_k}(D_{V_k})$ for any $i,j$}
 \end{array}\right.\right\} \\
 {\mathcal F}^2&:=\left\{u\in{\mathcal End}(\tilde{E})\otimes\tilde{\Omega}^2\left|
 \text{$u|_{m_i\tilde{t}_i}(l^{(i)}_j)\subset\tilde{l}^{(i)}_{j+1}\otimes\tilde{\Omega}^2$
 for any $i,j$}\right.\right\} \\
 d^0&:{\mathcal F}^0\ni u\mapsto
 \tilde{\nabla}\circ u-u\circ\tilde{\nabla}+ud\epsilon\in{\mathcal F}^1 \\
 d^1&:{\mathcal F}^1\ni \omega+ad\epsilon\mapsto
 d\epsilon\wedge\omega+(\tilde{\nabla}\circ a-a\circ\tilde{\nabla})\wedge d\epsilon
 \in{\mathcal F}^2.
\end{align*}
Here $\overline{\Omega}^1=
\Omega^1_{\cC_{V_k}/V_k}(D_{V_k})\oplus{\mathcal O}_{\cC_{V_k}}d\epsilon$.
We can easily check that the complex ${\mathcal F}^{\bullet}$ is exact
and so $\mathbf{H}^2({\mathcal F}^{\bullet})=0$.
So there is a horizontal lift $({\mathcal E},\nabla_{\mathcal E},\{(l_{\mathcal E})^{(i)}_j\})$
of $(\tilde{E},\tilde{\nabla},\{\tilde{l}^{(i)}_j\})$.
(In fact we can see that a horizontal lift is unique because of
$\mathbf{H}^1({\mathcal F}^{\bullet})=0$.)
$({\mathcal E},\nabla_{\mathcal E},\{(l_{\mathcal E})^{(i)}_j\})$
determines an algebraic vector field
$\Psi'(v)\in H^0(V_k,(\Theta_{M^{\balpha}_{D/\cC/T^{\circ}_{\bnu_{res}}}})_{V_k})$.
We can see that $\Psi'(v)$ descends to an algebraic vector field
$\overline{\Psi'}(v)\in
H^0((\pi'_{2,\bnu_{res}})^{-1}(U),\Theta_{M^{\balpha}_{D/\cC/T^{\circ}_{\bnu_{res}}}})$.
By construction we have $\overline{\Psi'}(v)=\Psi(v)$, that is, $\Psi$ is algebraic.
\end{proof}

\begin{Remark}{\rm  The algebraic splitting in (\ref{eq split-ex})  also defines 
an algebraic differential system on the phase space $M^{\balpha}_{D/\cC/T^{\circ}_{\bnu_{res}}}$
\begin{equation}\label{eq:isom-flow-alg-ex}
\btheta'_{\bnu_{res}} = \btheta'_{\bp} := 
\Psi( (\pi'_{2,\bnu_{res}})^{\ast} (\Theta_{{T}^{\circ}_{\bnu_{res}}} ))
\subset \Theta_{M^{\balpha}_{{\mathcal D}/{\mathcal C}/ {T}^{\circ}_{\bnu_{res}} } }. 
\end{equation}
which coincides with $\btheta_{\bnu_{res}} = \btheta_{\bp}$ on $M^{\balpha}_{{\mathcal D}/{\mathcal C}/ {T}^{\circ, s}_{\bnu_{res}} }$.  It seems natural  to expect that  $\btheta'_{\bnu_{res}}$ also satisfies the geometric Painlev\'e property when $\bnu_{res}$ is non-resonant and irreducible, that is, the condition for simpleness for $\bnu$ (or $\bnu_{top}$) may not be necessary.  
If we will fix a nonsimple $\bnu_{top}$ and vary the other data in $T^{\circ}_{\bnu_{res}}$,  
 we can show the geometric Painlev\'e property for the vector fields $\btheta'_{\bnu_{res}}$ 
 from Theorem \ref{thm:Riemann-Hilbert}.   
}
\end{Remark}

Now we show that geometric Painlev\'e property of a  differential system  $\btheta_{\bnu_{res}}$ on  $M^{\balpha}_{{\mathcal D}/{\mathcal C}/ {T}^{\circ, s}_{\bnu_{res}} }$  
implies that the analytic or  classical Painlev\'e property of differential system holds as follows
(cf. \cite{IISA}, \cite{Inaba-1}).  
Assume that on an affine Zariski open subset $U$ of ${T}^{\circ}_{\bnu_{res}}$ and then 
we have algebraic coordinates $T_1, \cdots, T_l$ of $U$  where $l =  l(g, n, (m_i), \bp) = \dim {T}^{\circ}_{\bnu_{res}}$.  Then we may also consider them  as a coordinate system on $U \cap  {T}^{\circ, s}_{\bnu_{res}} $.
Then we can see that the differential system $\btheta'_{\bnu_{res}}$ 
on the phase space $M^{\balpha}_{{\mathcal D}/{\mathcal C}/U}$ over $U$ are generated by 
the following algebraic vector fields 
$$
\btheta'_{\bnu_{res}}= \{ \theta'_1, \cdots, \theta'_l \} \ \mbox{where} \  \theta'_{i}= \Psi( \frac{\partial}{\partial T_i}).  
$$
These vector fields naturally commute to each other and 
by using affine algebraic coordinate charts of $M^{\balpha}_{{\mathcal D}/{\mathcal C}/U}$ we may write these vector fields explicitly and define algebraic partial differential equations on $M^{\balpha}_{{\mathcal D}/{\mathcal C}/U}$. 
Restricting  these vector fields on the phase space
$M^{\balpha}_{{\mathcal D}/{\mathcal C}/U  \cap T^{\circ, s}_{\bnu_{res}}}$ over $U \cap T^{\circ, s}_{\bnu_{res}}$,   we obtain the vector fields $\btheta_{\bnu_{res}}= \{ \theta_1, \cdots, \theta_l \} $ which are equivalent to the isomonodromic flows defined in Theorem \ref{thm:GP}.  Hence  $\btheta_{\bnu_{res}}$ can be written in  partial algebraic differential equations with the independent variables $T_1, \cdots, T_n $.  Since all the solutions of $\btheta_{\bnu_{res}}$ are in the isomonodromic flows,   the solutions stay in the phase space over $U \cap T^{\circ, s}_{\bnu_{res}}$.  This means that all solutions can be arranged in a coordinate chart after the rational transformations of algebraic coordinates of the fibers.  So the movable singularities of the associated differential equations are only poles, which implies the analytic Painlev\'e property.

\begin{Remark}\label{rem:JMU1}{\rm Jimbo, Miwa and Ueno \cite{JMU1} 
gave explicit isomonodromic differential systems in  the 
case of $C= \BP^1$.   }  
\end{Remark}

\begin{Remark}\label{rem:GPP} {\rm Even if  $\bnu_{res}$ is resonant or 
reducible, we can define the Riemann-Hilbert 
correspondence  ${\bf RH}_{\bnu_{res}}$ under the condition 
that $\bnu$ is generic.  We expect that the Riemann-Hilbert 
correspondence  ${\bf RH}_{\bnu_{res}}$ is a {\em proper} surjective bimeromorphic analytic map on each fiber of every closed 
point of $T^{\circ}_{\bnu_{res}}$.
  If we can show this fact, we can 
define an isomonodromic differential system and show  its 
geometric Painlev\'e property. }
\end{Remark}

\subsection{Relations to the classical Painlev\'e equations}
\par
\noindent
\vspace{0.5cm}

Painlev\'e (\cite{P1}, \cite{P2}) and Gambier (\cite{Gambier}) classified the second order rational algebraic ordinary differential equations which may have analytic Painlev\'e property into 6 types, $P_J, J=I, \cdots, VI$.  We call these equations classical 
Painlev\'e equations.  However they did not  give 
the proof of Painlev\'e property for classical Painlev\'e equations. 

Okamoto introduced a one parameter family of  algebraic surfaces associated to 
each type of classical Painlev\'e equation (\cite{O1}) on which 
the Painlev\'e equation has  horizontal separated solutions at least locally.  
A surface appeared as a fiber in the Okamoto's family is called Okamoto's space of initial conditions.  It has a nice compactification $S$, which is a smooth 
rational projective surface, whose anti-canonical divisor $-K_S = Y = 
\sum_{i=1}^{s} n_i Y_i$ is  an effective normal crossing divisor and 
the space of initial conditions can be given as $S \setminus Y_{red}$. 
It satisfies the condition $-K_S\cdot Y_i = Y \cdot Y_i = 0 $ for all 
$i, 1 \leq i \leq s$.  We call such a pair $(S, Y)$ where $S$ is a smooth 
projective rational surface and $Y \in |-K_S|$ with the above condition 
an Okamoto-Painlev\'e pair (\cite{Sakai},\cite{ST},\cite{STT}).  
In \cite{Sakai}, \cite{ST}, \cite{STT}, Okamoto-Painlev\'e pairs $(S, Y)$ are
classified into 8 types corresponding to the affine Dynkin diagrams of types 
$D^{(1)}_k, 4 \leq k \leq 8, E^{(1)}_6, E^{(1)}_7, E^{(1)}_8$. 
Moreover one can show that  such pairs $(S, Y)$  
have a special one parameter deformation and one can derive the 
classical Painlev\'e equations from the special deformations of 
Okamoto-Painlev\'e pairs.  

In \cite{IIS-1} and \cite{IIS-2}, we proved that the Okamoto-Painlev\'e 
pair of type $D^{(1)}_4$ which corresponds to 
Painlev\'e VI equation $P_{VI}$ can be obtained by the 
moduli space of stable $\bnu$-parabolic connections of rank $2$ and 
degree $-1$ over $\BP^1$ with $4$-regular singular points.  
Since it was known that Painlev\'e VI equations can be obtained as 
isomonodromic differential equations, so we can prove
the Painlev\'e property for $P_{VI}$ in \cite{IIS-1}, \cite{IIS-2}.

One can classify types of regular or irregular singularities 
of parabolic connections of rank $2$ on $\BP^1$ whose isomonodromic 
differential equations give the classical Painlev\'e equations of 
8 types (\cite{JMU1}, \cite{vdP-S}). In Table \ref{tab:sing}, 
we list up the type of singularities of linear connections of rank $2$ 
by specifying the orders $m_i$ of singularities at 4 points of $\BP^1$
$i=0, 1, \infty, t \not=0, 1, \infty$. When $m_i= -$, it indicates 
that there are no singularities at the point, and when $m_i$ is 
a half integer, it indicates that the connection has a ramified 
irregular singularity with Katz invariant $m_i -1$.  
Moreover $\cP$ is the space of formal monodromies as in the previous 
subsection.

\begin{table}[h]
\begin{center} 
\begin{tabular}{|c|c||c|c|c|c|c|c|} \hline 
Dynkin & Painlev\'e equations & $m_0$ & $m_1$ & $m_{\infty}$ & $m_{t}$ & 
$\dim \cP$ \\ \hline 
 $D^{(1)}_4$ & $P_{VI}$ &  $1$ & $1$ & $1$ & $1$ & $4$ \\ \hline 
$D^{(1)}_5$ & $P_{V}$ & $1$ & $1$ & $2$ & - & $3$ \\ \hline 
$D^{(1)}_6$  &  deg $P_V$=$P_{III}(D^{(1)}_6)$ & 1 & 1 & 1+1/2 & - & 2  \\ \hline 
$D^{(1)}_6$ & $P_{III}(D^{(1)}_6)$ & 2 & - &  2 & - & 2  \\ \hline  
$D^{(1)}_7$  & $P_{III}(D^{(1)}_7)$ &1+ 1/2& - & 2 & - & 1 \\ \hline 
$D^{(1)}_8$  & $P_{III}(D^{(1)}_8)$ & 1+1/2 & - &1+ 1/2 & - & 0  \\ \hline  
$E^{(1)}_6$ &$ P_{IV}$     & 1& - & 3 & - & 2 \\ \hline
$E^{(1)}_7$ & $P_{II}(FN)=P_{II}$ & 1 & - & 1+3/2 & - & 1\\ \hline  
$E^{(1)}_7$ & $P_{II}$ & - & - & 4 & - & 1 \\ \hline 
$E^{(1)}_8$ & $P_{I}$  &- & - & 1+5/2 & -& 0  \\ \hline 
\end{tabular}
\vspace{0.2cm}
\end{center}
\caption{}
\label{tab:sing}
\end{table}
{F}rom Table \ref{tab:sing}, 
we can see that the following 5 types depending on the parameter 
$\bp \in \cP$ 
are corresponding to the rank $2$ connections with  
regular or unramified  irregular singularities.
  
\begin{equation}\label{eq:URP}
P_{VI}(D^{(1)}_4)_{\bp}, P_{V}(D^{(1)}_5)_{\bp}, P_{III}(D^{(1)}_6)_{\bp}, 
P_{IV}(E^{(1)}_6)_{\bp},  P_{II}(E^{(1)}_7)_{\bp}
\end{equation}

As a corollary of Theorem \ref{thm:GP}, we have the following 

\begin{Theorem}\label{thm:classicalP}
Classical Painlev\'e equations of above 5 types  
in (\ref{eq:URP})  have the geometric Painlev\'e 
property as well as the analytic Painlev\'e property 
if the parameter $\bp \in \cP$ is non-resonant and
irreducible. 
\end{Theorem}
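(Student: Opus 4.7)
The plan is to realize each of the five Painlev\'e equations listed in (\ref{eq:URP}) as an instance of the isomonodromic differential system $\btheta_{\bp}$ constructed in Section 6, and then invoke Theorem \ref{thm:GP}. First I would fix $C = \BP^1$, $r = 2$, and a suitable degree $d$, and for each type read off the divisor $D = \sum m_i t_i$ from the unramified rows of Table \ref{tab:sing}: namely $D = t_0 + t_1 + t_\infty + t_t$ (all $m_i = 1$) for $P_{VI}(D^{(1)}_4)$; $D = t_0 + t_1 + 2 t_\infty$ for $P_V(D^{(1)}_5)$; $D = 2 t_0 + 2 t_\infty$ for $P_{III}(D^{(1)}_6)$; $D = t_0 + 3 t_\infty$ for $P_{IV}(E^{(1)}_6)$; and $D = 4 t_\infty$ for $P_{II}(E^{(1)}_7)$. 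In each case the dimension formula of Theorem \ref{smoothness-thm} gives $\dim M^{\balpha}_{D/\BP^1}(2, d, (m_i))_{\bnu} = 2$, consistent with the fact that each classical Painlev\'e equation is of second order.

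Next I would observe that for $r = 2$ every generic $\bnu$ is automatically simple: at each irregular point there is exactly one pair $(0,1)$ of indices, so every singular direction has multiplicity one and the hypothesis $\bnu \in N^{\circ, s}$ of Theorem \ref{thm:GP} reduces to the genericity $\nu^{(i)}_0 \neq \nu^{(i)}_1$ (in leading coefficient). Thus, under the hypothesis of Theorem \ref{thm:classicalP}, namely that $\bp = \be(\bnu_{res})$ is non-resonant and irreducible, all hypotheses of Theorem \ref{thm:GP} are fulfilled after any generic choice of $\bnu_{top}$ and $\bnu_{mid}$ with prescribed residue $\bnu_{res}$.

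Third, I would identify the isomonodromic vector field $\btheta_{\bp}$ on $M^{\balpha}_{D/\BP^1}(2, d, (m_i))_{\bnu}$ with the classical Painlev\'e vector field in each case. The explicit isomonodromic deformation equations on $\BP^1$ for each type of unramified irregular singular linear connection are written down in \cite{JMU1} (cf. Remark \ref{rem:JMU1}), and their reduction to the standard Painlev\'e forms on Okamoto's space of initial conditions is well documented. Combined with the identification of $M^{\balpha}_{D/\BP^1}(2, d, (m_i))_{\bnu}$ with the appropriate Okamoto--Painlev\'e space (as carried out for $P_{VI}$ in \cite{IIS-1}, \cite{IIS-2} and analogously for the other types via birational equivalence of the moduli of stable parabolic connections and the Okamoto surfaces), one obtains the desired matching between $\btheta_{\bp}$ and the Painlev\'e vector field. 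Once this identification is in place, Theorem \ref{thm:GP} delivers the geometric Painlev\'e property immediately. The analytic Painlev\'e property is then a direct consequence of the general argument following Theorem \ref{thm:algebraic-splitting}: choosing algebraic coordinates $T_1, \ldots, T_l$ on an affine open of $T^{\circ}_{\bnu_{res}}$ and affine charts on the fibres of $\pi_{2, \bnu_{res}}$, the vector fields $\Psi(\partial/\partial T_i)$ become algebraic, and because every solution lies in a horizontal isomonodromic leaf it stays inside the phase space, so its only movable singularities (in the original affine chart) are poles.

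The main obstacle is the case-by-case identification in the third step of the isomonodromic flow on our moduli space with the standard normal form of each Painlev\'e equation. Although Theorem \ref{thm:GP} makes the geometric content automatic once the phase space has been correctly set up, checking that the birational change of coordinates from a stable parabolic $\bnu$-connection to the classical Painlev\'e variables $(q,p)$ is an isomorphism of differential systems (and not merely birational equivalence) must be verified in each of the five cases separately, using the explicit formulas of \cite{JMU1} and the Okamoto surface classifications in \cite{Sakai}, \cite{ST}, \cite{STT}. For $P_{VI}$ this has been done in full in \cite{IIS-1}, \cite{IIS-2}, \cite{Inaba-1}; for $P_V, P_{III}(D^{(1)}_6), P_{IV}, P_{II}$ the same strategy applies, and the present Corollary assembles these verifications into the uniform conclusion.
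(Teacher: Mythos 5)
Your proposal follows essentially the same route as the paper's proof: the paper likewise identifies each of the five classical Painlev\'e equations with the isomonodromic flow $\btheta_{\bp}$ on a Zariski open set of the corresponding moduli space of parabolic connections (citing \cite{JMU1}, \cite{vdP-S}) and then invokes Theorem \ref{thm:GP}, with the simplicity of $\bnu$ for rank $2$ noted exactly as you observe. Your additional checks (the dimension count and the analytic Painlev\'e property via the algebraic splitting) are consistent elaborations of the same argument, so the proposal is correct.
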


\begin{proof} It is easy to check 
that each classical Painlev\'e equation listed above 
coincides with our isomonodromic flows $\btheta_{\bp}$ 
on a Zariski open set of our family of the moduli space 
of the parabolic connections of the type above
(cf. \cite{JMU1} or \cite{vdP-S}). Then by Theorem \ref{thm:GP} 
classical Painlev\'e equations satisfy Geometric Painlev\'e 
property. 
\end{proof}

\begin{Remark} {\rm In the case of $P_{VI}(D^{(1)}_4)_{\bp}$, 
the geometric Painlev\'e property holds even for 
resonant and reducible parameter $\bp \in \cP$ (cf. \cite{IIS-1}, 
\cite{IIS-2}). Actually, if  all singularities are 
regular, the result of Inaba \cite{Inaba-1} implies that 
the corresponding isomonodromic differential systems $\btheta_{\bp}$ have
geometric Painlev\'e property even for resonant or 
reducible parameter $\bp \in \cP$.} 
\end{Remark}

\begin{Remark}{\rm In \cite{vdP-S}, explicit 
families of connections corresponding to each type in Table \ref{tab:sing} 
are given as well as isomonodromic differential equations for these families.  
However these connections only cover a Zariski dense open set of 
our moduli spaces.  So it is not enough to show the Painlev\'e property 
for classical Painlev\'e equations.  
  Moreover even when $C = \BP^1, d=0$,  
constructions of moduli spaces by using only the trivial bundle 
does not give a whole moduli space of ours because of the existence of 
jumping locus of the bundle type.}  
\end{Remark}

\begin{Remark}{\rm In \cite{vdP-S},  
one can see the all of  explicit equations  
corresponding to the moduli spaces $\cR(\bnu_{res})$  of 
generalized monodromy data for ten types in 
Table \ref{tab:sing}.   These equations are all cubic equations 
in three variables $x_1, x_2, x_3$ with the coefficients in parameters
in $\cP$. In the case of $P_{VI}(D^{(1)}_4)_{\bp}$, the equation is 
given classically  by Fricke-Klein (cf. \cite{IISA}, \cite{vdP-S}).  
} 
\end{Remark}

\end{document}